\numberwithin{equation}{section}
\numberwithin{figure}{section}
\numberwithin{table}{section}
\declaretheorem[style=plain,parent=section]{theorem}
\declaretheorem[style=plain,sibling=theorem]{lemma}
\declaretheorem[style=plain]{claim}
\declaretheorem[style=plain,sibling=theorem]{proposition}
\declaretheorem[style=definition,sibling=theorem]{definition}
\declaretheorem[style=definition, qed=\hfill $\diamond$, sibling=definition]{example}
\declaretheorem[style=remark,sibling=theorem]{remark}
\newcommand{\RR}{\ensuremath{\mathbb{R}}}
\newcommand{\PP}{\ensuremath{\mathbb{P}}}
\newcommand{\resK}{\ensuremath{\widetilde{K}}}
\newcommand{\CC}{\ensuremath{\mathbb{C}}}
\newcommand{\NN}{\ensuremath{\mathbb{N}}}
\newcommand{\ZZ}{\ensuremath{\mathbb{Z}}}
\newcommand{\QQ}{\ensuremath{\mathbb{Q}}}
\newcommand{\ep}{\varepsilon}
\newcommand{\trop}{\text{trop}}
\newcommand{\Trop}{\text{Trop}}
\newcommand{\val}{\operatorname{val}}
\newcommand{\charF}{\operatorname{char}}
\newcommand{\init}{\operatorname{in}}
\newcommand{\ww}{\omega}
\newcommand{\support}{\operatorname{supp}}
\newcommand{\TP}{\mathbb{T}}
\newcommand{\TPr} {\ensuremath{\TP\PP}}
\newcommand{\PS}{\CC\{\!\{t
\}\!\}}
\newcommand{\relint}{\ensuremath{\operatorname{rel\,int}}}
\newcommand{\rspanone}{\ensuremath{\RR\!\cdot\!\mathbf{1}}}
\newcommand{\Gr}{\operatorname{\mathbb{G}{r}}}
\DeclareMathOperator {\cf}{coeff}
\DeclareMathOperator {\an}{\operatorname{an}}
\DeclareMathOperator {\partua}{\operatorname{part}}
\DeclareMathOperator {\defua}{\operatorname{def}}
\DeclareMathOperator {\tht}{\operatorname{ht}}
\newcommand \dumb{\includegraphics[scale=0.1]{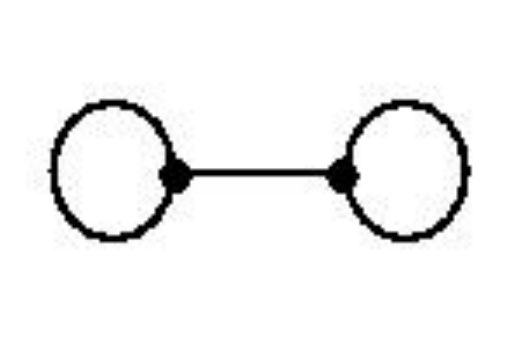}}
\newcommand \figTheta{\includegraphics[scale=0.07]{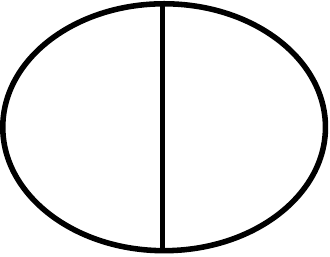}}
\newcommand \sage{\texttt{Sage}}
\newcommand \MacT{\texttt{Macaulay2}}
\newcommand \singular {\texttt{Singular}}
\title{Tropical geometry of genus two curves}
\author [Maria Angelica Cueto and Hannah Markwig]
{
Maria Angelica Cueto
 and Hannah Markwig${}^{\S}$
}
\thanks{${\S}$ \emph{Corresponding author}}
\date{\today}
\keywords{tropical geometry, tropical modifications, faithful tropicalizations,  Berkovich spaces, hyperelliptic covers, Igusa invariants}
\subjclass[2010]{14T05,14H45 (primary), 14Q05, 14G22 (secondary).}
\begin{document}

\begin{abstract}
We exploit three classical characterizations of smooth genus two curves to study their tropical and analytic counterparts. First, we provide a combinatorial rule to  determine the dual graph of each algebraic curve and the metric structure on its minimal Berkovich skeleton. Our main tool is the description of genus two curves via hyperelliptic covers of the projective line with six branch points. Given the valuations of these six points and their differences, our algorithm provides an explicit harmonic 2-to-1 map to a metric tree on six leaves.
Second, we use tropical modifications to produce a faithful tropicalization in dimension three starting from a planar hyperelliptic embedding.

Finally, we consider the moduli space
of abstract genus two tropical curves and translate the classical Igusa invariants characterizing isomorphism classes of genus two algebraic curves into the tropical realm.
While these tropical Igusa functions do not yield coordinates in the tropical moduli space, we propose an alternative set of invariants that provides new length data.
\end{abstract}

\maketitle

\section{Introduction}\label{sec:introduction}

Algebraic smooth genus two curves defined over an algebraically closed non-Archimedean valued field $K$, with residue field $\resK$ of $\charF{\widetilde{K}}\neq 2$ can be studied from three perspectives:
\begin{enumerate}[(i)]
\item as a planar curve defined by a (dehomogeneized) hyperelliptic equation:
  \begin{equation}\label{eq:hyperelliptic}
    y^2 = u\prod_{i=1}^6 (x-\alpha_i)\;;
  \end{equation}
\item as a $K$-point of the  space  ${M_2}$ of smooth genus two curves;
  \item as a hyperelliptic cover of $\PP^1_K$ with six simple branch points $\alpha_1,\ldots, \alpha_6\in \PP^1_{K}$.
\end{enumerate}
The hyperelliptic cover is determined, up to isomorphism, by a choice of six branch points, i.e., by a $K$-point in the space ${M_{0,6}}$ of smooth rational curves with six marked points.

The top row in~\autoref{fig:diagram} contains the three relevant spaces and maps between them.  The first and third characterizations are related by a projection to the $x$-coordinate and a forgetful map that disregards the planar embedding of the curve induced by~\eqref{eq:hyperelliptic}.

The present paper exploits the aforementioned description to
characterize the tropical and Berkovich non-Archimedean analytic
counterparts of smooth genus two curves. It relies on known comparison
methods between the moduli of (stable) algebraic and abstract tropical
curves via the vertical tropicalization maps
from~\autoref{fig:diagram}~\cite{ACPModuli,cap:13,cha:12}. Such curves
come in seven combinatorial types, and they form a poset under
degenerations. Their associated Berkovich skeleta are obtained as dual
metric graphs to the central fiber of a semistable regular model of
each input curve over the valuation ring $K^{\circ}$ of
$K$~\cite{bak.pay.rab:16, tyo:12}. Each vertex in the graph is
assigned the genus of the corresponding irreducible component as its
weight.  The induced poset of skeleta is depicted on the left
of~\autoref{fig:M2BarAndMumford}. The good reduction case is the only
smooth one and it corresponds to Type (VII). The \emph{tropical moduli
  space} of abstract genus two tropical curves $M_2^{\trop}$ is obtained
as the image of ${M_2}(K)$ under the tropicalization map~\cite[Theorem
  1.2.1]{ACPModuli}. It has the structure of a stacky fan with seven
cones, each labeled by a type and isomorphic to an orthant of
dimension equal to the number of edges on the
skeleton~\cite{ACPModuli,cav.et.at:17,cha:12}. We dicuss this space in
more detail in~\autoref{sec:trop-moduli-spaces}.

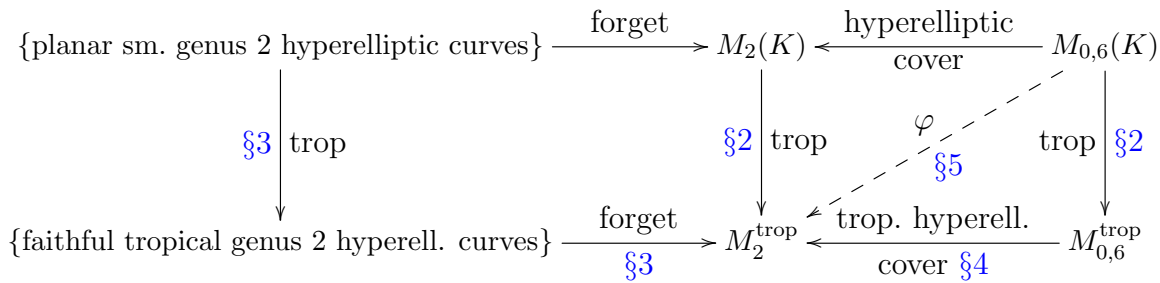
\begin{figure}
\centering  \xymatrix{\{\text{planar sm.~genus 2 hyperelliptic curves}\}\ar[rr]^-{\text{\large{forget}\normalsize}}
    \ar[dd]^-{\text{\large{$\trop$}\normalsize}}_-{\text{\large{\textcolor{blue}{\S}\ref{sec:faithf-trop-trop}}\normalsize}}  && {M_2}(K)\ar[dd]^-{\text{\large{$\trop$}\normalsize}}_-{\text{\large{\textcolor{blue}{\S}\ref{sec:trop-moduli-spaces}\normalsize}}}
    &  & & {M_{0,6}}(K)\ar[lll]_-{\text{\large{hyperelliptic}\normalsize}}^-{\text{\large{cover}\normalsize}} \ar[dd]_-{\text{\large{$\trop$}\normalsize}}^-{\text{\large{\textcolor{blue}{\S}\ref{sec:trop-moduli-spaces}}\normalsize}} \ar@{-->}[ddlll]^-{\text{\large{\textcolor{blue}{\S}\ref{sec:class-thm}}\normalsize}}_-{\text{\large{$\varphi$}\normalsize}}\\
      &&  &&& \\
    \{\text{faithful tropical  genus 2 hyperell. curves}\} \ar[rr]^-{\text{\large{forget}\normalsize}}_-{\text{\large{\textcolor{blue}{\S}\ref{sec:faithf-trop-trop}}\normalsize}} &&  M_{2}^{\trop} &&& M_{0,6}^{\trop} \ar[lll]_-{\text{\large{trop.~hyperell.}\normalsize}}^-{\text{\large{cover~\textcolor{blue}{\S}\ref{sec:trop-hyper-covers}}\normalsize}}}
  \caption{Three ways to represent genus two curves, their relations, and their tropical analogues.\label{fig:diagram}}
  \end{figure}

The tropical moduli space $M_{0,6}^{\trop}$ of rational tropical curves with six marked points is the space of phylogenetic trees on six leaves of Billera-Holmes-Vogtmann~\cite{BHV2001}. It is realized as the image of ${M_{0,6}}(K)$ under the vertical tropicalization map in~\autoref{fig:diagram}, i.e., by taking coordinatewise negative valuations of all $K$-points of  ${M_{0,6}}$ embedded in the toric variety defined by the pointed fan $M_{0,6}^{\trop}\subset \RR^9$. This map and the combinatorial structure of $M_{0,6}^{\trop}$ are also discussed in~\autoref{sec:trop-moduli-spaces}.

As in the algebraic case, abstract genus two tropical  curves are hyperelliptic: they admit a tropical hyperelliptic cover of a metric tree with six markings, given by a  2-to-1 harmonic map branched at all six legs of the tree~\cite{bak.nor:09, cha:12}. We review this construction in~\autoref{sec:trop-hyper-covers}. The tropical covers turn the right square of~\autoref{fig:diagram} into a commuting diagram, but the assignment is not explicit: it requires prior knowledge of each Berkovich skeleton.
We bypass this difficulty by factoring the right square of the diagram through the  map $\varphi$.  The assignment depends  on the  valuations of the  points $\alpha_1,\ldots, \alpha_6\in K^*$  and their differences:
\begin{equation}\label{eq:deWAndDs}
  \ww_i :=-\val(\alpha_i)\;
  \text{ for } i=1,\ldots, 6,\; \text{ and } d_{ij}:=-\val(\alpha_i-\alpha_j) \quad \text{for }i < j, \text{ if } \ww_i=\ww_j.
    \end{equation}
Here is our first main result, which we discuss in~\autoref{sec:class-thm}:
\begin{theorem}\label{thm:admisscovers}
  Each point  in $M_2^{\trop}$ together with an explicit harmonic 2-to-1 map to a metric tree in $M_{0,6}^{\trop}$ is determined by the ordering of  the quantities $\ww_i$ and $d_{ij}$ (see~\autoref{tab:CombAndLengthData}).
\end{theorem}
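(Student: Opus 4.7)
The plan is to process the combinatorial data $(\ww_i, d_{ij})$ in two stages: first extract from it the metric tree $T\in M_{0,6}^{\trop}$ associated to the six marked points, then lift to the unique 2-to-1 harmonic cover $\Gamma\to T$ whose source $\Gamma$ is the desired point in $M_2^{\trop}$.

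For the first stage, I would use the ultrametric property of $\val$ to recover every pairwise quantity $-\val(\alpha_i-\alpha_j)$ from the input: it equals $\max(\ww_i,\ww_j)$ when $\ww_i\ne \ww_j$, and it equals the refined datum $d_{ij}\le \ww_i=\ww_j$ otherwise. These pairwise numbers are precisely the tropical distances used in the standard four-point-condition construction of the tree in $M_{0,6}^{\trop}$; equivalently, they encode the tropicalization of every cross-ratio among the six branch points. A case-by-case reading of the ordering --- which $\ww_i$ coincide and how the $d_{ij}$ compare within each coincidence class --- outputs the combinatorial type of $T$ together with explicit formulas for each interior edge length.

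For the second stage, I would invoke the tropical hyperelliptic cover construction reviewed in~\autoref{sec:trop-hyper-covers}. The genus two source $\Gamma$ is uniquely determined (up to isomorphism) as the metric graph admitting a 2-to-1 harmonic map to $T$ branched at all six legs. Its combinatorics are read off from $T$ edge by edge via the standard parity rule: an internal edge of $T$ separating the six leaves into two subsets of odd cardinality is covered ramifiedly by a single edge of $\Gamma$, while one separating them into two subsets of even cardinality is covered by a pair of parallel edges --- the latter produce the cycles responsible for the genus. Edge lengths are then prescribed by the expansion factors of the harmonic map. Applying this recipe to each case produced in the first stage fills in one row of~\autoref{tab:CombAndLengthData}: the combinatorial type and metric of $\Gamma$ together with the explicit 2-to-1 map to $T$.

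The main obstacle is ensuring that the $\Gamma$ built abstractly really is the Berkovich skeleton of the curve defined by~\eqref{eq:hyperelliptic} and not merely some cover of $T$ compatible with the harmonic balancing conditions. To close this gap I would compare the edge lengths produced by the combinatorial rule against those computed from a semistable model of $y^2=u\prod_{i=1}^6(x-\alpha_i)$ over $K^{\circ}$ built by tropicalizing the branch locus, using that the dual graph of its special fibre coincides with the source of the tropical hyperelliptic cover of the tree of the six branch points. The principal technical work is the bookkeeping for all seven skeletal types (I)--(VII) and the need to treat strictly generic orderings separately from coincidence strata, tracking which internal edges of $T$ are doubled versus ramified in each stratum.
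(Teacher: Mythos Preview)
Your proposal is correct and follows essentially the same strategy as the paper: recover the metric tree from the pairwise quantities $-\val(\alpha_i-\alpha_j)$ via the four-point conditions (this is exactly how \autoref{pr:classificiationRefTypes} is proved), then invoke the uniqueness of the genus two hyperelliptic cover from \autoref{pr:CombCovering}. The one organizational difference is that the paper does not attempt to treat \emph{all} orderings at once; instead it isolates seven canonical witness regions $\Omega^{(\mathrm{I})},\ldots,\Omega^{(\mathrm{VII})}$ (those in \autoref{tab:CombAndLengthData}, all with $\ww_1$ and $\ww_6$ strictly isolated) and then supplies Algorithms~\ref{alg:ineq} and~\ref{alg:sep}, which move an arbitrary six-tuple into one of these regions by explicit automorphisms of $\PP^1$. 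Your direct approach would bypass this reduction step at the cost of a larger case analysis; the paper's version trades that for the algorithmic bookkeeping but yields a cleaner table.

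On the obstacle you flag---that the abstractly built $\Gamma$ really is the Berkovich skeleton---the paper does not verify this by constructing semistable models as you suggest. It instead invokes the commutativity of the right square in \autoref{fig:diagram}, which is quoted from \cite[Theorem 5.3]{ren.sam.stu:14} together with the identification of $M_2^{\trop}$ as the image of the tropicalization map from \cite{ACPModuli}. Your semistable-model route would also work and is more self-contained, but it is heavier than what the paper actually needs.
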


\noindent
For example,  the two maximal
cells in $M^{\trop}_{2}$ correspond to the orders
$\ww_1\!<\!\ww_2\!<\!\ww_3\!<\!\ww_4\!<\!\ww_5\!<\!\ww_6$ (the dumbbell graph (I)) and
$\ww_1\!<\!\ww_2\!<\!\ww_3\!\leq\! \ww_4\!<\!\ww_5\!<\!\ww_6$ with $d_{34}\!<\!\ww_3$ (the theta graph (II)). They are realized as 2-to-1 harmonic covers of the caterpillar and  snowflake trees as shown in~\autoref{fig:M2BarAndMumford}. Similar results were obtained earlier by Ren-Sam-Sturmfels~\cite[Table 3]{ren.sam.stu:14}  but with very different methods.

Our proof of~\autoref{thm:admisscovers} is sketched in the right of~\autoref{fig:M2BarAndMumford}. Starting from $\TPr^1$,  tropical modifications of $\TPr^1$ at the locations of the points $\ww_i$ dictated by the quantities $d_{ij}$  allow us to construct the target metric trees. The source curve and the map are determined by the tropical Riemann-Hurwitz formula~\cite{buc.mar:15}.
\autoref{pr:classificiationRefTypes} provides a list of seven regions in ${M_{0,6}}(K)$ that surject onto $M_2^{\trop}$. Algorithms~\ref{alg:ineq} and~\ref{alg:sep} take six arbitrary points in $(K^*)^6$ and return a linear change of coordinates of $\PP^1$ that sends these six points to one of these seven witness regions.
The same techniques will lead to a natural extension of~\autoref{thm:admisscovers} to the tropical hyperelliptic locus in $M_g^{\trop}$ for any $g\geq 2$.

\begin{figure}[htb]
  \centering
  \includegraphics[scale=0.41]{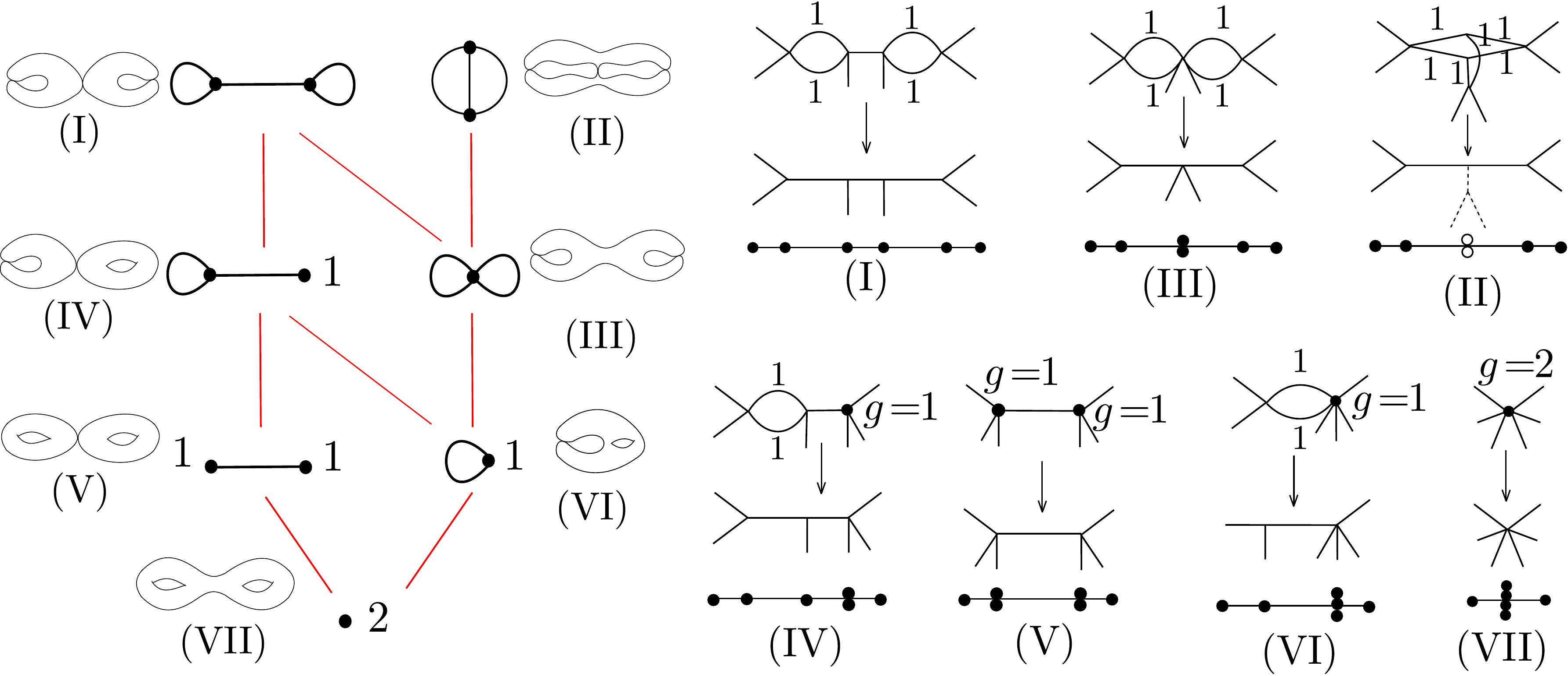}
\caption{From left to right: poset
  of  stable genus two curves, and their weighted dual graphs encoding the genus and intersections of all components;  harmonic 2-to-1 covers of tropical lines with six legs for each type, and ordering of the valuations of the six branch points.  All edge weights in the source curve equal two or one (indicated). All vertices in the source curves have genus zero, unless otherwise indicated.
  The unfilled points in type (II) share initial terms and yield a dashed branch on the metric tree.\label{fig:M2BarAndMumford}}
\end{figure}

\smallskip

The left side of~\autoref{fig:diagram} involves embedded
tropicalizations. Given the hyperelliptic
equation~\eqref{eq:hyperelliptic} defining a smooth genus two curve
$\mathcal{X}$, the tropical plane curve $\Trop\,\mathcal{X}\subset \RR^2$ is the dual
complex of the Newton subdivision of $\mathcal{X}$. An explicit calculation shown
in~\autoref{tab:nonFaithfulness} proves that the planar tropicalization  is always a tree, so it  does
not reflect the genus of our algebraic curve.  Thus, outside Types (V)
and (VII), the minimal Berkovich skeleton of $\mathcal{X}^{\an}$ will not map
isometrically to a subgraph of $\Trop\,\mathcal{X}$ under the hyperelliptic
tropicalization map $\trop\colon \mathcal{X}^{\an}\to \Trop\,\mathcal{X}$.  The
forgetful map on the bottom left of~\autoref{fig:diagram} is analogous
to the retraction map of $\mathcal{X}^{\an}$ onto the minimal Berkovich
skeleton: it shrinks all unbounded edges of the tropical curve and
contracts edges adjacent to one-valent vertices if they correspond to a
rational initial degeneration of $\mathcal{X}$. The map is further described
in~\autoref{sec:faithf-trop-trop}, and it will only be defined if the
tropicalization is faithful.

Faithful tropicalizations are a powerful tool to study non-Archimedean curves through combinatorial means~\cite{bak.pay.rab:16}.  In~\cite{cue.mar:16}, we proposed a program for effectively producing faithfulness for curves over non-Archimedean fields, starting in genus one. 
Our second main result shows that similar methods  can be used to faithfully re-embed genus two curve in three-space in a uniform fashion. The explicit construction is the subject of~\autoref{sec:faithful-reemb} and it relies on the notion of tropical modifications, which we review in~\autoref{sec:faithf-trop-trop}.
\begin{theorem}\label{thm:faithfulRe-embedding}
  Outside Types (V) and (VII), the na\"ive tropicalization induced by the hyperelliptic equation  can be repaired in dimension three by adding one 
  equation of the form $z-f(x,y)$ where  $f$ is linear
  in $y$ and quadratic in $x$. The re-embedded tropical curve contains an isometric copy of the minimal Berkovich skeleton (see~\autoref{tab:nonFaithfulness} and~\autoref{fig:5-7Naive}). 
\end{theorem}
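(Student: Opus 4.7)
The plan is to proceed by case analysis on the five types (I)--(IV) and (VI), leveraging the tropical modification machinery developed in~\cite{cue.mar:16} for genus one curves. For each type, \autoref{thm:admisscovers} together with \autoref{tab:CombAndLengthData} pin down the underlying graph and edge lengths of the Berkovich skeleton as explicit expressions in the quantities $\ww_i$ and $d_{ij}$ from~\eqref{eq:deWAndDs}. The defect of the planar tropicalization (recorded in \autoref{tab:nonFaithfulness}) is that each cycle in the skeleton is traced by a hyperelliptic pair of branches whose $x$-projections coincide pointwise; hence the bounded edge carrying that cycle collapses in $\RR^2$. The remedy is to augment the planar embedding with an auxiliary coordinate $z=f(x,y)$ whose valuation separates these two branches.

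Concretely, I would seek $f(x,y)=A(x)\,y+B(x)$ with $\deg A\leq 1$ and $\deg B\leq 2$, chosen so that on each collapsed edge the two sheets of the hyperelliptic cover are sent to points in $\RR^3$ with distinct $z$-coordinates, and the difference of these $z$-values equals the intrinsic length of the corresponding cycle in the skeleton. For Types (I) and (VI) (each containing two disjoint cycles) the quadratic $B$ provides the two needed separations simultaneously, with $A$ taken constant; for Types (II), (III), and (IV), a linear $B$ together with a nontrivial $A\,y$ term suffices to open the single or theta-shaped cycle. The coefficients of $A$ and $B$ must be read off directly from the branch data $\alpha_i$ and from the differences $\alpha_i-\alpha_j$, not merely from their valuations, in order to match the lengths predicted by \autoref{tab:CombAndLengthData}.

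Once $f$ is in hand, faithfulness is checked in two stages. First, combinatorially: compute $\Trop\{y^2=u\prod_i(x-\alpha_i),\ z=f(x,y)\}\subset \RR^3$ and verify that its underlying graph is trivalent and isomorphic to the dual graph predicted in \autoref{fig:M2BarAndMumford}. Second, metrically: apply the faithfulness criterion of Baker--Payne--Rabinoff~\cite{bak.pay.rab:16} edge by edge, checking that each bounded edge of the re-embedded curve carries tropical multiplicity one and that its lattice length equals the intrinsic length recorded in \autoref{tab:CombAndLengthData}. The main obstacle is this metric verification: cycle lengths involve subtle combinations such as $2d_{34}-\ww_3-\ww_4$ (for the theta graph in Type (II)), which must arise as the signed difference of the $z$-coordinates of the two new vertices introduced by the modification. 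Getting these to match requires a careful calibration of the constant terms in $A(x)$ and $B(x)$ rather than a generic choice; once the two maximal Types (I) and (II) have been handled explicitly, the remaining Types (III), (IV), and (VI) follow by specialization of the branch data along the degeneration poset of \autoref{fig:M2BarAndMumford}.
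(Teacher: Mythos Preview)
Your overall strategy---case analysis plus a tropical-modification lift, then the Baker--Payne--Rabinoff multiplicity-one criterion---matches the paper's, but several concrete features of your plan are off and would not survive execution.

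First, the shape of $f$. The paper uses a single uniform lift $f(x,y)=y-\sqrt{-\alpha_3\alpha_4\alpha_5}\,x+\sqrt{-\alpha_5}\,x^2$ for \emph{all} types; the coefficient of $y$ is always $1$, never $A(x)$ of positive degree. The point is not to choose different shapes of $f$ per type, but to choose one $f$ whose initial forms along the two multiplicity-two edges $e_{12}$ and $e_{34}$ of $\Trop V(g)$ are irreducible \emph{factors} of $\init_{e_{12}}(g)$ and $\init_{e_{34}}(g)$ (\autoref{lm:liftF}). This is the mechanism that unfolds each collapsed edge into a loop; your phrase ``whose valuation separates these two branches'' does not capture it, and your type-dependent choice of $(A,B)$ shows the mechanism is not yet in hand. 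Relatedly, your cycle count for Type~(VI) is wrong: that skeleton has a single loop and a genus-one vertex, not two disjoint cycles.

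Second, specialization does not do the work you hope. The paper carries out a separate Newton-subdivision analysis of the $xz$-projection for each type (Propositions~\ref{pr:heightsXZ} and~\ref{pr:refinedHeights}, \autoref{lm:TypeVINS}), because the expected heights of the coefficients of $\tilde g(x,z)$ depend on type-specific genericity conditions that do not simply degenerate along the poset. Type~(II) is in fact the \emph{hardest} case rather than a source from which others follow: it is the only type where the cell $\sigma_4$ of the modified plane meets the curve, forcing the use of all three coordinate projections (\autoref{lm:projections}) and producing thirteen distinct combinatorial types of $\Trop V(I_{g,f})$ (\autoref{thm:allCombTypes}). Your plan treats it as a base case whose resolution propagates downward; the paper treats it last and with the most machinery.
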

\noindent A precise formula for $f(x,y)$ can be found in~\eqref{eq:liftF}. An alternative refinement of this polynomial, denoted by $\tilde{f}(x,y)$ in~\eqref{eq:liftFtilde} will sometimes be used to simplify the combinatorics. 

In concrete computations, it is always desirable to bound  the ambient dimension required to achieve faithful tropicalizations on  minimal skeleta. In genus two, Wagner~\cite{wag:17} showed that, under certain length restrictions,  any Mumford curve (curves with totally degenerate reduction, namely Types (I), (II) and (III)) can be embedded faithfully in dimension three. Starting from the Schottky uniformization~\cite{GP} of the given Mumford curve, his techniques involve tropical Jacobians, together with an explicit description of the Abel-Jacobi map and they apply not only to the minimal Berkovich skeleta but also to unbounded subgraphs of extended skeleta.

\autoref{thm:faithfulRe-embedding} recovers the same dimension bound for every curve of genus two  where the curve is given by its hyperelliptic equation. In addition to contributing a larger class of curves where the same bound can be attained, our techniques have the additional advantage of extending to the whole hyperelliptic locus in any genus. Generalizations of this result to  extended skeleta are also treated in~\autoref{sec:faithful-reemb}.

\begin{remark}[\textbf{Algorithmic faithful tropicalization in genus $2$}]\label{rem:algf}
Theorems~\ref{thm:admisscovers}
and~\ref{thm:faithfulRe-embedding} can be combined with Algorithms~\ref{alg:ineq} and~\ref{alg:sep} to produce an explicit algorithm that inputs a hyperelliptic equation of the curve $\mathcal{X}$ and outputs a faithful tropicalization. Indeed, starting from the six branch points $\alpha_1,\ldots, \alpha_6$ of the cover, we use 
  Algorithms~\ref{alg:ineq} and~\ref{alg:sep} to construct an automorphism of the projective line that places the branch points in one of the seven special configurations described in~\autoref{tab:CombAndLengthData}. This step recovers the type of the Berkovich skeleton of $\mathcal{X}^{\an}$. With this knowledge, after shifting two of the branch points to be the origin and the  point at infinity via~\autoref{lm:0InftyBranchPoints}, we can pick the appropriate function $f(x,y)$ (which depends on the branch points) that gives the faithful embedding for the minimal Berkovich skeleton by~\autoref{thm:faithfulRe-embedding}. As a result, we obtain an explicit projective model for the input curve $\mathcal{X}$ in dimension three where we detect the topological type of its Berkovich analytification through its embedded tropicalization. In case we wish to recover faithfulness on the extended skeleta we must refine our choice of $f(x,y)$ and perform further linear re-embeddings. These refined methods are type-dependent. We explain them in detail in~Subsections \ref{ss:Dumb}--~\ref{ss:TypeVandVII}.
\end{remark}

A second motivation for Theorems~\ref{thm:admisscovers}
and~\ref{thm:faithfulRe-embedding} and the explicit description of the
diagonal map $\varphi$ from~\autoref{fig:diagram} originates in the
invariant theory of ${M_2}$~\cite{Igusa01} and the search for a coordinate
system for $M_2^{\trop}$.  Defining complete sets of tropical
invariants for each cell in the tropical hyperelliptic locus from
their algebraic counterparts is challenging already in small genera.
The genus one case is well-understood. The $j$-invariant has its
tropical analog: the tropical $j$-invariant. It arises as the expected
negative valuation of the $j$-invariant by using the
conductor-discriminant formula for Weierstrass
equations~\cite{KMM08}. This tropical invariant defines a piecewise
linear function on the space of smooth tropical plane cubics (i.e., the identity on $M_1^{\trop}$) and it is crucial in tropical
enumerative geometry of genus one curves~\cite{ker.mar:09}.

In the algebraic setting, the isomorphism classes of curves of
genus two are determined by the three (absolute) \emph{Igusa
invariants}~\cite{Igusa01}. They can be expressed as rational functions
on all pairwise differences of the six ramification points~\cite{gor.lau:12}. From a computational perspective, they can be viewed as a coordinate-dependent interpretation of the top row in~\autoref{fig:diagram}. We refer to~\autoref{sec:Igusa} for the precise definitions.

Any point on a maximal cell in $M_2^{\trop}$ is determined by three edge lengths: $L_0,L_1$ and $L_2$ in~\autoref{fig:allSkeletaAndMetrics}. In analogy with recent work of Helminck~\cite{hel:16}, our third main result relates these three numbers to the tropicalization of the Igusa invariants, but confirms that these classical invariants are not well suited for tropicalization:
\begin{theorem}\label{thm:IgusaTropical}
  The tropicalization of the Igusa invariants $j_1, j_2$ and $j_3$ are piecewise linear
  functions in $M^{\trop}_2$, with domains of linearity given by the seven cones in $M^{\trop}_2$. They do not form a complete set of
  invariants in $M^{\trop}_2$ since ${j_i}_{\figTheta}^{\trop}\!=\!L_1\!+\!L_0\!+\!L_2$ for
  all $i=1,2,3$, whereas ${j_1}_{\dumb}^{\trop}=L_1\!+\!12L_0\!+\!L_2$, and
  ${j_2}_{\dumb}^{\trop} \! = {j_3}_{\dumb}^{\trop}\!=\!L_1\!+\!8L_0\!+\!L_2$ whenever $\charF \resK \neq 2,3$.

  Replacing $j_3$ by the new invariant $j_4=j_2-4j_3$ induces a piecewise linear function on $M^{\trop}_2$ with ${j_4}_{\figTheta}^{\trop}= L_0+L_1 +L_2 - \min\{L_0, L_1, L_2\}$, and ${j_4}_{\dumb}^{\trop}=\!L_1\!+\!8L_0\!+\!L_2$ when $\charF \resK \neq 2,3$. The tropicalization of the invariants $\{j_1,j_2, j_4\}$  recovers two of the three edge lengths on each point in the tropical moduli space.  Similar formulas hold if $\charF \resK =3$.
\end{theorem}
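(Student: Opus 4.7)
The plan is to leverage \autoref{thm:admisscovers}: for each of the seven cones in $M_2^{\trop}$, the theorem provides an explicit witness region in $(K^*)^6$ specified by an ordering of the quantities $\ww_i$ and $d_{ij}$ of~\eqref{eq:deWAndDs}, together with formulas for the edge lengths $L_0,L_1,L_2$ as linear combinations of these valuations (see~\autoref{tab:CombAndLengthData}). Since the classical Igusa invariants $j_1 = I_2^5/I_{10}$, $j_2 = I_2^3 I_4/I_{10}$ and $j_3 = I_2^2 I_6/I_{10}$ (recalled in~\autoref{sec:Igusa}) are rational functions in the pairwise differences $\alpha_i - \alpha_j$, their tropicalizations pull back along $\varphi$ to piecewise linear functions on $M_2^{\trop}$ whose domains of linearity are exactly the seven cones. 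The piecewise linearity assertion then follows immediately from Theorems~\ref{thm:admisscovers}; the content is in determining the linear formulas.

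The main computation is to evaluate the tropicalizations cone-by-cone. The $I_k$'s are each expressed as a symmetric sum of monomials in the $(\alpha_i-\alpha_j)^2$; I would substitute into each such monomial the valuation $\val(\alpha_i-\alpha_j) = -\min\{\ww_i,\ww_j\}$ when $\ww_i\neq \ww_j$, and $\val(\alpha_i-\alpha_j) = -d_{ij}$ otherwise. This reduces each ${I_k}^{\trop}$ to a minimum over linear functionals in $\ww_i$ and $d_{ij}$, with cancellation only when the minimum is uniquely attained or when the residues of the leading terms sum to zero in $\resK$; the constraint $\charF\,\resK\neq 2,3$ rules out the latter. After plugging in the witness inequalities and combining with the length formulas, the stated formulas for ${j_i}^{\trop}_{\dumb}$ and the common value of ${j_i}^{\trop}_{\figTheta}$ should drop out. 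The lower-dimensional cones (Types III--VI) are treated by identical substitutions and match the two maximal cones on the boundary by continuity of the piecewise linear structure.

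The subtle step is the theta case for $j_4$. The equality ${j_2}^{\trop}_{\figTheta}={j_3}^{\trop}_{\figTheta}=L_0+L_1+L_2$ reflects that $I_2^3 I_4$ and $4 I_2^2 I_6$ share the same tropical leading term on this cone, so the naive minimum collapses under $j_2-4j_3$. The correct linear formula must come from the next-to-leading stratum, and the appearance of $-\min\{L_0,L_1,L_2\}$ suggests a three-way case analysis depending on which edge is shortest, or equivalently, which branching within the theta graph is closest to degenerating. I would verify this by computing $I_2^3 I_4 - 4 I_2^2 I_6$ directly on the theta witness region, factoring out the common leading monomial and identifying the subleading monomial whose valuation is precisely $\min\{L_0,L_1,L_2\}$ greater. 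This bookkeeping is the main obstacle, and I would organize it using the $S_3$-symmetry of the theta graph to reduce from $6!$ monomials to orbits. Once the cone-by-cone formulas for $j_1,j_2,j_4$ are established, the final claim that $\{j_1,j_2,j_4\}^{\trop}$ recovers two of the three lengths follows by inspection: on each maximal cone, the $3\times 3$ matrix of coefficients of $(L_0,L_1,L_2)$ has rank two, so inverting on a two-dimensional sublattice yields the two lengths, with the third remaining invisible to these invariants.
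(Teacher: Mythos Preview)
Your proposal is essentially the paper's own argument: compute $-\val$ of the four building blocks $A,B,C,D$ (your $I_2,I_4,I_6,I_{10}$) by identifying the dominant summand(s) on each witness region, combine via~\eqref{eq:IgusaInv}, and for $j_4$ on the theta cone track the subleading term of $AB-4C$ after the expected leading terms cancel. Two small corrections: the valuation formula should read $\val(\alpha_i-\alpha_j)=-\max\{\ww_i,\ww_j\}$ (not $-\min$) when $\ww_i\neq\ww_j$, and your phrasing of the cancellation condition is inverted---cancellation can occur only when the minimum valuation is \emph{not} uniquely attained and the residues sum to zero.

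The one place your outline diverges from the paper is the $j_4$ computation on the theta cone. You propose to exploit the $S_3$-symmetry of the theta graph to organize the subleading terms by hand; the paper instead rewrites everything in the variables $(\alpha_1,\alpha_2,\alpha_3,\alpha_{34},\alpha_5,\alpha_6)$ with $\alpha_{34}=\alpha_3-\alpha_4$ (so that $d_{34}$ becomes an honest coordinate weight), forms $Q_4'=A'B'-4C'$, and lets a Gr\"obner fan computation detect the three-way subdivision $\Theta_0,\Theta_1,\Theta_2$ of the theta cone (Lemma~\ref{lm:SubdivisionOfThetaByQ4}). Both routes arrive at the same answer; the substitution $\alpha_4\mapsto\alpha_3+\alpha_{34}$ is the key bookkeeping device that makes either approach tractable, and you should expect to need it even if you pursue the symmetry argument.
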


The ill-behavior of the Igusa invariants under tropicalizations is
similar to a phenomenon occurring in the ring of symmetric
polynomials: power sums will never yield a complete set of tropical
invariants. Indeed, their valuation only captures the root with
lowest valuation. In turn, the elementary symmetric functions enable
us to recover the valuation of all roots. \autoref{thm:IgusaTropical}
manifests again the non-faithfulness of the hyperelliptic embedding
and shows that faithfulness should be viewed as the natural
replacement for the tropical Igusa invariants. It remains an
interesting challenge to find three new algebraic invariants on $M_2$
inducing tropical coordinates on each cell of $M_2^{\trop}$.

\section*{Supplementary material}\label{sec:suppl-mater}
 Many results in this paper rely on calculations performed with \texttt{Singular}~\cite{DGPS} (including its
\texttt{tropical.lib} library~\cite{tropicalLib}), \MacT~\cite{M2}, \texttt{Polymake}~\cite{polymake}
and \sage~\cite{sage}.  We have created supplementary files so that the reader can reproduce all the claimed assertions done via explicit computations and numerical examples. The files are available at:
\begin{center}
  \url{https://people.math.osu.edu/cueto.5/tropicalGeometryGenusTwoCurves/}
\end{center}

  \noindent
  In addition to all \sage\ scripts, the website contains all input and output files both as  \sage\ object files and in plain text.
  We have also included the supplementary files  on the latest
  \texttt{arXiv}
  submission of this paper. They can be obtained by downloading the source.

  \section{Tropical moduli spaces}\label{sec:trop-moduli-spaces}
  In this section, we introduce the objects in the center and right of~\autoref{fig:diagram} involving abstract tropical curves and their moduli spaces.
  
\begin{definition}  
An \emph{abstract tropical curve} is a connected 
{metric graph} consisting of the data of a triple $\Gamma= (G,g,\ell)$ where $G=(V,E,L)$ is a connected graph $G$ with vertices $V$, edges $E$ and unbounded legs $L$ (called \emph{markings}), together with a weight function $g\colon V\to \ZZ_{\geq 0}$ on vertices and a length function $\ell\colon E\to \RR_{>0}$ on edges. Legs are considered to have infinite length. In the absence of legs, we say the curve has no markings.
The genus of a metric graph $\Gamma$ equals
\begin{equation}\label{eq:genus}
\operatorname{genus}(\Gamma):=  b_1(\Gamma) + \sum_{v\in V} g(v),
\end{equation}
where $b_1(\Gamma) = |E|-|V|+1$ is the first Betti number of the graph $G$. A genus zero curve is called \emph{rational}: it corresponds to a metric tree with constant weight function $g\equiv 0$.

An \emph{isomorphism} of a tropical curve is an automorphism of the underlying graph $G$ that respects both the length and weight functions.
The \emph{combinatorial type} of a tropical curve is obtained by disregarding the metric structure, i.e.\ it is given by $(G,g)$.
\end{definition}
The set of all tropical curves with a given a combinatorial type $(G,g)$ can be parameterized by the quotient of an open cone $\RR_{>0}^{E}$ under the action of automorphisms of $G$ that preserve the weight function $g$. Cones corresponding to different combinatorial types can be glued together by collapsing edges and adjusting the genus function accordingly. Such operations keep track 
of possible degenerations of the algebraic curves.~\autoref{fig:M2BarAndMumford} describes this process for unmarked genus two curves. In this way, the tropical moduli space $M_{g,n}^{\trop}$ (respectively, $M_g^{\trop}$) of $n$-marked (respectively, unmarked) curves of genus $g$ inherits the structure of an abstract cone complex. For more details on tropical moduli spaces of curves, we refer to~\cite{ACPModuli, cav.et.at:17, cha:12,GKM,mik:07}.

In this paper, we focus on two examples: $M_{0,6}^{\trop}$ and $M_2^{\trop}$. The first is the space of rational tropical curves with six markings. Up to relabeling of the markings, the moduli space $M_{0,6}^{\trop}$ has two top-dimensional cells, corresponding to the snowflake and caterpillar trees on six leaves. The second object of interest is the space of genus two tropical curves with no marked legs.~\autoref{fig:M2BarAndMumford} shows the labeling of the two top-dimensional cones: the dumbbell and theta graphs, indicated by Types (I) and (II).

\smallskip

The connection between moduli spaces of stable marked curves and their counterparts in tropical geometry has been studied on various occasions~\cite{ACPModuli,gib.mac:10,ren.sam.stu:14}. The spaces $M_{0,n}$ can be identified with a quotient of the open orbit of the cone over the Grassmannian of planes by the torus $(K^*)^n$ and tropicalized thereafter, as in~\cite{ren.sam.stu:14}. In turn, $M_{0,n}^{\trop}$ becomes the space of trees on $n$ leaves~\cite{TropGrass,CompactificationsTori} where we assign length zero to all leaf edges, as we now explain.

Up to an automorphism of $\PP^1$ we may assume that our marked points exclude $(1:0)$ and $(0:1)$, so we identify them with a tuple in $\underline{\alpha} \in (K^*)^n$. The torus $(K^*)^n$ acts on $\Gr_0(2,n)$ by $\underline{t}\star (p_{ij})_{i,j}=(t_it_jp_{ij})_{i,j}$.
In particular, we get an isomorphism
  \begin{equation}\label{eq:PlEmb}
    \Phi\colon M_{0,n}\stackrel{\simeq}\longrightarrow
    \Gr_0(2,n)/(K^*)^n\subset (K^*)^{\binom{n}{2}}/(K^*)^n \qquad \Phi(\underline{\alpha})  = (\alpha_i-\alpha_j)_{1\leq i<j\leq n},
  \end{equation}
  The  space $\overline{M}_{0,n}$ of stable rational curves with $n$ marked points is the tropical compactification of $M_{0,n}$ induced by $M_{0,n}^{\trop}\!:=\Trop\,\Gr_0(2,n)/\RR^n\subset \RR^{\binom{n}{2}}/\RR^n$~\cite[Theorem 5.5]{CompactificationsTori}.   Here, $\RR^n\subset \RR^{\binom{n}{2}}$ is the image of the linear map $\underline{\alpha}\mapsto (\alpha_i+\alpha_j)_{i,j}$. This is precisely the lineality space of $\Trop\,\Gr_0(2,n)$.  It is generated by the $n$ cut-metrics~\cite{TropGrass}.

  The lattice spanned by the cut-metrics has index two in its saturation in $\ZZ^{\binom{n}{2}}$. For this reason, a factor of $1/2$ must be added when considering lattice lengths on the space of trees (see~\cite[Section 3.1]{gro:16}.)
In particular, when $n=6$, the tropicalization map sends a tuple
$\underline{\alpha}$ of six distinct points in $K^*$ to the pairwise \emph{half}-distances
between the legs of the corresponding tree on six leaves:
  \begin{equation}\label{eq:tropMap}
  \trop\colon {M_{0,6}}(K)\to M_{0,6}^{\trop}\subset \RR^{15}/\RR^6 \qquad \trop(\underline{\alpha})=(-\val(\alpha_i-\alpha_j))_{1\leq i<j\leq 6}.
    \end{equation}

  All seven combinatorial types of trees with six leaves are depicted in the right of~\autoref{fig:M2BarAndMumford}. The poset structure of all labeled seven cells matches that of stable genus two curves and their tropical counterparts. Furthermore, the space $M_2^{\trop}$ can be constructed from $M_{0,6}^{\trop}$ via tropical hyperelliptic covers as in~\autoref{sec:trop-hyper-covers}. Indeed, starting from a metric tree $T$ with six leaves, there is a unique tropical hyperelliptic cover of it by a tropical curve $\Gamma$ of genus two with six legs. Our genus two abstract tropical curve will be obtained as the image of $\Gamma$ under the \emph{tropical forgetful map} that contracts all legs and, in turn, all edges adjacent to one-valent vertices of genus zero~\cite{PlaneModuli2015}. This identification describes the commuting right square of~\autoref{fig:diagram}, as proved in~\cite[Theorem 5.3]{ren.sam.stu:14}.

  The tropicalization map $\trop\colon {M_2}(K)\to M_2^{\trop}$ factors through $\trop\colon {M_2}^{\an}\twoheadrightarrow M_2^{\trop}$~\cite[Theorem 1.2.1]{ACPModuli}. Under this map, abstract tropical curves correspond to the minimal Berkovich skeleta: metrized dual graphs of central fibers of semistable regular models of a smooth curve over the valuation ring $K^{\circ}$~\cite{bak.pay.rab:16, tyo:12}.

\begin{figure}[htb]
  \centering
  \includegraphics[scale=0.3]{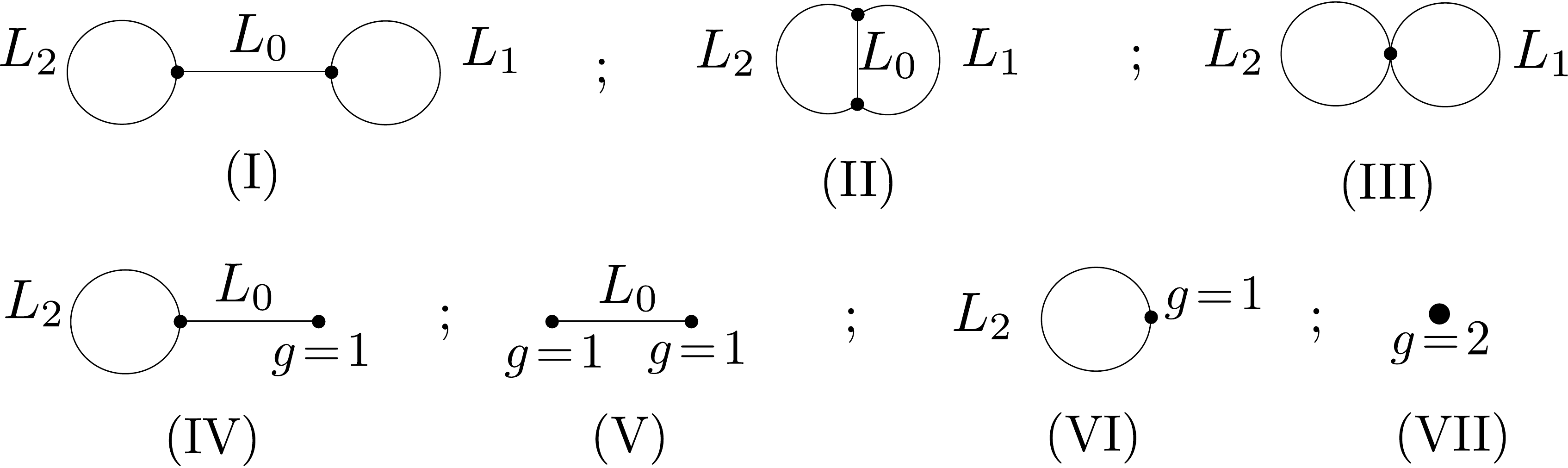}
  \caption{Minimal skeleta of genus two curves obtained by applying the forgetful map to the double covers in the right of~\autoref{fig:M2BarAndMumford}.}\label{fig:allSkeletaAndMetrics}
\end{figure}

  \section{Faithful tropicalization, skeleta and  tropical modifications}
  \label{sec:faithf-trop-trop}

In this section, we discuss {embedded tropicalizations of curves} and their relation to abstract tropical curves and their moduli. Embedded tropical curves are determined by the negative valuations of all $K$-points on a curve $\mathcal{X}$ inside the multiplicative split torus $(K^*)^n$~\cite[Chapter 3]{MSBook}: they are balanced weighted graphs in $\RR^n$ with rational slopes. While this approach is computationally advantageous due to its connection to Gr\"obner degenerations~\cite{gfan} it also poses a major challenge: tropicalization in this setting strongly depends on the embedding. Furthermore, certain features of an abstract tropical curve can be lost under a given choice of coordinates. For example, the na\"ive tropicalization of a genus two hyperelliptic plane curve induced by~\eqref{eq:hyperelliptic} is a graph $\Gamma\subset\RR^2$ with $b_1(\Gamma)=0$.

The connection to Berkovich non-Archimedean spaces~\cite{berkovichbook} initiated by Payne~\cite{analytifAndTrop} hands us a way to overcome this coordinate-dependency: a faithful tropicalization is the best candidate to reflect relevant geometric properties of the algebraic curve~\cite{bak.pay.rab:16}. An embedding $\mathcal{X}\subset (K^*)^n$ induces a \emph{faithful tropicalization} if $\Trop\,\mathcal{X}$ contains an isometric copy of the minimal Berkovich skeleton of $\mathcal{X}^{\an}$ under the tropicalization map $\trop\colon \mathcal{X}^{\an}\to \Trop\,\mathcal{X}$.
 The latter can be obtained from a given (extended) skeleton by contracting it to its minimal expression~\cite{BPRContempMath}.

  Just as in the abstract setting, faithful tropicalizations induced by $\mathcal{X}\subset (K^*)^n$ admit a tropical forgetful map to $M_g^{\trop}$, where $g$ is the arithmetic genus of $\mathcal{X}$. In order to do so, we must endow the rational weighted balanced graph $\Gamma=\Trop\,\mathcal{X}$ in $\RR^n$ with  a weight function on its vertices. This can be achieved by means of an extended Berkovich skeleton $\Sigma(\mathcal{X})$ coming from a semistable model of $\mathcal{X}$ with a horizontal divisor (i.e.\ the closure of a divisor of the generic fiber in the model) that is compatible with $\Gamma$~\cite{gub.rab.wer:16,gub.rab.wer:15}. Indeed, to each vertex $v$ in $\Gamma$ we assign the sum of the genera of all semistable vertices of $\Sigma(\mathcal{X})$ mapping to $v$ under $\trop\colon \Sigma(\mathcal{X})\to \Trop\,\mathcal{X}$.  The semistable vertices correspond exactly to the components of the central fiber~\cite{BPRContempMath}, so we weigh them with the genus of the associated component. 

  For planar tropicalizations, a similar ad-hoc rule can be put in practice. If we let $\Gamma$ be the dual complex of the Newton subdivision of the corresponding curve, each vertex of $\Gamma$ gets assigned the number of interior lattice points of its dual polygon. This quantity is the genus of the initial degeneration of the curve induced by the vertex  minus the number of nodes (assuming it is nodal). However, unless our planar embedding is faithful (which only occurs for Types (V) and (VII)), we will not be able to define a forgetful map on the tropical side (by collapsing all legs and weight zero one-valent vertices, as we did in the abstract case) that recovers the image of the Berkovich skeleton under tropicalization.

In the algebraic setting, the forgetful map sending planar genus two smooth hyperelliptic curves  to points in ${M_2}(K)$ is surjective if we allow the curves to be defined over valued field extensions $L|K$. Since the forgetful map on the associated tropical plane curves is only defined for Types (V) and (VII), faithfulness becomes an essential property to define the  left square in~\autoref{fig:diagram}. A similar behavior  in genus three and four was encountered by Brodsky-Joswig-Morrison-Sturmfels~\cite[Theorems 5.1 and 7.1]{PlaneModuli2015}.
\autoref{sec:faithful-reemb} and~\autoref{tab:nonFaithfulness} give explicit effective methods for producing \emph{faithful re-embeddings} of smooth planar genus two curves in a suitable torus. The main technique involved is tropical modifications of $\RR^n$ along tropical divisors~\cite{BLdM11,IMS09,MikhalkinICM}, which we now recall.

\begin{definition}\label{def:modif}
Fix a tropical polynomial $F$ defining a piecewise linear function 
\begin{equation*}\label{eq:tropPoly}
  F\colon \RR^n \to \TP=\RR \cup \{-\infty\} \quad F(\underline{X})=\max_{\beta\in \ZZ_{\geq 0}^n} \{C_{\beta} + \beta_1X_1 + \ldots +\beta_n X_n\} \text{ in }\TP[X_1,\ldots, X_n].
\end{equation*}
 The graph of $F$ is a rational polyhedral complex of pure dimension $n$. Unless $F$ is linear, the bend locus of $F$ has codimension 1. At each break codimension-one cell $\sigma$, we attach a new cell $\widetilde{\sigma}$ spanned by $\sigma$ and  $-e_{n+1}:=(0,\ldots, 0, -1)$. The result is a pure rational polyhedral complex in $\RR^{n+1}$. We call it the \emph{tropical modification of $\RR^n$ along $F$}.
\end{definition}
It will often be useful to consider \emph{polynomial lifts} of $F$, namely
 \begin{equation}\label{eq:liftf}
   f(\underline{x}) = \sum_{\beta \in \support(F)} c_{\beta} \underline{x}^{\beta}\in K[x_1,\ldots, x_n] \qquad
   \text{ where }\quad \support(F):=\{\beta \colon C_{\beta}\neq -\infty\}
 \end{equation}
 satisfies $\trop(f)(\underline{X}) := \max_{\beta} \{-\val(c_{\beta}) +\beta_1X_1 +\ldots + \beta_nX_n\} \!=\! F(\underline{X})$ as functions on $\RR^n$.

 By the Structure Theorem~\cite[Proposition 3.1.6]{MSBook}, any polynomial lift $f$ of $F$ will allow us to turn the tropical modification of $\RR^n$ along $F$ into a weighted balanced complex, since it will be supported on the tropical hypersurface $\Trop\,V(f)$.  In turn, any tropical hypersurface $\Trop\,V(g)$ in $\RR^n$ can be modified along $F$ in a similar fashion and the attached cells can be endowed with suitable multiplicities to turn the resulting complex into a balanced one. For precise multiplicity formulas, we refer to~\cite[Construction 3.3]{AllermanRau}.
\begin{example}
  The leftmost map in~\autoref{fig:modifypoints} describes the tropical modification of $\RR$ along the tropical function $F=\max\{X,-\val(\alpha_2)\}=\trop(x-\alpha_2)$. The result is a tropical line in $\RR^2$ with vertex $(-\val(\alpha_2), -\val(\alpha_2))$. All its tropical multiplicities equal 1. A higher dimensional instance can be found in Example \ref{ex:thetaC}.
\end{example}

Tropical modifications can be used to define re-embeddings of irreducible plane curves $\mathcal{X}$~\cite{BLdM11,cue.mar:16,IMS09}. This technique is also known as tropical refinement in parts of the literature. Consider a tropical polynomial $F\in \TP[X,Y]$ and a lift $f$. Given a defining equation $g(x,y)$ for $\mathcal{X}$, the tropicalization of the ideal
\begin{equation}\label{eq:Igf}
  I_{g,f}:=\langle g, z-f\rangle\subset K[x^{\pm},y^{\pm},z^{\pm}]
\end{equation}
is a tropical curve in the modification of $\RR^2$ along $F$. For almost all lifts $f$, $\Trop\,V(I_{g,f})$ coincides with the modification of $\Trop\,V(g)$ along $F$, i.e.\ we only bend $\Trop\,V(g)$ so that it fits the graph of $F$ and attach suitable weighted downward legs. However, for some special choices of lifts $f$, the cells of $\Trop\,V(I_{g,f})$ in the downward cells of the modification of $\RR^n$ along $F$ become more interesting. Such choices are determined by the  initial degenerations of $g$ along the bend locus of $F$. More details can be found in~\autoref{sec:faithful-reemb}.
\smallskip

In addition to linear tropical polynomials, which were the main players in~\cite{cue.mar:16}, our main focus in~\autoref{sec:faithful-reemb} will be  modifications of $\RR^2$ along tropical polynomials of the form
\begin{equation}\label{eq:F}
  F=\max\{Y , A + X , B +2X\} = \trop(f)\qquad \text{ for }A, B\in \RR.
\end{equation}
The tropical surface $\Trop\,V(f)$ consists of six two-dimensional cells $\sigma_1,\ldots, \sigma_6$, as depicted in~\autoref{fig:ThetaModif}. They are defined by the following systems of linear equations and inequalities:
  \begin{equation}
  \label{eq:cellsModif}
   \begin{minipage}{0.45\textwidth}
 \[\begin{aligned}
    \sigma_1:= &\{Z = X+A\geq Y, X\leq A-B\}, \\
    \sigma_2:=&\{Z=2X+B\geq Y, X\geq A-B\},\\
    \sigma_3:=&\{Z=Y\geq X+A, 2X+B\},
  \end{aligned}\]
    \end{minipage}
   \begin{minipage}{0.45\textwidth}
 \[\begin{aligned}
   \sigma_4:=& \{Z,Y \leq 2A-B, X=A-B\}, \\
  \sigma_5:=&\{Y=2X+B\geq Z, X\geq A-B\},\\
  \sigma_6:=&\{Y=X+A\geq Z, X\leq A-B\}.
\end{aligned}\]
        \end{minipage}
    \end{equation}
Just as it happened in the linear case~\cite[Lemma 2.2]{cue.mar:16}, the choice of $F$ in~\eqref{eq:F} allows us to  recover $\Trop\, V(I_{g,f})$ in $\RR^3$ from the three coordinate projections.  This property will be exploited in~\autoref{sec:faithful-reemb} to certify faithfulness by planar computations.
\begin{lemma}\label{lm:projections}
Given an irreducible curve $\mathcal{X}\subset (K^*)^2$ defined by a polynomial $g\in K[x,y]$ and a polynomial lift $f(x,y)=y-a x- bx^2 \in K[x,y]$ of the tropical polynomial $F$ from \eqref{eq:F}, the tropicalization induced by the ideal $I_{g,f}=\langle g, z-f\rangle\subset K[x^{\pm}, y^{\pm}, z^{\pm}]$ is completely  determined by the tropical plane curves $\Trop\,V(g)$, $\Trop\, V(I_{g,f}\cap K[x^{\pm},z^{\pm}])$, and $\Trop\, V(I_{g,f}\cap K[y^{\pm},z^{\pm}])$.  \end{lemma}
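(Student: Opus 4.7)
The plan is to exploit the stratification of the ambient tropical surface $\Trop\,V(z-f)\subset\RR^3$ into the six maximal cells $\sigma_1,\ldots,\sigma_6$ of~\eqref{eq:cellsModif}. Since $I_{g,f}$ contains $z-f$, the Structure Theorem places $\Trop\,V(I_{g,f})$ as a $1$-dimensional balanced subcomplex of $\Trop\,V(z-f)$, and each of its open edges lies in the relative interior of a unique $\sigma_i$. The key observation is that each cell admits a coordinate projection that is injective on it: $\pi_{xy}$ restricts to a homeomorphism on $\sigma_1\cup\sigma_2\cup\sigma_3$ (where $Z = \max\{A+X, B+2X, Y\}$ is a piecewise linear function of $(X,Y)$); $\pi_{xz}$ is injective on $\sigma_5$ and $\sigma_6$ (where $Y$ equals $B+2X$ or $A+X$ respectively, hence is determined by $X$); and $\pi_{yz}$ is injective on $\sigma_4$ (where $X=A-B$ is constant).

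Next, I would identify the three coordinate projections of $\Trop\,V(I_{g,f})$ with the three tropical plane curves appearing in the statement. For the $(x,y)$-projection, elimination gives $I_{g,f}\cap K[x^{\pm}, y^{\pm}] = \langle g\rangle$, and the Fundamental Theorem of Tropical Algebraic Geometry~\cite{MSBook} yields $\pi_{xy}(\Trop\,V(I_{g,f})) = \Trop\,V(g)$. For the other two projections, the same tools produce equalities of the images with $\Trop\,V(I_{g,f}\cap K[x^{\pm},z^{\pm}])$ and $\Trop\,V(I_{g,f}\cap K[y^{\pm},z^{\pm}])$ respectively, since linear coordinate projections of tropical varieties agree with the tropicalizations of the corresponding elimination ideals.

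To finish, I would reconstruct $\Trop\,V(I_{g,f})$ cell by cell using the injectivity results above. Edges in $\sigma_1\cup\sigma_2\cup\sigma_3$ are recovered from $\Trop\,V(g)$ via the graph $Z = F(X,Y)$. Edges in $\sigma_5\cup\sigma_6$ are recovered from $\Trop\,V(I_{g,f}\cap K[x^{\pm},z^{\pm}])$ by imposing $Y=B+2X$ or $Y=A+X$ depending on whether $X\geq A-B$ or $X\leq A-B$. Edges in $\sigma_4$ are recovered from $\Trop\,V(I_{g,f}\cap K[y^{\pm},z^{\pm}])$ by imposing $X=A-B$. Vertices of $\Trop\,V(I_{g,f})$ are then located at the intersections of the reconstructed edges.

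The main obstacle is the bookkeeping required to consistently assign each portion of each projection to the correct cell $\sigma_i$ and to rule out spurious contributions. This is handled by comparing the projected edges with the bend loci of $F$ in each projection plane (which are fully determined by the parameters $A$ and $B$) and by cross-checking across the three projections, following the template established in the linear case treated in~\cite[Lemma 2.2]{cue.mar:16}.
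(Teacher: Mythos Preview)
Your approach is essentially the same as the paper's: stratify the ambient tropical surface $\Trop\,V(z-f)$ by the six cells $\sigma_1,\ldots,\sigma_6$, observe that on each cell some coordinate projection is injective, and reconstruct the space curve cell-by-cell from the three planar projections. Two points deserve sharpening, however.

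First, the $YZ$-projection does not cleanly isolate $\sigma_4$. The images $\pi_{yz}(\sigma_1)=\{Y\le Z\le 2A-B\}$ and $\pi_{yz}(\sigma_6)=\{Z\le Y\le 2A-B\}$ are both contained in $\pi_{yz}(\sigma_4)=\{Y,Z\le 2A-B\}$, so a given edge of $\Trop\,V(I_{g,f}\cap K[y^{\pm},z^{\pm}])$ in that region could a priori lift to $\sigma_1$, $\sigma_4$, or $\sigma_6$. The paper makes this explicit: these are the \emph{only} overlaps among the six $2$-dimensional images, and they are resolved because the pieces in $\sigma_1$ and $\sigma_6$ are already known from the $XY$-projection. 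Your final paragraph gestures at this (``cross-checking''), but the precise statement that overlaps occur only in $YZ$ between the pairs $(\sigma_1,\sigma_4)$ and $(\sigma_4,\sigma_6)$ is what makes the reconstruction well-defined.

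Second, ``completely determined'' means determined as a weighted balanced complex, not just as a set. Your outline recovers the support but never addresses multiplicities. The paper closes this gap by invoking the Sturmfels--Tevelev push-forward formula for tropical multiplicities in the non-constant-coefficient setting \cite[Corollary~7.3]{bak.pay.rab:16}: the multiplicities in the three planar projections encode the multiplicities of the space curve along each cell, and additivity over the overlapping cells in the $YZ$-plane lets you separate the contributions. Without this, the reconstruction is incomplete.
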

\begin{proof} Since coordinate projections are monomial maps, functoriality ensures that the three coordinate projections of $\Trop\, V(I_{g,f})$ are supported on the three tropical plane curves in the statement. The tropical space curve is completely determined by its intersection with the relative interiors of the six maximal cells of $\Trop\,V(f)$. By construction, each open cell $\sigma_i^\circ$ maps to a  two-dimensional open region under two out of the three projections. The precise choices are indicated on~\autoref{fig:ThetaModif}. Note that overlaps occur only in the $YZ$-projection between two pairs of cells: $(\sigma_1,\sigma_4)$ and $(\sigma_4,\sigma_6)$. 
  
The tropical multiplicities in all coordinate projections let us recover the support of $\Trop\,V(I_{g,f})$ along the bend locus
from the generalized push-forward formula for multiplicities of
Sturmfels--Tevelev in the non-constant coefficients
case~\cite[Corollary 7.3]{bak.pay.rab:16}.
\end{proof}
\begin{example}\label{ex:thetaC}
  Consider the smooth genus two curve in $(K^*)^2$ defined over $\PS$ by  
  \[g(x,y)=y^2 - x(x-(3 t^5)^2)(x-(11 t^2 + 5 t^7)^2)(x-(11 t^2)^2)(x+(1+t^2)^2),
  \]
  the tropical polynomial $F=\max\{Y,-4+X, 2X\}$ and its lift $f(x,y)=y-(1+t^2)(11t^2+5t^7)(11t^2)\, x+(1+t^2)\,x^2$. The tropicalization induced by $I_{g,f}\subset K[x^{\pm}, y^{\pm},z^{\pm}]$ is depicted in the left of~\autoref{fig:ThetaModif} and it lies in the tropical surface in $\RR^3$ obtained by modifying $\RR^2$ along $F$. We reconstruct the tropical curve from the three coordinate projections shown on the right of the picture, accounting for additivity of multiplicities and the two \emph{false crossings} on the $YZ$-projection. The na\"ive plane tropicalization agrees with the $XY$-projection. The Berkovich skeleton is a theta graph. For further details we refer to~\autoref{ex:ExTypeII}.
\end{example}
\begin{figure}
  \centering
  \includegraphics[scale=0.37]{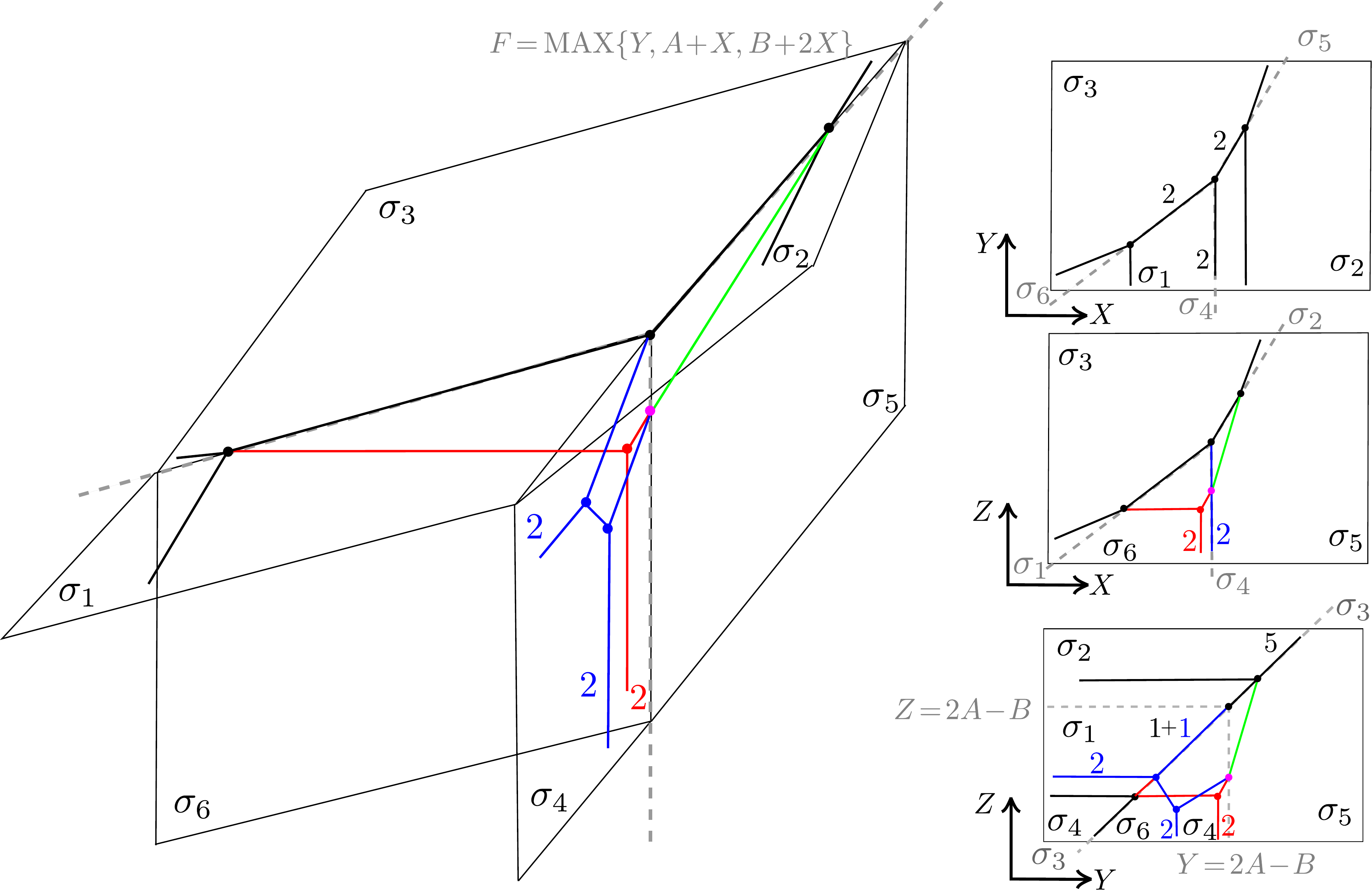}
  \caption{A tropical modification of $\RR^2$  and its  coordinate projections.\label{fig:ThetaModif}}
\end{figure}

  \section{Tropical hyperelliptic covers of metric trees}\label{sec:trop-hyper-covers}
  Algebraic genus two curves are hyperelliptic and hence can be realized as the source curve of a 2-to-1 cover of the projective line branched at six points. The analogous results for tropical hyperelliptic genus $g$ curves and metric trees with $n=2g+2$ legs and genus zero vertices was first established by Baker-Norine~\cite{bak.nor:09} and Chan~\cite{cha:13}, and later generalized to admissible covers and harmonic morphisms by Caporaso~\cite{cap:14} and Cavalieri-Markwig-Ranganathan~\cite{cav.mar.ran:16}. We restrict the exposition to our case of interest.
  \begin{definition}A map $\pi\colon \Gamma \to \Gamma'$ is a morphism of metric graphs if $\pi$ sends the vertices of $\Gamma$ to vertices of $\Gamma'$, and the edges (respectively, legs) of $\Gamma$ to edges (respectively, legs) of $\Gamma'$ in a piecewise fashion with integral slopes.
  \end{definition}
  \begin{remark}\label{rm:pLmetricMorphism} Assume the morphism $\pi$ sends an edge $e$ of $\Gamma$ with length $\ell(e)$ onto an edge $e'$ of $\Gamma'$ of length $\ell(e')$. We may write the map $\pi_{|_e}$ as $h\colon [0,\ell(e)]\twoheadrightarrow [0,\ell(e')]$ with $h(t)=w(e)t$ for some $w(e)\in \ZZ_{>0}$. By construction, $w(e)=\ell(e')/\ell(e)$.
Similarly, the map $\pi$ restricted to a leg $e$ of $\Gamma$ equals $h\colon [0,\infty)\twoheadrightarrow [0, \infty)$ with $h(t)=w(e)t$ for some $w(e)\in \ZZ_{>0}$. 
  \end{remark}

  \begin{definition} A map $\pi\colon \Gamma \to \Gamma'$ of metric graphs is \emph{harmonic} if for each vertex $v$ of $\Gamma$ and any edge $e'$ adjacent to $\pi(v)$, the number
    \begin{equation}\label{eq:localDegree}
    d_v:=\sum_{\substack{e\in E(\Gamma)\\
        v\in e,\pi(e)=e'}} w(e)
    \end{equation}
    does not depend on the choice of edge $e'$. We call $d_v$ the \emph{local degree} of the map $\pi$ at $v$. The \emph{degree} of  $\pi$ is the sum over all local degrees in the fiber of any vertex $v'\in \Gamma'$.
  \end{definition}
 
  \begin{definition} A \emph{tropical hyperelliptic cover} of a metric tree $T$  by a metric graph $\Gamma$ is a surjective degree two harmonic map $\pi\colon \Gamma \to T$ of metric graphs satisfying the local Riemann-Hurwitz conditions at each $v$ vertex of $\Gamma$:
  \begin{equation}\label{eq:RHGenus0}
    2-2g(v) = 2d_v - \#\{e\ni v: \ww(e)=2\}.
  \end{equation}
  \end{definition}

  \begin{definition} A \emph{branch point} of a hyperelliptic cover $\pi\colon \Gamma \to T$ of a genus zero metric tree $T$  is a leg or edge of $T$ which is covered by a leg or edge $e$ of $\Gamma$  with weight $w(e)=2$.
    \end{definition}

Since we are interested in metric graphs $\Gamma$ of genus two, we are restricted to covers of trees $T$ with precisely six leaves. Each vertex of $T$ has valency between three and six. The following technical lemma describes the local behavior of a hyperelliptic cover  $\Gamma \to T$.

\begin{lemma}\label{lm:localCovers} There are precisely five tropical hyperelliptic covers of a single genus zero vertex with valency between three and six  with source curve a vertex of genus at most two.
\end{lemma}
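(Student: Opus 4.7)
My plan is an exhaustive enumeration driven by the local Riemann--Hurwitz identity. Since $v'$ has the unique preimage $v$ and $\pi$ has degree two, the local degree is $d_v=2$. Harmonicity forces each half-edge $e'$ incident to $v'$ to lift either to one weight-two half-edge at $v$ (a \emph{branched} lift) or to two weight-one half-edges at $v$ (an \emph{unbranched} lift). Writing $a$ and $b$ for the numbers of branched and unbranched half-edges at $v'$, we have $a+b=k$ and the source valency equals $a+2b=2k-a$.

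Applying the local Riemann--Hurwitz relation $2-2g(v)=2d_v-a$ yields $a=2+2g(v)$ and hence $b=k-2-2g(v)\ge 0$. For each admissible pair $(k,g(v))$ with $k\in\{3,4,5,6\}$ and $g(v)\in\{0,1,2\}$ the combinatorial data $(a,b)$ is uniquely determined, and the resulting cover is unique up to relabeling of the equally-weighted legs of $v$ and $v'$. I would enumerate the admissible pairs and match them against the vertex neighborhoods visible in Figure~\ref{fig:M2BarAndMumford}, identifying the five local types
\[(k,g(v))\in\bigl\{(3,0),\,(4,0),\,(4,1),\,(5,1),\,(6,2)\bigr\}.\]

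The main subtle step is the pruning: the local constraints alone admit three further pairs $(k,g(v))\in\{(5,0),(6,0),(6,1)\}$, whose source vertex would have valency $2k-2-2g(v)\in\{8,10,8\}$. I expect to rule these out using the global tropical Riemann--Hurwitz identity
\[\sum_{v\in V(\Gamma)}\bigl(\mathrm{val}(v)-2+2g(v)\bigr) \;=\; 2g(\Gamma)-2+\#L(\Gamma) \;=\; 8,\]
combined with the stability requirement that every other vertex of $\Gamma$ contribute a positive amount, and the constraint that in a hyperelliptic cover of a genus-two curve all six legs of the target tree be branch points (so that $\Gamma$ carries no weight-one legs). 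This bounds the source valency by six and leaves exactly the five local models listed above.
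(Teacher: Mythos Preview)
Your core approach matches the paper's: both plug $d_v=2$ into the local Riemann--Hurwitz formula~\eqref{eq:RHGenus0} to obtain $a=2+2g(v)$ and then enumerate. The paper's proof is a single sentence deferring to \autoref{fig:localCovers}; you go further by observing that the raw enumeration over $k\in\{3,4,5,6\}$ and $g(v)\in\{0,1,2\}$ with $b=k-a\ge 0$ yields eight admissible pairs, not five, and that some pruning is required---a point the paper's proof does not make explicit.

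The gap is in your pruning argument. Invoking the global Euler identity $\sum_v(\mathrm{val}(v)-2+2g(v))=8$ together with stability of the remaining vertices yields only $\mathrm{val}(v)-2+2g(v)\le 8-(n-1)$, which does not by itself force source valency at most six (for instance with two vertices and $g(v)=0$ it still permits valency nine). The argument that actually works is combinatorial on the \emph{target}: in a stable tree with six leaves, a vertex $v'$ of valency $k$ has at most $6-k$ incident internal edges, since each internal edge leads to a subtree carrying at least two leaves while $v'$ itself carries $k-m$ leaves, giving $6\ge(k-m)+2m$. Because all legs are branched, every unbranched half-edge at $v'$ is internal, hence $b\le m\le 6-k$, and the source valency $a+2b=k+b\le 6$. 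This excludes precisely $(5,0)$, $(6,0)$, $(6,1)$. Note, however, that this step imports the global six-leaf, genus-two setting of \autoref{pr:CombCovering}; taken literally as a purely local statement about one target vertex covered by one source vertex, the lemma's hypotheses admit all eight models, so you should flag that the pruning is extrinsic.
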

\begin{proof}

  We let $v'$ be the vertex in the target curve and fix a covering vertex $v$ on  the source curve. The result follows by analyzing all possible combinations of genus $g(v)$ and valency of $v'$.   
  Replacing  each  value of $g_v=0,1$, or $2$ in~\eqref{eq:RHGenus0} yields all cases in~\autoref{fig:localCovers}.
\end{proof}
\begin{figure}[htb]
  \centering
   \includegraphics[scale=0.3]{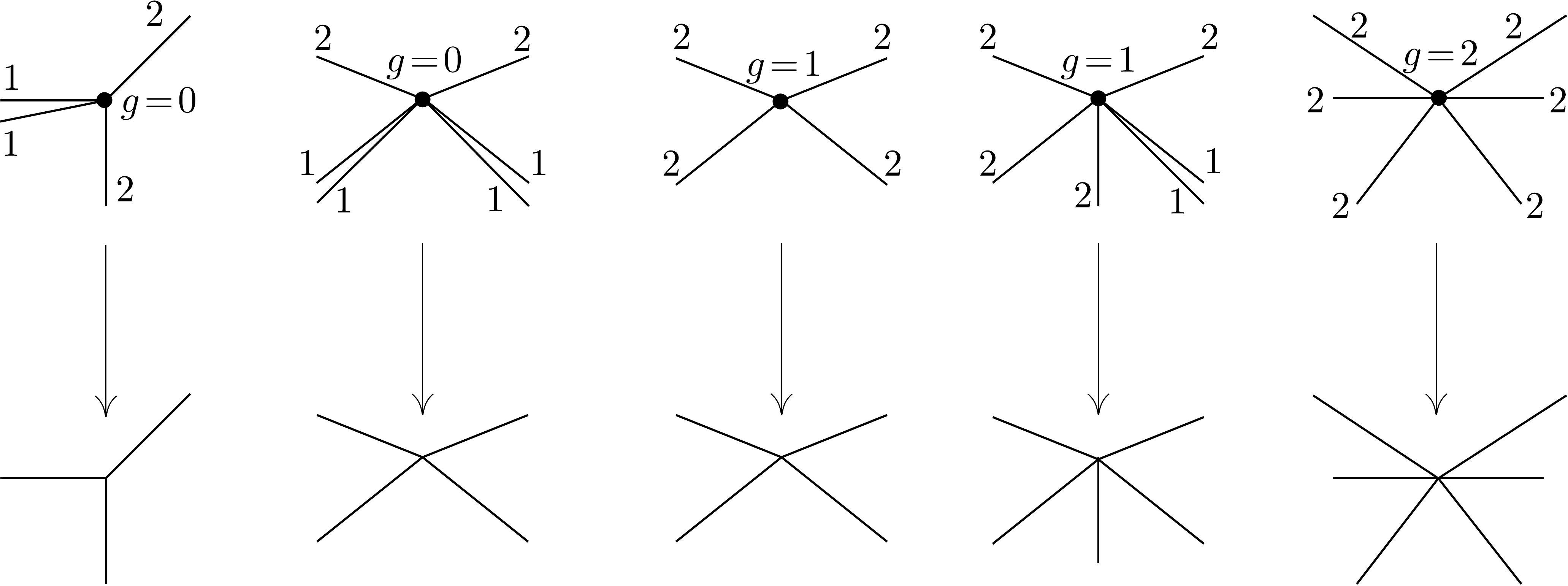}
  \caption{All possible degree two covers of a single genus zero vertex with valency between three and six by a single vertex of genus up to two.}\label{fig:localCovers}
\end{figure}

Our main result in this section describes the combinatorics of hyperelliptic covers of trees on six leaves. It implies that the poset structures on $M_{0,6}^{\trop}$ and $M_2^{\trop}$ agree, as shown in~\cite[Theorem 5.3]{ren.sam.stu:14}. Unlike the latter, our proof is elementary and uses the local tropical Riemann-Hurwitz conditions~\eqref{eq:RHGenus0}. The general hyperelliptic case is treated in~\cite[Lemma 2.4]{bobrch:17}. Superhyperelliptic curves are discussed in~\cite{brhe:17}:
\begin{proposition}\label{pr:CombCovering}
  Each tree on six leaves is covered by exactly one genus two graph with six legs  via a harmonic 2-to-1 map branched at all six leaf edges as in~\autoref{fig:M2BarAndMumford}. 
\end{proposition}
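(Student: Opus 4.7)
The plan is a case analysis on the seven combinatorial types of trees on six legs depicted in the right column of~\autoref{fig:M2BarAndMumford}. For each target $T$, I would construct the hyperelliptic cover $\pi\colon \Gamma\to T$ vertex by vertex by assembling the five local pictures classified in~\autoref{lm:localCovers} in a way that is consistent with harmonicity, has all six legs branched, and yields a source graph of genus two. The task reduces to showing that in each case there is exactly one such assembly.

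The first step is to rigidify the cover along the boundary of $T$. By the branching hypothesis, each leg of $T$ lifts to a single leg of weight two in $\Gamma$, and every vertex of $\Gamma$ incident to such a leg must have local degree two. This immediately selects a unique local model from~\autoref{fig:localCovers} at every vertex of $T$ adjacent to at least one leg, once the valency of that vertex is recorded. For the remaining internal vertices of $T$ (those not adjacent to any leg, which occur only in the more refined types), I would propagate the information along the internal edges of $T$ using harmonicity: over each internal edge $e'$, the preimage $\pi^{-1}(e')$ is either a single weight-two edge or two distinct weight-one edges, and this choice must be compatible with the local degrees already determined at both endpoints. Walking through $T$ starting from a leaf and moving inward, each such choice is forced at every step.

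Finally, for each of the seven cases I would exhibit the resulting $\Gamma$, compute the vertex genera using~\eqref{eq:RHGenus0}, the total genus using~\eqref{eq:genus}, and check that the outcome matches the source graph shown in~\autoref{fig:M2BarAndMumford}. The main obstacle will be at trivalent internal vertices of $T$ not adjacent to any leg, where a priori both the ``one preimage vertex of local degree two'' model and the ``two preimage vertices of local degree one'' model of~\autoref{lm:localCovers} could apply. In these cases, harmonicity along the path connecting the vertex to the nearest branched leg determines whether the fiber over it has one or two components, and hence rigidifies the local picture. The same traversal argument simultaneously establishes existence and uniqueness, recovering the list of covers displayed in~\autoref{fig:M2BarAndMumford}.
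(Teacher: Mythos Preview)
Your proposal is correct and follows the same strategy as the paper: use that all six legs carry weight two, invoke the local classification of~\autoref{lm:localCovers}, and observe that these two ingredients force the global cover. The paper's proof is considerably terser (three sentences), while you spell out the inward propagation along internal edges and explicitly flag the one subtle point---the central vertex of the snowflake tree, where a priori the fiber could consist of two degree-one vertices rather than one degree-two vertex---which the paper leaves implicit; your more detailed treatment is a faithful elaboration rather than a different argument.
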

\begin{proof}
  The leaf edges on the trees are branch points, hence they must be covered by legs of weight two. \autoref{lm:localCovers} characterizes the local behavior at each vertex of the tree.
These two facts uniquely determine the combinatorial type of the graph and the cover itself.
\end{proof}

\begin{remark}\label{rm:edgeLengths} Following~\autoref{rm:pLmetricMorphism}, the  length of an edge $e$ in $\Gamma$ covering an edge $e'$ in $\Gamma'$ satisfies $ \ell(e) = \ell(e')/\ww(e)$. 
In particular, when two weight-one edges in $\Gamma$ form a loop that covers a single edge $e'$ in $\Gamma'$, then the loop has length $2\,\ell(e')$.
\end{remark}

\section{The Classification Theorem and the diagonal map ${M_{0,6}}(K)\to M_2^{\ensuremath{\trop}}$}\label{sec:class-thm}
Throughout this section, we let $\alpha_1,\ldots, \alpha_6$ be six distinct points in $K^*$ defining an element of ${M_{0,6}}(K)$ via the six marking  $(1:\alpha_1), \ldots, (1:\alpha_6)$ in $\PP^1$. We consider the diagonal map
\begin{equation}\label{eq:diagVarphi}
  \varphi\colon {M_{0,6}}(K)\to M_2^{\trop}
\end{equation}
from~\autoref{fig:diagram} sending a smooth rational curve $\mathcal{X} \in {M_{0,6}}(K)$ to the minimal Berkovich skeleton $\varphi(\mathcal{X})$ of the unique hyperelliptic
curve covering $\mathcal{X}$ with branching at $(1:\alpha_1), \ldots, (1:\alpha_6)$, as in \autoref{fig:M2BarAndMumford}.  This map is well-defined since it only depends on the
equivalence class of $\underline{\alpha}:=(\alpha_1, \ldots, \alpha_6)$ in
$(K^*)^6$ up to isomorphism. 
Combining~\autoref{tab:CombAndLengthData} with Algorithms~\ref{alg:ineq} and~\ref{alg:sep} will completely determine $\varphi$. Furthermore, this characterization depends solely on  the relative order of the negative valuations of the entries of $\underline{\alpha}$ and some of their differences, as in~\eqref{eq:deWAndDs}. 
As discussed in \autoref{rem:algf}, results in this section can be used to take an arbitrary genus two curve given by a hyperelliptic equation to one of the seven forms corresponding to the seven cones in $M_2^{\trop}$.

Since $K = \overline{K}$ is non-trivially valued by assumption, it follows that the valued group of $K$ is dense in $\RR$~\cite[Lemma 2.1.12]{MSBook}. As a consequence, we can construct a splitting of the valuation map~\cite[Lemma 2.1.15]{MSBook}. Inspired by the canonical splitting for the Puiseux series field, we write it as $\gamma\mapsto t^{\gamma}$. We use this notion to define initial forms in  $K^*$:
\begin{definition}\label{def:init_t}
  Given  a splitting $\gamma\mapsto t^{\gamma}$ of the valuation on $K$, we define the \emph{initial form}  $\init(\alpha)$ of any $\alpha\in K^*$  as the class of $\alpha \,t^{-\val(\alpha)}$ in the residue field $\resK$ of $K$ obtained as the quotient of the valuation ring by its maximal ideal.
\end{definition}

We let $\underline{\omega}:= (\ww_1,\ldots, \ww_6)\in \RR^6$ be the  weight vector from \eqref{eq:valn4Param} associated to $\underline{\alpha}$ and assume $\ww_1\leq \ww_2\leq \ldots\leq \ww_6$.
Whenever there is a tie between $\ww_i$ and $\ww_{i+1}$ and the corresponding initial forms of $\alpha_i$ and $\alpha_{i+1}$ agree, we consider the valuation of the difference $\alpha_i-\alpha_{i+1}$ and notice that $d_{i,i+1}:=-\val(\alpha_i-\alpha_{i+1}) <\ww_i=\ww_{i+1}$  if  $\init(\alpha_i)=\init(\alpha_{i+1})$. In this situation, we replace the $(i+1)$-st. entry of $\underline{\omega}$ by $d_{i,i+1}$.

\smallskip
As a first step towards a complete classification of the image of $\varphi$ and its domains of linearity, we construct
seven regions in the space of branch points whose
associated trees have different combinatorial types:
\begin{equation}\label{eq:setsOmega}
  \Omega^{(i)} := \{\underline{\alpha} \in {M_{0,6}}(K)\colon \text{weight }\underline{\omega} \in \RR^6\text{ satisfies conditions (i) in~\autoref{tab:CombAndLengthData}}\}, 
\end{equation}
for $i\in \{\text{I,} \ldots, \text{VII}\}$. 
Even though these sets do not cover all tuples of distinct points in $(K^*)^6$ we show that they parameterize all seven cones in $M_2^{\trop}$ and the harmonic maps from the metric graphs in $M_2^{\trop}$ to $M_{0,6}^{\trop}$ given in~\autoref{fig:M2BarAndMumford}. Here is the precise statement:

\begin{proposition}\label{pr:classificiationRefTypes}
  For each $i\in \{$\emph{I,}$\ldots$,\emph{VII}$\}$, the diagonal map $\varphi$ from~\eqref{eq:diagVarphi} restricted to $\Omega^{(i)}$ parameterizes the cone  of Type (i) in $M_2^{\trop}$ and induces a hyperelliptic cover of a tree in $M_{0,6}^{\trop}$ by an abstract tropical curve of Type (i) in $M_2^{\trop}$.
   Furthermore, the metrics on both objects are completely determined by piecewise functions on the weight vectors $\underline{\omega}$ of points in each $ \Omega^{(i)}$  as in the second and fourth column of~\autoref{tab:CombAndLengthData}.
\end{proposition}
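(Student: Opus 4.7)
The proof is a seven-fold case analysis indexed by $i\in\{\text{I},\ldots,\text{VII}\}$. Fix $\underline{\alpha}\in \Omega^{(i)}$ with weight vector $\underline{\ww}$ satisfying the conditions on row~(i) of~\autoref{tab:CombAndLengthData}. The plan is to first construct the image tree $T:=\trop(\underline{\alpha})\in M_{0,6}^{\trop}$ by iterated tropical modifications of $\TPr^1$, then invoke~\autoref{pr:CombCovering} to produce the unique harmonic 2-to-1 cover $\pi\colon\Gamma\to T$, and finally read off the edge lengths of $\Gamma$ via~\autoref{rm:edgeLengths}.

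To build $T$, one starts from $\TPr^1\cong \RR\cup\{\pm\infty\}$ and adjoins a leg for each marking $(1:\alpha_i)$ by tropically modifying along $F_i=\max\{X,\ww_i\}=\trop(x-\alpha_i)$. If all $\ww_i$ are distinct, the resulting tree is the caterpillar whose interior vertices sit at the successive $\ww_i$ and whose finite edges have length equal to the consecutive gaps $\ww_{i+1}-\ww_i$; this handles the generic Types~(I) and~(III). When several $\ww_i$ coincide but the initial forms $\init(\alpha_i)$ are pairwise distinct, the corresponding legs share a single attachment vertex at that common value, producing the higher-valency interior vertices needed for the intermediate Types~(IV)--(VI). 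Finally, when $\ww_i=\ww_j$ \emph{and} $\init(\alpha_i)=\init(\alpha_j)$, the two corresponding legs split from the spine not at $\ww_i$ but at the strictly larger value $d_{ij}=-\val(\alpha_i-\alpha_j)$; this is the mechanism that introduces $d_{ij}$ as a new edge length, and is responsible for the theta graph Type~(II). In each case, matching the inequalities of $\Omega^{(i)}$ with this construction shows that the combinatorial type of $T$ agrees with the tree shape listed for Type~(i) on the right of~\autoref{fig:M2BarAndMumford}, and that its finite edge lengths are the piecewise-linear expressions in $\underline{\ww}$ and the relevant $d_{ij}$ displayed in the second column of~\autoref{tab:CombAndLengthData}.

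With $T$ in hand,~\autoref{pr:CombCovering} yields the unique degree-two harmonic cover $\pi\colon\Gamma\to T$ branched at all six legs, with $\Gamma$ of the prescribed combinatorial type~(i). Applying the tropical forgetful map from~\autoref{sec:trop-moduli-spaces} contracts the six legs of $\Gamma$ and any weight-zero one-valent vertices, producing the abstract genus two skeleton of Type~(i) in $M_2^{\trop}$. Edge lengths of $\varphi(\underline{\alpha})$ are read off from those of $T$ via~\autoref{rm:edgeLengths}: an edge $e$ of $\Gamma$ with $\ww(e)=2$ covering $e'\subset T$ contributes length $\ell(e')/2$, whereas a pair of weight-one edges forming a loop over $e'$ assembles into a cycle of length $2\,\ell(e')$. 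Comparing with the tree metrics identified in Step~1 yields the formulas in the fourth column of~\autoref{tab:CombAndLengthData}, showing that $\varphi(\Omega^{(i)})$ sweeps out the entire cone of Type~(i).

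The principal obstacle is the bookkeeping in Step~1: for the intermediate Types~(IV)--(VI) the tree has several higher-valency vertices, and one must carefully track which subset of $\ww_i$-ties produces each interior vertex and whether a $d_{ij}$ intervenes between a tied cluster and the surrounding spine. Verifying that the inequalities defining $\Omega^{(i)}$ cut out precisely Type~(i) (without collapsing to a smaller-dimensional neighboring type) reduces to checking that the tied initial-form conditions force each relevant $d_{ij}$ to lie in the correct interval relative to the surrounding $\ww_k$; once this combinatorial bookkeeping is done, the statement follows formally from~\autoref{pr:CombCovering} and~\autoref{rm:edgeLengths}.
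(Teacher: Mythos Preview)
Your approach via iterated tropical modifications of $\TPr^1$ is a valid alternative to the paper's proof, which instead computes the tree directly from the tropical Pl\"ucker coordinates $-\val(\Phi(\underline{\alpha}))=(-\val(\alpha_i-\alpha_j))_{i<j}\in\RR^{15}/\RR^6$ and invokes the four-point conditions (tropical three-term Pl\"ucker relations) to pin down the combinatorial type and edge lengths, as set up in~\autoref{sec:trop-moduli-spaces} via~\eqref{eq:PlEmb} and~\eqref{eq:tropMap}. The paper's route has the advantage of being purely arithmetic once the distance vector is written down---linear algebra on the tropical Pl\"ucker relations recovers the interior edge lengths with no ambiguity---whereas your modification picture is more geometric and closer to the proof sketch for~\autoref{thm:admisscovers} in the introduction and~\autoref{fig:modifypoints}. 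Both methods finish identically through~\autoref{pr:CombCovering} and~\autoref{rm:edgeLengths}.

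That said, your write-up contains two factual slips that must be repaired. First, when $\ww_i=\ww_j$ and $\init(\alpha_i)=\init(\alpha_j)$, the quantity $d_{ij}=-\val(\alpha_i-\alpha_j)$ is strictly \emph{smaller} than $\ww_i$, not larger (see~\eqref{eq:deWAndDs} and the paragraph following~\autoref{def:init_t}); geometrically, the legs for $\alpha_i$ and $\alpha_j$ leave the spine together at $\ww_i$ and only separate from each other at depth $\ww_i-d_{ij}$ along the common downward leg, producing the extra snowflake edge of length $\ww_3-d_{34}$ in Type~(II). Second, Type~(III) has $\ww_3=\ww_4$ with distinct initial forms, so it is \emph{not} covered by your ``all $\ww_i$ distinct'' caterpillar case; it belongs to your second clause, where a tie with distinct initials creates a four-valent interior vertex. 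Once these are corrected, your argument goes through.
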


\begin{proof} Starting from a tuple $\underline{\alpha}\in \Omega^{(i)}$ viewed as a marking on $\PP^1$, we consider the smooth rational curve $\mathcal{X}$ in ${M_{0,6}}$  and the associated weight vector $\underline{\omega}\in \RR^6$. 
  Our goal is to determine the combinatorial type of the tree  $\Trop\, \mathcal{X}$ and to express its metric structure in terms of $\underline{\omega}$. We do so by analyzing each of the seven sets $\Omega^{(i)}$ separately. By~\autoref{pr:CombCovering} we can label each tree by the type of the genus two metric graph $\Gamma$ covering it.  The edge length formulas on $\Gamma$ indicated on the last column of~\autoref{tab:CombAndLengthData}  are obtained directly from the metric structure on each tree using~\autoref{rm:edgeLengths}. It is important to emphasize that the tropical Pl\"ucker map will give the half-distance vector on the tree, as we saw in~\autoref{sec:trop-moduli-spaces}.

In what remains, we discuss the second column of the table.  The combinatorial type of each tree is determined by the isomorphism $\Phi\colon M_{0,6}\stackrel{\simeq}\longrightarrow \Gr_0(2,6)/(K^*)^6\subset (K^*)^{15}/(K^*)^6$ from~\eqref{eq:PlEmb} and the four-point conditions (i.e., the tropical 3-term Pl\"ucker relations \cite[Lemma 4.3.6]{MSBook}) on $-\val(\Phi(\underline{\alpha}))\in \RR^{15}/\RR^6$.  We use the lexicographic order on $\RR^{15}$.
  
  \vspace{1ex}
  
  \noindent
  \textbf{Type (I):} We claim  $\Trop\,\mathcal{X}$ is a trivalent caterpillar tree on six leaves with internal edge lengths $\ww_3-\ww_2$, $\ww_4-\ww_3$ and $\ww_5-\ww_4$. Indeed, since $-\val(\alpha_i-\alpha_j)=\ww_j$ for $i<j$ we have
  \[
-\val(\Phi(\underline{\alpha}))\!:=  (\ww_2,\ww_3,\ww_4,\ww_5,\ww_6, \ww_3, \ww_4,\ww_5, \ww_6, \ww_4, \ww_5, \ww_6,\ww_5, \ww_6, \ww_6)\in \Trop\,\Gr_0(2,6)/\RR^6.
  \]
By construction, the half-distance vector equals $-\val(\Phi(\underline{\alpha}))$.  The four-point condition implies that the corresponding line in $\PP^5$  is a trivalent caterpillar tree. Linear algebra recovers the expected lengths on its three bounded edges \cite[Remark 4.3.7]{MSBook}.   Note that the lengths assigned to the six legs in the second column of~\autoref{tab:CombAndLengthData} play no role here: the associated half-distance vector in $\RR^{15}$ is in the same class modulo the lineality space in $ \Trop\,\Gr_0(2,6)$. The claim follows.

    \vspace{1ex}

  \noindent
  \textbf{Type (II):} 
By construction,  $\Phi(\underline{\alpha})$ has negative valuation vector
  \[-\val(\Phi(\underline{\alpha}))\!:=  (\ww_2,\ww_3,\ww_4,\ww_5,\ww_6, \ww_3, \ww_4,\ww_5, \ww_6, d_{34}, \ww_5, \ww_6,\ww_5, \ww_6, \ww_6)\in \Trop\,\Gr_0(2,6)/ \RR^{6},\]
  where the $\ww_i$ and $d_{34}$ are as in \eqref{eq:valn4Param}.
  The four--point conditions  imply that the tropical line in $\PP^5$ is a snowflake tree with internal edges $\ww_3-\ww_2$, $\ww_5-\ww_3$ and $\ww_3-d_{34}$, as indicated on the second column of the table.

    \vspace{1ex}

    \noindent
    \textbf{Types (III) through (VII):} The tropicalization induced by the Pl\"ucker embedding  shows that the metric trees on these lower-dimensional cells of $M_{0,6}^{\trop}$ are obtained by specializing the trees for Type (I) or Type (II): both the combinatorial type and the metric are obtained by coarsening either the caterpillar or the snowflake trees.      The edge length formulas match those given in~\autoref{tab:CombAndLengthData}.
\end{proof}

  \begin{table}
    \centering
{\renewcommand{\arraystretch}{0.35}}
  \begin{tabular}{|c|c|c|c|}
    \hline
    \small{Type}\normalsize& Cover with lengths on $M_{0,6}^{\trop}$ 
    & Defining conditions & Lengths on $M_2^{\trop}$ \\ 
    \hline
    \hline 
                        \multirow{6}{*}{(I)} &  \multirow{7}{*}{\scalebox{0.75}{\includegraphics[scale=0.5]{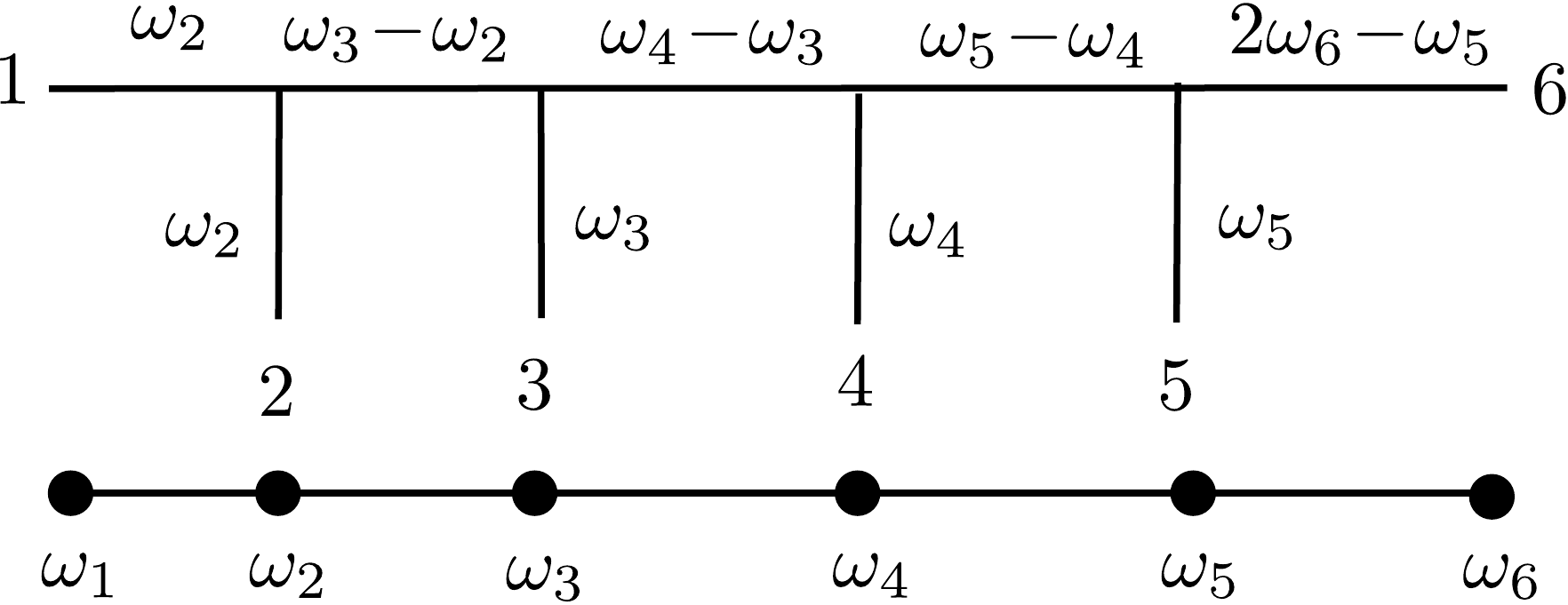}}} & \multirow{6}{*}{$\ww_1\!<\!\ww_2\!<\!\ww_3\!<\!\ww_4\!<\!\ww_5\!<\!\ww_6$}& \multirow{2}{*}{$L_0=(\ww_4-\ww_3)/2$}\\ &
      & & \\ & & & \multirow{2}{*}{$L_1=2(\ww_5-\ww_4)$\ }\\  & & & \\     & & & \multirow{2}{*}{$L_2=2(\ww_3-\ww_2)$\ }\\ & & & \\  & & & \\ 

             \hline
        \multirow{7}{*}{(II)} &  \multirow{7}{*}{\scalebox{0.75}{\includegraphics[scale=0.5]{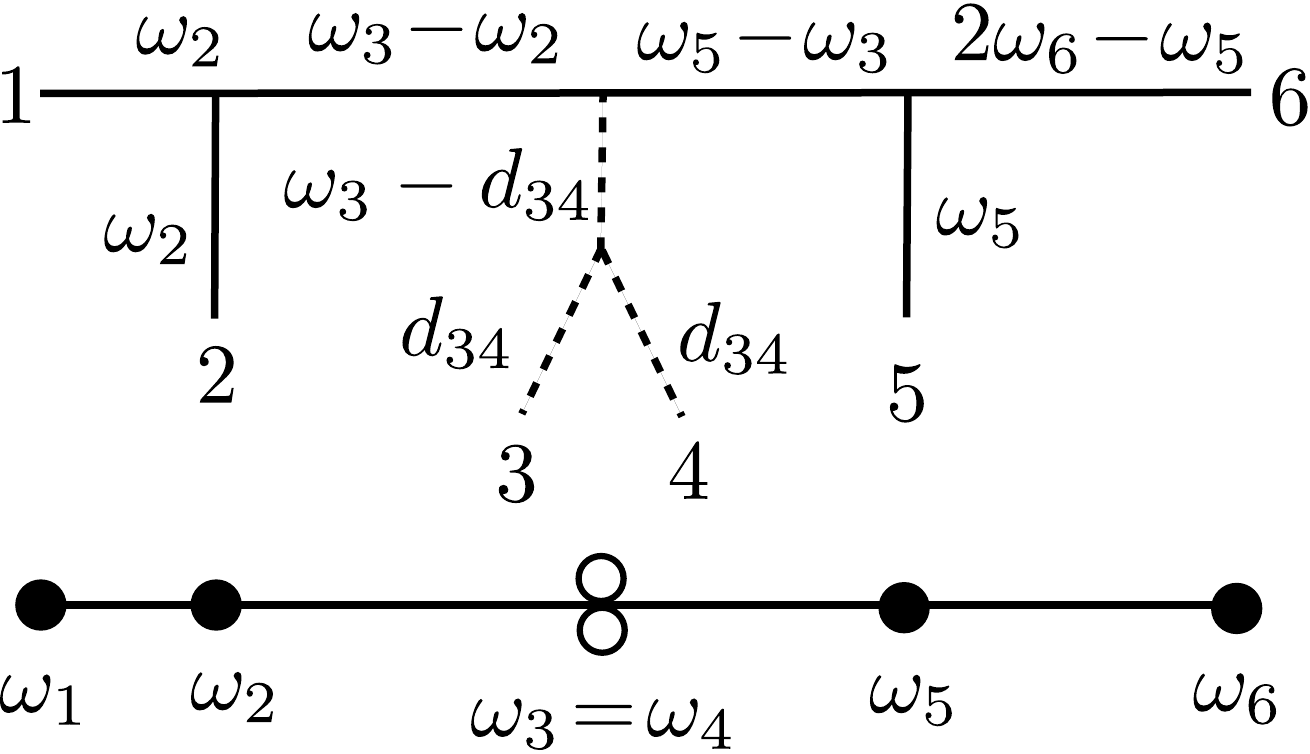}}} & \multirow{2}{*}{$\ww_1<\ww_2<\ww_3<\ww_5<\ww_6$}&\multirow{2}{*}{$L_0=2(\ww_3-d_{34})$}\\ &
      & & \\ & &  \multirow{2}{*}{$\ww_3 = \ww_4$}& \multirow{2}{*}{$L_1=2(\ww_5-\ww_3)$\ }\\  & & & \\     & & \multirow{2}{*}{$\init(\alpha_3) = \init(\alpha_4)$}& \multirow{2}{*}{$L_2=2(\ww_3-\ww_2)$\ }\\ & & & \\  & & & \\ 

        \hline
        \multirow{5}{*}{(III)} &  \multirow{6}{*}{\scalebox{0.75}{\includegraphics[scale=0.5]{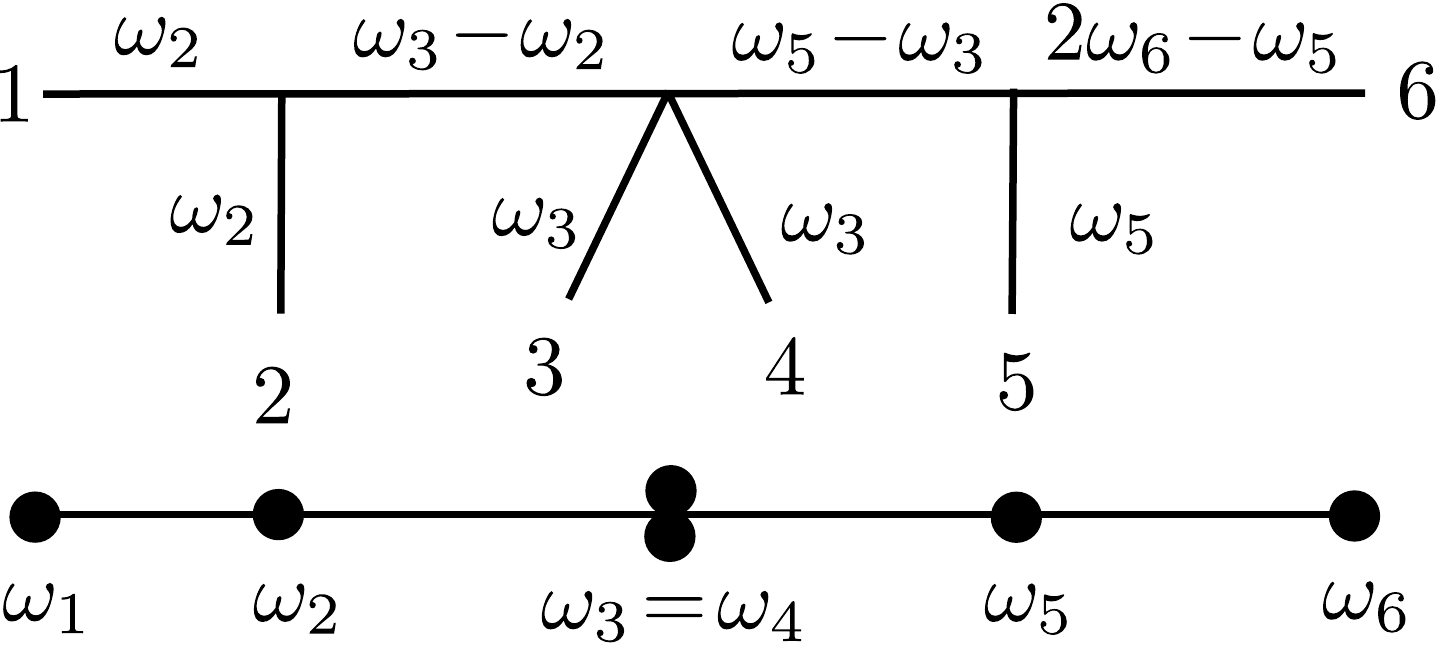}}} & \multirow{2}{*}{$\ww_1<\ww_2<\ww_4<\ww_5<\ww_6$}&\multirow{2}{*}{$L_0= 0$\ \qquad\qquad}\\ &
      & & \\ & &  \multirow{2}{*}{$\ww_3 = \ww_4$}& \multirow{2}{*}{$L_1=2(\ww_5-\ww_3)$\ }\\  & & & \\     & & \multirow{2}{*}{$\init(\alpha_3) \neq \init(\alpha_4)$}& \multirow{2}{*}{$L_2=2(\ww_3-\ww_2)$\ }\\ & & & \\ 
    \hline
        \multirow{5}{*}{(IV)} &  \multirow{6}{*}{\scalebox{0.75}{\includegraphics[scale=0.5]{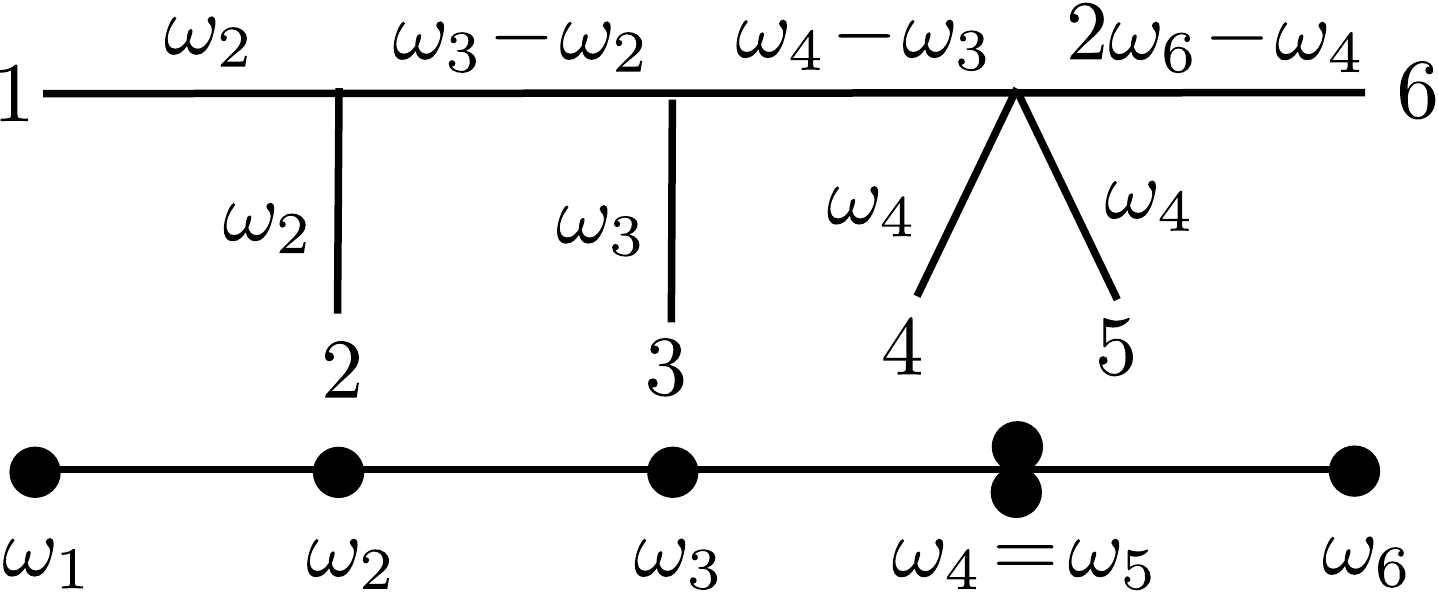}}} & \multirow{2}{*}{$\ww_1<\ww_2<\ww_3<\ww_4<\ww_6$}&\multirow{2}{*}{$L_0=(\ww_4-\ww_3)/2$}\\ &
      & & \\ & & \multirow{2}{*}{$\ww_4 = \ww_5$}& \multirow{2}{*}{$L_1= 0$\ \qquad\qquad}\\  & & & \\     & & \multirow{2}{*}{$\init(\alpha_4) \neq \init(\alpha_5)$}& \multirow{2}{*}{$L_2=2(\ww_3-\ww_2)$\ }\\ & & & \\
        \hline 
        \multirow{5}{*}{(V)} &  \multirow{6}{*}{\scalebox{0.75}{\includegraphics[scale=0.5]{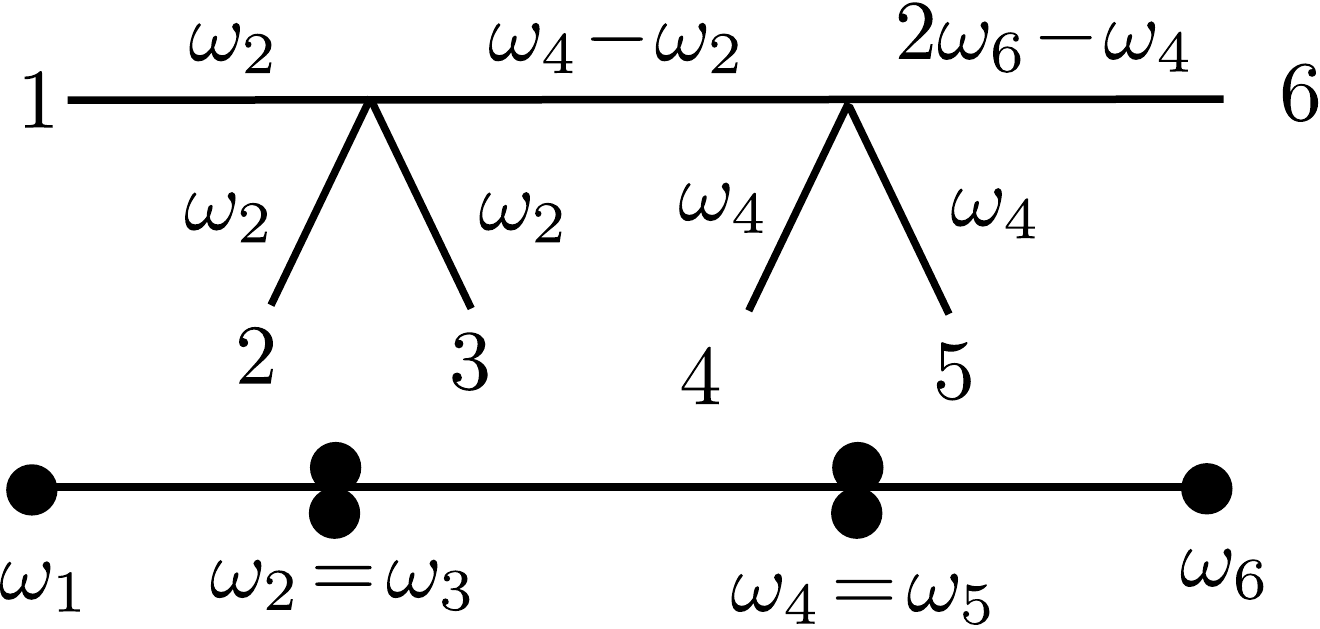}}} & \multirow{2}{*}{$\ww_1<\ww_2<\ww_4<\ww_6$}&\multirow{2}{*}{$L_0=(\ww_4-\ww_2)/2$}\\ &
      & & \\ & & \multirow{2}{*}{$\ww_2 = \ww_3$\,,\, $\ww_4 = \ww_5$}&  \multirow{2}{*}{$L_1= 0$\ \qquad\qquad}\\  & & {}& \\  & & {$\init(\alpha_2) \neq \init(\alpha_3)$}&  \multirow{2}{*}{$L_2= 0$\ \qquad\qquad} \\  & & {$\init(\alpha_4) \neq \init(\alpha_5)$}& \\ 
    \hline 
    \multirow{7}{*}{(VI)}   &  \multirow{7}{*}{\scalebox{0.75}{\includegraphics[scale=0.5]{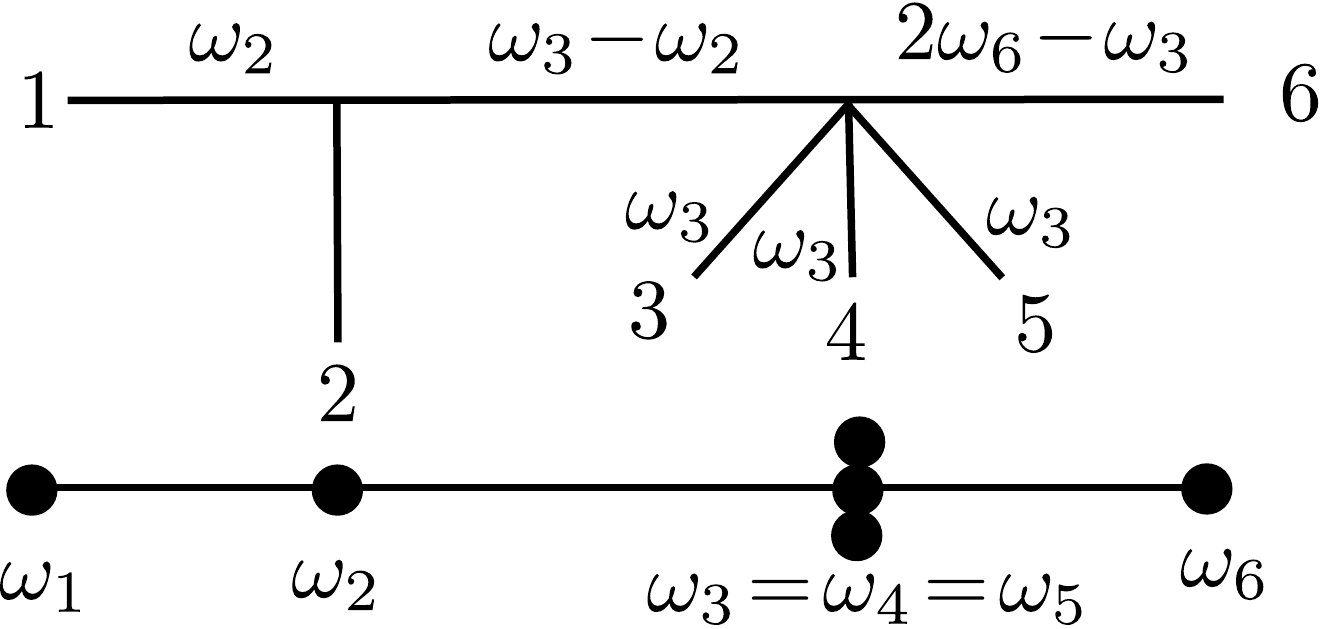}}} &  \multirow{2}{*}
             {$\ww_1<\ww_2<\ww_3<\ww_6$}& \multirow{2}{*}
             {$L_0=0$\ \qquad\qquad}\\ &
      & &\\ & & \multirow{2}{*}
                 {$\ww_3=\ww_4 = \ww_5$} & \multirow{2}{*}
                 {$L_1= 0$\ \qquad\qquad}
                 \\   & & &\\ & & {$\init(\alpha_3) \neq \init(\alpha_4)$}&  \multirow{3}{*}
                 {$L_2= 2(\ww_3-\ww_2)$} \\  & & {$\init(\alpha_3) \neq \init(\alpha_5)$}&  \\     & & {$\init(\alpha_4) \neq \init(\alpha_5)$}&  \\   
    \hline 
\multirow{6}{*}{(VII)} &  \multirow{7}{*}{ \scalebox{0.75}{\includegraphics[scale=0.5]{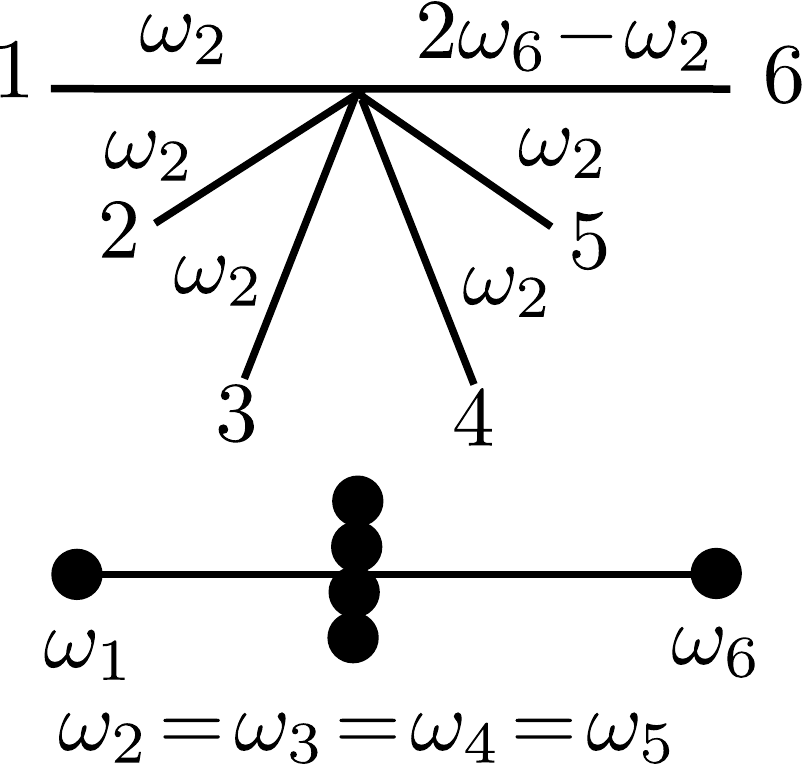}}} &  \multirow{2}{*}
             {$\ww_1<\ww_2<\ww_6$}& \multirow{2}{*}{$L_0= 0$\ \qquad\qquad}\\ & & & \\  & &  {$\ww_2 = \ww_3=\ww_4 = \ww_5$}& \multirow{2}{*}{$L_1= 0$\ \qquad\qquad}\\  & & & \\  & &  {$\init(\alpha_i) \neq \init(\alpha_j)$} & \multirow{2}{*}{$L_2= 0$\ \qquad\qquad}\\  & & {for $1<i<j<6$}& \\  & & & \\  
    \hline
  \end{tabular}
\caption{Combinatorial types  with the corresponding defining valuation conditions, and length data for  $M_{0,6}^{\trop}$ and $M_2^{\trop}$. Here, $\ww_i=-\val(\alpha_i)$, $d_{34}=-\val(\alpha_3-\alpha_4)$, and the edge lengths $L_0, L_1$ and $L_2$ refer to~\autoref{fig:allSkeletaAndMetrics}.}\label{tab:CombAndLengthData}  \end{table}

  In the remainder of this section we discuss why these seven regions $\Omega^{(i)}$ suffice to classify all smooth genus two tropical curves. Indeed, Algorithms~\ref{alg:ineq} and~\ref{alg:sep} describe an explicit combinatorial procedure that takes six distinct points $\alpha_1,\ldots, \alpha_6$ in $K^*$ and provides linear changes of coordinates in $\PP^1$ producing a tuple of points in one of the sets $\Omega^{(i)}$, after   iteratively combining two steps:
  \begin{description}
  \item [(A) \textbf{Separate points}] We take a coordinate $\ww_k$ of $\underline{\omega}$ and two points $\alpha_i$ and $\alpha_j$ of valuation $-\ww_k$ where $\val(\alpha_i-\alpha_j)$ is maximal, and make a linear change of coordinates that turns the tuple $\underline{\alpha}\in (K^*)^6$ into $\underline{\alpha}'\in (K^*)^6$, where  $-\val(\alpha_i')$ is the unique smallest element of $\ww'$. The method is described in~\autoref{lm:sepPts}. 
    
    \item [(B) \textbf{Turn around}] We change coordinates from one open affine chart of $\PP^1$ to another by replacing $x$ by $1/x$. As a result, $-\val(\alpha_i')=\val(\alpha_i)$ and the relative order of the valuations on the tuple $\underline{\alpha}$ is reversed on the new tuple $\underline{\alpha}'$.
  \end{description}

 As was mentioned earlier in this section, our assumptions on $K$ ensures the density of the value group of $K$ in $\RR$ and the existence of a splitting $\gamma\mapsto t^{\gamma}$ to the valuation. We use these to properties to separate branch points:
  \begin{lemma}\textbf{\emph{[Separating points]}}\label{lm:sepPts} Consider a repeated coordinate $\ww$ of $\underline{\omega}$, and write \[
    \beta=\max\{\val(\alpha_m-\alpha_l) \colon \ww_m=\ww_l=\ww \text{ for }m\neq l\}\geq -\ww.\]
    Fix two indices $i,j$ with $\ww_i\!=\!\ww_j\!=\!\ww$ and $\beta=\val(\alpha_i-\alpha_j)$. If $\init(\alpha_i-\alpha_j)=\overline{\zeta}\in \resK$ for some $\zeta$ with $\val(\zeta)=0$, choose $\gamma\in \val(K^*)$ with $\beta<\gamma<\val(\alpha_i-\alpha_j-\zeta t^{\beta})$. Then, the linear change of coordinates          $\psi\colon \PP^1\to\PP^1$ defined locally by 
    \begin{equation}\label{eq:sepPts}
 \psi(x) = x-\alpha_j-\zeta t^{\beta}-t^{\gamma}
    \end{equation}
      turns the tuple $\underline{\alpha}\in (K^*)^6$ into $\underline{\alpha}'\in (K^*)^6$, where their coordinatewise negative valuations $\underline{\omega}$ and $\underline{\omega}'$ satisfy the following properties:
\begin{enumerate}[(1)]
\item $\ww_s' =\ww_s > \ww_i  \;\text{ if } \ww_s > \ww_i$;
  \item  $\ww_s'=\ww_i$, and $\init(\alpha'_s) = -\init(\alpha_i) \text{ if }\ww_s<\ww_i$;
\item   $\ww_i'=-\gamma<\ww_s' = -\val(\alpha_s-\alpha_i)  \leq \ww_i\;  \text{ if }\ww_s=\ww_i  \,\text{ and }s\neq i$.
\end{enumerate}
  \end{lemma}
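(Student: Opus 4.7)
The plan is to split the argument into two stages: first verifying that the constants $\beta$, $\zeta$, $\gamma$ in the statement are well-defined, and then, for each of the conclusions (1)--(3), performing a single ultrametric computation applied to
\[
\alpha_s' \;=\; \alpha_s - \alpha_j - \zeta\, t^{\beta} - t^{\gamma}.
\]

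The well-definedness is quick. The ultrametric inequality gives $\val(\alpha_m - \alpha_l) \geq -\ww$ whenever $\ww_m = \ww_l = \ww$, so $\beta \geq -\ww$, and the maximum is attained because the six points are distinct and the index set is finite. Since $\alpha_i\neq\alpha_j$, the class $\init(\alpha_i-\alpha_j)\in\resK$ is nonzero, hence admits a lift $\zeta\in K^{*}$ with $\val(\zeta)=0$. By the definition of the initial form, $\val(\alpha_i-\alpha_j-\zeta\, t^{\beta}) > \beta$, and the density of $\val(K^{*})\subset\RR$ (guaranteed by our hypotheses on $K$) lets us pick $\gamma$ in the prescribed open interval.

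For (1): if $\ww_s > \ww_i$, then $\val(\alpha_s) = -\ww_s < -\ww_i \leq \beta < \gamma$, so $\alpha_s$ is the unique summand of $\alpha_s'$ attaining the minimum valuation, and the strong triangle inequality yields $\val(\alpha_s') = -\ww_s$. For (3), when $\ww_s=\ww_i$ and $s\neq i$, I would rewrite $\alpha_s' = (\alpha_s-\alpha_i) + (\alpha_i-\alpha_j-\zeta\,t^{\beta}) - t^{\gamma}$. The maximality of $\beta$ forces $\val(\alpha_s-\alpha_i)\leq \beta<\gamma$, while the other two summands have valuation strictly bigger than $\gamma$ and equal to $\gamma$, respectively; hence $\val(\alpha_s') = \val(\alpha_s-\alpha_i) \in [-\beta,\ww_i]$, which is strictly greater than $-\gamma$. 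The special case $s=i$ follows by the same calculation with the $(\alpha_s-\alpha_i)$ summand replaced by $0$, giving $\val(\alpha_i') = \gamma$ and $\ww_i' = -\gamma$.

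The main subtle case is (2), when $\ww_s < \ww_i$. Here $\val(\alpha_s) > -\ww_i = \val(-\alpha_j)$, $\val(-\zeta\, t^{\beta}) = \beta \geq -\ww_i$, and $\val(-t^{\gamma}) = \gamma > -\ww_i$, so the minimum valuation $-\ww_i$ is always realized by $-\alpha_j$. The wrinkle is whether $-\zeta\, t^{\beta}$ co-dominates; this happens precisely when $\beta = -\ww_i$, i.e.\ when $\init(\alpha_i)\neq\init(\alpha_j)$. When $\beta > -\ww_i$, the term $-\alpha_j$ dominates and $\init(\alpha_s') = -\init(\alpha_j) = -\init(\alpha_i)$, the last equality holding because $\val(\alpha_i-\alpha_j)>-\ww_i$ forces the initial forms of $\alpha_i$ and $\alpha_j$ to coincide. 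When $\beta = -\ww_i$, the identity $\init(\alpha_i-\alpha_j) = \init(\alpha_i)-\init(\alpha_j)$ (valid precisely in this subcase) gives
\[
\init(\alpha_s') \;=\; -\init(\alpha_j) - \overline{\zeta} \;=\; -\init(\alpha_j) - \bigl(\init(\alpha_i)-\init(\alpha_j)\bigr) \;=\; -\init(\alpha_i),
\]
which is nonzero. So uniformly $\ww_s' = \ww_i$ and $\init(\alpha_s') = -\init(\alpha_i)$. The whole proof is essentially an organized sequence of ultrametric inequalities; the only conceptual care is needed in this bookkeeping of the two subcases of (2).
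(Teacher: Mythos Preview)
Your proof is correct and follows essentially the same ultrametric strategy as the paper's proof, including the identical decomposition $\alpha_s' = (\alpha_s-\alpha_i) + (\alpha_i-\alpha_j-\zeta t^{\beta}) - t^{\gamma}$ in case~(3); you are in fact more thorough than the paper in case~(2), where you carefully separate the subcases $\beta=-\ww_i$ and $\beta>-\ww_i$ (the paper just writes ``a similar argument''). One small slip to fix: in your case~(3) you write $\val(\alpha_s')\in[-\beta,\ww_i]$ and compare it to $-\gamma$, but you mean $\ww_s'=-\val(\alpha_s')\in[-\beta,\ww_i]$; the valuation itself lies in $[-\ww_i,\beta]$.
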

\begin{proof} The first claim follows immediately from the strong non-Archimedean triangle inequality since $\alpha_j+\zeta t^{\beta}+t^{\gamma}$ has valuation $-\ww_i$. A similar argument proves the second claim. In particular, $\alpha_s'\neq 0$ whenever $\ww_s\neq \ww_i$.

  We now prove the third item. Again, $\val(\alpha_i-\alpha_j-\zeta t^{\beta})>\gamma$, so $\val(\alpha'_i)=\gamma$ and $\alpha'_i\neq 0$. Pick $s\neq i$ with $\ww_s=\ww_i$. We write
  \[\alpha'_s = \underbrace{(\alpha_s-\alpha_i)}_{-\ww_i\leq \val(\cdot)\leq \beta} + \underbrace{(\alpha_i-\alpha_j-\zeta t^{\beta}) - t^{\gamma}}_{\val(\cdot) =\gamma>\beta}.
  \]
By the strong non-Archimedean inequality, $-\ww_i\leq \val(\alpha'_s)\!=\!\val(\alpha_s-\alpha_i)<\gamma$, so $\alpha_s'\!\neq\!0$.
\end{proof}

  As the next example illustrates, the effect of the coordinate change in~\autoref{lm:sepPts}  can easily be visualized by means of a tropical modification followed by a coordinate projection.
\begin{example}Consider  points in the Puiseux series field $K\!=\PS$:
    \[
    \alpha_1= t^{3},\; \alpha_2=2+t,\; \alpha_3=2+t^2,\; \alpha_4=t^{-2}, \;\alpha_5=t^{-3},\; \text{ and } \alpha_6=t^{-4} \text{ in }K^\ast,\]
    where $\ww_1=-3$, $\ww=\ww_2=\ww_3=0$, $\ww_4=2$, $\ww_5=3$, $\ww_6=4$, $\beta\!=\!\zeta\!=\!1$, $1<\gamma=3/2<\val(t^2)$.

    To separate $\alpha_2$ from $\alpha_3$, and place $-\val(\alpha_2)$ to the very left of $\RR$, we reembed the line in the plane via $y=x-(2+t^2)-t-t^{3/2}$. The tropicalization of this planar line together with its marked points and the projection to the $y$-coordinate is depicted in~\autoref{fig:modifypoints}. 
\end{example}

  \begin{figure}
\includegraphics[scale=0.22]{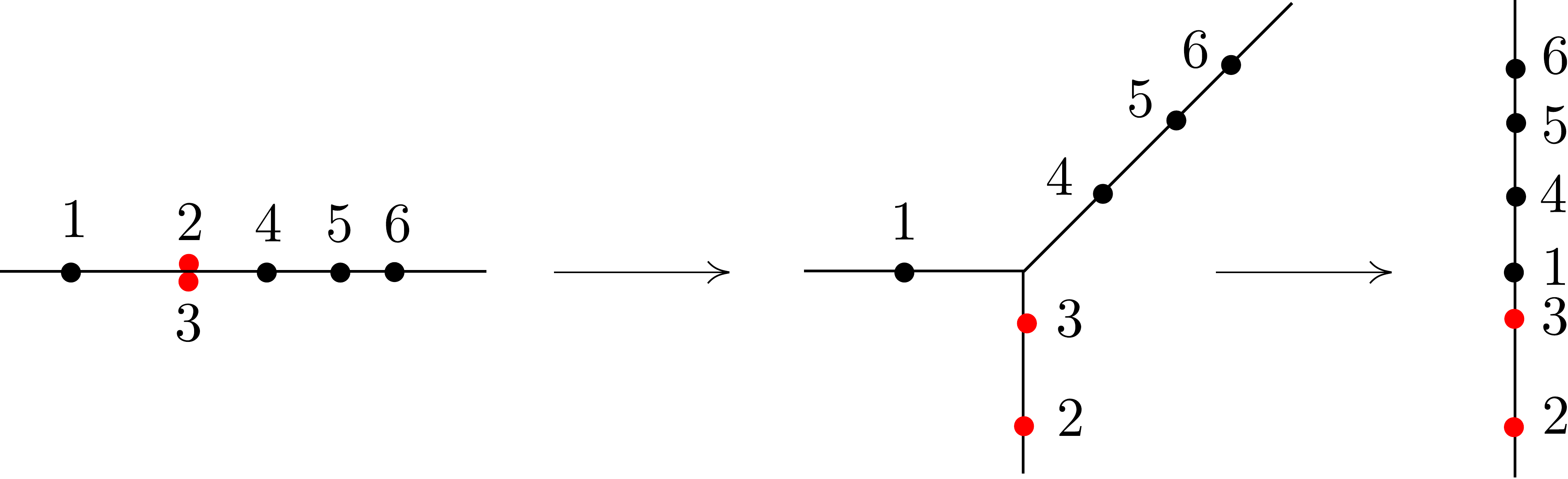}
\caption{Visualizing the coordinate change (A) via tropical modifications.}\label{fig:modifypoints}
\end{figure}

  Our first combinatorial procedure uses a change of coordinates in $\PP^1$ and a relabeling to produce a new tuple $\underline{\alpha}'$ from $\underline{\alpha}$ with the additional property that the maximum and minimum values of $\underline{\omega}'$ are attained exactly once. This is the content of~\autoref{alg:ineq}. In turn,~\autoref{alg:sep} transforms the output of~\autoref{alg:ineq} into a configuration of points in a suitable  region $\Omega^{(i)}$. 
We measure improvement by two auxiliary variables:
\begin{itemize}
\item the \emph{deficiency} $\defua(\underline{\alpha})$ of the point configuration defined as the size of the partition of $[6]=\{1,\ldots, 6\}$ identifying equal coordinates of $\underline{\omega}$,
\item a refined partition $\Lambda$ taking both the valuation and  the initial terms into account.
\end{itemize}
Our partitions will always have the singletons $\{1\}$ and $\{6\}$ since $\ww_1$ and $\ww_6$ remain isolated after each iteration of Step (A).

  \begin{algorithm}[htb] 
  \KwIn{A tuple $\underline{\alpha} = (\alpha_1,\ldots, \alpha_6)$ of six distinct labeled points in $K^*$.}
  \textbf{Assumption:} $\val(K^*)$ is  dense in $\RR$ and the valuation on $K$ splits via $\ww\mapsto t^{\ww}$.\\
  \KwOut{A tuple $\underline{\alpha}'$ obtained from $\underline{\alpha}$ by a linear change of coordinates in $\PP^1$  followed by a relabeling, where~$-\val(\alpha_1')\!<\!-\val(\alpha_2')\!\leq \!\ldots \!\leq \!-\val(\alpha_5')\!<\!-\val(\alpha_6')$.}
  \smallskip
  Relabel the points so that $-\val(\alpha_1)\leq -\val(\alpha_2)\leq \ldots \leq -\val(\alpha_5)\leq -\val(\alpha_6)$\;
  $\underline{\alpha}' \leftarrow \underline{\alpha}$\ ;\ $\underline{\omega}'\leftarrow -\val(\underline{\alpha}'):=(-\val(\alpha_1'), \ldots, -\val(\alpha_6'))$\;
  $\Lambda_{\min}  \leftarrow \{i \colon \ww_i = \min(\underline{\omega}')\}$\ ;\ $\Lambda_{\max} \leftarrow \{i \colon \ww_i = \max(\underline{\omega}')\}$\;
  \If{ $|\Lambda_{\min}|>1$}
     {Relabel $\Lambda_{\min}$ so that $\{\val(\alpha_i-\alpha_j)\colon i, j \in \Lambda_{\min}, i\neq j\}$ is maximized at $i\!=\!1, j\!=\!2$\;\ifhmode\\\fi
       $\underline{\alpha}'\leftarrow$  $\psi(\underline{\alpha})$ where $\psi$ is defined as in \eqref{eq:sepPts} with $\ww=\ww_1$, $i=1$, $j=2$\;
       $\underline{\omega}'\leftarrow -\val(\underline{\alpha}')$\;}
     \If{$|\Lambda_{\max}|>1$}
        {$\underline{\alpha}' \leftarrow $  Coordinate change \textbf{(B)} on $\PP^1$ applied to $\underline{\alpha}'$  ;  $\Lambda_{\max} \leftarrow \Lambda_{\min}$\;\ifhmode\\\fi
          Relabel $\Lambda_{\max}$ so that $\{\val(\alpha_i'-\alpha_j')\colon i, j \in \Lambda_{\max}, i\neq j\}$ is maximized for $i\!=\!6, j\!=\!5$\;
        $\underline{\alpha}'\leftarrow$  $\psi(\underline{\alpha}')$ where $\psi$ is defined as in \eqref{eq:sepPts} with  $\ww=\ww_6'$, $i=6$, $j=5$\;
        $\underline{\alpha}' \leftarrow $  Coordinate change \textbf{(B)} on $\PP^1$ applied to $\underline{\alpha}'$\;}
  \KwRet{$\underline{\alpha}'$.}
    \caption{Separate the minimum and maximum values of $\underline{\omega}$. \label{alg:ineq}}
  \end{algorithm}

\begin{algorithm}[htb] 
  \KwIn{A tuple $\underline{\alpha} = (\alpha_1,\ldots, \alpha_6)$ of six distinct labeled points in $K^*$ with  $-\val(\alpha_1)<-\val(\alpha_2)\leq \ldots \leq -\val(\alpha_5)<-\val(\alpha_6)$.}
  \textbf{Assumption:} $\val(K^*)$ is  dense in $\RR$ and the valuation on $K$ splits via $\ww\mapsto t^{\ww}$.\\ 
  \KwOut{A pair $(\underline{\alpha}', i)$ where $i\in\{\text{I}, \ldots,\text{VII}\}$ and $\underline{\alpha}'$  lies in $\Omega^{(i)}$. 
     Here,  $\underline{\alpha}'$ is  obtained from $\underline{\alpha}$ by a linear change of coordinates in $\PP^1$  followed by a relabeling of $[6]=\{1,\ldots, 6\}$ if needed.}
  \smallskip
    $\underline{\alpha}'\leftarrow \underline{\alpha}$ \ ; \  $\underline{\omega}'\leftarrow -\val(\underline{\alpha}')$\ ;\ 
  $d\leftarrow \operatorname{def}(\underline{\alpha}'):=$ deficiency of $\underline{\alpha}'$\;
  $\Lambda \leftarrow$ $\partua(\underline{\alpha}'):=$ partition of $[6]$ 
  determined by equality among $(\ww_i', \init(\alpha_i'))$'s\;
          {
    \While{$d = 3$}
       {
          \lIf{$|\Lambda|=6$}
               {\DontPrintSemicolon\KwRet{$(\underline{\alpha}',\text{VII})$.}}
               \PrintSemicolon
 {             Relabel $\{2,\ldots, 5\}$ so  $\max\{\val(\alpha'_i-\alpha'_j): \ww_i'=\ww_j'=\ww_2'\} = \val(\alpha'_2-\alpha'_3)$\;
              $\underline{\alpha}'\leftarrow$  $\psi(\underline{\alpha}')$ where $\psi$ is defined as in \eqref{eq:sepPts} with $\ww=\ww_2'$, $i=2$, $j=3$\;
               Relabel $[6]$ by incr.~$-\!\val(\underline{\alpha}')$; $\Lambda\leftarrow \partua(\underline{\alpha}')$; \
              $\underline{\omega}'\leftarrow -\!\val(\underline{\alpha}')$;  $d\leftarrow \defua(\underline{\alpha}')
               $\;}}

       \While{$d = 4$}
             {\lIf{$|\Lambda| = 6$, $\ww_2'=\ww_3'$, and
                 $\ww_4'=\ww_5'$ }
               {\DontPrintSemicolon\KwRet{$(\underline{\alpha}', \text{V})$.}}
               \PrintSemicolon
               \lElseIf{$|\Lambda| = 6$, $\ww_2'<\ww_3'$}
               {\DontPrintSemicolon\KwRet{$(\underline{\alpha}', \text{VI})$.}}
               \PrintSemicolon
                                    \uElseIf{($|\Lambda|=6$ and $\ww_4'<\ww_5'$) or ($|\Lambda|<6$, $\ww_3'<\ww_4'$ and $\init(\alpha'_2)\neq\init(\alpha'_3)$) or ($|\Lambda|<6$ and $\ww_2'<\ww_3'$)}
                       {$\underline{\alpha}' \leftarrow $ Coordinate change \textbf{(B)} on $\PP^1$ applied to $\underline{\alpha}'$ with  relabeling of $[6]$\;
$\underline{\omega}'\leftarrow -\val(\underline{\alpha}')$  \ ; \ $\Lambda\leftarrow \partua(\underline{\alpha}')$\  ;\  $d\leftarrow \defua(\underline{\alpha}')
               $\;                       }
                       \Else
                       {Relabel $\{2,\ldots, 5\}$ so that  $\max\{\val(\alpha'_i-\alpha'_j)\!: \ww_i'=\ww_j'=\ww_2'\} \!=\! \val(\alpha'_2-\alpha'_3)$\;
                         $\underline{\alpha}'\leftarrow$
$\psi(\underline{\alpha}')$ where $\psi$ is defined as in \eqref{eq:sepPts} with
 $\ww=\ww_2'$, $i=2$, $j=3$\;
              Relabel $[6]$ by incr.~$-\!\val(\underline{\alpha}')$; $\Lambda\leftarrow \partua(\underline{\alpha}')$; \
              $\underline{\omega}'\leftarrow -\!\val(\underline{\alpha}')$;  $d\leftarrow\defua(\underline{\alpha}')$\;}}

     \While{$d = 5$}
           {
             \lIf{$\ww_3'=\ww_4'$ and $\init(\alpha_3')=\init(\alpha_4')$}
                 {
                   \DontPrintSemicolon\KwRet{$(\underline{\alpha}', \text{II})$.}
                 }
               \PrintSemicolon
                       \lElseIf{$\ww_3'=\ww_4'$ and $\init(\alpha_3')\neq \init(\alpha_4')$}
                   {
                     \DontPrintSemicolon\KwRet{$(\underline{\alpha}', \text{III})$.}
                   }
                 \PrintSemicolon
        \uElseIf{$\ww_2'=\ww_3'$ and $\init(\alpha_2') = \init(\alpha_3')$}
                              {$\underline{\alpha}'\leftarrow$ $\psi(\underline{\alpha}')$ where $\psi$ is defined as in \eqref{eq:sepPts} with $\ww\!=\!\ww_2$, $i\!=\!2$, $j\!=\!3$\;
                               Relabel $[6]$ by incr.~$-\!\val(\underline{\alpha}')$ and                                \KwRet{$(\underline{\alpha}', \text{I})$.}
                             }
                 \lElseIf{$\ww_4'=\ww_5'$ and $\init(\alpha_4') \neq \init(\alpha_5')$}
                     {
                       \DontPrintSemicolon\KwRet{ $(\underline{\alpha}', \text{IV})$.}
                     }
                     \PrintSemicolon
                             \Else{$\underline{\alpha}' \leftarrow $ Coordinate change \textbf{(B)} on $\PP^1$ applied to $\underline{\alpha}'$ with relabeling of $[6]$\;
                       $\underline{\omega}'\leftarrow -\val(\underline{\alpha}')$  \ ; \ $\Lambda'\leftarrow \partua(\underline{\alpha}')$\;}
           }
          \KwRet{$(\underline{\alpha}', \text{I})$.}}

           \caption{Finding a representative of a tuple $\underline{\alpha}$ in some $\Omega^{(i)}$ for $i\in\{\text{I}, \ldots,\text{VII}\}$.\label{alg:sep}}
  \end{algorithm}

\begin{proof}[Proof of~\autoref{alg:sep}] If the input $\underline{\alpha}$ is already in one of the desired regions $\Omega^{(i)}$ for $i$ in $\{\text{I}, \ldots, \text{VII}\}$, the algorithm outputs the pair $(\underline{\alpha}, i)$.
  If not,  the deficiency of the partition $\Lambda$ of $\underline{\alpha}'$ gives us precise rules to apply  transformations (A) and (B) to improve this invariant one step at a time.
 Before each iteration, we use the turn around transformation (B) followed by a relabeling of $[6]$ (to satisfy $-\val(\alpha_j)\leq-\val(\alpha_{j+1})$ for all $j=1,\ldots, 6$) to reduce ourselves to the case when  $\ww_2=\ww_3>-\val(\alpha_2-\alpha_3)$ and  $\{\val(\alpha_i-\alpha_j):  \ww_i=\ww_j=\ww_2\}$ is maximized at $i=2, j=3$. In this situation, the change of coordinates (A) on $\PP^1$ with $\ww\!=\!\ww_2$, $i\!=\!2$, and $j\!=\!3$  turns $\underline{\alpha}$ to $\underline{\alpha}'\in (K^*)^6$ and $\defua(\underline{\alpha}')>\defua(\underline{\alpha})$. After each such transformation, a relabeling of $[6]$ is performed to ensure the $-\val(\alpha_i)$ are ordered increasingly. The process stops in at most four steps.  
\end{proof}

\section{Faithful re-embedding of planar hyperelliptic curves}
\label{sec:faithful-reemb}

Up to this point, we have only dealt with abstract tropical curves. In this section, we turn our attention to \emph{embedded} tropical plane curves, defined as the dual complex to Newton subdivisions of~\eqref{eq:hyperelliptic}~\cite{bru.ite.mik.sha:14,MSBook, FirstSteps}. Our objective
is to prove~\autoref{thm:faithfulRe-embedding}. Along the way, we analyze the combinatorics of the re-embedded tropical curves, which will vary with the type of the input planar hyperelliptic curve. We assume throughout that the valued group of $K$ is dense in $\RR$ and we fix a splitting $\ww\to t^{\ww}$ of the valuation.

Our first result allows us
to assume that the hyperelliptic cover~\eqref{eq:hyperelliptic} is branched at both $0$ and $\infty$, and that  the leading coefficient $u$ equals  1.
It  ensures that the description of witness regions from~\autoref{tab:CombAndLengthData} remains valid in this setting, for $\ww_1\!=\!-\infty$ and $\ww_6\!=\!\infty$:
\begin{lemma}\label{lm:0InftyBranchPoints}
  After an automorphism of $\PP^1$ sending $\underline{\alpha}'$ to $\underline{\alpha}$, the equation~\eqref{eq:hyperelliptic} becomes
\begin{equation}\label{eq:hypPlanar0Infty}
g(x,y):=  y^2 - x\prod_{i=2}^5(x-\alpha_i)=0 \qquad \alpha_2,\ldots, \alpha_5 \in K^*,
\end{equation}
where  $\ww_i\!:=\!-\val(\alpha_i) \in \RR$,  $\ww_i\!=\! \ww_i' -2 \ww_6' = 2\val(\alpha_6')-\val(\alpha_i')$, $\init \alpha_i = \init \alpha_i' /\init \alpha_6'^2$  for all $i=2,\ldots, 5$, and $\ww_2\leq \ww_3\leq \ww_4 \leq \ww_{5}$.\end{lemma}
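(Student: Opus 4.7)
The plan is to construct an explicit Möbius transformation of $\PP^1$ that sends $\alpha_1'$ to $0$ and $\alpha_6'$ to $\infty$ with a carefully chosen normalizing constant, transport the defining equation of the hyperelliptic cover through it, and absorb the resulting leading coefficient into the $y$-variable using the fact that $K$ is algebraically closed. The freedom in choosing the Möbius (a $\PGL_2(K)$-worth) is what allows us to match the precise valuation and initial-form normalizations demanded by the statement.

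Concretely, I would take
$$
\phi(x') \;=\; \frac{x' - \alpha_1'}{\alpha_6'\,(\alpha_6' - x')}\,,
$$
with inverse $x' = (\alpha_6'^2\, x + \alpha_1')/(1 + \alpha_6' x)$. A direct computation yields, for $i=2,\ldots,5$, the factorization
$$
x' - \alpha_i' \;=\; \frac{\alpha_6'\,(\alpha_6'-\alpha_i')}{1+\alpha_6' x}\,(x-\alpha_i), \qquad \text{where }\ \alpha_i := \frac{\alpha_i' - \alpha_1'}{\alpha_6'(\alpha_6' - \alpha_i')},
$$
together with $x' - \alpha_1' = \alpha_6' x(\alpha_6' - \alpha_1')/(1+\alpha_6' x)$ and $x' - \alpha_6' = -(\alpha_6'-\alpha_1')/(1+\alpha_6' x)$. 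Multiplying these six factors gives $\prod_{i=1}^6(x' - \alpha_i') = c\cdot x\prod_{i=2}^5(x - \alpha_i)/(1 + \alpha_6' x)^6$ with an explicit constant $c = -\alpha_6'^5(\alpha_6'-\alpha_1')^2 \prod_{i=2}^5(\alpha_6'-\alpha_i') \in K^\ast$. Setting $y = \sqrt{u\,c}\;\tilde{y}/(1+\alpha_6' x)^3$, which is possible because $K$ is algebraically closed, transforms $y^2 = u\prod_{i=1}^6(x'-\alpha_i')$ into the monic form $\tilde{y}^2 - x\prod_{i=2}^5(x-\alpha_i) = 0$ of \eqref{eq:hypPlanar0Infty}. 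Distinctness of the $\alpha_i'$ immediately gives $\alpha_i \in K^\ast$ for $i = 2,\ldots,5$.

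The valuation and initial-form claims then follow from the ultrametric inequality. The defining conditions of every region $\Omega^{(i)}$ in~\autoref{tab:CombAndLengthData} guarantee $\omega_1' < \omega_j' < \omega_6'$, equivalently $\val(\alpha_6') < \val(\alpha_j') < \val(\alpha_1')$, for $j = 2,\ldots,5$. Hence
$$
\val(\alpha_i' - \alpha_1') = \val(\alpha_i'), \qquad \val(\alpha_6' - \alpha_i') = \val(\alpha_6'),
$$
and the corresponding initial forms equal $\init(\alpha_i')$ and $\init(\alpha_6')$ respectively. Substituting into the formula for $\alpha_i$ yields $\val(\alpha_i) = \val(\alpha_i') - 2\val(\alpha_6')$, i.e.\ $\omega_i = \omega_i' - 2\omega_6'$, together with $\init(\alpha_i) = \init(\alpha_i')/\init(\alpha_6')^2$. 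Finally, the chain $\omega_2 \leq \omega_3 \leq \omega_4 \leq \omega_5$ is inherited from $\omega_2' \leq \omega_3' \leq \omega_4' \leq \omega_5'$ by the order-preserving affine shift $\omega' \mapsto \omega' - 2\omega_6'$.

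The main (minor) obstacle is getting the right normalization constant in $\phi$: a naive choice such as $(x'-\alpha_1')/(x'-\alpha_6')$ would yield the correct combinatorial type but change the exponent in $\omega_i'- k\omega_6'$ or introduce an unwanted unit in the initial form. The factor $\alpha_6'$ in the denominator of $\phi$ is precisely what produces the power $2\omega_6'$ and the clean quotient $\init(\alpha_i')/\init(\alpha_6')^2$. Once $\phi$ is fixed, the rest of the argument is routine non-Archimedean bookkeeping.
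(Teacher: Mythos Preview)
Your proof is correct and follows essentially the same approach as the paper: an explicit M\"obius transformation sending $\alpha_1'\mapsto 0$, $\alpha_6'\mapsto\infty$ followed by a rescaling of $y$ to absorb the leading coefficient. The only difference is cosmetic---the paper uses $\varphi(x)=(x-\alpha_1')/\big((\alpha_1'-\alpha_6')(x-\alpha_6')\big)$ while you use $\alpha_6'$ in place of $\alpha_1'-\alpha_6'$ in the denominator, but since $\init(\alpha_1'-\alpha_6')=-\init(\alpha_6')$ and $\val(\alpha_1'-\alpha_6')=\val(\alpha_6')$ under the standing hypothesis $\omega_1'<\omega_6'$, both normalizations yield the same valuation and initial-form formulas.
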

\begin{proof} Equation~\eqref{eq:hyperelliptic} is obtained from~~\eqref{eq:hypPlanar0Infty} by means of the projective transformation $\varphi(x)\!:=\!   (x-\alpha_1')/\big((\alpha_1'-\alpha_6')(x-\alpha_6')\big)$ and replacing   $y$ with $\displaystyle{y/\Big (x-\alpha_6')^3\!\!\sqrt{u\!\!\!\prod_{1\leq k\leq 5}\!\!\!(\alpha_k'-\alpha_6')}\Big )}$.
  \end{proof}

As discussed in~\autoref{sec:faithf-trop-trop}, the na\"ive tropicalization $\Trop\,V(g)$ induced by~\eqref{eq:hypPlanar0Infty}  is almost never faithful. Our goal in this section is to produce faithful re-embeddings in $(K^*)^3$ for all seven witness regions, both at the level of minimal and extended Berkovich skeleta. We will make full use of the techniques developed in~\autoref{sec:faithf-trop-trop}, in particular \autoref{lm:projections}, which describe these re-embedded tropical curves by means of the three coordinate projections. 

As we will see, except for Type (II), faithfulness can be achieved in the $XZ$-plane, since the relative interior of the  cell $\sigma_4$ from~\eqref{eq:cellsModif} will contain no point from the re-embedded tropical curve $\Trop\, V(I_{g,f})$. For this reason, we postpone the treatment of Type (II) to the end of this section. Furthermore, a refined algebraic lift of the tropical polynomial $F=\max\{Y , A + X , B +2X\}$ from~\eqref{eq:F} will yield faithfulness on the extended skeleta for Types (I) and (III). 

The rest of this section is organized as follows. We start by giving a complete description of vertices, edges and tropical multiplicities of the $xy$-tropicalizations, whose Newton subdivisions are shown in the middle column of~\autoref{tab:nonFaithfulness} and in~\autoref{fig:missingTypeVI}. We do so by calculating various initial forms of the input hyperelliptic equation $g(x,y)$.  The explicit values will depend on the genericity of the branch points $\alpha_2,\ldots, \alpha_5$ and the relation between the expected valuations of all coefficients in  $g$ and their actual valuations. These computations allow us to  determine the function $f(x,y)$ from~\eqref{eq:liftF} appearing in \autoref{thm:faithfulRe-embedding}. \autoref{lm:liftF} confirms the validity of $f$ as a lift of the tropical polynomial $F$. A refined choice $\tilde{f}(x,y)$ of this function, described in~\eqref{eq:liftFtilde}, will allow us to control the combinatorics of the re-embedded tropical curves and achieve faithfulness on the extended skeleta on certain types of curves.
Propositions~\ref{pr:heightsXZ},~\ref{pr:refinedHeights} and \autoref{lm:TypeVINS} analyze the combinatorics of the $xz$-tropicalizations, visible on the right-column of~\autoref{tab:nonFaithfulness}. The description of the $yz$-tropicalizations for each type is done on separate subsections.

\smallskip

In order to find the appropriate lift $f(x,y)$ of the tropical polynomial $F$, we must first predict the Newton subdivision of~\eqref{eq:hypPlanar0Infty} for each witness region. This is done by computing the expected heights of all monomials (i.e., the negative valuation of the coefficients) in the Newton polytope of $g(x,y)$ in terms of $\underline{\omega}$:
\begin{equation*}\label{eq:expHts}
  \tht(x^5)\!=\!\tht(y^2)\!=\!0 \, ,\; \tht(x) \!=\! \sum_{i=2}^5 \ww_i \,
  , \;\tht(x^2)\!=\! \sum_{i=3}^5 \ww_i \, ,\;\tht(x^3) \!=\! \ww_5+\ww_4\;\text{ and }\; \tht(x^4) \!=\! \ww_5.
\end{equation*}
These heights determine the induced subdivision, as seen in~\autoref{tab:nonFaithfulness} and~\autoref{fig:5-7Naive}. Notice that outside Types (I) and (II), the expected heights may not be attained. For example, the coefficient of $x^3$ equals $\alpha_5(\alpha_4 + \alpha_3) + \sum_{i<j<5}\alpha_i\alpha_j$. Unless $\init(\alpha_4) = -\init(\alpha_3)$, its expected height in Type (III) will be achieved. We indicate these situations by red  points in the Newton polytopes. Nonetheless, these special situations have no effect on the tropical world: they will only unmark the given lattice point.

The expected heights determine all vertices in  $\Trop\, V(g)$ from~\autoref{tab:nonFaithfulness} and~\autoref{fig:5-7Naive}:
\[v_1 \!=\! (\ww_2, \ww_2 + \frac{\ww_3+\ww_4+\ww_5}{2})\,,\, v_2 \!=\! v_1 + (\ww_3-\ww_2)\mathbf{1}, v_3 \!=\! (\ww_4, 2\ww_4+\frac{\ww_5}{2}), v_4  \!=\! v_3 + (\ww_5-\ww_4) (1,2).
\]
Unless $v_1=v_2$, the edge $e_{12}$ joining $v_1$ and $v_2$ has tropical multiplicity 2. Similar behavior occurs for the edge $e_{34}$ joining $v_3$ and $v_4$.
Notice that the combinatorial types for $\Trop\, V(g)$ are all distinct, except for Types (II) and (III). However, these two differ as \emph{tropical cycles}, since the tropical multiplicities of the vertex $v_2$ are distinct: it is one for Type (III) but two for Type (II). This follows by computing the initial degenerations with respect to $v_2$:
\[
 \init_{v_2}(g) = y^2 + x^2\init(\alpha_5)(x-\init(\alpha_3))(x-\init(\alpha_4)) \in \resK[x^{\pm}, y^{\pm}].
 \]
 Indeed, $\init_{v_2}(g)$ is irreducible if and only if $\init(\alpha_3)\neq \init(\alpha_4)$. This holds for Type (III) but fails for Type (II) as~\autoref{tab:CombAndLengthData} indicates. In the latter case, $\init_{v_2}(g)$ has two reduced components, so $m_{\trop}(v_2) = 2$.
  \begin{table}
    \centering
{\renewcommand{\arraystretch}{0.35}}
  \begin{tabular}{|c|c|c|}
    \hline
    \small{Cells and skeleta}\normalsize& Na\"ive tropicalization
        &  xz-tropicalization     \\ 
    \hline
    \hline 
                         \multirow{2}{*}
                                 {(I)}      &   \multirow{4}{*}
                                 {\scalebox{0.8}{\includegraphics[scale=0.45]{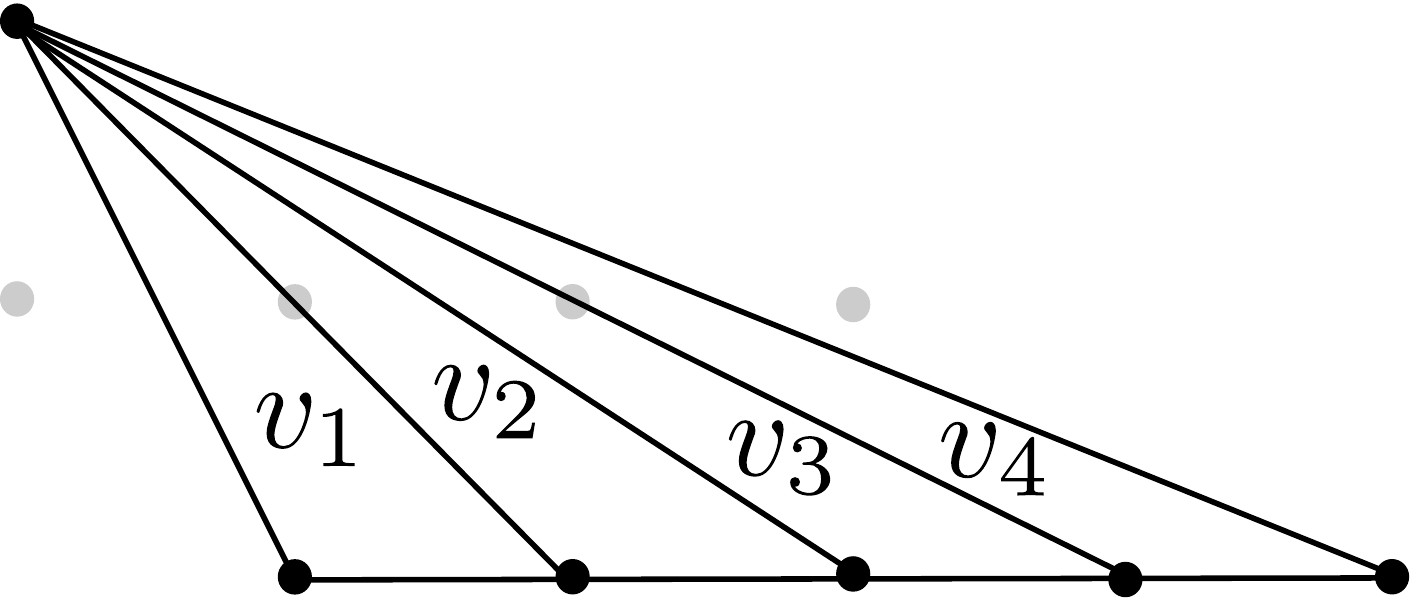}}}
                                 &  
                                 \multirow{4}{*}
                                 {\scalebox{0.8}{\includegraphics[scale=0.45]{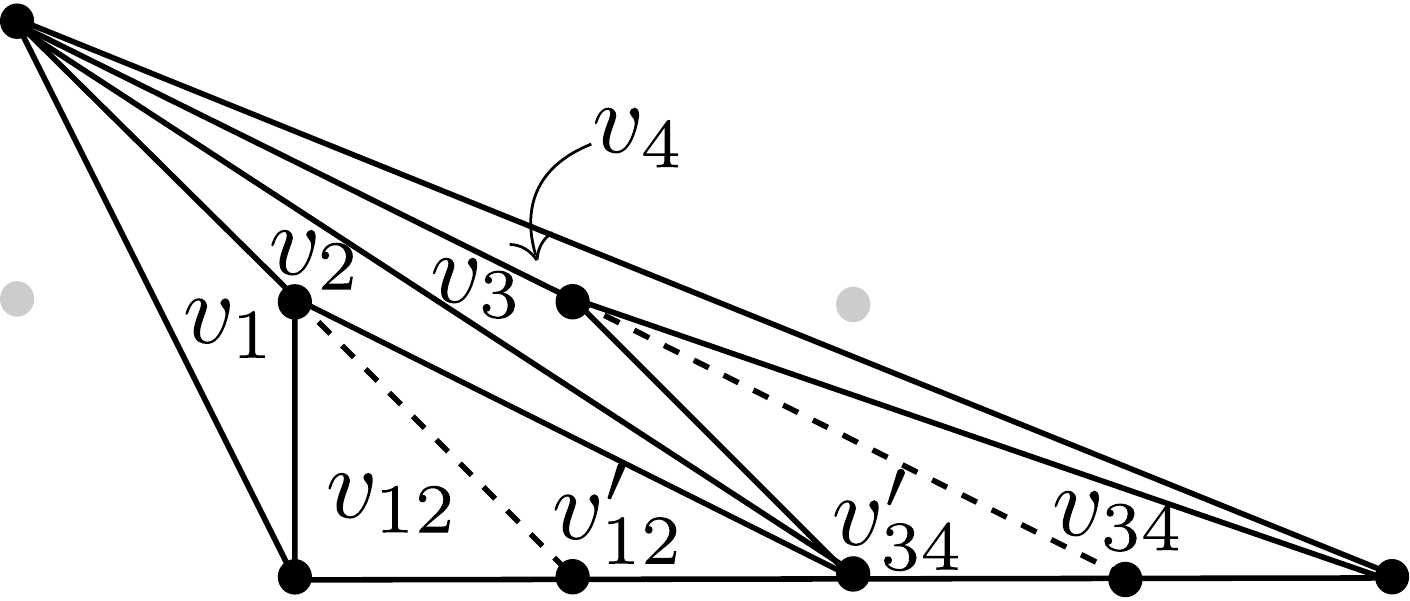}}}      \\
                                 & &   \\
                                                     \multirow{2}{*}{ \includegraphics[scale=0.3]{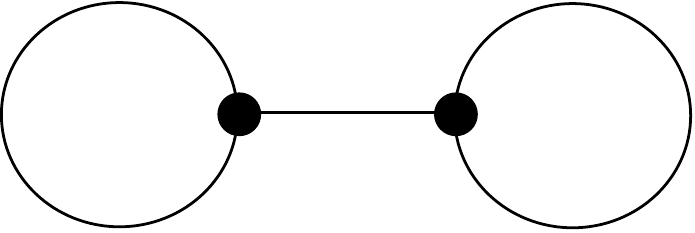}} & &  \\
    & & \\ && \\
             \hline
                         \multirow{2}{*}
                                 {(II)} &   \multirow{4}{*}
                                 {\scalebox{0.8}{\includegraphics[scale=0.45]{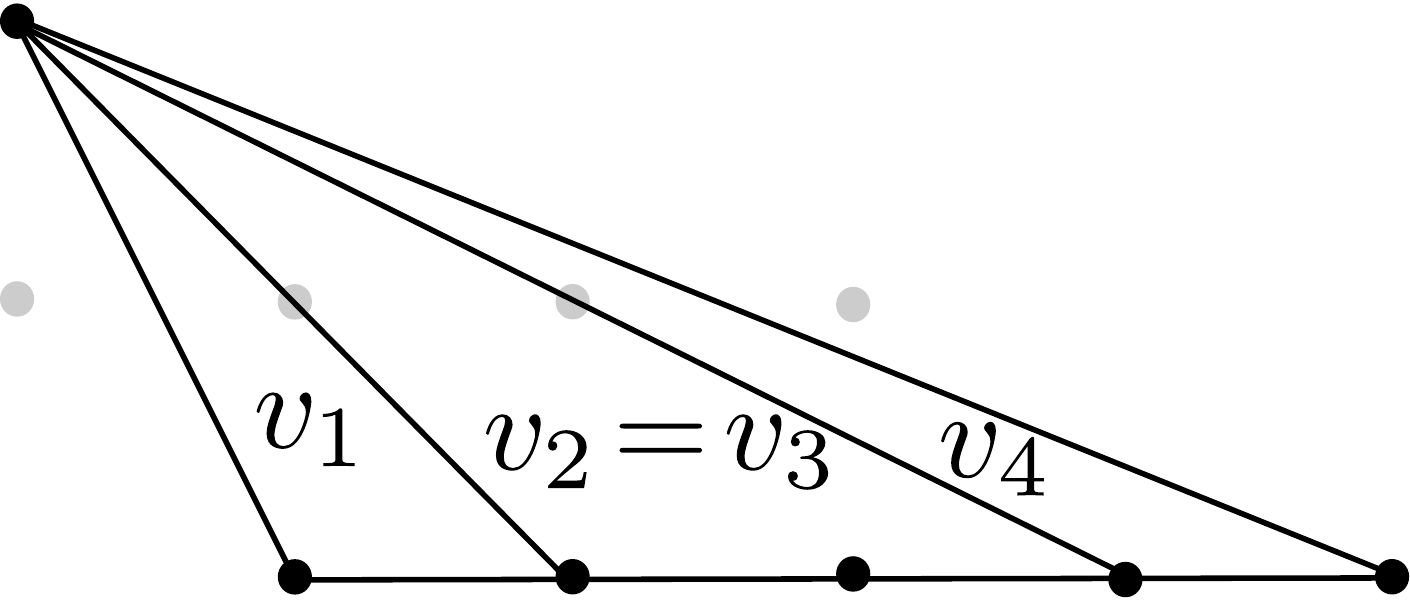}}}
                                 &  
                                 \multirow{4}{*}
                                 {\scalebox{0.8}{\includegraphics[scale=0.45]{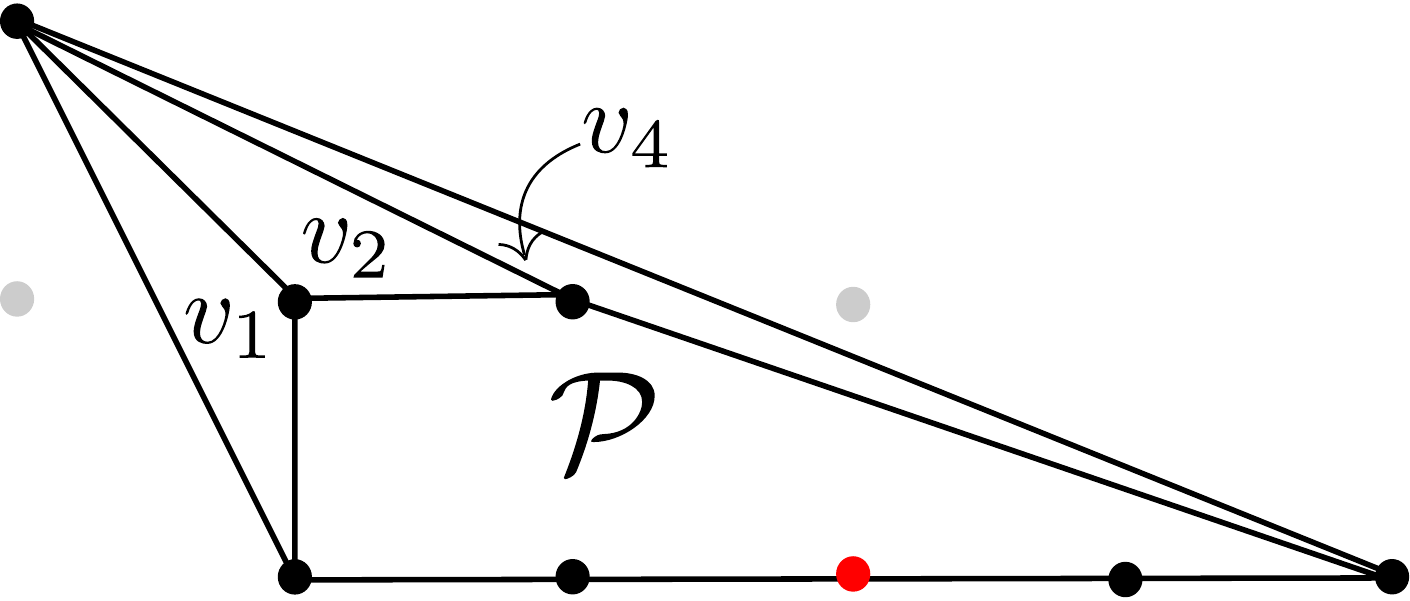}}} 
                                  \\
                                             & &   \\
 \multirow{2}{*}{ \includegraphics[scale=0.3]{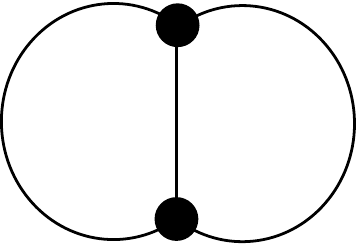}}                                 & &  \\
    & & \\ && \\
             \hline

                         \multirow{2}{*}
                                 {(III)} &   \multirow{4}{*}
                                 {\scalebox{0.8}{\includegraphics[scale=0.45]{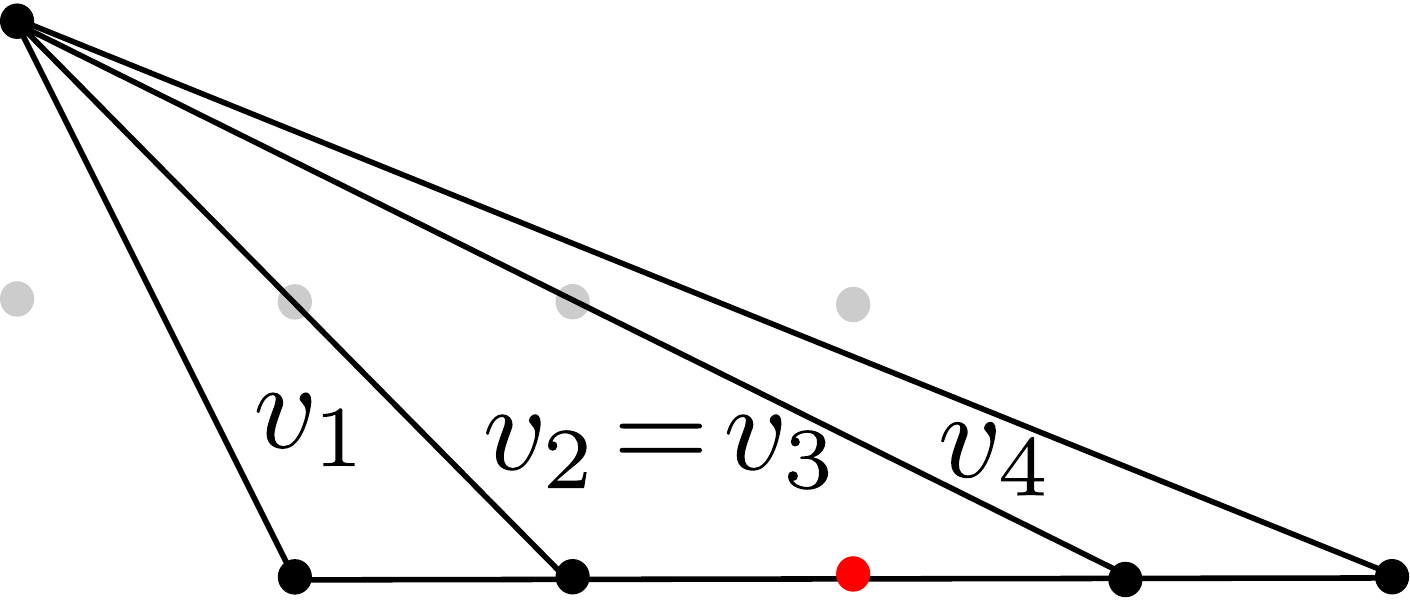}}}
                                 &  
                                 \multirow{4}{*}
                                 {\scalebox{0.8}{\includegraphics[scale=0.45]{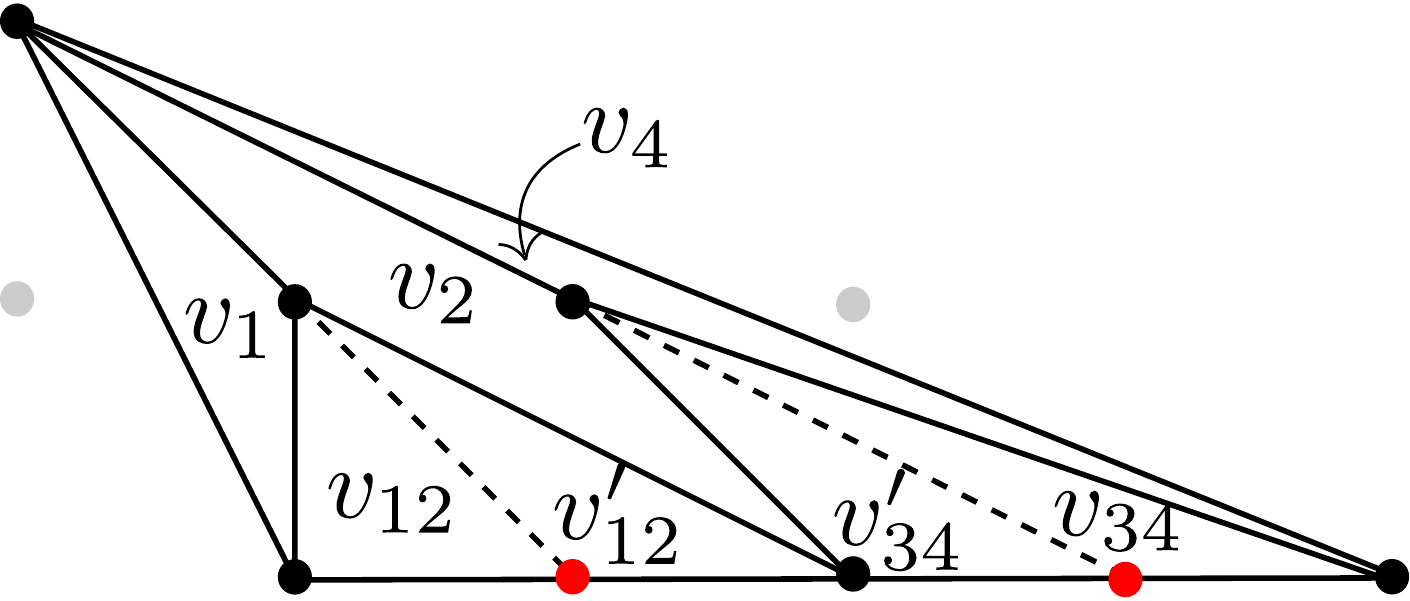}}} 
                                  \\
                                 & &   \\
                             \multirow{2}{*}{ \includegraphics[scale=0.3]{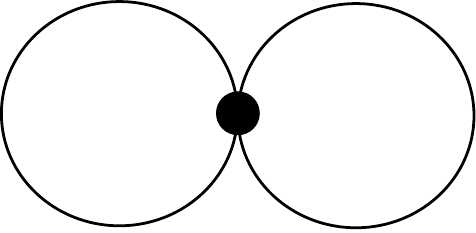}}    & &  \\
                                 
    & & \\ && \\
             \hline
                         \multirow{2}{*}
                                 {(IV)} &   \multirow{4}{*}
                                 {\scalebox{0.8}{\includegraphics[scale=0.45]{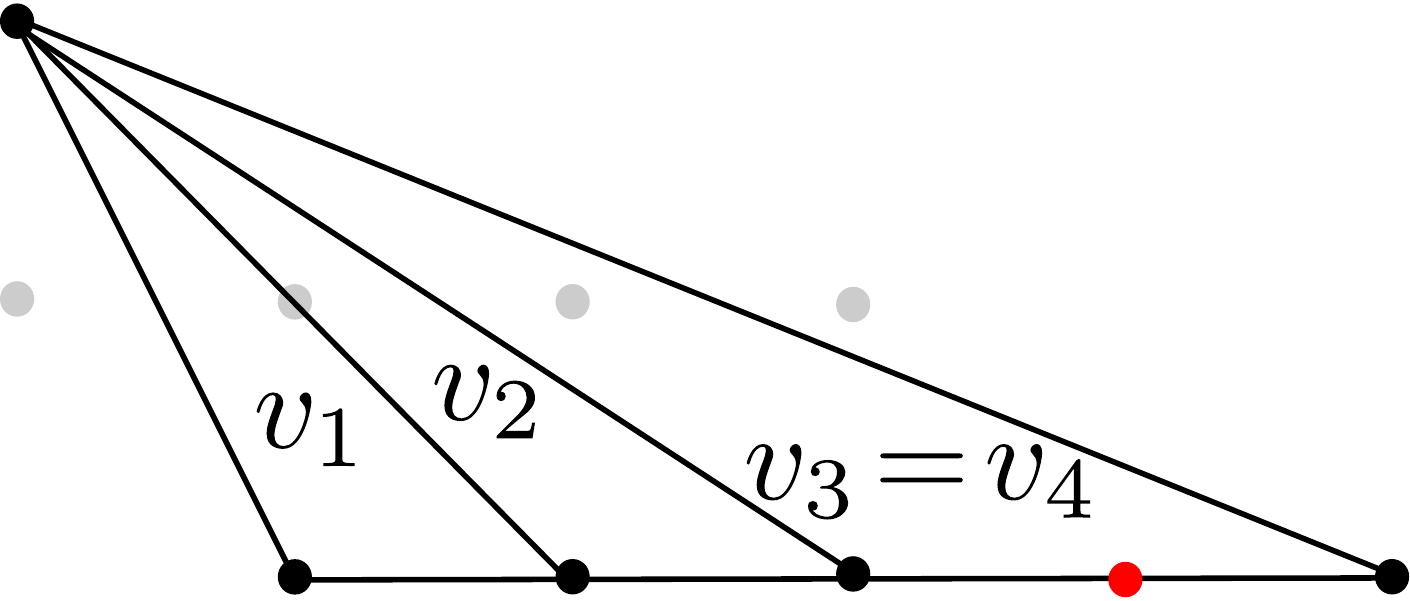}}}
                                 &  
                                 \multirow{4}{*}
                                 {\scalebox{0.8}{\includegraphics[scale=0.45]{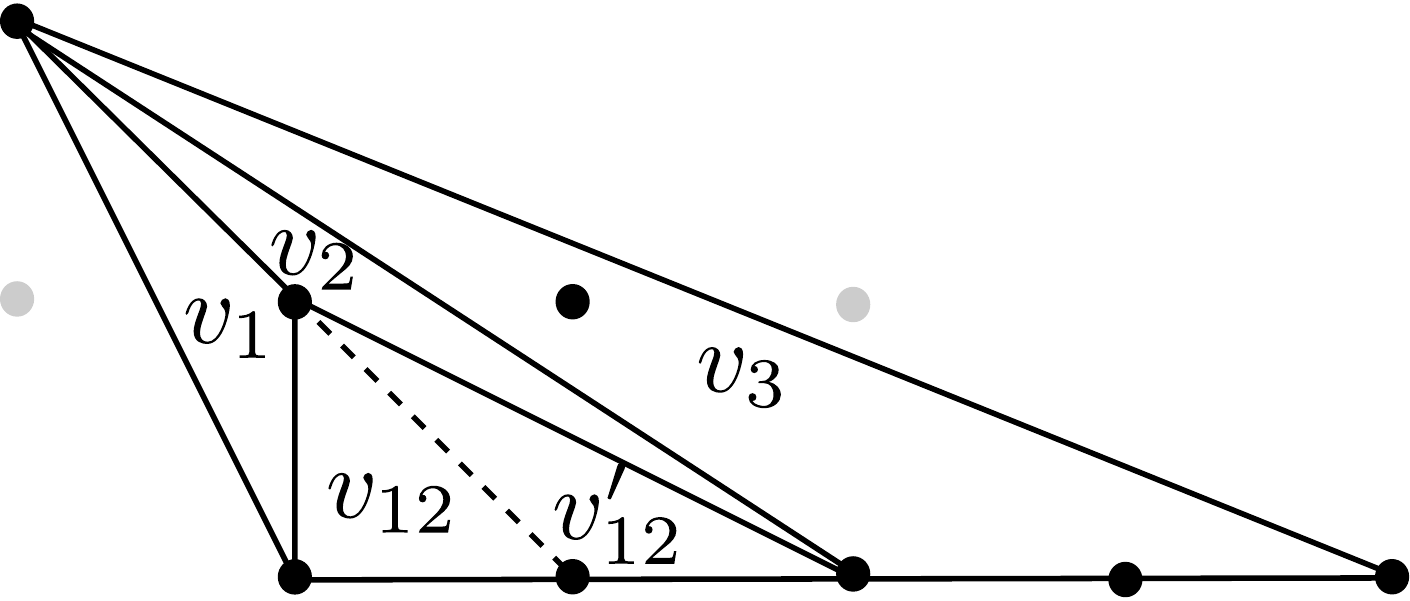}}} 
                                  \\
                                 & &   \\
                              \multirow{2}{*}{ \includegraphics[scale=0.3]{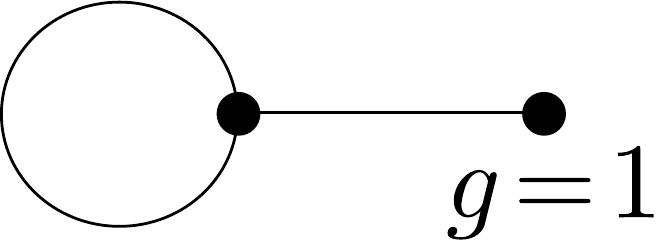}}   & &  \\
    & & \\ && \\
             \hline

                                      \multirow{2}{*}
                                 {(VI)} &   \multirow{4}{*}
                                 {\scalebox{0.8}{\includegraphics[scale=0.45]{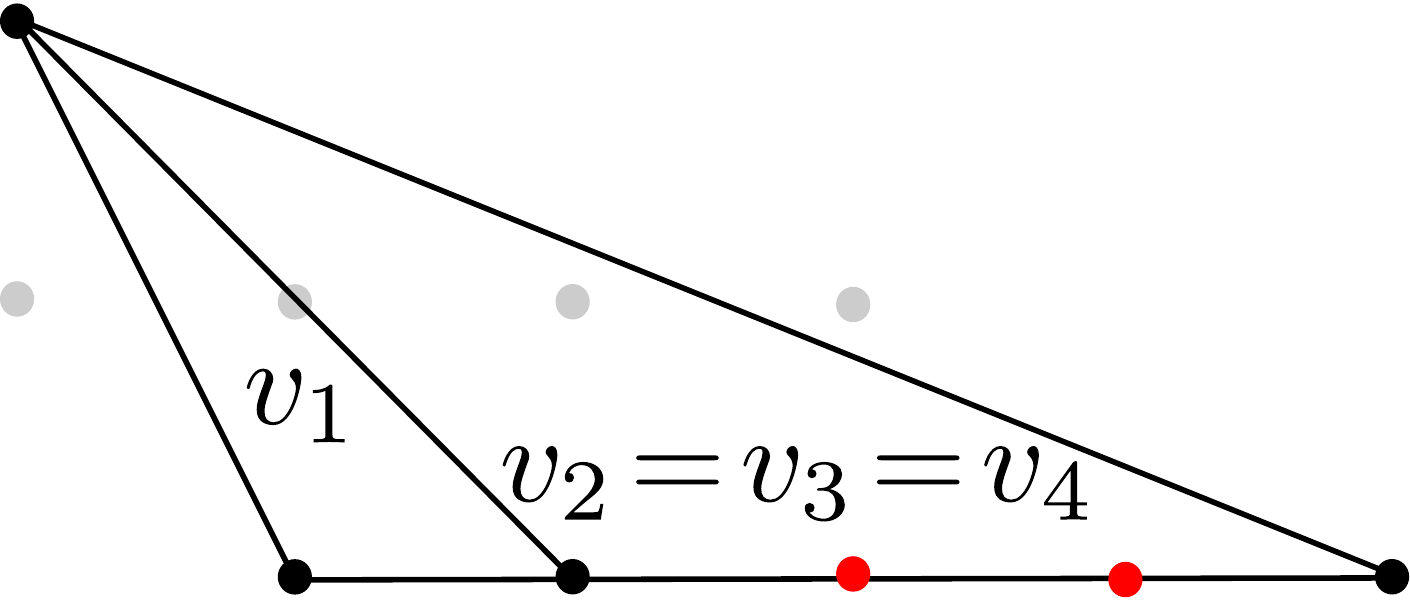}}}
                                 &  
                                 \multirow{4}{*}
                                 {\scalebox{0.8}{\includegraphics[scale=0.45]{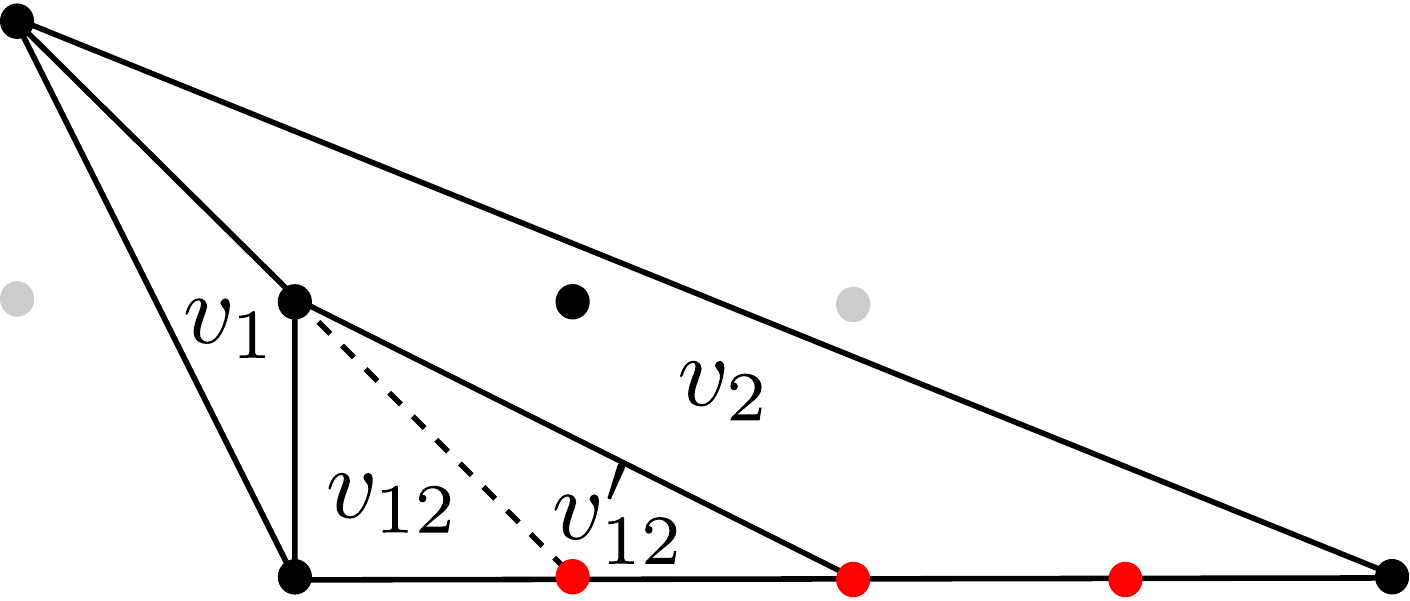}}} 
                                  \\
                                 & &   \\
                     \multirow{2}{*}{ \includegraphics[scale=0.3]{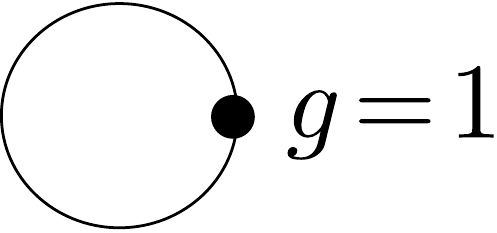}}            & &  \\
    & & \\ && \\
             \hline

  \end{tabular}
\caption{Na\"ive tropicalization for cells (I), (II), (III), (IV) and generic (VI), and planar re-embeddings described by Newton subdivisions. All planar re-embeddings are faithful except for Type (II). The polygon $\mathcal{P}$ will be further subdivided, as in~\autoref{ss:Theta}. The dashed edges correspond to the refined lift $\tilde{f}$~\eqref{eq:liftFtilde} of the tropical polynomial $F$.  The red points' heights might be lower than expected for special choices of $\alpha_2,\ldots, \alpha_5$. The grey  points have height $-\infty$. All vertices are described in~\eqref{eq:vertices} and~\eqref{eq:verticesRef}.\label{tab:nonFaithfulness}}  \end{table}

The tropical polynomial $F$ from~\eqref{eq:F} associated to  $A:=(\ww_3+\ww_4+\ww_5)/2$ and $B:=\ww_5/2$ contains all  vertices of $\Trop\, V(g)$ and the edges between them.
Our choice of lifting for $F$ is governed by the initial degenerations of $\Trop \,V(g)$ along the (possibly degenerate) multiplicity two edges $e_{12}=\overline{v_1v_2}$ and $e_{34}=\overline{v_3v_4}$. Whenever these edges have positive length, the method unfolds them and produces loops in the re-embedded tropical curve, as in~\cite[Theorem 3.4]{cue.mar:16}. We propose:
\begin{equation}\label{eq:liftF} 
  f(x,y) := y - \sqrt{-\alpha_3\,\alpha_4\,\alpha_5}\; x +\sqrt{-\alpha_5}\; x^2.
\end{equation}

Since $\init_{e_{12}}(g) = y^2 + \init(\alpha_3\alpha_4\alpha_5)\,x^2$ and $\init_{e_{34}}(g) = y^2 + \init \alpha_5 \, x^4$ we verify:
\begin{lemma}\label{lm:liftF} The polynomial $f$ from~\eqref{eq:liftF} is a lifting of $F$ and its initial degenerations $\init_{e_{12}}(f)$ and $\init_{e_{34}}(f)$ are irreducible components of $\init_{e_{12}}(g)$ and $\init_{e_{34}}(g)$, respectively.  
\end{lemma}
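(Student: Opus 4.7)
The plan is to verify the two assertions separately, both by direct computation from the formula for $f$ in \eqref{eq:liftF} together with the definitions of $A$ and $B$.

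First, to confirm that $f$ lifts $F$, I would compute the valuations of the three coefficients of $f$. Since $\val(1)=0$, $\val\!\bigl(\sqrt{-\alpha_3\alpha_4\alpha_5}\bigr)=\tfrac{1}{2}(\val\alpha_3+\val\alpha_4+\val\alpha_5)=-A$, and $\val\!\bigl(\sqrt{-\alpha_5}\bigr)=-\tfrac{1}{2}\ww_5=-B$, we get
\[
\trop(f)(X,Y)=\max\bigl\{Y,\,A+X,\,B+2X\bigr\}=F(X,Y),
\]
as required. Here I use that $\charF\widetilde K\neq 2$ ensures the square roots exist in $K$ after passing to a suitable extension (or equivalently, we may choose fixed branches once and for all).

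Second, for each of the edges $e_{12}$ and $e_{34}$, I would identify which monomials of $F$ attain the maximum and then restrict $f$ to those terms. The edge $e_{12}$ lies on the locus $\{Y=A+X\geq B+2X\}$, so on $e_{12}$ only the $y$-term and the linear $x$-term of $f$ are active. Hence
\[
\init_{e_{12}}(f)=y-\sqrt{-\init(\alpha_3\alpha_4\alpha_5)}\;x.
\]
Since $\charF \widetilde K\neq 2$, the polynomial $\init_{e_{12}}(g)=y^2+\init(\alpha_3\alpha_4\alpha_5)x^2$ factors over $\widetilde K$ as
\[
\bigl(y-\sqrt{-\init(\alpha_3\alpha_4\alpha_5)}\,x\bigr)\bigl(y+\sqrt{-\init(\alpha_3\alpha_4\alpha_5)}\,x\bigr),
\]
and $\init_{e_{12}}(f)$ is one of the two irreducible factors. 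An analogous computation on $e_{34}$, where the active locus is $\{Y=B+2X\geq A+X\}$, gives $\init_{e_{34}}(f)=y-\sqrt{-\init\alpha_5}\;x^2$, which again factors $\init_{e_{34}}(g)=y^2+(\init\alpha_5)\,x^4$ into two components over $\widetilde K$.

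The only subtle point is to make the choice of square root compatible between $f$ and the factorization of the initial form of $g$, but this is automatic: by definition $\init(\sqrt{-\alpha_3\alpha_4\alpha_5})$ is a square root of $-\init(\alpha_3\alpha_4\alpha_5)$ in $\widetilde K$, and similarly for $\sqrt{-\alpha_5}$, so the coefficients of $\init_{e_{12}}(f)$ and $\init_{e_{34}}(f)$ literally appear as the roots used in the factorizations of $\init_{e_{12}}(g)$ and $\init_{e_{34}}(g)$. No further obstacle is anticipated; the lemma reduces to tracking valuations and matching coefficients through the chosen square roots.
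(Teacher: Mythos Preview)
Your proposal is correct and follows exactly the same approach as the paper: the paper simply records the two initial forms $\init_{e_{12}}(g)=y^2+\init(\alpha_3\alpha_4\alpha_5)x^2$ and $\init_{e_{34}}(g)=y^2+\init(\alpha_5)x^4$ and leaves the rest as an immediate verification, which is precisely the valuation-and-factorization check you spell out. One cosmetic point: the $x^2$-coefficient in $f$ carries a plus sign, so $\init_{e_{34}}(f)=y+\sqrt{-\init\alpha_5}\,x^2$; this is harmless since either sign names a factor of $\init_{e_{34}}(g)$ and the square root is only determined up to sign anyway.
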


The next result recovers the Newton subdivision of the polynomial
\begin{equation}\label{eq:gtilde}
  \tilde{g}(x,z)\!:=\!g(x,z+\beta_3\beta_4\beta_5\,x-\beta_5\,x^2)\quad \text{ where }\; \alpha_i\!=\!\beta_i^2 \text{ for } i=2,3,4 \text{ and }\alpha_5\! =\! -\beta_5^2. 
\end{equation}
generating the  ideal $I_{g,f}\cap K[x^\pm, z^\pm]$ from~\autoref{tab:nonFaithfulness}:
\begin{proposition}\label{pr:heightsXZ} For Types (I), (III), (IV) and (VI), the  expected heights of $\tilde{g}(x,z)$ are:
      \[\begin{aligned}
    \tht(z^2)&\!=\tht(x^5)=0\;,\quad\tht(xz)\! =\! \frac{\ww_5\!+\!\ww_4\!+\!\ww_3}{2}\;,\quad\tht(x^2z)\! =\! \frac{\ww_5}{2}\;,\quad \tht(x) \!=\! \ww_5\!+\!\ww_4\!+\!\ww_3\!+\!\ww_2
    ,\\
\tht(x^3) &= \ww_5+\ww_4 \;,\qquad\tht(x^4)=\ww_4\;,\qquad\qquad\quad\tht(x^2)=\ww_5+\ww_4+\ww_3.
  \end{aligned}
  \]
  The expected heights for $z^2, x^5, xz$ and $x^2z$ are always achieved. For the remaining monomials, genericity conditions need to be imposed for  Types (III) and (VI)  (see~\autoref{tab:nonFaithfulness}.)
\end{proposition}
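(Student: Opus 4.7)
The plan is a direct calculation: substitute $y = z + \beta_3\beta_4\beta_5\,x - \beta_5\,x^2$ into $g(x,y) = y^2 - x\prod_{i=2}^5(x-\alpha_i)$ from~\eqref{eq:hypPlanar0Infty}, then read off $\tilde{g}(x,z)$ monomial by monomial. Writing $A:=\beta_3\beta_4\beta_5$ and $B:=-\beta_5$, the expansion $y^2 = z^2 + 2Az\,x + 2Bz\,x^2 + A^2 x^2 + 2AB\,x^3 + B^2 x^4$ combined with $x\prod(x-\alpha_i) = x^5 - e_1 x^4 + e_2 x^3 - e_3 x^2 + e_4 x$ (where $e_k$ denotes the $k$-th elementary symmetric function in $\alpha_2,\alpha_3,\alpha_4,\alpha_5$), together with the identities $A^2 = -\alpha_3\alpha_4\alpha_5$ and $B^2 = -\alpha_5$, produces
\begin{equation*}
  \tilde{g}(x,z) = z^2 + 2\beta_3\beta_4\beta_5\,xz - 2\beta_5\, x^2 z - e_4\,x + (e_3 - \alpha_3\alpha_4\alpha_5)\,x^2 + (2\beta_3\beta_4\alpha_5 - e_2)\,x^3 + (e_1 - \alpha_5)\,x^4 - x^5.
\end{equation*}

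The heights are then negatives of $\val$ of these coefficients. Four of them---$z^2,\,x^5,\,xz$ and $x^2z$---are monomial expressions in the $\alpha_i$ and $\beta_i$, so their valuations are unambiguous and the expected heights are automatically achieved. Similarly, $-e_4 = -\alpha_2\alpha_3\alpha_4\alpha_5$ is a single monomial, yielding $\tht(x) = \ww_2+\ww_3+\ww_4+\ww_5$ without further work. The coefficient of $x^4$ is $e_1 - \alpha_5 = \alpha_2+\alpha_3+\alpha_4$: the substitution has been tailored so that the leading $\alpha_5$ cancels exactly, giving $\tht(x^4) = \max\{\ww_2,\ww_3,\ww_4\}=\ww_4$ with no genericity requirement on the branch points.

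It remains to analyze the coefficients of $x^2$ and $x^3$. I would expand $e_3 - \alpha_3\alpha_4\alpha_5 = \alpha_2(\alpha_3\alpha_4+\alpha_3\alpha_5+\alpha_4\alpha_5)$ and $2\beta_3\beta_4\alpha_5 - e_2$ into their monomial summands, and apply the ultrametric triangle inequality to locate the dominant summand under the assumption $\ww_2\le\ww_3\le\ww_4\le\ww_5$. In each sum, the expected height equals the maximum of the individual $-\val$'s, and is attained whenever the corresponding initial forms do not cancel. I would then case-split according to~\autoref{tab:CombAndLengthData}: in Type~(I) the strict inequalities among the $\ww_i$ isolate a unique dominant term, so no cancellation can occur and all expected heights are achieved; in Types~(III), (IV) and (VI), partial equalities among the $\ww_i$ open the door for the dominant initial forms to sum to zero, but the stipulations on the $\init(\alpha_i)$ in the table (such as $\init(\alpha_3)\neq\init(\alpha_4)$ for Type~(III)) generically prevent this. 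The main obstacle is the bookkeeping: tracking which red-marked lattice points in~\autoref{tab:nonFaithfulness} correspond to non-generic sub-configurations in which an additional cancellation does drop the height, and verifying that such drops only unmark the corresponding lattice points without altering the induced Newton subdivision of $\tilde{g}$, and hence leave the combinatorics of the $xz$-tropicalization intact.
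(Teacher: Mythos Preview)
Your approach mirrors the paper's: compute the coefficients of $\tilde{g}(x,z)$ explicitly and then analyze their valuations. The paper invokes a \texttt{Singular} computation where you expand by hand; the resulting formulas agree (up to an overall sign).

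There is, however, a genuine error in your treatment of $x^4$. You write that $\tht(x^4)=\ww_4$ ``with no genericity requirement on the branch points,'' arguing that the substitution cancels the leading $\alpha_5$ exactly. That cancellation is correct, but the remaining coefficient $\alpha_2+\alpha_3+\alpha_4$ can still suffer further cancellation. In Type~(III) (and likewise Type~(VI)) one has $\ww_3=\ww_4$, so the expected height $\ww_4$ is attained only when $\init(\alpha_3)+\init(\alpha_4)\neq 0$. This is \emph{not} implied by the type-defining condition $\init(\alpha_3)\neq\init(\alpha_4)$ from \autoref{tab:CombAndLengthData}: nothing prevents $\init(\alpha_3)=-\init(\alpha_4)$. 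The paper records exactly this as a genericity requirement for $x^4$.

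This exposes a broader confusion in your final paragraph. The conditions in \autoref{tab:CombAndLengthData} carve out the witness regions $\Omega^{(i)}$ --- they determine which curves are of the given type --- but they are not the genericity conditions for achieving the expected heights. The latter are separate polynomial non-vanishing conditions on the initial forms (for instance $\init(\alpha_5)(\init(\beta_3)-\init(\beta_4))^2 + \init(\alpha_3)\init(\alpha_4)\neq 0$ for $x^3$ in Type~(VI)) that must be extracted from your expanded coefficients, case by case. Your plan to ``case-split according to \autoref{tab:CombAndLengthData}'' is the right structure, but the output of that case-split should be a fresh list of such conditions, not a verification that the type-defining inequalities already suffice. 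Incidentally, Type~(IV) needs no genericity at all: since $\ww_3<\ww_4$ there, each of $x^2,x^3,x^4$ has a unique dominant term.
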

\begin{proof}
An explicit computation with \texttt{Singular} (see the Supplementary material):
reveals that the coefficients of  $\tilde{g}$ from~\eqref{eq:gtilde} equal:
  \[
  \begin{aligned}
    \cf(x^5) &=-\cf(z^2) =1\;,\quad
  \cf(xz) =-2\beta_3\beta_4\beta_5\;,\quad
  \cf(x^2z) = 2\beta_5\;,\\
  \cf(x^4) &= -\alpha_2-\alpha_3-\alpha_4\;,\quad\cf(x^3) = \alpha_5(\beta_3-\beta_4)^2+\alpha_3 \alpha_4 + \alpha_2(\alpha_3+\alpha_4+ \alpha_5)\;,\\
  \cf(x^2) &= - \alpha_2((\alpha_3+\alpha_4)\alpha_5 +\alpha_3\alpha_4 )
  \;,\quad \cf(x) = \alpha_2\alpha_3\alpha_4\alpha_5\;.
  \end{aligned}
  \]
  The characterization of each witness region  in~\autoref{tab:CombAndLengthData} gives both the expected heights for each relevant monomial and the genericity conditions requiered to achieve them:
\begin{description}
  \item [{$\mathbf{x^4}$}] $\init(\alpha_3)+\init(\alpha_4) \neq 0$ for (III) or (VI),
  \item [$\mathbf{x^3}$] $\init(\alpha_5)(\init(\beta_3)-\init(\beta_4))^2 + \init(\alpha_3)\init(\alpha_4) \!\neq 0$ for (VI),
  \item [$\mathbf{x^2}$]  $\init(\alpha_3)\!+\!\init(\alpha_4)\!\neq\!0$ for (III); $(\init(\alpha_3)\!+\!\init(\alpha_4))\init(\alpha_5) \!+\!\init(\alpha_3)\init(\alpha_4)\!    \neq\! 0$ for (VI).\qedhere
\end{description}
\end{proof}
\noindent
The previous result, together with the characterization of  all six maximal cells of $\Trop\, V(f)$ in~\eqref{eq:cellsModif} yield explicit formulas for all vertices of the $XZ$-projections depicted in~\autoref{tab:nonFaithfulness}:
\begin{equation}\label{eq:vertices}
   \begin{minipage}{0.55\textwidth}
  \[
  \begin{aligned}
      v_1 &= (\ww_2,\, \ww_2\!+\!\frac{\ww_3\!+\!\ww_4\!+\!\ww_5}{2},\, \ww_2\!+\!\frac{\ww_3\!+\!\ww_4\!+\!\ww_5}{2}),\\
v_{12}&=v_{12}' = v_1 +(\ww_3-\ww_2)/2 \,(1,1,0)    \;,\\
v_{34} &= v_{34}' \!=\! v_3 + (\ww_5-\ww_4)/2\, (1,2,1)\;,  \end{aligned}
  \]
     \end{minipage}
   \begin{minipage}{0.35\textwidth}
     \[\begin{aligned}
     v_3\! &= (\ww_4,\, 2\ww_4\!+\!\frac{\ww_5}{2},\, 2\ww_4\!+\!\frac{\ww_5}{2}),\\
     v_2 &= v_1 + (\ww_3-\ww_2) \mathbf{1}\;, \\
     v_4  & = v_3 + (\ww_5-\ww_4) (1,2,2).
        \end{aligned}
    \]  \end{minipage}
\end{equation}
The formulas for $v_{12}$ and $v_{34}$ are valid for Types (III) and (VI)
only generically.  Furthermore, the description of Type (VI) curves done in~\autoref{tab:nonFaithfulness} is only generic.~\autoref{fig:missingTypeVI} shows the combinatorial types of $\Trop\, V(\tilde{g})$ for special configurations of Type (VI). In particular, for this type we can only get a triangle as the dual polygon to $v_2$ in the Newton subdivision of $\tilde{g}(x,y)$ when the coefficients of $x^3$ and $x^4$ are non-generic. We conclude:
\begin{lemma}\label{lm:TypeVINS}
On Type (VI), the initial form $\init_{v_2-(0,0,\lambda)}(\tilde{g}(x,z))$ for any $\lambda>0$ is monomial   only if $\init(\alpha_3)= -\init(\alpha_4)$ and  $2\init(\alpha_5) = \init(\beta_3\beta_4)$.
\end{lemma}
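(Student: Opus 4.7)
The plan is to pin down the support of $\init_{v_2-(0,0,\lambda)}(\tilde{g})$ and then translate monomiality into valuation constraints on the coefficients. Substituting the Type~(VI) equality $\ww_3=\ww_4=\ww_5$ into~\eqref{eq:vertices} places the $(X,Z)$-projection of $v_2$ at $(\ww_3,\tfrac{5}{2}\ww_3)$, so each monomial $x^{a}z^{b}$ of $\tilde{g}$ contributes tropical value $\tht(x^{a}z^{b})+a\ww_3+b\bigl(\tfrac{5}{2}\ww_3-\lambda\bigr)$ at the perturbed weight.

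I would first show that only the three monomials $x^{5}$, $x^{4}$, $x^{3}$ can achieve the maximum tropical value $5\ww_3$ at the perturbed point for $\lambda>0$. Any monomial with $b>0$ loses $b\lambda$ relative to its value at $v_2$, so $z^{2}$, $xz$ and $x^{2}z$ drop strictly below. The monomial $x$, of height $\ww_2+3\ww_3$, contributes $\ww_2+4\ww_3<5\ww_3$. For $x^{2}$, the explicit formula for $\cf(x^{2})$ from the proof of~\autoref{pr:heightsXZ} forces $\val(\cf(x^{2}))\geq -\ww_2-2\ww_3$ after the identical cancellation between the $y^{2}$ and $-x\prod_{i=2}^{5}(x-\alpha_i)$ contributions, so its tropical value is bounded by $\ww_2+4\ww_3<5\ww_3$ as well.

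Since $x^{5}$ has height zero and therefore always contributes $5\ww_3$, the initial form is monomial only if $\val(\cf(x^{4}))>-\ww_3$ and $\val(\cf(x^{3}))>-2\ww_3$. I would read off these cancellations from the coefficient formulas appearing in the proof of~\autoref{pr:heightsXZ}. The coefficient of $x^{4}$ is $\pm(\alpha_2+\alpha_3+\alpha_4)$, and since $\val(\alpha_2)>-\ww_3$ the leading term vanishes precisely when $\init(\alpha_3)+\init(\alpha_4)=0$. Assuming this, the only terms of valuation $-2\ww_3$ in $\cf(x^{3})$ are $\alpha_5(\beta_3-\beta_4)^{2}$ and $\alpha_3\alpha_4$, because $\alpha_2(\alpha_3+\alpha_4+\alpha_5)$ has valuation at least $-\ww_2-\ww_3>-2\ww_3$. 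Expanding the square, substituting $\alpha_i=\beta_i^{2}$ for $i=3,4$, and using $\init(\alpha_3)+\init(\alpha_4)=0$, the vanishing of the leading term reduces to the factored equation $\init(\beta_3\beta_4)\bigl(\init(\beta_3\beta_4)-2\init(\alpha_5)\bigr)=0$. Since $\init(\beta_3\beta_4)\neq 0$, this is equivalent to $\init(\beta_3\beta_4)=2\init(\alpha_5)$, as required.

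The main obstacle is the bookkeeping with the identifications $\alpha_i=\beta_i^{2}$ (and the sign in $\alpha_5=-\beta_5^{2}$) together with the residue-field algebra that extracts the factorization above. Once these are handled, and once one verifies that the identical cancellation in $\cf(x^{2})$ really rules out $x^{2}$ from the candidate list, the lemma follows directly from~\autoref{pr:heightsXZ}.
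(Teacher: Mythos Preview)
Your proposal is correct and follows the same approach as the paper: the lemma is stated immediately after the sentence ``we can only get a triangle as the dual polygon to $v_2$ in the Newton subdivision of $\tilde{g}(x,z)$ when the coefficients of $x^3$ and $x^4$ are non-generic,'' and is meant to follow directly from the coefficient formulas in the proof of \autoref{pr:heightsXZ}. Your write-up simply makes this explicit---identifying $x^3,x^4,x^5$ as the only candidate monomials at $v_2-(0,0,\lambda)$, and carrying out the residue-field factorization $\init(\alpha_5)(\init\beta_3-\init\beta_4)^2+\init(\alpha_3)\init(\alpha_4)=\init(\beta_3\beta_4)\bigl(\init(\beta_3\beta_4)-2\init(\alpha_5)\bigr)$ under $\init(\alpha_3)+\init(\alpha_4)=0$---which the paper leaves to the reader; your remark about the cancellation in $\cf(x^2)$ (the $\pm\alpha_3\alpha_4\alpha_5$ terms from $y^2$ and from the product) is exactly why $x^2$ acquires the factor $\alpha_2$ and drops out.
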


In order to address this non-generic behavior and the combinatorics of $\Trop\,V(\tilde{g})$ for all types discussed in~\autoref{pr:heightsXZ}, it will be convenient to choose a refined lift $\tilde{f}$ of $F$ on Types (I), (III), (IV) and (VI). We define:
\begin{equation}\label{eq:liftFtilde} 
  \tilde{f}(x,y)\! :=\! y - \sqrt{-\alpha_3\,\alpha_4\,\alpha_5}(1 +  t^{\ep})\,x +\sqrt{-\alpha_5}\,(1+\delta t^{\ep'})\, x^2\quad \text{ for }0\!<\!\ep, \ep'\! \ll\! 1,\, 
  \delta \!=\! 0/1.
\end{equation}
By construction,~\autoref{lm:liftF} holds for $\tilde{f}$ as well, and $\Trop\,V(f)\!=\!\Trop\,V(\tilde{f})$. The parameters $\ep, \ep'$  depend on the branch points $\alpha_2,\ldots, \alpha_5$, while the choice of $\delta$ depends solely on the curve type: $\delta=1$ for Types (I) and (III), whereas  $\delta=0$ for (IV) and (VI). Following the notation from~\eqref{eq:gtilde}, the generator $\tilde{g}'$ of the ideal $I_{g,\tilde{f}} \cap K[x^{\pm}, z^{\pm}]$ becomes
\begin{equation}\label{eq:gtilderef}
  \tilde{g}'(x,z)\!:=\!g(x,z+\beta_3\beta_4\beta_5\,(1+t^\ep)x-\beta_5\,(1+\delta t^{\ep'})x^2).
\end{equation}

Our next result shows that when $\ep$ and $\ep'$ are chosen appropriately,  $\tilde{f}$  produces faithfulness on the whole extended skeleton in Types (I) and (III), as~\autoref{tab:nonFaithfulness} indicates.

\begin{proposition}\label{pr:refinedHeights} For Types (I), (III), (IV) and (VI), the coefficients of $\tilde{g}'(x,z)$ and $\tilde{g}(x,z)$ agree with the following five exceptions:
  \[
  \begin{aligned}
  \cf(x^2z) &=\!2\beta_5(1\!+\!\delta t^{\ep'})\,,\; \qquad  \cf(x^2) \!=\! - \alpha_2(\alpha_3\!+\!\alpha_4)\alpha_5 -\alpha_2\alpha_3\alpha_4 + \alpha_3\alpha_4\alpha_5t^\ep (2 + t^{\ep})\,,\\
  \cf(xz)\;\; &=\!-2\beta_3\beta_4\beta_5(1\!+\! t^{\ep}) \,,\;\;
  \cf(x^4) \!=\! -\alpha_2-\alpha_3-\alpha_4 +\alpha_5\delta t^{\ep'}(2+\delta t^{\ep'})\,,\\\cf(x^3)\;\; &= \alpha_5(\beta_3-\beta_4)^2+\alpha_3 \alpha_4 + \alpha_2(\alpha_3+\alpha_4+ \alpha_5) -2\alpha_5\beta_3\beta_4(t^{\ep} + \delta t^{\ep'} + \delta t^{\ep +\ep'})\,.
  \end{aligned}
  \]
  The heights  of $xz$ and $x^2z$ agree with those in~\autoref{pr:heightsXZ}. The expected height of $x^3$ is $\ww_5+\ww_4$ and it is achieved for Type (VI) only when $\val(\alpha_5(\beta_3-\beta_4)^2 + \alpha_3\alpha_4)= -2\ww_4$.
  
  Moreover, if $0<\ep<(\ww_3-\ww_2)/2$ and $0<\ep'<(\ww_5-\ww_4)/2$ (if $\ww_5\neq \ww_4$)), then
  \begin{itemize}
  \item $\tht(x^2) = \ww_5+\ww_4+\ww_3-\ep>\ww_5+\ww_4+\ww_2$ for all four types,
  \item$\tht(x^4)=\ww_5-\ep'$ for Types (I) and (III),
  \item $\tht(x^4) = \ww_4$ for Type (IV), and
    \item $\tht(x^4)\leq \ww_4$ for Type (VI). Equality is achieved if and only if $\init(\alpha_3)\neq -\init(\alpha_4)$.
  \end{itemize}
  \end{proposition}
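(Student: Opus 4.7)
The strategy is a direct algebraic computation followed by a type-by-type valuation analysis. Since $g(x,y)$ depends on $y$ only through the $y^2$ summand, the difference $\tilde{g}'(x,z)-\tilde{g}(x,z)$ equals $y_1^2 - y_0^2 = 2y_0 B + B^2$, where $y_0 := z+\beta_3\beta_4\beta_5 x - \beta_5 x^2$ is the substitution in~\eqref{eq:gtilde}, $y_1 := y_0+B$, and $B := \beta_3\beta_4\beta_5 t^\ep x - \beta_5\delta t^{\ep'} x^2$. Expanding $2 y_0 B + B^2$ in $x$ and $z$ and simplifying with $\beta_3^2=\alpha_3$, $\beta_4^2=\alpha_4$, $\beta_5^2=-\alpha_5$ produces exactly the five coefficient modifications listed in the statement; every other coefficient coincides with those computed in \autoref{pr:heightsXZ}.

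The next step is to verify each height claim. For $xz$ and $x^2z$, the new factors $(1+t^\ep)$ and $(1+\delta t^{\ep'})$ are units, so the heights agree with those in \autoref{pr:heightsXZ}. For $x^3$, each newly added summand carries a factor $t^\ep$ or $t^{\ep'}$ and has valuation at least $-\omega_5-(\omega_3+\omega_4)/2 + \min(\ep,\ep')$; since $(\omega_3+\omega_4)/2\leq \omega_4$, this is strictly greater than the expected $-\omega_5-\omega_4$ and does not affect the height. Hence $\tht(x^3)$ is governed by the original expression $\alpha_5(\beta_3-\beta_4)^2+\alpha_3\alpha_4+\alpha_2(\alpha_3+\alpha_4+\alpha_5)$, and the expected height $\omega_5+\omega_4$ is reached in Type (VI) precisely when the first two summands do not cancel below the threshold $-2\omega_4$, which is the stated genericity criterion. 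For $x^2$, the added summand $\alpha_3\alpha_4\alpha_5 t^\ep(2+t^\ep)$ has valuation $-(\omega_3+\omega_4+\omega_5)+\ep$, and the assumption $\ep<(\omega_3-\omega_2)/2<\omega_3-\omega_2$ forces this strictly below every original summand (each of the form $-\omega_2-\omega_i-\omega_j$ with $\{i,j\}\subset\{3,4,5\}$), so the new term dominates and $\tht(x^2)=\omega_3+\omega_4+\omega_5-\ep>\omega_5+\omega_4+\omega_2$ uniformly across all four types.

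The remaining monomial $x^4$ requires a case split by type. In Types (IV) and (VI) we have $\delta=0$, so the coefficient is unchanged from $\tilde{g}$: for (IV), the chain $\omega_2<\omega_3<\omega_4=\omega_5$ forces $\val(-\alpha_2-\alpha_3-\alpha_4)=-\omega_4$, hence $\tht(x^4)=\omega_4$; for (VI), where $\omega_3=\omega_4$, the same valuation is attained exactly when $\init(\alpha_3)\neq-\init(\alpha_4)$, and is strictly higher (smaller height) otherwise. In Types (I) and (III), $\delta=1$ introduces the new summand $\alpha_5 t^{\ep'}(2+t^{\ep'})$ of valuation $-\omega_5+\ep'$; the bound $\ep'<(\omega_5-\omega_4)/2<\omega_5-\omega_4$ ensures this sits strictly below $-\omega_4$, so it dominates the original coefficient (of valuation $-\omega_4$ in both types) and yields $\tht(x^4)=\omega_5-\ep'$. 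The main technical obstacle will be in Type (VI), where the interplay between the initial-form constraints in~\autoref{tab:CombAndLengthData} and the several possible cancellations inside the original coefficients of $x^2$, $x^3$ and $x^4$ must each be checked against the hypothesis to rule out any unexpected valuation collapse.
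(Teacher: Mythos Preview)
Your proof is correct and follows the same direct-computation approach as the paper, which simply states that ``the result follows by direct computation (see the Supplementary material).'' Your organization via $\tilde{g}'-\tilde{g}=2y_0B+B^2$ is a clean way to isolate exactly the five affected coefficients, and your type-by-type valuation analysis is more explicit than what the paper records in print. One minor remark: for Type~(III) the original coefficient $-\alpha_2-\alpha_3-\alpha_4$ need not have valuation exactly $-\omega_4$ (it can be higher when $\init(\alpha_3)=-\init(\alpha_4)$), but your argument still goes through since the new term $\alpha_5 t^{\ep'}(2+t^{\ep'})$ has valuation $-\omega_5+\ep'<-\omega_4\leq \val(-\alpha_2-\alpha_3-\alpha_4)$ regardless. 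The paper's proof additionally records the convexity inequalities $\tht(x)+\tht(x^3)<2\,\tht(x^2)$ and $\tht(x^3)+\tht(x^5)<2\,\tht(x^4)$, which explain why the specific bounds $(\omega_3-\omega_2)/2$ and $(\omega_5-\omega_4)/2$ (rather than the weaker $\omega_3-\omega_2$ and $\omega_5-\omega_4$ that suffice for the height claims alone) are imposed: they force $x^2$ and $x^4$ to be marked in the Newton subdivision, as drawn in \autoref{tab:nonFaithfulness}.
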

\begin{proof}
  The result follows by direct computation (see the Supplementary material). The conditions on $\ep$ (and $\ep'$ for (I) and (III)) guarantee that the  heights of $x^2$ and  $x^4$ satisfy:
  \[
\tht(x) + \tht(x^3) \leq \tht(x) + \text{exp}\tht (x^3)\!<\!2\tht(x^2) \; \text{ and }   \tht(x^3) + \tht(x^5) \leq  \text{exp}\tht (x^3)   < 2\tht(x^4).
\]
Under these constraints, the point $x^2$  lies above the plane spanned by $x, x^3$ and $xz$ in the extended Newton polygon. Therefore, the triangle in the Newton subdivision with vertices $x$, $x^3$ and $xz$ will be subdivided by an edge joining $xz$ and $x^2$. For Types (I) and (III), our choice of $\ep'$ produces the same effect for  $x^4$ and the facet spanned by $x^3, x^5$ and $x^2z$.
\end{proof}

\autoref{pr:refinedHeights} implies that when the expected height of $x^3$ is attained, the refined modifications  replace $v_{12}$ and $v_{34}$ by two pairs of vertices, as seen in~\autoref{tab:nonFaithfulness}:
\begin{equation}\label{eq:verticesRef}
     v_{12}\! =\! v_1 + \ep(1,1,0)\,,\;   v_{12}' \! =\! v_2 -\ep(1,1,2)\,,\;  v_{34} \!= \! v_4 -\ep'(1,2, 3)\,\text{ and }\;v_{34}'\!  =\! v_3 + \ep'(1,2,1).
\end{equation}

\begin{figure}
  \includegraphics[scale=0.33]{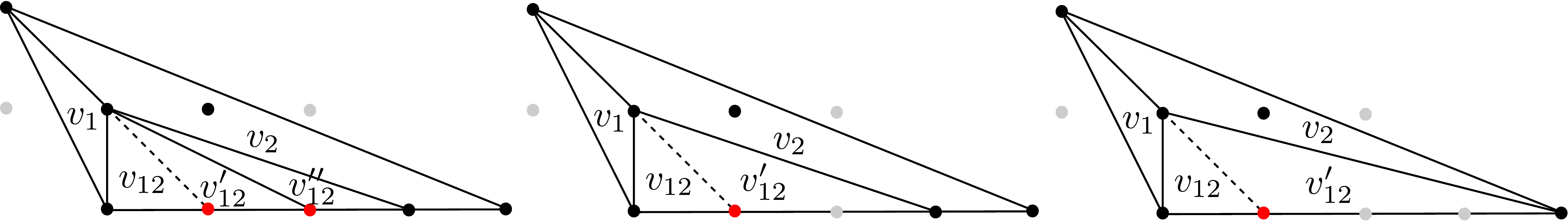}
  \caption{Non-generic $xz$-tropicalizations for Type (VI)  with respect to the height of $x^3$. The rightmost is non-generic with respect to $x^4$ as well.\label{fig:missingTypeVI}}
\end{figure}

\begin{remark}\label{rm:refinedXZ}
The combinatorial types arising from $\tilde{g}(x,z)$ and $\tilde{g}'(x,z)$ for non-generic Type (VI) curves is  more subtle. All possible Newton subdivisions  are shown in~\autoref{fig:missingTypeVI} and they depend on the behavior of $x^3$ and $x^4$. Our bound for $\ep$ given in \autoref{pr:refinedHeights} allows us to split the vertex $v_{12}$ into two or three vertices. There are three cases to analyze:
\begin{enumerate}
  \item 
    When $x^3$ is non-generic but marked and the behavior of $x^4$ is generic (as in the leftmost picture), there will be no high-multiplicity leg in the direction $(0,0,-1)$ and the $xz$-tropicalization will be faithful on the whole extended skeleton. Precise formulas for $v_{12}$, $v_{12}'$ and $v_{12}''$ will depend on the heights of $x^3$ and $x^4$.
  \item When $x^4$ is generic and $x^3$ is unmarked (as in the middle picture), the vertex $v_{12}$ splits into two vertices, with coordinates
    \[v_{12} =v_1+\ep(1,1,0)\;, \qquad v_{12}' = (\ww_4-\ep/2, (5\ww_4/2-\ep)/2, (5\ww_4-3\ep)/2).\]
    A multiplicity two leg in the direction of $(0,0,-1)$ is attached to the vertex $v_{12}'$, so faithfulness on the extended skeleton induced by $\tilde{g}'$ is not guaranteed. If we consider $\tilde{g}$ instead, then $v_{12}=v_{12}'$ and the leg has  multiplicity three. The precise coordinates of $v_{12}$ will depend on the height of $x^3$. 
  \item When $x^4$ and $x^3$ are both non-generic, we cannot predict the combinatorics of the Newton subdivision of $\tilde{g}$. We bypass this difficulty by choosing the refined lift $\tilde{f}$ from~\eqref{eq:liftFtilde} with $\delta=1$ and $\ep, \ep'$ satisfying:
\[0<\ep <\ep'<\min\{\ww_4-\ww_2/2, \ww_4 +\val(\alpha_3+\alpha_4)\}.
\]
In this case, convexity shows that the $xz$-tropicalization of $I_{g,\tilde{f}}$ has a  unique high-multiplicity leg dual to the segment with endpoints $x^2$ and $x^5$, as in the rightmost picture. The remaining legs are adjacent to $v_1$ and $v_4$ and lie in the cells $\sigma_1$ and $\sigma_2$. The heights of $x^2$, $x^3$ and $x^4$ in the right-most picture in~\autoref{fig:missingTypeVI} become $3\ww_4-\ep$, $2\ww_4-\ep'$ and $\ww_4-\ep'$, respectively. Furthermore, the vertices of $\Trop\,V(I_{g,\tilde{f}})$ in $\sigma_6$ are $v_1 = (\ww_2, \ww_2+3\ww_3/2, \ww_2 + 3\ww_3/2)$, $v_4 = (\ww_3, 5\ww_3/2, 5\ww_3/2)$ and
\begin{equation*}\label{eq:nonGenx3x4VI}
  v_{12} = v_1 + \ep(1,1,0),    \quad v_{12}'= v_4 - \ep/3(1, 1, 4).
\end{equation*}
    \end{enumerate}
\end{remark}

\smallskip

The remainder of this section is devoted to the proof of~\autoref{thm:faithfulRe-embedding}, which we do by  a detailed case-by-case analysis.  Following~\cite[Theorem 5.24]{bak.pay.rab:16} we certify faithfulness for $I_{g,f}$ and $I_{g,\tilde{f}}$ by verifying that the tropical multiplicities of all vertices and edges on the tropical (extended) skeleton under the forgetful map equal one.
The Poincar\'e-Lelong formula~\cite[Theorem 5.15]{BPRContempMath} will help us analyze the tropicalizations
\begin{equation}\label{eq:tropMaps}
\trop\colon\Sigma(\mathcal{X})\to \Trop\, V(I_{g,f})\qquad \text{ and} \qquad \trop'\colon \Sigma(\mathcal{X})\to \Trop\,V(I_{g,\tilde{f}})
\end{equation}
where $\Sigma(\mathcal{X})$ denotes the extended skeleton of $\mathcal{X}^{\an}$ with respect to the six branch points. They correspond to the source curves on the left of~\autoref{fig:M2BarAndMumford}.
For all types except (V) and (VII), 
the legs in $\Sigma(\mathcal{X})$ marked with $\alpha_1=0$ and $\alpha_6=\infty$ are mapped isometrically to the legs attached to $v_1$ and $v_4$ with directions $(-2,-1,-1)$ and $(2,5,5)$, respectively.

Whenever faithfulness on $\Sigma(\mathcal{X})$ cannot be achieved via $f$ or $\tilde{f}$, we overcome this issue by employing vertical modifications along tropical polynomials of the form $\trop(x-\alpha_i)$. \autoref{ex:ExTypeII} provides a detailed explanation of our re-embedding methods presented briefly in~\autoref{ex:thetaC}. The Supplementary material includes a complete list of examples (with scripts) for each  combinatorial type, considering generic and special branch point behaviors. The interested reader can simply change the parameters $\alpha_2$, and $\beta_i$'s on the script corresponding to a fixed curve type to produce new examples. 

\subsection{Proof for Type (I)}\label{ss:Dumb} From  the $XZ$-projections of both
$\Trop\, V(I_{g,f})$ and $\Trop\, V(I_{g,\tilde{f}})$ given in~\autoref{tab:nonFaithfulness} we know that the maximal cell $\sigma_4$ does not meet any of these two curves. Thus,  we can ignore the $YZ$-projection when reconstructing the space curves using~\autoref{lm:projections}: it suffices to attach a leg in the direction $(0,-1,0)$ to the vertices $v_1, v_2, v_3$ and $v_4$ in the charts $\sigma_1$ and $\sigma_2$.

From~\autoref{tab:nonFaithfulness}, we see that all vertices and edges in $\Trop\,V(\tilde{g})$ and $\Trop\,V(\tilde{g}')$ have tropical multiplicities one, since their initial degenerations are reduced and irreducible.   This shows that both $xz$-tropicalizations are faithful on the minimal skeleta.
Furthermore, all legs in $\Trop\, V(\tilde{g}')$ have multiplicity one, thus the refined modification induces a faithful tropicalization on the whole tropical curve.  This is not the case for $\Trop\, V(I_{g,f})$ since there are two multiplicity two legs in the direction $(0,0,-1)$.

The tropicalization maps in~\eqref{eq:tropMaps} can be read off from the combinatorics of both re-embedded curves. The  legs attached to $v_1$, $v_2$, $v_3$ and $v_4$  are the isometric images of the legs marked with $\alpha_2, \alpha_3, \alpha_4$ and $\alpha_5$ under the tropicalization maps. These legs get contracted under the $XZ$-projections.~\qed

\subsection{Proof for Type (III)}\label{ss:Fig8}
The $XY$- and $XZ$-projections reveal that $\sigma_4$ intersects both tropical curves $\Trop\, V(I_{g,f})$ and $\Trop \, V(I_{g,\tilde{f}})$ along the ray $\sigma_1\cap \sigma_2\cap \sigma_4$. Thus, we can use~\autoref{tab:nonFaithfulness} to reconstruct the space curves.

All trivalent vertices in the $XZ$-projections of both space curves have tropical multiplicities 1. By~\cite[Corollary 2.14]{cue.mar:16}, we can confirm that $v_2$ has also multiplicity one by showing that the discriminants $\Delta$ of $\init_{v_2}(\tilde{g}(x,z))$ and $\init_{v2}(\tilde{g}'(x,z))$ do not vanish. The explicit descriptions of $\tilde{g}(x,z)$ and $\tilde{g}'(x,z)$ from Propositions~\ref{pr:heightsXZ} and~\ref{pr:refinedHeights} give
\[
\Delta \!=\! \init(\cf(xz))\init(\cf(x^2z))-\init(\cf(x^3))\init(\cf(z^2)) = \init(\alpha_5) (\init(\beta_3)+\init(\beta_4))^2 \neq 0.
\]
From the Newton subdivisions, we see that all bounded edges of both $XZ$-projections have tropical multiplicity one, so both planar re-embeddings are faithful on the minimal skeleta. Since all legs on $\Trop \,V(\tilde{g}')$ have also multiplicity one, we conclude that the $XZ$-projection for the refined modification is also faithful on the extended skeleton.

As with Type (I), the tropicalization~\eqref{eq:tropMaps} maps the legs of $\Sigma(\mathcal{X})$ marked by $\alpha_2$ and $\alpha_5$  isometrically onto the leg adjacent to $v_1$ and $v_4$ in the cells $\sigma_1$ and $\sigma_2$. Since $m_{\trop}(v_2)=2$, the legs marked with $\alpha_3$ and $\alpha_4$ are mapped isometrically onto the leg adjacent to $v_2$, so  these tropicalizations in $\RR^3$ are not faithful on the extended skeleta.
This can be repaired in dimension four by a vertical modification along $X=\ww_4$, via the ideal
\begin{equation}\label{eq:fiathfulExtendedIII}
  J = I_{g,\tilde{f}} + \langle u-(x-\alpha_4)\rangle \subset K[x^{\pm}, y^{\pm}, z^{\pm}, u^{\pm}].
\end{equation}
The tropical curve $\Trop\,V(J)$ in $\RR^4$ is obtained from $\Trop\,V(I_{g,\tilde{f}})$ by four simple operations:
\begin{enumerate}[(i)]\item
  points $p = (p_1,p_2,p_3)$ in $\Trop\,V(I_{g,\tilde{f}})$ with $p_1<\ww_4$ lift to points of the form $(p,\ww_4)$; 
\item points $p=(p_1,p_2,p_3)$ in $\Trop\,V(I_{g,\tilde{f}})$ with $p_1>\ww_4$ lift to points $(p,p_1)$;
  \item the vertex $v_3$ in $\Trop\,V(J)$ has coordinates $(\ww_4,\, 2\ww_4+{\ww_5}/{2},\, 2\ww_4+{\ww_5}/{2}, \ww_4)$;
\item the multiplicity two leg with direction $(0,-1,0)$ adjacent to $v_3$ splits into two multiplicity one legs $\ell_3$ and $\ell_4$, with directions $(0,-1,0,0)$ and $(0,-1,0,-2)$: these are the images of the corresponding legs in $\Sigma(\mathcal{X})$ under the tropicalization map. Indeed,
\[  \begin{aligned}
  \init_{\ell_3}(J) &= \langle -x^2\init(\alpha_5)(x-\init(\alpha_3))u, u-x+\init(\alpha_4), z + \init(\beta_5\beta_4\beta_3)x - \init(\beta_5) x^2\rangle, \\
      \init_{\ell_4}(J) &= \langle y^2-x^2\init(\alpha_5)(x-\init(\alpha_3))u, -x+\init(\alpha_4), z + \init(\beta_5\beta_4\beta_3
      )x - \init(\beta_5) x^2\rangle.
  \end{aligned}
  \]
\end{enumerate}
These two identities follow from standard Gr\"obner bases techniques over valued fields, in particular~\cite[Proposition 2.6.1, Corollary 2.4.10]{MSBook}. 
Notice that the $UY$-projection and its Newton subdivision can be easily obtained by the change of variables $u=x+\alpha_4$. Indeed, the result is a hyperelliptic genus two curve covering $\mathbb{P}^1$, whose six branch points have negative valuations  $-\infty, \ww_4$, $\ww_4$, $\ww_4$, $\ww_5$ and $\infty$. As a consequence, we subdivide its Newton polytope  along an edge joining $y^2$ and $u^4$. Similar reasoning applies to the  $UZ$-projection.~\qed

\subsection{Proof for Type (IV)}\label{ss:TypeIV} The $XY$-and $XZ$-projections from~\autoref{tab:nonFaithfulness} confirm that the two tropical space curves contain no points in $\sigma_4$. Furthermore, both  curves can be obtained from their $XZ$-projections  by attaching  a leg in the direction $(0,-1,0)$ to the vertices $v_1, v_2$ and $v_3$. The leg attached to $v_3$ has multiplicity two, and it is the image of the legs marked with $\alpha_4$ and $\alpha_5$ in $\Sigma(\mathcal{X})$. The legs marked with $\alpha_2$ and $\alpha_3$ are mapped isometrically onto the legs adjacent to $v_1$ and $v_2$ in $\sigma_1$. Both curves have a multiplicity two leg $\ell$ with direction $(0,0,-1)$ attached to  $v_3$:
  \begin{equation}\label{eq:ellIV}
  \init_{\ell}(I_{g,f}) = \init_{\ell}(I_{g,\tilde{f}}) = \left\langle y^2-x^3\prod_{i=4}^5(x-\init(\alpha_i)), y  + \init(\beta_3\beta_4\beta_5)x - \init(\beta_5) x^2\right \rangle.
  \end{equation}

\noindent  
  The vertex $v_3$ is the image of the unique genus one vertex in the Berkovich skeleton, and it is dual to the unique genus one triangle in the Newton subdivision of $g$. Furthermore:
\begin{claim}\label{cl:initDegv3IV} The initial degeneration $\init_{v_3}(g)$ defines a smooth elliptic curve in $(\resK^*)^2$.
    \end{claim}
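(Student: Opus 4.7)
The plan is to first compute $\init_{v_3}(g)$ explicitly from the vertex coordinates and the Type (IV) hypotheses, and then verify smoothness on the torus and recognize the smooth completion as an elliptic curve via a simple change of variables.

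Expanding $g(x,y) = y^2 - x^5 + e_1 x^4 - e_2 x^3 + e_3 x^2 - e_4 x$, where $e_k$ denotes the $k$-th elementary symmetric polynomial in $\alpha_2,\alpha_3,\alpha_4,\alpha_5$, I would use the vertex $v_3 = (\ww_4, 5\ww_4/2)$ (noting $\ww_4=\ww_5$ in Type (IV)) together with the constraints $\ww_2 < \ww_3 < \ww_4$ to determine which monomials contribute to $\init_{v_3}(g)$. A short valuation calculation shows that $y^2$, $x^5$, and $x^3$ always contribute --- the latter because $-\val(e_2) = 2\ww_4$ is forced by its dominant term $\alpha_4 \alpha_5$ --- whereas $x^4$ contributes precisely when $\init\alpha_4 + \init\alpha_5 \neq 0$. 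In either case the expressions combine to yield
\[
\init_{v_3}(g) = y^2 - x^3(x-\init\alpha_4)(x-\init\alpha_5),
\]
which matches the first generator in \eqref{eq:ellIV}. The product formula handles the edge case $\init\alpha_4 + \init\alpha_5 = 0$ consistently: there the $x^4$ monomial drops out and the quadratic factor loses its linear coefficient.

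Next, I would check smoothness of $V(\init_{v_3}(g))$ in $(\resK^*)^2$ directly from this equation. Since $\charF\resK \neq 2$, the equation $\partial_y = 2y = 0$ forces $y=0$, and the vanishing of the defining polynomial then forces $x \in \{0, \init\alpha_4, \init\alpha_5\}$. A direct computation of $\partial_x$ at the last two candidates produces nonzero values of the form $-(\init\alpha_i)^3(\init\alpha_i - \init\alpha_j)$, using $\init\alpha_4, \init\alpha_5 \in \resK^*$ and $\init\alpha_4 \neq \init\alpha_5$ from the Type (IV) hypothesis. Hence the only singular point of the affine curve is $(0,0)$, which lies outside the torus.

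Finally, to identify the smooth completion as an elliptic curve, I would apply the substitution $u = y/x$, which is an isomorphism of schemes on the torus $\{x\neq 0\}$. This converts the equation into Weierstrass form $u^2 = x(x-\init\alpha_4)(x-\init\alpha_5)$, whose three roots $0$, $\init\alpha_4$, $\init\alpha_5$ are distinct in $\resK$, yielding a smooth elliptic curve. Therefore $V(\init_{v_3}(g)) \cap (\resK^*)^2$ is an open subvariety of this elliptic curve. The main obstacle is the careful bookkeeping in the first step, specifically verifying that the product formula for $\init_{v_3}(g)$ remains correct uniformly across both cases, including when $\init\alpha_4 = -\init\alpha_5$ and the $x^4$ monomial disappears but the quadratic factor simultaneously loses its linear term.
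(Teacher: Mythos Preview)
Your proposal is correct and follows essentially the same route as the paper: compute $\init_{v_3}(g) = y^2 - x^3(x-\init\alpha_4)(x-\init\alpha_5)$ and use the substitution $u=y/x$ to identify a Weierstrass cubic with three distinct roots $0,\init\alpha_4,\init\alpha_5$. The paper's version is simply more terse---it records the factorization $x^2\bigl((y/x)^2 - x(x-\init\alpha_4)(x-\init\alpha_5)\bigr)$ and observes that the projectivization is a double cover of $\PP^1_{\resK}$ branched at the four distinct points $0,\init\alpha_4,\init\alpha_5,\infty$---whereas you add an explicit Jacobian-criterion smoothness check on the torus and a careful case analysis for $\init\alpha_4=-\init\alpha_5$, both of which are fine but not strictly needed once the product formula is established.
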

\noindent Indeed, a direct computation and the Type (IV) defining conditions from~\autoref{tab:CombAndLengthData}   reveal
\begin{equation}
  \label{eq:initv3TypeIV}
  \init_{v_3}(g) = y^2-x^3(x-\init(\alpha_4))(x-\init(\alpha_5)) = x^2 ((y/x)^2
  -x(x-\init(\alpha_4))(x-\init(\alpha_5)),
\end{equation}
so its projectivization is a double cover of $\PP^1_{\resK}$
branched at four distinguished points. 
\begin{remark}\label{rm:jInvTypeIV} An alternative proof for~\autoref{cl:initDegv3IV} can be given in terms of $j$-invariants, by considering the plane cubic curve $\mathcal{X}'$ defined by the truncation $g'$ of $g$ corresponding to all monomials in the triangle dual to $v_3$ in the Newton subdivision of $g$. By construction,  $\Trop\,V(g')$ is the star of $\Trop\,V(g)$ along $v_3$. A direct computation with \texttt{Singular} and \sage\, (available in the Supplementary material) confirms that for any characteristic of $\resK$ other than two, the $j$-invariant of $\mathcal{X}'$ has non-negative valuation, so $\mathcal{X}'$ has good reduction and the vertex of $\Sigma(\mathcal{X}')$ maps to $v_3$.
\end{remark}

The previous discussion confirms that faithfulness occurs at the level of the minimal skeleta but fails for the extended one, due to the presence of the multiplicity two leg $\ell$ in $\sigma_2$ adjacent to $v_3$. This can be fixed using a vertical modification and the  ideal $J$ from~\eqref{eq:fiathfulExtendedIII}. The same procedure from~\autoref{ss:Fig8} allows us to recover $\Trop\,V(J)$ from $\Trop\, V(I_{g,\tilde{f}})$ and $\Trop\,V(I_{g,f})$, where the role of $\ell_3$ is replaced by a leg $\ell_5$. The following identities hold:
\[  \begin{aligned}
  \init_{\ell_5}(J) &= \langle -x^3(x-\init(\alpha_5))u, u-x+\init(\alpha_4), z + \init(\beta_3\beta_4\beta_5)x - \init(\beta_5) x^2\rangle, \\
      \init_{\ell_4}(J) &= \langle y^2-x^3\init(\alpha_5)u, -x+\init(\alpha_4), z + \init(\beta_3\beta_4\beta_5)x - \init(\beta_5) x^2\rangle.
  \end{aligned}
\]
The legs $\ell_4$ and $\ell_5$ adjacent to $v_3$ have directions $(0,-1,0,-2)$ and $(0,-1,0,0)$ and they are isometric images of the legs in $\Sigma(\mathcal{X})$ marked with $\alpha_4$ and $\alpha_5$, respectively. By combining  \eqref{eq:ellIV} with   the identity $\init_{\ell}(J)  = \init_{\ell}(I_{g,\tilde{f}}) + \langle u-x+\init(\alpha_4) \rangle$ we see that the leg $\ell$ from $\Trop\,V(I_{g,\tilde{f}})$ survives in $\Trop\,V(J)$: it has direction $(0,0,-1,0)$ and multiplicity two. 
\qed

\subsection{Proof for Type (VI)}\label{ss:TypeVI}
From~\autoref{tab:nonFaithfulness} we see that the vertex $v_2$ is dual to the unique genus one lattice polygon in the Newton subdivision of $g$. As in Type (IV),  $v_2$ is the image of the unique genus one vertex in the Berkovich skeleton under the $xy$- and $xz$-tropicalizations.

 \begin{claim}\label{cl:initDegv3VI} The initial degeneration $\init_{v_2}(g)$ defines a smooth elliptic curve in $(\resK^*)^2$.
    \end{claim}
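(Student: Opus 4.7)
The plan is to follow the template used for Type (IV) in~\autoref{cl:initDegv3IV}. The first step is to pin down the coordinates of $v_2$: inserting the Type (VI) conditions $\ww_3=\ww_4=\ww_5$ from~\autoref{tab:CombAndLengthData} into the formulas~\eqref{eq:vertices}, one checks that the three vertices $v_2, v_3, v_4$ of the na\"ive tropicalization collapse onto a single point $v_2=(\ww_3,\,5\ww_3/2)$. This is consistent with~\autoref{tab:nonFaithfulness}, where $v_2$ appears as the unique vertex of $\Trop\,V(g)$ dual to a lattice polygon carrying an interior lattice point, namely the triangle with vertices $y^2$, $x^2$ and $x^5$.

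Next, I would compute $\init_{v_2}(g)$ by invoking the multiplicativity of initial forms, rather than expanding $g$ as a polynomial in $x$. Since $v_2$ assigns value $\ww_3$ to the monomial $x$ and value $\ww_i$ to the constant $\alpha_i$, each binomial factor satisfies
\[
\init_{v_2}(x-\alpha_i)=\begin{cases} x & \text{if } \ww_i<\ww_3,\\ x-\init(\alpha_i) & \text{if } \ww_i=\ww_3. \end{cases}
\]
Under Type (VI) the first case occurs only for $i=2$ and the second case for $i=3,4,5$, so
\[
\init_{v_2}(g)\;=\;y^2-x^2\bigl(x-\init(\alpha_3)\bigr)\bigl(x-\init(\alpha_4)\bigr)\bigl(x-\init(\alpha_5)\bigr).
\]
This formulation is what avoids the main technical headache of the problem: tracking the potential cancellations in the elementary symmetric functions $\sum\alpha_i$ and $\sum_{i<j}\alpha_i\alpha_j$ that govern the heights of $x^4$ and $x^3$ in the expanded form of $g$, which are precisely the non-generic situations flagged in~\autoref{tab:nonFaithfulness} and~\autoref{fig:missingTypeVI}.

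Finally, to conclude smoothness, I would exploit that $x$ is a unit in $\resK[x^{\pm}]$ and substitute $y':=y/x$, rewriting the defining equation as $y'^2=(x-\init(\alpha_3))(x-\init(\alpha_4))(x-\init(\alpha_5))$ on $(\resK^*)^2$. Its projectivization is a double cover of $\PP^1_{\resK}$ branched at the four points $\init(\alpha_3), \init(\alpha_4), \init(\alpha_5), \infty$; these lie in $\resK^*$ because $\val(\alpha_i)=-\ww_3$ for $i=3,4,5$, and they are pairwise distinct by the Type (VI) defining conditions of~\autoref{tab:CombAndLengthData}. This yields a smooth genus one curve in the torus. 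As a sanity check, one could argue in parallel with~\autoref{rm:jInvTypeIV}: compute the $j$-invariant of the plane cubic obtained by truncating $g$ to the star of $\Trop\,V(g)$ at $v_2$ and verify it has non-negative valuation when $\charF\resK\neq 2$, confirming good reduction of the elliptic component at $v_2$. I do not expect any essential obstacle; every step mirrors the Type (IV) argument, with the only substantive input being the pairwise distinctness of the three reductions, which is immediate from~\autoref{tab:CombAndLengthData}.
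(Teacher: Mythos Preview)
Your proposal is correct and follows essentially the same approach as the paper: both compute $\init_{v_2}(g)=y^2-x^2\prod_{i=3}^5(x-\init(\alpha_i))$, factor out $x^2$ via the substitution $y'=y/x$, and conclude by recognizing a double cover of $\PP^1_{\resK}$ branched at four distinct points. Your version is simply more explicit about the factor-by-factor computation of the initial form and about why the branch points are distinct.
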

\noindent Indeed, the conditions from~\autoref{tab:CombAndLengthData} reveal that   $\init_{v_2}(g) =  x^2 ((y/x)^2-\prod_{i=3}^5(x-\init(\alpha_i))$,
so its projectivization is a double cover of $\PP^1_{\resK}$
branched at four distinguished points.

By construction, the na\"ive tropicalization maps the legs marked with $\alpha_3, \alpha_4, \alpha_5$ in $\Sigma(\mathcal{X})$ isometrically to the leg adjacent to $v_2$ with direction $(0,-1)$. The next initial form computation reveals that this leg is the projection of a multiplicity three leg $\ell$ with direction $(0,-1,0)$ adjacent to $v_2$ which is the image of the aforementioned marked legs in $\Sigma(\mathcal{X})$:
\begin{equation}\label{eq:ellForVI}
\init_{\ell}(I_{g,f}) = \init_{\ell}(I_{g,\tilde{f}}) = \langle x^2\prod_{i=3}^5(x-\init(\alpha_i)), z+\init(\beta_3\beta_4\beta_5)x - \init(\beta_5)x^2\rangle.
\end{equation}
As was discussed earlier, the combinatorics of the $xz$-tropicalizations depend heavily on the genericity of the coefficients
of $x^3$ and $x^4$ in both $\tilde{g}(x,z)$ and $\tilde{g}'(x,z)$. A careful case-by-case analysis confirms that all vertices have multiplicity one. Furthermore,
\[
\init_{v_2}(I_{g,f}) = \init_{v_2}(I_{g, \tilde{f}}) = \langle \init_{v_2}(g), z-y + \init(\beta_3\beta_4\beta_5)x-\init(\beta_5)x^2 \rangle.
\]
Since all bounded edges also have multiplicity one, we conclude that the $xz$-tropicalizations are faithful on the minimal skeleton.  In what follows, we describe the combinatorics of both space curves in each relevant case and analyze faithfulness on the extended skeleton. The genericity conditions for both $x^3$ and $x^4$ are described in Propositions~\ref{pr:heightsXZ} and~\ref{pr:refinedHeights}.\\

\noindent \textbf{Case 1: generic for $x^3$.}
Extended faithfulness cannot be guaranteed  since each star of $v_2$ contains a
multiplicity two leg in $\sigma_5$ with direction $(0,0,-1)$.  The vertex $v_{12}=v_{12}'$ of $\Trop\,V(I_{g,f})$ also has a
multiplicity two leg in $\sigma_6$ with the same direction.

\vspace{1ex}

\noindent \textbf{Case 2: non-generic for $x^3$, generic for $x^4$.} The two possible $xz$-tropicalizations are obtained from the Newton subdivision of $\tilde{g}$ and $\tilde{g}'$ in the left and center of~\autoref{fig:missingTypeVI}. They depend on whether $x^3$ is marked or not. Both cases were discussed in \autoref{rm:refinedXZ}. In the marked case, the  $xz$-tropicalization $\Trop\,V(\tilde{g}')$ is not faithful on the extended skeleton. Indeed, the high multiplicity leg attached to $v_{12}=v_{12}'$ in the direction $(0,0,-1)$ induces an initial degeneration with two distinct reduced components, and faithfulness fails for the extended skeleton. It can be repaired by a vertical modification along this leg and a lift induced by one of these two components.

Similarly, in the unmarked case, ~\autoref{pr:refinedHeights} shows that 
the high multiplicity leg attached to $v_{12}'$ in the direction $(0,0,-1)$ induces an initial degeneration with reduced distinct components. So extended faithfulness fails for the $xz$-tropicalization. Vertical modifications along this leg adapted to these components will repair this situation in dimension three for $I_{g,\tilde{f}}$ and four for $I_{g,f}$.

Finally,  the multiplicity of the leg $\ell$ described in~\eqref{eq:ellForVI} and~\autoref{lm:projections} ensure that the leg attached to the vertex $v_2$ in both $xz$-tropicalizations is the projection of a single multiplicity two leg in the direction $(0,0,-1)$ attached to $v_2$.  This completes the description of the combinatorics of both space curves.

\vspace{1ex}

\noindent \textbf{Case 3: non-generic for both $x^3$ and $x^4$.} As discussed in~\autoref{rm:refinedXZ}, the Newton subdivision of $\tilde{g}$ cannot be predicted, so we focused on the refined modification and the embedding $I_{g,\tilde{f}}$. The Newton subdivision of $\tilde{g}'$, depicted in the right of~\autoref{fig:missingTypeVI} shows that no point of $\Trop(I_{g,\tilde{f}})$ lies in the relative interior of $\sigma_4$. The star of $v_2$ consists of the multiplicity three leg $\ell$ with direction $(0,-1,0)$, the leg $\ell_6$ with direction $(2,5,5)$ and  two bounded edges with directions $(-1,-1,-1)$ and $(-1,-1,-3)$, respectively. The vertex $v_1$ is adjacent to a unique leg, with direction $(-2,-1,-1)$. By~\autoref{pr:refinedHeights}, the vertex $v_{12}'$ is adjacent to a multiplicity two leg with direction $(0,0,-1)$ whose initial degeneration has two distinct reduced components. The $xz$-tropicalization is not faithful on the extended skeleton. This can be repaired by a vertical modification along $\max\{Z,(5\ww_4-3\ep)/2\}$, adapted to one of these  components. 
\smallskip

As with Type (III), the extended skeleton $\Sigma(\mathcal{X})$ can only be revealed by means of vertical modifications through $v_2$ designed to separate the images of the legs  marked with $\alpha_3, \alpha_4$ and $\alpha_5$. We use the ideal
\[
J = I_{g,\tilde{f}} + \langle z_3-(x-\alpha_3), z_4-(x-\alpha_4)\rangle\subset K[x^{\pm}, y^{\pm},z^{\pm}, z_3^{\pm}, z_4^{\pm}].
\]
The leg $\ell$ in the star of $v_2$ in $\Trop\,V(I_{g,f})$ and $\Trop\,V(I_{g,\tilde{f}})$ is replaced by three multiplicity one legs ($\ell_3$, $\ell_4$ and $\ell_5$), with directions $(0,-1,0,-2,0)$, $(0,-1,0,0,-2)$, and $(0,-1,0,0,0)$, each coming from the expected marked leg in $\Sigma(\mathcal{X})$.~\qed

\subsection{Proof for Type (II)}\label{ss:Theta} Throughout this section, and to simplify the exposition, we assume $\charF \resK\neq 2, 3$. A refinement of our methods will be required in characteristic three.

The Type (II) cone manifests itself as the most combinatorially challenging cell of $M_2^{\trop}$. It is the only case for which the chart $\sigma_4$ in the tropical modification of $\RR^2$ contains points of the re-embedded tropical curve $\Trop\,V(I_{g,f})$ in its relative interior. In particular, information from all three coordinate projections is necessary to recover the space curve using~\autoref{lm:projections}. Furthermore, as was already observed in~\autoref{fig:ThetaModif}, depending on the values of the three edge lengths in the theta graph, the $YZ$-projection of  $\Trop\,V(I_{g,f})$ introduces extra crossings and higher multiplicities that need to be unraveled in the reconstruction process. Here is our main result:

\begin{theorem}\label{thm:allCombTypes}
  In Type (II) the tropical curves $\Trop\,V(I_{g,f})$ come in 13 combinatorial types, depicted in~\autoref{fig:allCombTypesTypeII}. These graphs are determined by a subdivision of the Type (II) cone along its baricenter. Precise coordinates for all vertices are given in~\eqref{eq:verticesTypeII}.
\end{theorem}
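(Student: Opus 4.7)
The plan is to apply~\autoref{lm:projections} to reconstruct $\Trop\,V(I_{g,f})$ from its three coordinate projections, then observe that for Type~(II) this is the unique situation in which the cell $\sigma_4$ of $\Trop\,V(f)$ carries genuine points of the re-embedded curve, so the $YZ$-projection must be analyzed in detail. The $XY$-projection is already described in~\autoref{tab:nonFaithfulness}; from~\autoref{pr:heightsXZ} we see that the $XZ$-projection is also straightforward (and not affected by $\sigma_4$). So I would first compute the defining polynomial of $I_{g,f}\cap K[y^{\pm},z^{\pm}]$ by eliminating $x$ from the ideal $\langle g, z-f\rangle$ and determining its Newton subdivision under the Type~(II) conditions $\omega_3=\omega_4$, $\init(\alpha_3)=\init(\alpha_4)$, with $L_0=2(\omega_3-d_{34})$, $L_1=2(\omega_5-\omega_3)$, $L_2=2(\omega_3-\omega_2)$.

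Next, I would track how each vertex of the $YZ$-projection sits in the six cells $\sigma_1,\ldots,\sigma_6$ of $\Trop\,V(f)$ described in~\eqref{eq:cellsModif}. The two pairs of cell overlaps $(\sigma_1,\sigma_4)$ and $(\sigma_4,\sigma_6)$ identified in~\autoref{lm:projections} produce \emph{false crossings} in the $YZ$-plane whose resolution depends on whether the actual space-curve vertices sit in the open region of $\sigma_4$ or on its boundary rays shared with $\sigma_1,\sigma_2,\sigma_6$. The locations of those vertices are controlled precisely by the three linear forms $L_0,L_1,L_2$ on the Type (II) cone, so the combinatorial type of $\Trop\,V(I_{g,f})$ changes exactly when one of the inequalities $L_i \lessgtr L_j$ breaks. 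This cuts the open Type~(II) cone into the chambers of the fan refined along the diagonal $L_0=L_1=L_2$, i.e., the baricentric subdivision.

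Counting the cells of this subdivision gives $6$ open chambers (one per strict ordering of $L_0,L_1,L_2$), $6$ codimension-one walls (strict inequality plus one equality $L_i=L_j$), and $1$ central ray $L_0=L_1=L_2$, for a total of $13$ combinatorial types. For each chamber, I would combine the three projections using~\autoref{lm:projections} and read off the coordinates of the vertices directly from the cell equations in~\eqref{eq:cellsModif} together with the formulas for the $xz$-vertices in~\eqref{eq:vertices}; this yields the list in~\eqref{eq:verticesTypeII}. Finally, one checks via initial degeneration computations in \texttt{Singular} that the 13 graphs are pairwise non-isomorphic as weighted graphs in $\RR^3$ and that no further refinement occurs inside a chamber.

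The main obstacle will be the careful bookkeeping in the $YZ$-projection: unraveling which portions of the overlapping $\sigma_1\cup\sigma_4$ and $\sigma_4\cup\sigma_6$ regions carry one, two, or no true cells of $\Trop\,V(I_{g,f})$, and matching the resulting multiplicities via the Sturmfels--Tevelev push-forward formula as in the proof of~\autoref{lm:projections}. Once this is handled, the chamber structure and the vertex formulas follow by direct substitution, with no additional conceptual input.
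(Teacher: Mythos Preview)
Your overall strategy of reconstructing $\Trop\,V(I_{g,f})$ from its three coordinate projections via \autoref{lm:projections} matches the paper, and you correctly identify that Type~(II) is the only case where $\sigma_4$ carries points of the re-embedded curve. However, there are two genuine gaps.

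First, a wrong citation: \autoref{pr:heightsXZ} is stated only for Types (I), (III), (IV) and (VI), not for Type~(II). The paper handles the $xz$-projection for Type~(II) separately in \autoref{pr:xzNewtonS}, and it is \emph{not} straightforward: there are eight possible Newton subdivisions of the parallelogram $\mathcal{P}$, depending on where the weight vector $\underline{u}$ lies and on genericity conditions for the monomial $x^3$. So your claim that the $xz$-projection ``is also straightforward (and not affected by $\sigma_4$)'' understates the work needed.

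Second, and more seriously, your mechanism for obtaining $13$ types is not how they actually arise. You propose the full barycentric subdivision by the walls $L_i=L_j$, counting $6+6+1$. The paper instead obtains its subdivision from the Gr\"obner fans of the $17$ non-monomial coefficients of $\tilde{g}(x,z)$ and $h(y,z)$ (\autoref{lm:Subdivision1}), which produces a \emph{non-symmetric} decomposition of the Type~(II) triangle into nine cones $\mathcal{C}_0,\ldots,\mathcal{C}_8$ (four maximal, using only the barycenter $R_{123}$ and one edge midpoint $R_{23}$), followed by a further refinement of $\mathcal{C}_0$, $\mathcal{C}_2$, $\mathcal{C}_7$. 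Several of these cells then yield the \emph{same} combinatorial type of space curve (e.g.\ all $\mathcal{C}_{0,i}$, $\mathcal{C}_1$, $\mathcal{C}_4$ collapse to a single type $\mathcal{C}_{014}$), while six of the $13$ types come from \emph{non-generic} parameter choices --- the special cases $\mathcal{C}_i^{sp}$ and $\mathcal{C}_i^{sp_2}$ for $i=5,6,7$ in \autoref{pr:yzNewtonS}, where a leading term in \autoref{tab:LTerms} vanishes. Your approach, based solely on the position of the weight vector in the cone, cannot detect these special types: two curves with the same $\underline{u}$ but different initial forms $b_i=\init(\beta_i)$ can have different combinatorial types. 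That the barycentric count $6+6+1$ happens to equal $13$ is a coincidence, not the reason.
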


\noindent The proof of this result is computational and it  involves genericity conditions of the branch points giving each graph. As usual, examples for all cases are provided in the Supplementary material.

   \begin{figure}
     \includegraphics[scale=0.85]{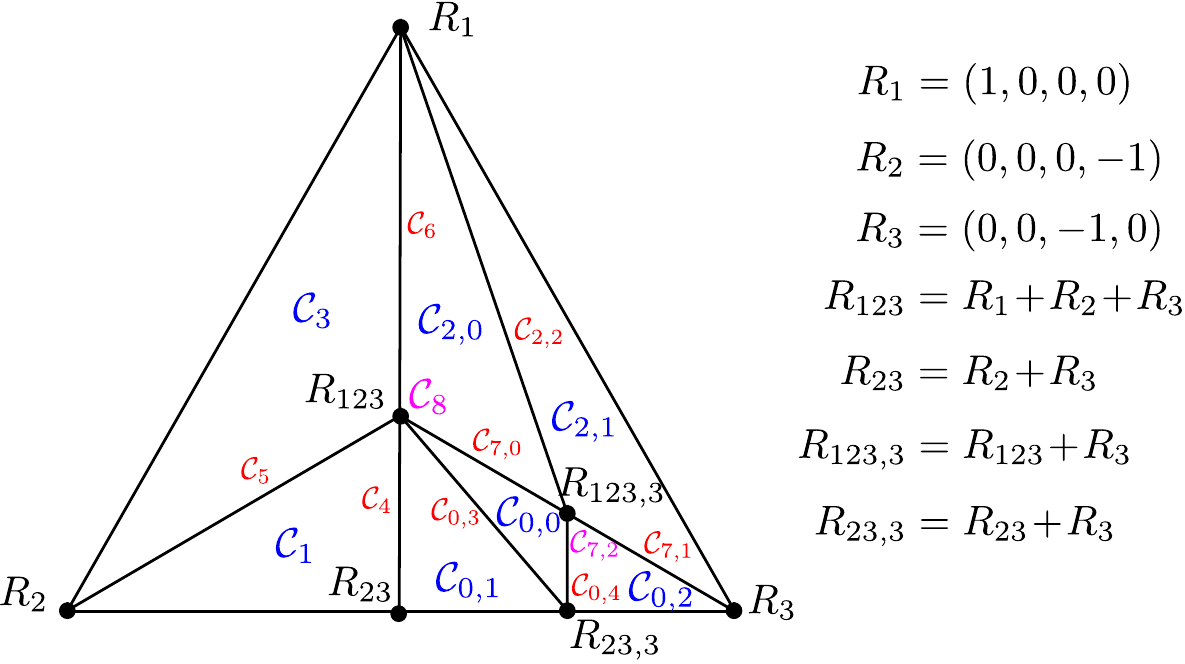}
   \caption{Refined subdivision of the Type (II) cone induced by all possible Newton subdivisions of the $yz$-projection after removing the one-dimensional lineality space. The first index of each cone reflects the label within the subdivision by leading terms of coefficients as in~\autoref{tab:LTerms}.  The blue cones have maximal dimension four, the red cones have dimension three and the purple cone has dimension two.
     \label{fig:SubdivisionThetaCone}}\end{figure}

The condition $\init(\alpha_3)=\init(\alpha_4)$ characterizing the witness Type (II)  region  in~\autoref{tab:CombAndLengthData} suggests a new strategy to determine the combinatorics of $\Trop\,V(I_{g,f})$ by controlling the value of $d_{34}$. We introduce a new variable $\beta_{34} := \beta_3 -\beta_4$ and redefine the third branch point as  $\alpha_3:=(\beta_4 + \beta_{34})^2$,  where $-\val(\beta_{34}) = d_{34}  + \val(\alpha_4)/2 =   -\val(\alpha_3 - \alpha_4)   + \val(\beta_4)$.
The hyperelliptic equation becomes $g(x,y) = y^2 - x(x-\beta_2^2)(x-(\beta_4+\beta_{34})^2)(x-\beta_4^2)(x+\beta_5^2)$, and the lifting $f$ from~\eqref{eq:liftF} of the  tropical polynomial $F$ from~\eqref{eq:F} equals
\begin{equation*}\label{eq:type2Modif}
f(x,y) = y-\beta_5(\beta_4+\beta_{34})\beta_4\, x + \beta_5\, x^2.
  \end{equation*}

The weight vector $\underline{u}\in \RR^4$ encoding the negative valuation of the four parameters equals: 
\begin{equation}\label{eq:valn4Param}
 \underline{u} :=(-\val(\beta_5), -\val(\beta_4), -\val(\beta_{34}), -\val(\beta_2)) = (\ww_5/2,\ww_4/2, d_{34}-\ww_4/2, \ww_2/2).
\end{equation}
We set $u_i = -\val(\beta_i)$ for each $i=5,4,34,2$ and write the coordinates of $\RR^4$ in that order. The Type (II) cone  is then determined by the following inequalities:
\begin{equation}\label{eq:NewTypeIICone}
  u_5 > u_4, \quad u_4> u_{34}, \quad \text{ and } \quad u_{4}>u_2.
  \end{equation}
An easy \sage\ computation reveals that the closure of this cone is spanned by three vectors ($R_1$, $R_2$ and $R_3$ in~\autoref{fig:SubdivisionThetaCone}) and has a one-dimensional lineality space generated by the all-ones vector. We are solely interested in its interior, since its various proper faces correspond to other curve types in $M_2$.

On the algebraic side, the interplay between the combinatorics of $\Trop\,V(I_{g,f})$ and the weight vector $\underline{u}$ is determined by the projection to $\RR^4$ of the Gr\"obner fan of the extended ideal $I_{g,f}K[\beta_5^{\pm},\beta_{4}^{\pm}, \beta_{34}^{\pm},\beta_2^{\pm}, x^{\pm},y^{\pm},z^{\pm}]$. Since the computation of this fan with build-in \sage~functions does not terminate, we turn to~\autoref{lm:projections} and compute $\Trop\,V(I_{g,f})$ by means of the three coordinate projections as we vary $\underline{u}$. In the remainder of this section we describe the interplay between the weight vector $\underline{u}$ and the $(x,z)$- and $(y,z)$-subdivisions.

   \begin{table}
     \centering
         \begin{tabular}{|c|Sc|c|c|}
         \hline
 \textbf{Monomials} & \textbf{Leading Terms} & \textbf{Weights} & \textbf{Cones} \\ 
   \hline 
& $2  \, b_2^2 \, b_5^3$ &  $\ww_2+3\ww_5/2$ & [0, 2, 7]\\ 
$y^4$ &  $2 \, b_{34}^2 \, b_5^3$ & $2d_{34} -\ww_4+3\ww_5/2$ & [1, 3, 5]\\
&     $ 2  \, b_5^3 \, (b_{34}^2 + b_2^2)$ & $ \ww_2 +  3\ww_5/2$ & [4, 6, 8]
\\
\hline $y^3z$ & $-2 \, b_4^2 \, b_5^3$ &  $ \ww_4 +  3\ww_5/2$
		     & all \\ \hline
		      $y^2z^2, yz^3, z^4$  &
		     $ b_5^5$\quad (coeffs 4, -4, 1, resp.) & $5\ww_5/2$
		     & all \\\hline

& $-b_2^2 \,  b_4^4 \,  b_5^4$ & $\ww_2 + 2(\ww_4+\ww_5)$ & [0] \\
& $-b_{34}^2 \,  b_4^4 \,  b_5^4$ & $2(d_{34}+\ww_5)+\ww_4$ & [1]\\
& $b_2^4 \,  b_5^6$ & $2\ww_2 + 3\ww_5$ & [2] \\
& $b_{34}^4 \,  b_5^6$ & $4d_{34}-2\ww_4 + 3\ww_5$ & [3]\\
{$y^3$}& $-b_4^4 \,  b_5^4 \,  (b_{34}^2 + b_2^2)$ & $\ww_2 + 2(\ww_4+\ww_5)$ & [4]\\
& $b_{34}^2 \,  b_5^4 \,  (-b_4^2 + b_5\, b_{34}) \,  (b_4^2 + b_5\, b_{34})$ & $2(d_{34}+\ww_5)+\ww_4$ & [5]\\
& $b_5^6 \,  {(b_{34}^2 + b_2^2)}^2$ & $2\ww_2 + 3\ww_5$ &  [6]\\
& $b_2^2 \,  b_5^4 \,  (-b_4^2 + b_5\, b_2) \,  (b_4^2 + b_5\, b_2)$ & $\ww_2 + 2(\ww_4+\ww_5)$ & [7]\\
& $b_5^4 \,  (b_{34}^2 + b_2^2) \,  (-b_4^4 + b_5^2\, b_{34}^2 + b_5^2 \, b_2^2)$ & $\ww_2 + 2(\ww_4+\ww_5)$ &  [8]
     \\\hline
& $b_2^2 \, b_4^2 \, b_5^6$\; (coeffs 2, -3, 1 resp.) & $\ww_2 + \ww_4+3\ww_5$ & [0, 2, 7] \\
$y^2z, yz^2, z^3$ & $b_{34}^2 \, b_4^2 \, b_5^6$\; (coeffs -6, 9, -3, resp.) & $2d_{34} + 3\ww_5$ &   [1, 3, 5] \\
&      $b_4^2 \, b_5^6 \, (3\,b_{34}^2 - b_2^2)$\; (coeffs -2, 3, -1, resp.) & $\ww_2 + \ww_4+3\ww_5$ &[4, 6, 8]
    \\
\hline		      $y^2$ & $-4 \, b_2^2 \, b_{34}^2 \, b_4^4 \, b_5^7$ & $2d_{34}\!+\!\ww_2\!+\!\ww_4\!+\!7\ww_5/2$
		     & all  \\
\hline	             $y z, z^2$ & $ b_{34}^2 \, b_4^6 \, b_5^7$ \quad (coeffs 2, -1, resp.) & $2d_{34}\!+\!2\ww_4\!+\!7\ww_5/2$ & all \\ 
\hline		     $y, z$  &
$b_2^2 \, b_{34}^2 \, b_4^8 \, b_5^8$\; (coeffs 1, -1, resp.) & $2d_{34}\!+\!\ww_2\!+\!3\ww_4\!+\!4\ww_5$
& all                       \\ \hline\hline 
$x^4$  &
$ -2 \, b_4^2 $ & $2\ww_4$   &       all                       \\\hline
                      & $b_4^4$ & $2\ww_4$ & [0, 1, 4]\\
                      & $-b_2^2 \,  b_5^2$ & $\ww_2+\ww_5$ & [2]\\
&  $-b_{34}^2 \,  b_5^2$ & $ 2d_{34} + \ww_5-\ww_4$ & [3]\\
$x^3$ &  $-(-b_4^2 + b_5\, b_{34}) \,  (b_4^2 + b_5\, b_{34})$ & $2\ww_4$ & [5]\\
                      &  $-b_5^2 \,  (b_{34}^2 + b_2^2)$ & $\ww_2 + \ww_5$ & [6]\\
                      &  $-(-b_4^2 + b_5\, b_2) \,  (b_4^2 + b_5\, b_2)$ & $2\ww_4$ & [7]\\
& $-(-b_4^4 + b_5^2\, b_{34}^2 + b_5^2\, b_2^2)$ & $2\ww_4$ 
                      & [8]
                      \\
                      \hline $x^2$   & $2 \, b_2^2 \, b_4^2 \, b_5^2$  & $\ww_2+\ww_4+\ww_5$& all                      \\\hline         
       \end{tabular}
       \caption{From top to bottom: Expected leading terms for all relevant coefficients of  $h(y,z)$ (14 total) and $\tilde{g}(x,z)$ (three total) on the nine cones $\mathcal{C}_i$ coarsening  the refined subdivision of the Type (II) Cone in~\autoref{fig:SubdivisionThetaCone}. Each $b_i$ is the initial form of the parameter $\beta_i$.\label{tab:LTerms}}
   \end{table}

Following earlier notation, we call $\tilde{g}(x,z)= g(x, z+(\beta_4+\beta_{34})\beta_4\beta_5\,x-\beta_5\,x^2)$ and let $h(y,z)$ be the generator of $I_{g,f}\,\cap K[y,z]$. The latter is determined by an easy elimination ideal computation using \singular, available in the Supplementary material. Its extremal monomials are $y, z, y^5$ and $z^5$. The coefficients of both $\tilde{g}$ and $h$ lie in $\ZZ[\beta_5,\beta_{4},\beta_{34},\beta_2]$. 
The first column of \autoref{tab:LTerms} shows the 17 terms of both polynomials with non-monomial coefficients.
The second column shows the factorization of the leading terms of these non-monomial coefficients  for  each of the nine cones in~\autoref{lm:Subdivision1} and justifies our characteristic assumption on $\resK$. The $\underline{u}$-weights  give the expected heights of all relevant coefficients of $\tilde{g}$ and $h$ (indicated in the third column.) The table also provides the  precise conditions on the initial forms of $\beta_5, \beta_4, \beta_{34}$ and $\beta_2$ under which these heights are lower than expected.

The $(y,z)$- and $(x,z)$-Newton subdivisions of $I_{g,f}$ will be determined by the valuations of these 17 coefficients. The answer will vary with $\underline{u}$ in a piecewise linear fashion. At first glance, the domains of linearity are determined by the common refinement of the  Type (II) cone in $\RR^4$ and the  Gr\"obner fan of the product of all these 17 non-monomial coefficients. The latter has $f$-vector $(1, 21, 54, 35)$, so the refinement is performed by intersecting the Type (II) cone with the 35 chambers in the fan.
The next statement describes this na\"ive subdivision of the Type (II) cone into four triangles determined by the baricenters $R_{123}$ and $R_{23}$ from~\autoref{fig:SubdivisionThetaCone}. Its proof is computational, and the required scripts are available in the Supplementary material.

\begin{lemma}\label{lm:Subdivision1} The Gr\"obner fans  of all 17 non-monomial coefficients of $\tilde{g}$ and $h$ induce a subdivision of the Type (II) cone into nine cones. Following~\autoref{fig:SubdivisionThetaCone} they are:\\
  \begin{minipage}{0.33\textwidth}
    \[\begin{aligned}
    \mathcal{C}_0\!:=& \RR_{> 0}\langle R_{23},\!R_{123},\!R_{3}\rangle\!\oplus\! \rspanone,\\
    \mathcal{C}_3\!:= & \RR_{> 0}\langle R_{1},\!R_{123},\!R_{2}\rangle \oplus \rspanone,\\
    \mathcal{C}_6\!:=  & \RR_{> 0}\langle R_{123}, R_{1}\rangle \oplus \rspanone, \\
    \end{aligned}
    \]
  \end{minipage}
  \begin{minipage}{0.34\textwidth}
    \[\begin{aligned}
    \mathcal{C}_1\!:=& \RR_{> 0}
    \langle R_{23},\! R_{123},\! R_{2}\rangle\! \oplus\! \rspanone,\\
    \mathcal{C}_4\!:= &    \RR_{> 0}\langle R_{23},R_{123}\rangle \oplus \rspanone,\\
    \mathcal{C}_7\!:= &    \RR_{> 0}\langle R_{123},R_{3}\rangle \oplus \rspanone,\\
  \end{aligned}
  \]
    \end{minipage}
  \begin{minipage}[l]{0.25\textwidth}
     \[\begin{aligned}
    \mathcal{C}_2\!:= &\RR_{> 0}\langle R_{1},\!R_{123},\! R_{3}\rangle\! \oplus\! \rspanone,\\
    \mathcal{C}_5\!:= &\RR_{> 0}\langle R_{123}, R_{2}\rangle \oplus \rspanone, \\
    \mathcal{C}_8\!:=   &\RR_{> 0}\langle R_{123}\rangle \oplus \rspanone.
    \end{aligned}\]
  \end{minipage}
  \end{lemma}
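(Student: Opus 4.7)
The plan is to reduce the lemma to a finite combinatorial computation on the Type (II) cone, governed by the weight function $\underline{u}\mapsto \tht$ on each of the 17 non-monomial coefficients. Each such coefficient $c\in \ZZ[\beta_5,\beta_4,\beta_{34},\beta_2]$ is a polynomial in four variables, so its tropicalization is a piecewise linear function $\trop(c)\colon \RR^4\to \RR$ whose domains of linearity are the maximal cones of the (inner normal) fan of the Newton polytope $\mathrm{Newt}(c)\subset \RR^4$. The common refinement of these 17 fans intersected with the Type (II) cone from~\eqref{eq:NewTypeIICone} dictates the leading terms that appear in Table~\ref{tab:LTerms} and hence the expected heights of all coefficients of $\tilde g(x,z)$ and $h(y,z)$.

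First, I would use \singular\ to extract $h(y,z)$ as a generator of $I_{g,f}\cap K[y,z]$ via the standard elimination procedure, confirm that exactly 14 of its coefficients are non-monomial in $\ZZ[\beta_5,\beta_4,\beta_{34},\beta_2]$, and add to them the three non-monomial coefficients of $\tilde g(x,z)$ coming from $x^4,\,x^3,\,x^2$. All coefficients which are monomials contribute a single linear function to $\trop$, so they induce trivial subdivisions. Of the remaining coefficients, many share the same Newton polytope once the common monomial factors are stripped, so the effective fans collapse into only a handful of distinct refinements: comparing $u_2$ with $u_{34}$ (from $y^4$, $y^2z,yz^2,z^3$, and $x^3$), comparing $u_{34}+u_5$ with $2u_4$, comparing $u_2+u_5$ with $2u_4$, and the three-way combinations that appear in the $y^3$ row.

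The heart of the computation is the coefficient of $y^3$, which from Table~\ref{tab:LTerms} exhibits nine distinct leading-term patterns: these are precisely the cases where each of the three pairwise comparisons $(2u_2\text{ vs.}\ 2u_{34}$, $2u_2\text{ vs.}\ u_{34}+u_5$, $2u_{34}\text{ vs.}\ u_2+u_5$, etc.) is resolved one way or another. The baricenters $R_{23}=R_2+R_3$ and $R_{123}=R_1+R_2+R_3$ (modulo the lineality $\rspanone$) arise exactly as the loci where ties occur: $R_{23}$ corresponds to $u_2=u_{34}$ and $R_{123}$ to the additional balance $2u_2=u_{34}+u_5$ (equivalently $2u_{34}=u_2+u_5$). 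A direct case analysis, comparing the weights listed in the third column of Table~\ref{tab:LTerms} two at a time, produces the inequalities defining the nine cones $\mathcal{C}_0,\ldots,\mathcal{C}_8$, and confirms that no finer refinement is forced by the remaining 16 coefficients.

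Finally, I would confirm the claim by feeding the 17 Newton polytopes into \texttt{Polymake} or \sage, computing the common refinement of their normal fans inside the Type (II) cone of~\eqref{eq:NewTypeIICone}, and matching the output with $\{\mathcal{C}_i\}_{i=0}^{8}$ described above (four four-dimensional, four three-dimensional, and one two-dimensional cone, as depicted in~\autoref{fig:SubdivisionThetaCone}). The main obstacle is not any single computation but the verification that the refinement stabilizes at these nine cones: since several coefficients factor into sums that may vanish when the characteristic of $\resK$ is $2$ or $3$ (as made explicit in the \emph{Leading Terms} column of Table~\ref{tab:LTerms}), one must check that in characteristic $\neq 2,3$ no extra degeneracies lower the expected heights further and introduce new walls. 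This is why the hypothesis $\charF \resK\neq 2,3$ is essential, and the computational verification—provided in the Supplementary material—certifies that the nine cones above are exact.
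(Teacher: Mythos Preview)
Your proposal is essentially the same computational strategy the paper uses: compute the common refinement of the Gr\"obner (normal) fans of the 17 non-monomial coefficients and intersect it with the Type~(II) cone~\eqref{eq:NewTypeIICone}. The paper states only that the proof is computational and defers all details to the Supplementary material, so your added conceptual explanation---identifying the $y^3$ coefficient (and $x^3$) as the source of the full nine-fold refinement, and noting that the remaining coefficients impose only the coarser wall $u_2=u_{34}$---goes beyond what the paper writes out and is a useful gloss on the computation.

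One small inaccuracy: your description of the baricenter $R_{123}$ as the locus ``$2u_2=u_{34}+u_5$ (equivalently $2u_{34}=u_2+u_5$)'' is not correct. Comparing the weights in~\autoref{tab:LTerms} (e.g.\ the $x^3$ row), the additional balance at $R_{123}$ is $2u_4=u_2+u_5$ (and, on the locus $u_2=u_{34}$, equivalently $2u_4=u_{34}+u_5$); the variable $u_4$ must appear. Your two displayed equalities are not equivalent to each other unless $u_2=u_{34}$ already holds, and neither one is the actual wall. This does not affect the validity of your approach---the computational verification you propose would produce the correct cones regardless---but the hand-analysis paragraph should be corrected before it is relied upon.
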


In what follows we discuss the combinatorics of the Newton subdivisions of $\tilde{g}$. 
 The next result summarizes our findings, depicted in~\autoref{fig:xzSubdivisions}:
   \begin{proposition}\label{pr:xzNewtonS}
     There are eight combinatorial types of unmarked Newton subdivisions of $\tilde{g}$. The monomial $x^3$ in $\tilde{g}(x,z)$ is the sole responsible for non-generic behavior, which only occurs in the cells $\mathcal{C}_i$ for $i=5,6,7, 8$. 
   \end{proposition}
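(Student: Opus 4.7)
The plan is to exploit Table~\ref{tab:LTerms} so that the classification of Newton subdivisions reduces to tracking the single coefficient $\cf(x^3)$. I would begin by establishing the second assertion. An inspection of the third and fourth columns of the table shows that every other relevant coefficient of $\tilde{g}(x,z)$---namely $\cf(x^2z)$, $\cf(x^4)$, $\cf(x^2)$, and $\cf(x)$---has a leading term that is a single nonvanishing monomial in the initial forms $b_2, b_4, b_{34}, b_5$. Consequently, their heights coincide with the listed $u$-weights throughout the Type (II) cone, regardless of any algebraic condition imposed on the branch points. By contrast, the leading expression of $\cf(x^3)$ in each of the four cells $\mathcal{C}_5$, $\mathcal{C}_6$, $\mathcal{C}_7$, $\mathcal{C}_8$ is a genuine polynomial in the $b_i$ whose vanishing raises the actual height of $x^3$ strictly above the expected $u$-weight. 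This proves that $x^3$ is the sole source of non-generic behavior, confined to these four cells.

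For the enumeration, I would compute the Newton subdivision cell by cell directly from the heights. Within each chamber $\mathcal{C}_i$ the leading terms of all coefficients of $\tilde{g}$ are fixed (up to the potential vanishing of $\cf(x^3)$'s leading term in cells $5$--$8$), so the subdivision is governed by the relative position of the height $h_3$ of $(3,0)$ with respect to the piecewise linear lower hull built from the remaining monomials. The decisive comparison is with the value of the affine lift through $(2,0)$ and $(4,0)$ at $(3,0)$---equivalently, the sign of $h_3 - (\omega_2+2\omega_4+\omega_5)/2$---which decides whether $(3,0)$ appears as a vertex, is only marked, or is absent from the subdivision. Secondary comparisons involving the lifts through $(1,0), (5,0)$ and the upper monomials $(1,1), (2,1)$ determine the remaining combinatorial features.

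Enumerating over the nine chambers of~\autoref{lm:Subdivision1} using the three generic values for $h_3$ that occur (namely $2\omega_4$, $\omega_2+\omega_5$, and $2d_{34}+\omega_5-\omega_4$), together with the additional non-generic cases in $\mathcal{C}_5,\ldots,\mathcal{C}_8$, yields all possible Newton subdivisions. Recognizing the combinatorial coincidences among chambers that share the same leading term for $\cf(x^3)$ trims the list to the eight types depicted in Figure~\ref{fig:xzSubdivisions}. The main obstacle is the bookkeeping required to confirm that the tally is exactly eight, rather than nine or more: one must verify that the coincidences between cones carrying the same leading monomial for $\cf(x^3)$ do indeed produce identical subdivisions modulo relabeling, while each non-generic degeneration in $\mathcal{C}_5, \ldots, \mathcal{C}_8$ contributes a distinct type. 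This is dispatched by explicit computation with \singular, whose scripts are available in the Supplementary material.
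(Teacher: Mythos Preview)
Your approach is essentially the same as the paper's: both extract the non-genericity claim directly from Table~\ref{tab:LTerms} by observing that only the leading term of $\cf(x^3)$ is a non-monomial expression in the $b_i$ (and only on the cells $\mathcal{C}_5,\ldots,\mathcal{C}_8$), and both enumerate the subdivisions cell by cell, ultimately deferring the bookkeeping to \singular\ computations in the Supplementary material.

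Two small points. First, you have the direction reversed: when the leading term of $\cf(x^3)$ vanishes, the valuation increases, so the height $-\val(\cf(x^3))$ is \emph{lowered}, not raised above the expected weight; the paper consistently speaks of ``lowering $x^3$.'' Second, the paper is more explicit than you are about certifying that the generic subdivision is constant across each cell $\mathcal{C}_i$: rather than just checking a sample point, it computes closed formulas for the dual vertices in terms of the monomial heights and verifies, using the defining inequalities of $\mathcal{C}_i$, that the same monomials are maximized throughout. Your ``decisive comparison'' heuristic points in this direction, but the actual verification that each comparison has a fixed sign on the whole cell is what makes the argument rigorous.
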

   \begin{proof}
     By~\autoref{tab:nonFaithfulness}, the Newton subdivision of $\tilde{g}$ is determined by all possible subdivisions of the parallelogram $\mathcal{P}$.  To find the generic subdivision on each cell, we take as a sample weight vector $\underline{u}$ the average of its spanning  rays. We compute an example of parameters $\beta_5,\ldots, \beta_2$ with coordinatewise negative valuation $\underline{u}$ and pick initial forms $b_i =\init(\beta_i)$ ensuring the corresponding leading terms in~\autoref{tab:LTerms} do not vanish. We compute the corresponding plane tropical curve and its  dual subdivision with the \texttt{tropical.lib} package in \singular. All examples and scripts are available in the Supplementary material.
     
  To certify that each generic subdivision is valid on the entire cell, we compute explicit formulas for all the vertices dual to polygons in the subdivision, in terms of the weights of the monomials on $\mathcal{P}$ being maximized (these weights are provided in~\autoref{tab:LTerms}). Finally, the inequalities defining each of the nine cells  confirm that these vertices maximize the same monomials for every weight vector in the given cell.

    To address non-generic behavior on the cells $\mathcal{C}_5,\mathcal{C}_6,\mathcal{C}_7$ and $\mathcal{C}_8$,  we need only to focus on the monomial $x^3$. We list all possible subdivisions of $\mathcal{P}$ that can arise by lowering $x^3$ and construct numerical examples showing which ones are realized. 
     \end{proof}

   \begin{figure}
     \includegraphics[scale=0.35]{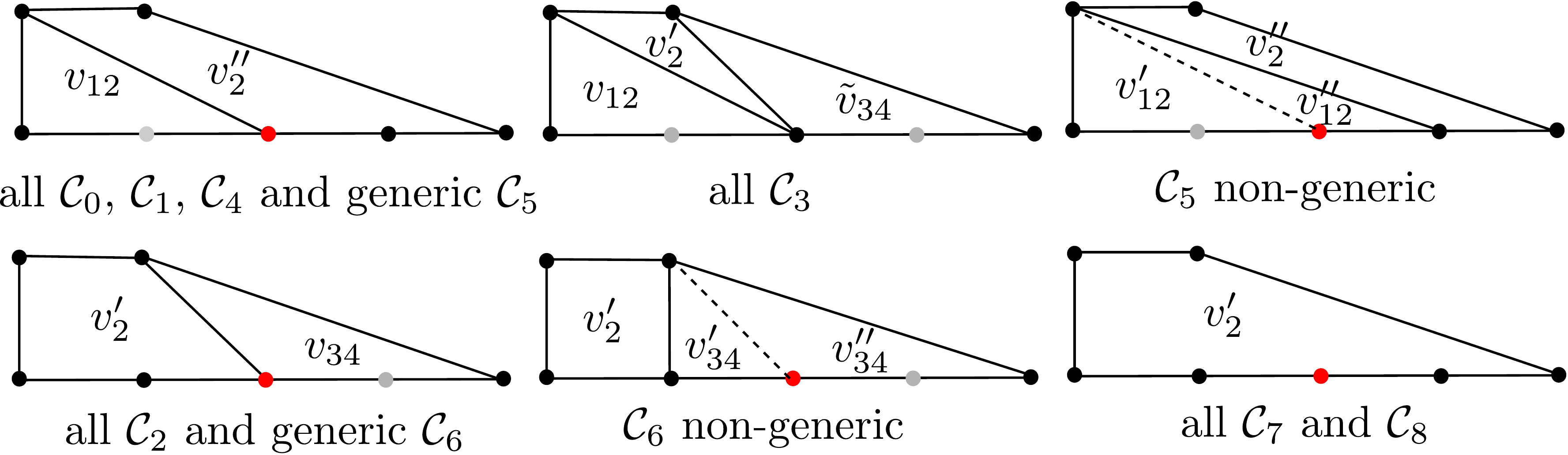}
     \caption{All eight  subdivisions of the parallelogram $\mathcal{P}$ from~\autoref{tab:nonFaithfulness}. The generic and non-generic behavior of $x^3$ on the four relevant cells are indicated by a red dot. The dashed lines correspond to two combinatorial types arising for non-generic initial forms. If absent, both vertices agree with $v_{12}$ and $v_{34}$, accordingly. For non-generic $\mathcal{C}_7$,  $x^3$ is unmarked.\label{fig:xzSubdivisions}}
   \end{figure}

Since  the linear inequalities between the expected heights of each relevant monomial in $h(y,z)$ can vary within each cell, the methods used for $\tilde{g}$ will not suffice to determine all possible  Newton subdivisions of $h$. 
A refined subdivision of the Type (II) cone induced by a subdivision of $\mathcal{C}_0$, $\mathcal{C}_2$ and the relative interior of their common facet $\mathcal{C}_7$  will be required to address this point and the effect of non-generic choices of $\beta$-parameters.

To this end, we construct nine polynomials $h_i$ for $i=0,\ldots, 8$, obtained by replacing each coefficient of $h$ by its leading term on the corresponding cone $\mathcal{C}_i$. We compute the Gr\"obner fan $\mathscr{G}_i$ of each $h_i$ in $\RR^6$,  and intersect each $\mathcal{C}_i$ with the projection of all maximal cells in $\mathscr{G}_i$ to the four $\beta$--coordinates. These calculations are easily performed  since each fan has at most 16 chambers and lineality space $\rspanone$. 
The result of this subdivision process is depicted in~\autoref{fig:SubdivisionThetaCone}.

Next, we describe all possible Newton subdivisions of $h$. As with~\autoref{pr:xzNewtonS}, the proof is computational in nature and requires a careful analysis for non-generic cases.
\begin{proposition}\label{pr:yzNewtonS}
  Each cell in~\autoref{fig:SubdivisionThetaCone} will give rise to one generic subdivision of $h(y,z)$, with further possibilities if genericity conditions are detected in~\autoref{tab:LTerms}. Figures~\ref{fig:Cell0yz} through~\ref{fig:Cell7yz} depict all possible outcomes, grouped conveniently.
  \end{proposition}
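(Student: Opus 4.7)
The plan is to adapt the strategy of \autoref{pr:xzNewtonS} to the polynomial $h(y,z)$, exploiting the fact that the refined cell decomposition in \autoref{fig:SubdivisionThetaCone} already encodes the linear orderings among the expected heights of the fourteen non-monomial coefficients of $h$ listed in \autoref{tab:LTerms}. For each cell $\mathcal{C}$ in that figure I would first pick a sample weight vector $\underline{u}$ given by the average of its spanning rays, choose explicit parameters $\beta_5,\beta_4,\beta_{34},\beta_2\in\PS$ with coordinatewise negative valuation $\underline{u}$, and select initial forms $b_i=\init(\beta_i)$ so that none of the leading terms in the second column of \autoref{tab:LTerms} vanishes. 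Then I would use \singular\ together with its \texttt{tropical.lib} library to compute the tropical plane curve $\Trop\,V(h)$ and its dual Newton subdivision, recording the outcome as the \emph{generic subdivision} associated to $\mathcal{C}$ and storing the scripts in the Supplementary material.

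Next, I would upgrade each pointwise computation to a statement valid on the entire cell by writing closed-form coordinates for every vertex of the generic subdivision as linear functions of the symbolic expected heights from \autoref{tab:LTerms}. The inequalities carving out $\mathcal{C}$ inside the Type (II) cone inherited from \autoref{lm:Subdivision1}, together with the additional inequalities coming from the refinement induced by each monomial group (for instance comparing the heights of $y^2z,yz^2,z^3$ among themselves, or of $y^3$ against the rows above it), are enough to certify that the same monomials are maximized at each vertex for every weight vector in $\mathcal{C}$. By construction, the refined subdivision of \autoref{fig:SubdivisionThetaCone} was built precisely as the intersection of the Gr\"obner fans of the leading-term polynomials $h_i$ with the cones $\mathcal{C}_i$, so this certification reduces to finitely many linear inequality checks that can be discharged by hand or with \sage.

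To address non-generic behaviour, I would scan the leading-term column of \autoref{tab:LTerms} and catalogue the factorizations that allow a cancellation in $\resK$: for example the factors $b_4^2\pm b_5 b_{34}$ appearing for $y^3$ on $\mathcal{C}_5$, the sums $b_{34}^2+b_2^2$ occurring on $\mathcal{C}_4,\mathcal{C}_6,\mathcal{C}_8$, and the combination $-b_4^4+b_5^2 b_{34}^2+b_5^2 b_2^2$ on $\mathcal{C}_8$. For each vanishing pattern I would recompute the actual height of the affected coefficient, enumerate the finitely many compatible refinements of the generic Newton subdivision by convexity, and exhibit numerical examples realizing each. The union of all such cases across the nine cells would produce the diagrams grouped in Figures~\ref{fig:Cell0yz}--\ref{fig:Cell7yz}.

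The main obstacle is organizational rather than conceptual: fourteen non-monomial coefficients combined with nine cells and several independent vanishing conditions yield a large bookkeeping problem, with the additional subtlety that two or more non-generic conditions can be satisfied simultaneously and interact. The refined subdivision of \autoref{fig:SubdivisionThetaCone} is engineered precisely so that the generic case on each cell is unambiguous, and the characteristic assumption $\charF\resK\neq 2,3$ eliminates vanishings coming from constant factors appearing in \autoref{tab:LTerms}; what remains is then a finite, cell-by-cell enumeration that can be carried out mechanically with \singular\ and \sage.
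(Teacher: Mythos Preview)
Your plan is essentially the one the paper carries out: pick a sample weight vector in each refined cell, compute the Newton subdivision of $h$ with \texttt{tropical.lib}, certify it on the whole cell via the linear height formulas from \autoref{tab:LTerms}, and then treat the non-generic cases by lowering the affected monomials and listing the resulting subdivisions, confirmed by numerical examples.

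There is one ingredient in the paper's argument that your outline omits and that you would need. The paper does not analyse the $yz$-subdivisions in isolation: it uses \autoref{lm:projections} and the already-known $xz$-data from \autoref{pr:xzNewtonS} to \emph{label} each polygon in the subdivision of $h$ by a vertex of the space curve (or to flag it as a false crossing coming from the overlap of $\sigma_4$ and $\sigma_6$). This is part of what Figures~\ref{fig:Cell0yz}--\ref{fig:Cell7yz} record, and it is also what cuts down the non-generic enumeration. The key instance is $\mathcal{C}_6$: when $b_{34}^2=-b_2^2$ drops both $y^3$ and $y^4$, pure convexity would allow several subdivisions of the resulting triangle $T$ with vertices $y^2$, $y^2z^2$, $y^5$. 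The paper rules out all but two by observing that any extra vertex dual to a cell of $T$ must lie in $\sigma_5$, hence must be one of $v_{34}'$, $v_{34}''$ from \autoref{fig:xzSubdivisions}, and the edge between them forces slope $-2$ in the subdivision. Without invoking the $xz$-picture you would have to manufacture and discard more candidate subdivisions by ad hoc numerical search; feasible, but messier. Adding this cross-reference to \autoref{lm:projections} and \autoref{pr:xzNewtonS} to your step on non-generic behaviour closes the gap.
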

Before providing the details of the proof for each cell, we point out some common features of the various subdivisions and clarify notation. In all cases, we only indicate  vertices of $\Trop\,V(I_{g,f})$ rather than false crossings arising from  certain parallelograms (seen, for example, in the subdivision of $Q_1$ in~\autoref{fig:Cell13456yz}.) False crossings may also appear from  a polygon with at least two parallel edges when a vertex in $\sigma_6$ maps to the interior of an edge or leg in $\sigma_4$. This is seen in the polygon $Q_4$ in the same figure: the $YZ$-projection of the vertex $v_{12}$ in $\sigma_6$  lies in the projection of the leg  with direction $(0,0,-1)$ adjacent to the vertex $v_{21}$ in $\sigma_4$.

In addition to these false crossings, the $YZ$-projection has other undesirable effects: we will see vertices in $\sigma_4$ hidden in edges of $\Trop\,V(h)$, overlapping of vertices, as well as higher multiplicity edges and legs coming  in two flavors: \begin{enumerate}[(i)]
  \item 
    Multiplicity one edges and legs inherit higher multiplicities in the $yz$-tropicalization due to the push-forward formula for multiplicities. This occurs for the leg with direction $(2,5,5)$ in $\sigma_3$ adjacent to $v_4$ which inherits multiplicity 5 in $\Trop\,V(h)$.
  \item Two edges or legs (one in $\sigma_4$ and one in $\sigma_6$) overlap in the $yz$-tropicalization, and their multiplicities get added accordingly.  This will always be the case for the edges joining $z^4$ and $y^2z^2$ in all Newton subdivisions of $h$. On the tropical side, this was  observed already in~\autoref{fig:ThetaModif}.
  \item Vertices in $\sigma_4$ lie in relative interiors of edges in $\Trop\,V(h)$. This occurs for the vertex $v_2$ and the cells $\mathcal{C}_{0,i}$: $v_2$ maximizes the edge between $z^4$ and $y^2z^2$ in~\autoref{fig:Cell0yz}.
  \item A vertex in $\sigma_4$  and  one in $\sigma_6$ become the same vertex in $\Trop\,V(h)$. This will be indicated in all figures by equalities between labeling vertices dual to a given polygon.
\end{enumerate}
\begin{proof}[Proof of~\autoref{pr:yzNewtonS}] To determine the generic subdivisions we proceed by direct computation, as in the proof of~\autoref{pr:xzNewtonS}.  The results for each one of the 17 cells are shown in Figures~\ref{fig:Cell0yz} through~\ref{fig:Cell7yz}, where superscripts \emph{gen} indicate generic parameters. 

Next, we discuss the labeling of all polygons in the generic subdivisions.  By~\autoref{lm:projections}, we can place the vertices of $\Trop\,V(I_{g,f})$ we already know from~\autoref{tab:nonFaithfulness} and~\autoref{fig:xzSubdivisions} as duals to polygons or edges in the subdivision. The remaining unlabeled polygons correspond to either false crossings or vertices in $\sigma_4$. The false crossings correspond to parallelograms, and we leave them blank. The others get labeled with blue vertices of the  form $v_{2i}$ with $i=0,1$ to emphasize that they come from $\sigma_4$.

  In order to determine all non-generic subdivisions, we look for vanishing of expected leading terms in~\autoref{tab:LTerms} that will lower the corresponding monomials. In most cases, the resulting special subdivisions (marked with the superscript \emph{sp} on the figures) will differ from the generic ones in only a few polygons. We treat each cell separately to predict these special behaviors and construct numerical examples  to confirm these potential subdivisions do occur.  
  
  We start with the cell $\mathcal{C}_4$. The monomials affected are $y^4$ (if $b_{34}^2 =-b_2^2$), and $y^2, yz^2$ and $z^3$ (if $3b_{34}^2=b_2^2$). From~\autoref{fig:Cell13456yz} we see that lowering any of these four monomials will have no effect on the generic subdivision since these points were already unmarked (the unmarking of $y^4$ was indicated in pink). Therefore, there will be a single Newton subdivision for $\mathcal{C}_4$, namely the generic one.

  Special subdivisions on the cell $\mathcal{C}_5$ are determined by the behavior of $y^3$ whenever $b_4^2 = \pm b_5b_{34}$. This monomial is marked in $Q_5^{gen}$, as seen in~\autoref{fig:Cell13456yz}. When the height of this monomial is reduced, an edge between $yz$ and $y^4$ arises. Furthermore, with the exception of $y^3$, the heights of all points in the triangle $T$ with vertices $y$, $yz$ and $y^4$ are known from~\autoref{tab:LTerms}. Depending on the height of $y^3$, there will be two possible subdivisions: either $T$ is a polygon in the subdivision, or it gets divided along an edge between $y^3$ and $yz$. Numerical examples  confirm that both cases do occur. 

  The cell $\mathcal{C}_6$ has the same defining genericity conditions as $\mathcal{C}_4$, with the addition that $y^4$ drops height whenever $y^3$ does. Since $y^2$ is marked, the lowering of the monomials $z^3, y^2z$ and $yz$ will not change the subdivision, so we can disregard this genericity condition, and only require $b_{34}^2 = -b_2^2$ for special behavior.

  Furthermore, since $y^3$ and $y^4$ are both marked in $Q_6^{gen}$ as we see in~\autoref{fig:Cell13456yz}, for special parameters, an edge joining $y^2$ and $y^2z^2$ will appear and give rise to a triangle $T$ with vertices $y^2, y^2z^2$ and $y^5$. We claim that $T$ can only be further subdivided by an edge between $y^2z^2$ and $y^3$ leading to the two possibilities for $Q_6^{sp}$ shown in the figure. The reason for this lies in \autoref{lm:projections} and~\autoref{pr:xzNewtonS}. Since $v_{12}=(2\ww_4+\ww_5/2, \ww_2+\ww_4+\ww_5/2)$, this vertex lies in $\sigma_4\cap \sigma_5$. Therefore, all cells in a subdivision of $T$ will come from vertices in $\sigma_5$, namely the vertices  $v_{34}'$ and $v_{34}''$ in~\autoref{fig:xzSubdivisions}. Unless these two agree, the edge between them in $\Trop\,V(\tilde{g})$ is dual to an edge with slope $-2$ in a subdivision of $T$. By convexity, there is only one option for such an edge.

  The analysis of non-genericity for the cells $\mathcal{C}_{7,i}$ with $i=0,1,2$ is simpler that earlier cases since only the monomial $y^3$ imposes restrictions on the parameters. Only if $b_4^2 = \pm b_2b_5$ this monomial will be lower than expected. If so, due to the marking of $y^4$ in the polygon $\mathcal{B}^{gen}$ from~\autoref{fig:Cell7yz},  an edge between $y^2z$ and $y^4$ will appear for special parameters. Depending on the height of $y^3$, we will have one extra edge joining $y^2z$ and $y^3$. This yields the two possible configurations $\mathcal{B}^{sp}$ in the figure.

  Finally, we discuss the subdivisions for non-generic parameters coming from $\mathcal{C}_8$. The same six monomials from $\mathcal{C}_6$  are responsible for special choices of parameters. Since these six monomials were not vertices in the generic subdivision in~\autoref{fig:Cell28yz}, lowering them will not alter the subdivision, except for unmarking $y^3$ and $y^4$ accordingly. Thus, the generic and the special Newton subdivisions agree for $\mathcal{C}_8$. This concludes our proof.
\end{proof}

\begin{figure}
  \centering
\includegraphics[scale=0.2]{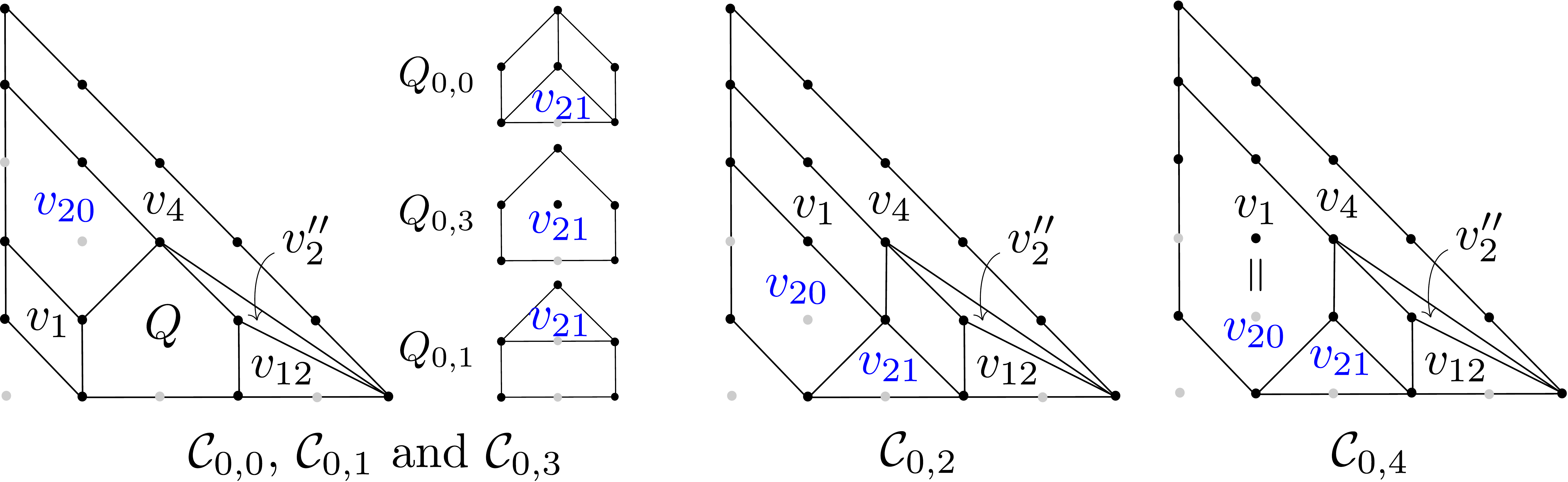}
\caption{All possible subdivisions corresponding to weight vectors in the cells $\mathcal{C}_{0,i}$ for $i=0,\ldots, 4$. The polygon $Q_{0,i}$ indicates the subdivision of the polygon $Q$ on $\mathcal{C}_{0,i}$.  Unlabeled polygons correspond to false crossings. Blue vertices come from $\sigma_4$. The notation on the remaining vertices is compatible with that of~\autoref{tab:nonFaithfulness} and~\autoref{fig:xzSubdivisions}. \label{fig:Cell0yz}}
\end{figure}

\begin{figure}
  \centering
\includegraphics[scale=0.2]{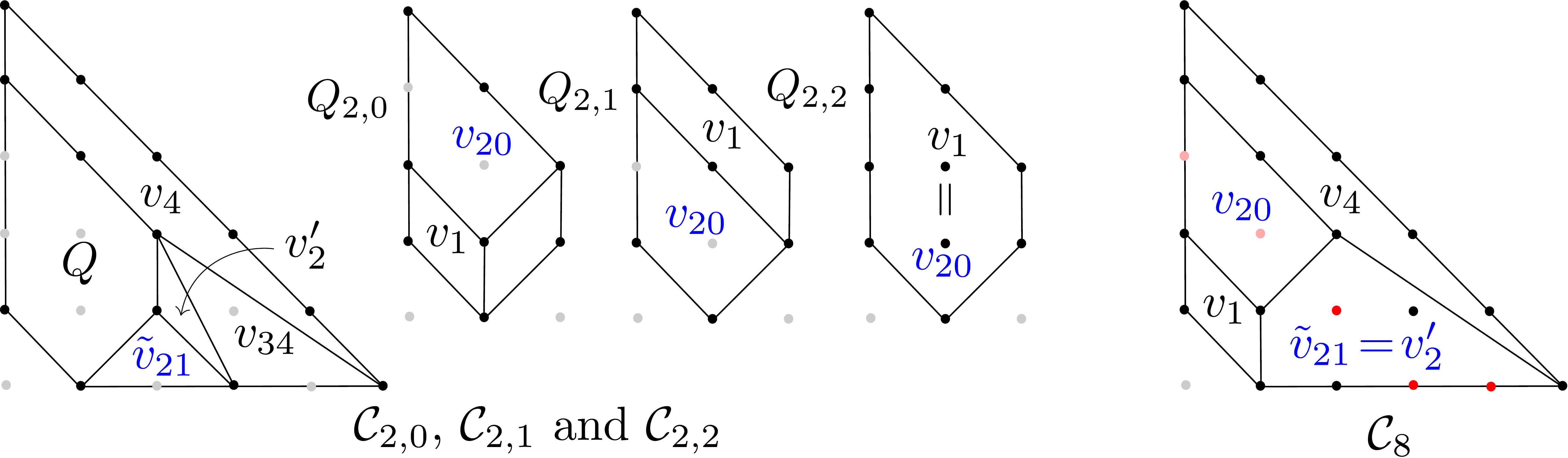}
\caption{All possible subdivisions for the cells $\mathcal{C}_{2,i}$ for $i=0,1, 2$ and $\mathcal{C}_8$.  Red (respectively pink) dots  indicate marked (resp. unmarked) monomials whose behavior varies with the genericity conditions.\label{fig:Cell28yz}}
\end{figure}

\begin{figure}
  \centering
\includegraphics[scale=0.2]{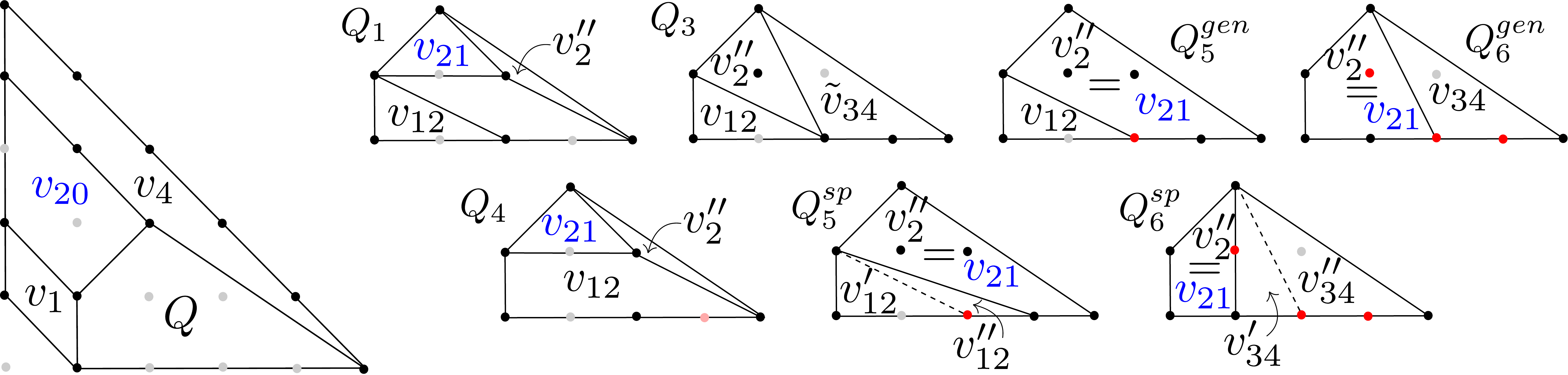}
\caption{All possible subdivisions for the cells $\mathcal{C}_{i}$ for $i=1,3,4,5,6$. The polygon $Q$ gets subdivided differently on each cell. The subscript $gen$ correspond to generic parameters $\beta_5,\beta_4, \beta_{34}$ and $\beta_2$, whereas the superscript $sp$ indicate special ones. As with~\autoref{fig:xzSubdivisions}, dotted lines correspond to extra possible subdivisions. When absent, the corresponding vertices agree. 
  \label{fig:Cell13456yz}}
\end{figure}

\begin{figure}
  \centering
\includegraphics[scale=0.2]{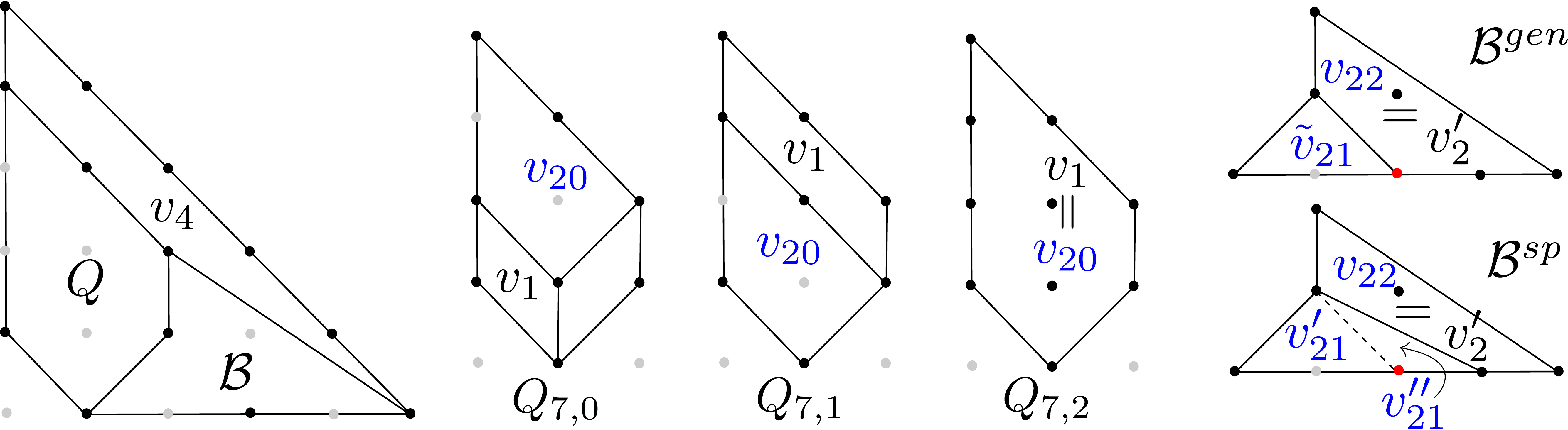}
\caption{All possible subdivisions for  the cells $\mathcal{C}_{7,i}$ for $i=0,1, 2$.\label{fig:Cell7yz}}
\end{figure}
Formulas for all vertices in $\Trop\,V(I_{g,f})$ can be given in terms of the vertices $v_1,v_2,v_4$ from~\eqref{eq:vertices} (where $\ww_3=\ww_4$) and the weight vector $(\ww_5,\ww_4,d_{34}, \ww_2)$ from~\autoref{tab:CombAndLengthData}:
\begin{equation}\label{eq:verticesTypeII}
  \begin{minipage}{0.4\textwidth}
  \[\begin{aligned}
    v_{12}& := v_1 + (\ww_5-\ww_2)/2\,(1,1,0),\\
    v_{34} & :=v_4 - (\ww_5-\ww_2)/2\,(1,2,3),\\
    \tilde{v}_{34} & := v_4-(\ww_5+\ww_4-2d_{34})/2\,(1,2,3),\\
  v_{20} & := v_2-(\ww_4-d_{34})(0,1,1),\\
  v_{21} & := v_2'' - (3\ww_4+2\ww_5-2d_{34})/2\,(0,1,1),\\
  \tilde{v}_{21} & := v_2'-(\ww_4+\ww_2-2d_{34})/2\,(0,1,1),\\
  v_{2}'& :=v_2 -(\ww_4-\ww_2) (0,0,1),
  \end{aligned}\]
\end{minipage}
  \begin{minipage}{0.45\textwidth}
    \[\begin{aligned}
    v_{2}''& :=v_2 -(\ww_5-\ww_4) (0,0,1),\\
  v_{12}' & := v_{12} - \varepsilon'(1,1,0),\\
  v_{12}'' & := v_2''-2\varepsilon'(1,1,3),\\
   v_{34}' & := v_{2}'+2\varepsilon(1,2,0) , \\
  v_{34}''& := v_{34}-\varepsilon (1,2,3),\\
  v_{21}' & := v_{21} + \varepsilon''(0,1,-1),\\
  v_{21}'' & := v_{2}' -2 \varepsilon''(0,1,2),
    \end{aligned}\]
\end{minipage}
\end{equation}

\noindent
where $0\leq \varepsilon\leq (\ww_5+\ww_2-2\ww_4)/6$, $0\leq \varepsilon' \leq  (2\ww_4-\ww_5-\ww_2)/6$ and $0\leq \varepsilon''\leq (\ww_5+2d_{34}-\ww_4)/6$. Whenever the value of $\varepsilon$ is maximal, we get $v_{34}'=v_{34}''$. Similarly, when $\varepsilon'$ and $\varepsilon''$ are maximal, it follows that  $v_{12}'=v_{12}''$ and $v_{21}'=v_{21}''$, respectively.

\begin{proof}[Proof of~\autoref{thm:allCombTypes}.]
  The result follows by combining~\autoref{lm:projections} with Propositions~\ref{pr:xzNewtonS} and~\ref{pr:yzNewtonS}. It is worth noticing that  $\mathcal{C}_{0,i}$, $\mathcal{C}_1$, and $\mathcal{C}_4$ give tropical curves in $\RR^3$ with the same combinatorial type (indicated in~\autoref{fig:allCombTypesTypeII} by the cell $\mathcal{C}_{014}$). ~\autoref{fig:ThetaModif}  corresponds to a graph in $\mathcal{C}_{014}$.
Each special configuration leads to two cells $\mathcal{C}_i^{sp}$ and $\mathcal{C}_i^{sp_2}$ for $i=5,6,7$. The latter is obtained when $v_{12}'=v_{12}''$, $v_{34}'=v_{34}''$ and $v_{21}'=v_{21}''$, respectively.
\end{proof}

\begin{figure}
\centering  \includegraphics[scale=0.25]{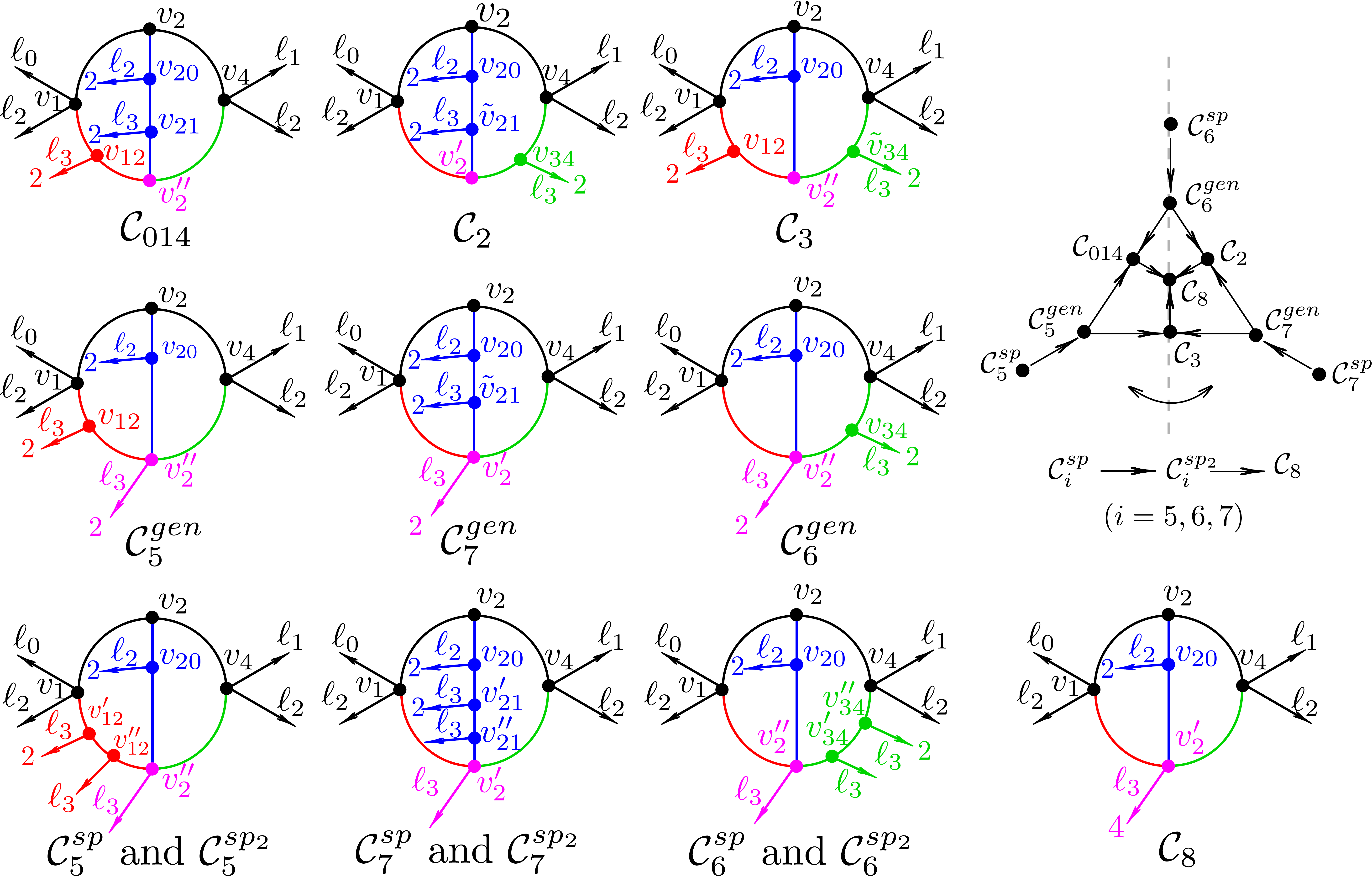}
  \caption{All combinatorial types of $\Trop\,V(I_{g,f})$ and their (symmetric) poset of specializations, where $\mathcal{C}_i^{sp_2}$ is obtained from $\mathcal{C}_i^{sp}$ by making the two  vertices $v_{kl}'$ and $v_{kl}''$ agree (the adjacent leg $\ell_3$ has  multiplicity 3).  The four leg directions are $\ell_0=-(2,1,1)$, $\ell_1= (2,2,5)$, $\ell_2=-e_2$ and $\ell_3=-e_3$.\label{fig:allCombTypesTypeII}}
  \end{figure}

\smallskip

A simple computation shows that $\init_{v}(I_{g,f})$ is reduced and irreducible for all vertices and edges in $\Trop\,V(I_{g,f})$. We conclude that the tropical skeleton is isometric to the minimal Berkovich skeleton, as predicted by~\autoref{thm:faithfulRe-embedding}. Faithfulness at the level of the extended skeleta can be achieved via the vertical modification~\eqref{eq:fiathfulExtendedIII} as in Type (III).

\begin{example}[\autoref{ex:thetaC} revisited]\label{ex:ExTypeII} As was shown in~\autoref{fig:ThetaModif}, the curve from~\autoref{ex:thetaC} is of Type (II). It lies in the witness region $\Omega^{(II)}$  with branch points
  \[\alpha_1=\infty,\; \alpha_2 = (3\,t^5)^2,\; \alpha_3 = (11\,t^2 + 5\,t^7)^2,\; \alpha_4 = (11\,t^2)^2, \; \alpha_5=(1+t^2)^2 \text{ and } \alpha_6=0.
  \]
  By construction, we have natural choices for square-roots of the relevant branch points, namely $\beta_2 = 3\, t^5$, $\beta_3 =11\,t^2 + 5\,t^7$, $\beta_4 = 11\,t^2$ and $\beta_5= 1+t^2$.  We re-embed our na\"ive tropicalization via the following algebraic lift from~\eqref{eq:liftF} of $F =\max\{Y, -4 + X, 2X\}$:
  \[f(x,y) = y - 11\,t^2(1+t^2)(11\,t^2+5\,y^7) \,x + (1+t^2)\,x^2.\]
  Since $\beta_{34} = \beta_3-\beta_4 = -5\,t^7$, the weight vector $\underline{u}$ from~\eqref{eq:valn4Param}  becomes
  \[
\underline{u} = (0, -2, -7, -5) = -2\, \mathbf{1} + 5/3 \, R_{123} + 1/3 \, R_{123,3} + 4/3\, R_{23,3} 
\]
so it lies in the cell $\mathcal{C}_{0,0}$ from~\autoref{fig:SubdivisionThetaCone}. The top left graph in  \autoref{fig:allCombTypesTypeII} shows the tropical curve  $\Trop\, V(I_{g,f})$, in agreement with~\autoref{fig:ThetaModif}.
Since $\underline{\omega}=(\ww_5,\ww_4,d_{34}, \ww_2) = (0, -4, -9, -10)$, expression \eqref{eq:vertices} gives us the vertices  $v_1 = ( -10,-14, -14)$, $v_2=v_3 = (-4, -8, -8)$ and $v_4= (0, 0, 0)$. We use~\eqref{eq:verticesTypeII} to determine all remaining vertices: $v_{12}= (-5, -9, -14)$, $v_2''= (-4,-8,-12)$,     $v_{20}= (-4,-13,-13)$, and $v_{21}= (-4,-11,-15)$.
\end{example}
\subsection{Types (V) and (VII)}\label{ss:TypeVandVII} As discussed earlier in this section, these are the only two types of curves whose na\"ive tropicalization is faithful on the minimal skeleton. As~\autoref{fig:5-7Naive} shows, these tropical curves have high-multiplicity legs with direction $(0,-1)$. They are the images of four legs on the source curves in~\autoref{fig:M2BarAndMumford}. For Type (VII), the unique multiplicity four leg adjacent to $v_1$ is the isometric image of the legs of $\Sigma(\mathcal{X})$ marked with $\alpha_2, \ldots, \alpha_5$. For Type (V), the legs marked with $\alpha_2$ and $\alpha_3$ are mapped isometrically onto the multiplicity two leg adjacent to $v_1$, while the legs marked with $\alpha_4$ and $\alpha_5$ are mapped to the corresponding vertical leg adjacent to $v_3$. In both cases, the legs marked with $\alpha_1$ and $\alpha_6$ are mapped isometrically to the legs with directions $(-2,-1)$ and $(2,5)$, respectively.
The next result discusses the behavior of the vertices of the Berkovich skeleta under tropicalization, where $v_1\!=\!(\ww_2, 3\ww_2/2 + \ww_4)$ and $v_4\!=\! (\ww_4, 5\ww_4/2)$:
\begin{lemma}\label{lm:vertexTypeVII} The initial degeneration of the vertex $v_1$ of $\Trop \, V(f)$ for Type (VII) is a smooth genus two curve over $\resK$. The vertex is the image of the unique genus two vertex of the extended skeleton of $\mathcal{X}^{\an}$ under the na\"ive tropicalization map.
  \end{lemma}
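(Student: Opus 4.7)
The plan is to compute the initial degeneration $\init_{v_1}(g)$ directly from the hyperelliptic equation \eqref{eq:hypPlanar0Infty}, identify it as a smooth hyperelliptic curve of genus two over $\resK$, and then invoke the correspondence between vertices of $\Trop\,V(g)$ whose initial degeneration is smooth of positive genus and weighted vertices of the Berkovich skeleton, as in~\autoref{sec:faithf-trop-trop}.

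First, I would expand
\[
g(x,y) = y^2 - x\prod_{i=2}^5(x-\alpha_i) = y^2 - x^5 + \sum_{k=1}^4 (-1)^k e_k\, x^{5-k},
\]
where $e_k$ denotes the $k$-th elementary symmetric polynomial in $\alpha_2,\ldots,\alpha_5$. Under the Type (VII) conditions from~\autoref{tab:CombAndLengthData}, every $\alpha_i$ satisfies $\val(\alpha_i)=-\ww_2$, so each summand of $e_k$ has valuation $-k\ww_2$, giving $\val(e_k)\ge -k\ww_2$, with equality exactly when $e_k(\init\alpha_2,\ldots,\init\alpha_5)\neq 0$ in $\resK$. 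Using $v_1=(\ww_2,5\ww_2/2)$ (since $\ww_4=\ww_2$), both $y^2$ and $x^5$ achieve height $5\ww_2$, while each $e_kx^{5-k}$ has height at most $5\ww_2$. Collecting the contributing monomials yields $\init_{v_1}(g) = y^2 - x\prod_{i=2}^5(x - \init\alpha_i)\in \resK[x,y]$, because the coefficient of $x^{5-k}$ on the right-hand side is precisely $(-1)^k e_k(\init\alpha_2,\ldots,\init\alpha_5)$, which vanishes exactly when the corresponding monomial drops out of $\init_{v_1}(g)$. The Type (VII) defining conditions also guarantee that $\init\alpha_2,\ldots,\init\alpha_5\in\resK^*$ are nonzero and pairwise distinct, so this affine curve is a double cover of $\PP^1_{\resK}$ branched at the six distinct points $\{0,\init\alpha_2,\init\alpha_3,\init\alpha_4,\init\alpha_5,\infty\}$ and is therefore smooth of genus $(6-2)/2=2$.

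For the second assertion, recall that for Type (VII) the extended skeleton $\Sigma(\mathcal{X})$ is a star consisting of a single vertex of weight two with six legs attached at the branch points $\alpha_1,\ldots,\alpha_6$. Since $\init_{v_1}(g)$ is smooth and irreducible of positive genus, the Baker--Payne--Rabinoff correspondence~\cite{bak.pay.rab:16} (together with the Poincar\'e-Lelong formula invoked in~\autoref{sec:faithful-reemb}) identifies $v_1$ as the image under $\trop$ of a skeletal vertex whose weight equals the genus of its initial degeneration, namely two. Uniqueness of the genus two vertex in $\Sigma(\mathcal{X})$ forces $v_1$ to be its image. The main subtlety in this approach lies in ensuring that the identification $\init_{v_1}(g)=y^2-x\prod(x-\init\alpha_i)$ is robust when individual elementary symmetric polynomials $e_k(\init\alpha_i)$ vanish; such vanishings reflect non-generic configurations within the Type (VII) cone but not a failure of the hypotheses, so the identification goes through unchanged.
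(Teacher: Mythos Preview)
Your proposal is correct and follows essentially the same approach as the paper: compute $\init_{v_1}(g)=y^2-x\prod_{i=2}^5(x-\init\alpha_i)$, recognize it as a double cover of $\PP^1_{\resK}$ branched at six distinct points (hence smooth of genus two), and then identify $v_1$ with the unique weight-two vertex of $\Sigma(\mathcal{X})$. The only minor difference is in the second step: the paper deduces that $v_1$ is the image of the genus-two vertex by continuity from the already-established description of where each marked leg of $\Sigma(\mathcal{X})$ lands, whereas you invoke the Baker--Payne--Rabinoff correspondence between smooth positive-genus initial degenerations and weighted skeletal vertices; both arguments are standard and equally valid here.
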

\begin{proof}
  A simple computation gives $\init_{v_1}(g) = y^2-x\prod_{i=2}^5(x-\init(\alpha_i))$. \autoref{tab:CombAndLengthData} ensures that this initial degeneration is a genus two hyperelliptic curve branched at six distinct points: 0, $\init(\alpha_2), \ldots, \init(\alpha_5)$ and  $\infty$ in $\PP^1_{\resK}$. Therefore, it is smooth. The second claim follows directly by continuity and the earlier description of the images of all legs.
  \end{proof}
  \begin{figure}
    \begin{minipage}{0.4\textwidth}
    \includegraphics[scale=0.3]{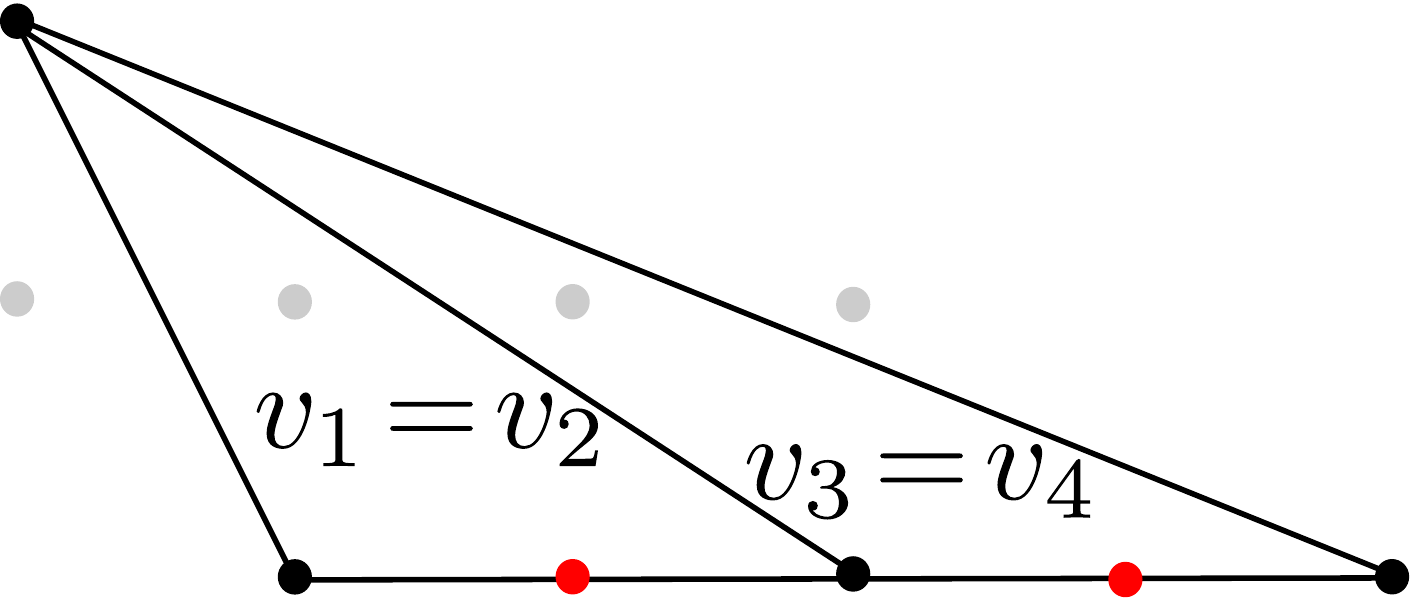}
    \end{minipage}
    \begin{minipage}{0.4\textwidth}
    \includegraphics[scale=0.3]{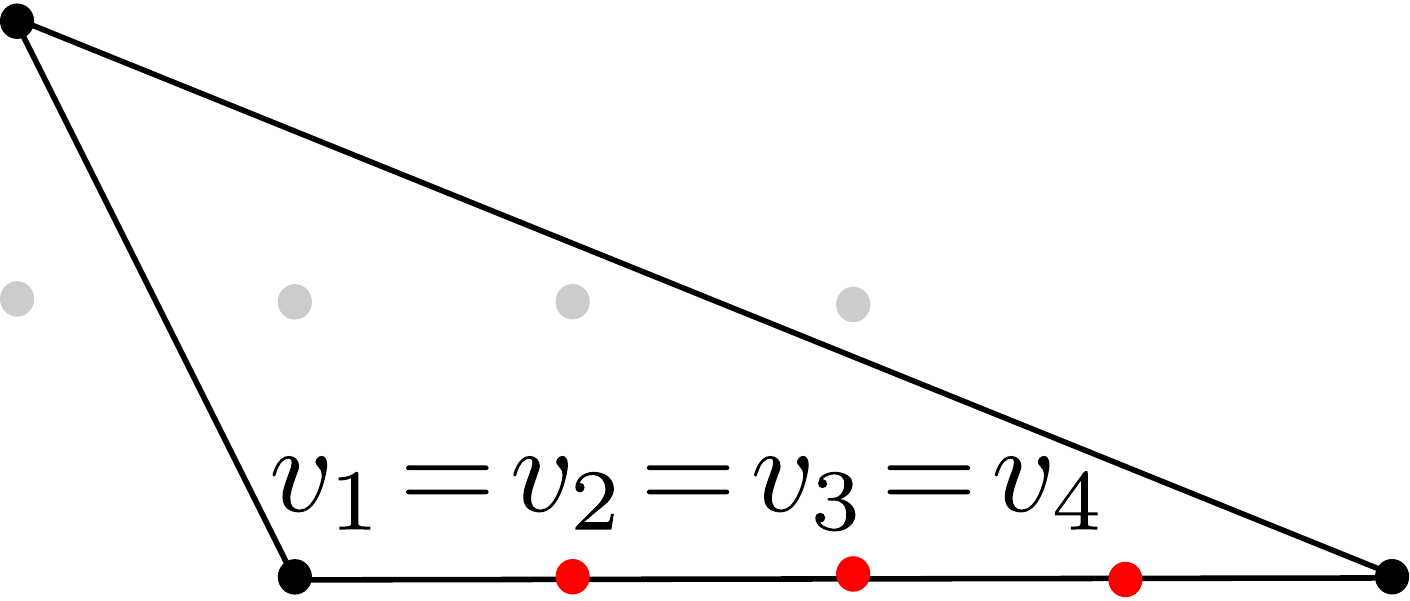}
    \end{minipage}
    \caption{From left to right: Na\"ive tropicalizations for Types (V) and (VII) via Newton subdivisions, following the notation from~\autoref{tab:nonFaithfulness}.\label{fig:5-7Naive}} \end{figure}

  \begin{lemma}\label{lm:verticesTypeV} The initial degenerations of both vertices of $\Trop \, V(g)$ for Type (V) are smooth genus one curves over $\resK$. These vertices are the images of the genus one vertices of the extended skeleton of  $\mathcal{X}^{\an}$ under the na\"ive tropicalization map.
  \end{lemma}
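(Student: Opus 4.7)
The plan is to mirror the approach used in \autoref{lm:vertexTypeVII} and \autoref{cl:initDegv3IV}: identify each vertex of $\Trop\, V(g)$, compute its initial degeneration, and show that it defines a smooth hyperelliptic cover of $\PP^1_{\resK}$ branched at four distinct points, whence geometric genus one by Riemann--Hurwitz. Specialising~\eqref{eq:vertices} to Type~(V) (where $\ww_3=\ww_2$ and $\ww_5=\ww_4$) collapses the four vertices there to just two,
\[
v_1=\Big(\ww_2,\,\tfrac{3\ww_2}{2}+\ww_4\Big)\qquad\text{and}\qquad v_3=\Big(\ww_4,\,\tfrac{5\ww_4}{2}\Big),
\]
each dual to a unique genus one lattice polygon in the Newton subdivision shown in~\autoref{fig:5-7Naive}.

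First I would expand $g(x,y)=y^2-x^5+e_1x^4-e_2x^3+e_3x^2-e_4x$, where $e_k$ is the $k$th elementary symmetric polynomial in $\alpha_2,\alpha_3,\alpha_4,\alpha_5$. Using the Type~(V) inequality $\ww_2<\ww_4$ together with the initial-term conditions $\init(\alpha_2)\neq\init(\alpha_3)$ and $\init(\alpha_4)\neq\init(\alpha_5)$, the strong non-Archimedean triangle inequality pins down the unique leading contribution to each $e_k$: all the expected heights from the generic Type~(V) computation are attained, with dominant terms coming from $\alpha_4+\alpha_5$, $\alpha_4\alpha_5$, $(\alpha_2+\alpha_3)\alpha_4\alpha_5$ and $\alpha_2\alpha_3\alpha_4\alpha_5$ respectively. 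Extracting the monomials that achieve the tropical maximum at each vertex then yields
\[
\init_{v_1}(g)\;=\;y^2-\init(\alpha_4)\init(\alpha_5)\,x\bigl(x-\init(\alpha_2)\bigr)\bigl(x-\init(\alpha_3)\bigr),
\]
\[
\init_{v_3}(g)\;=\;y^2-x^3\bigl(x-\init(\alpha_4)\bigr)\bigl(x-\init(\alpha_5)\bigr).
\]

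Next I would confirm both initial forms are smooth of geometric genus one. After the birational changes $y\mapsto\sqrt{\init(\alpha_4)\init(\alpha_5)}\,y$ and $y\mapsto x\,y$ respectively, the smooth projective models become double covers of $\PP^1_{\resK}$ branched at the four distinct points $\{0,\init(\alpha_2),\init(\alpha_3),\infty\}$ and $\{0,\init(\alpha_4),\init(\alpha_5),\infty\}$; distinctness uses the Type~(V) hypotheses on initial terms together with $\alpha_i\in K^{\ast}$. Riemann--Hurwitz then gives genus one, proving the first claim.

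For the second claim, \autoref{fig:allSkeletaAndMetrics} records that the minimal skeleton $\Sigma(\mathcal{X})$ of a Type~(V) curve consists of exactly two genus one vertices joined by an edge, with the marked legs $\{\alpha_2,\alpha_3\}$ and $\{\alpha_4,\alpha_5\}$ attached to the two vertices respectively, as described at the start of this subsection. Since the na\"ive tropicalization is faithful on the minimal skeleton for Type~(V), the continuous map $\trop\colon\Sigma(\mathcal{X})\to\Trop\,V(g)$ must send each genus one vertex of $\Sigma(\mathcal{X})$ to a vertex of $\Trop\,V(g)$ with positive-genus initial degeneration; the only candidates are $v_1$ and $v_3$, and the assignment is pinned down by matching the images of the marked legs. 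The main technical obstacle is the careful bookkeeping of cancellations in $e_1,\ldots,e_4$ when extracting leading terms, but this is routine once the Type~(V) inequalities are explicitly in play.
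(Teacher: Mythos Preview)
Your proposal is correct and follows essentially the same approach as the paper: compute the two initial degenerations explicitly, recognize each as a double cover of $\PP^1_{\resK}$ branched at four distinct points (hence smooth of genus one), and then use the images of the marked legs to pin down which Berkovich-skeleton vertex lands on which tropical vertex. The paper is terser---it simply reuses the Type~(IV) formula~\eqref{eq:initv3TypeIV} for $\init_{v_3}(g)$ rather than redoing the symmetric-function bookkeeping---but the content is the same.
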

\begin{proof} A direct computation gives $\init_{v_1}(g) = y^2-\init(\alpha_4)\init(\alpha_5)x(x-\init(\alpha_2)(x-\init(\alpha_3))$. 
  By~\autoref{tab:CombAndLengthData} we conclude that $\init_{v_1}(g)$ defines an elliptic curve over $\resK$, since it is a double cover of
  $(\resK^*)^2$ branched at  four distinct points: $0$, $\init(\alpha_2)$, $\init(\alpha_3)$ and $\infty$ in $\PP^1_{\resK}$.
Expression~\eqref{eq:initv3TypeIV} computed  for Type (IV)  is also valid  for Type (V), so  $\init_{v_3}(g)$   is a smooth genus one curve in $(\resK^*)^2$.

  Since the images of the legs marked with $\alpha_2$ and $\alpha_3$ meet at $v_1$, we see that $v_1$ is the image of the corresponding genus one vertex. Similar arguments prove the claim for  $v_3$.
\end{proof}
\begin{remark} \label{rm:jInvTypeV} Techniques from~\autoref{rm:jInvTypeIV} can be used here to show that the vertices of $\Trop \,V(g)$ have genus one. Computations available in the Supplementary material confirm that  the valuations of the $j$-invariants of the restriction of $g$ to the  triangles dual to $v_1$ and $v_3$ are non-negative for any characteristic of $\resK$ other than two.
  \end{remark}

As discussed earlier, the na\"ive tropicalization is not faithful on the extended skeleta. We overcome this  via  vertical modifications along the tropical polynomials $\trop(x-\alpha_2)$ and $\trop(x-\alpha_4)$. Our next result show that these methods yield faithfulness for these tropical curves  in dimensions four and five. The Supplementary material provides examples illustrating this technique for both types.

\begin{proposition}\label{pr:ExtFaithVII} Let $\mathcal{X}$ be of Type (VII). Then, the embedding $\mathcal{X}\hookrightarrow (K^*)^5$ given by 
  \begin{equation}\label{eq:IdealVII}
    J=\langle g, z_i-(x-\alpha_i) \colon \; i=2,3, 4\rangle \subset K[x^{\pm}, y^{\pm}, z_2^{\pm},z_3^{\pm}, z_4^{\pm}],
  \end{equation}
  induces a faithful tropicalization for the extended skeleton with respect to $\alpha_1,\ldots, \alpha_6$. The tropical curve $\Trop\,V(J)$ has one vertex and six legs, and  all tropical multiplicities are one.
\end{proposition}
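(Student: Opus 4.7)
The plan is to apply the faithfulness criterion of Baker–Payne–Rabinoff \cite[Theorem 5.24]{bak.pay.rab:16}: faithfulness of $J$ on the extended skeleton of $\mathcal{X}^{\an}$ relative to the six branch points follows once we verify that $\Trop\,V(J) \subset \RR^5$ has exactly one vertex and six legs, all with tropical multiplicity one (equivalently, with reduced and irreducible initial degenerations). I would establish the three assertions of the proposition in sequence.

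First I would locate and analyze the vertex. By \autoref{lm:vertexTypeVII}, the na\"ive tropicalization $\Trop\,V(g) \subset \RR^2$ has a single vertex $v_1 = (\ww_2, 5\ww_2/2)$ whose initial degeneration is a smooth irreducible hyperelliptic genus two curve over $\resK$. The Type (VII) conditions in \autoref{tab:CombAndLengthData} ($\ww_2 = \ww_3 = \ww_4 = \ww_5$ with pairwise distinct initial terms) ensure that $\trop(x-\alpha_i) = \ww_2$ generically near $v_1$ for $i=2,3,4$, so the image vertex in $\RR^5$ is $v_1^{(5)} = (\ww_2, 5\ww_2/2, \ww_2, \ww_2, \ww_2)$. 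Eliminating $z_2, z_3, z_4$ from
\[
\init_{v_1^{(5)}}(J) = \bigl\langle\, \init_{v_1}(g),\; z_i - x + \init(\alpha_i)\; : i=2,3,4\,\bigr\rangle
\]
recovers the curve of \autoref{lm:vertexTypeVII}, certifying vertex multiplicity one.

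Next I would compute the six legs from $v_1^{(5)}$ by tracing each branch direction on $\mathcal{X}$. In coordinates $(X, Y, Z_2, Z_3, Z_4)$ the primitive leg directions are $(-2,-1,0,0,0)$ for $\alpha_1 = 0$, $(2,5,2,2,2)$ for $\alpha_6=\infty$, $(0,-1,0,0,0)-2 e_{Z_j}$ for $\alpha_j$ with $j \in \{2,3,4\}$, and $(0,-1,0,0,0)$ for $\alpha_5$: as $x \to \alpha_j$ with $j \in \{2,3,4\}$, $\val(z_j) \to +\infty$ while $\val(z_i)$ stays bounded for $i \neq j$ thanks to the distinct initial terms, and $y \sim \sqrt{z_j \cdot c_j}$ with $c_j = \alpha_j \prod_{i\neq j}(\alpha_j-\alpha_i)$. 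These six directions are pairwise distinct, so the multiplicity-four vertical leg of the na\"ive tropicalization is fully resolved. For each leg $\ell$ I would compute $\init_\ell(J)$ using a Gr\"obner-basis element of $J$ like $y^2 - x\, z_2 z_3 z_4 (x-\alpha_5)$, which captures the local behavior of the smooth genus two reduction at a single one of its six distinct branch points; reducedness and irreducibility of the resulting equation of the form $y^2 = c_j z_j$ (in $(\resK^*)^2$) then give leg multiplicity one.

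The main obstacle I anticipate is certifying that $\Trop\,V(J)$ contains no additional vertices or bounded edges beyond $v_1^{(5)}$ and its six rays. This should follow from the piecewise-linear identity $\trop(x-\alpha_i) = \max(X, \ww_i)$ valid generically, which causes the natural projection $\Trop\,V(J) \to \Trop\,V(g)$ to subdivide only the vertical multiplicity-four leg and to introduce no additional bends in the $X$ or $Y$ directions; since the three bend loci $X = \ww_i$ coincide with the $X$-coordinate of $v_1$, no new vertices arise. Combining the three inputs above with \cite[Theorem 5.24]{bak.pay.rab:16} then yields the stated faithfulness.
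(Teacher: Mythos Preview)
Your proposal is correct and follows essentially the same strategy as the paper: invoke the Baker--Payne--Rabinoff criterion, identify the unique vertex $v=\ww_2(1,5/2,1,1,1)$, list the six leg directions (yours match the paper's exactly), and verify that all initial degenerations are reduced and irreducible using the Gr\"obner-basis element $y^2 - x\,z_2z_3z_4(x-\alpha_5)$.

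The one place where the paper is cleaner is the step you flag as your ``main obstacle'': ruling out extra vertices or bounded edges in $\Trop\,V(J)$. Rather than arguing indirectly via the projection $\Trop\,V(J)\to\Trop\,V(g)$ and the coincidence of the bend loci $X=\ww_i$, the paper simply parameterizes $\mathcal{X}(K)$ by
\[
x\longmapsto \Bigl(x,\; \pm\bigl(x\textstyle\prod_{i=2}^5(x-\alpha_i)\bigr)^{1/2},\; x-\alpha_2,\; x-\alpha_3,\; x-\alpha_4\Bigr)
\]
and reads the tropical curve off directly from the valuations of the coordinate functions along this parameterization (via the Fundamental Theorem). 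This makes the global combinatorial structure immediate, whereas your projection argument, while correct in spirit, would need an extra sentence to exclude hidden bends in the fiber over $v_1$ coming from non-generic cancellations in $x-\alpha_i$.
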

\begin{proof} The result follows from the Fundamental Theorem of Tropical Geometry~\cite[Theorem 3.2.5]{MSBook} after parameterizing  $\mathcal{X}(K)$ by the maps:
  \begin{equation}\label{eq:paramVII}
    K\ni x\mapsto (x, \pm\Big(x\prod_{i=2}^5(x-\alpha_i)\Big)^{1/2}, x-\alpha_2, x-\alpha_3, x-\alpha_4).
  \end{equation}
  We claim that $\Trop\, V(J)$ has a single  vertex $v=\ww_2(1,5/2, 1, 1, 1)$ and six legs $\ell_i$ ($i=1,\ldots, 6$) with directions $(-2,-1,0,0,0)$, $(0,-1,-2,0,0)$, $(0,-1,0,-2,0)$, $(0,-1,0,0,-2)$, $(0,-1,0,0,0)$ and $(2,5,2,2,2)$. By construction, all tropical multiplicities equal one. Indeed, standard Gr\"obner bases arguments from~\cite[Proposition 2.6.1, Corollary 2.4.10]{MSBook} ensure that the initial degeneration of the first and last legs equal
  \[\init_{\ell_1}(J)\! =\!\langle y^2+x\init(\alpha_5)\prod_{i=2}^4z_i,\, z_j-\init(\alpha_j)\colon j\!=\!2,3,4\rangle,\;\init_{\ell_6}(J)\!=\!\langle y^2-x^5,\, z_j-x\colon  j\!=\!2,3,4\rangle.\]
  \noindent
  Similarly, the initial degenerations with respect to the  legs $\ell_2, \ell_3$ and $\ell_4$ are \[
  \init_{\ell_i}(J)=\langle y^2-xz_2z_3z_4(x-\init(\alpha_5)), x-\init(\alpha_i), z_j-(x-\init(\alpha_j))\colon  j=2,3, 4, j\neq i\rangle\; (i=2,3,4),
  \]
while  $\init_{\ell_5}(J) = \langle xz_2z_3z_4(x-\init(\alpha_5)), z_j-(x-\init(\alpha_j))\colon j=2,3,4\rangle$. We conclude that all six initial degenerations  are reduced and irreducible, so $m_{\trop}(\ell_i)=1$ for all $i=1,\ldots, 6$.
  
  Finally, $\init_v(J) = \langle y^2-xz_2z_3z_4(x-\init(\alpha_5)), z_j-(x-\init(\alpha_j))\colon j=2,3,4\rangle$, so $m_{\trop}(v)=1$ as well.  A direct computation from~\eqref{eq:paramVII} shows that each leg in $\Trop\,V(J)$ is the isometric image of the corresponding marked leg of $\Sigma(\mathcal{X})$ under the new tropicalization. 
  \end{proof}
\begin{proposition}\label{pr:ExtFaithV}  Let $\mathcal{X}$ be of Type (V). Then, the embedding $\mathcal{X}\hookrightarrow (K^*)^4$ given by 
  \begin{equation}\label{eq:IdealV}
    J=\langle g,  z_2-(x-\alpha_2),  z_4-(x-\alpha_4)\rangle \subset K[x^{\pm}, y^{\pm}, z_2^{\pm},z_4^{\pm}],
  \end{equation}
induces a faithful tropicalization of the extended Berkovich skeleton of $\mathcal{X}$ with respect to the six branch points $\alpha_1,\ldots, \alpha_6$.
\end{proposition}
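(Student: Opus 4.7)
I will mirror the structure of~\autoref{pr:ExtFaithVII}. The hyperelliptic cover produces the two parametrizations
\[
K\ni x\mapsto \Big(x,\,\pm\bigl(x\textstyle\prod_{i=2}^5(x-\alpha_i)\bigr)^{1/2},\,x-\alpha_2,\,x-\alpha_4\Big),
\]
and the Fundamental Theorem of Tropical Geometry~\cite[Theorem 3.2.5]{MSBook} identifies $\Trop\,V(J)$ with the closure of the image of these two maps under coordinatewise negative valuation. A case analysis on $-\val(x)$ exploiting the Type (V) defining conditions from~\autoref{tab:CombAndLengthData} (in particular $\ww_2=\ww_3$ with $\init(\alpha_2)\neq\init(\alpha_3)$, and $\ww_4=\ww_5$ with $\init(\alpha_4)\neq\init(\alpha_5)$), combined with the strong non-Archimedean triangle inequality applied to $z_2=x-\alpha_2$ and $z_4=x-\alpha_4$, will yield the full combinatorial structure of $\Trop\,V(J)\subset\RR^4$.

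From this analysis I expect two vertices $v_1=(\ww_2,\,3\ww_2/2+\ww_4,\,\ww_2,\,\ww_4)$ and $v_3=(\ww_4,\,5\ww_4/2,\,\ww_4,\,\ww_4)$, joined by a bounded edge with primitive direction $(2,3,2,0)$ and lattice length $L_0=(\ww_4-\ww_2)/2$, together with six legs in primitive directions $(-2,-1,0,0)$, $(0,-1,-2,0)$, $(0,-1,0,0)$ (all adjacent to $v_1$, coming from $\alpha_1$, $\alpha_2$, $\alpha_3$ respectively) and $(0,-1,0,-2)$, $(0,-1,0,0)$, $(2,5,2,2)$ (all adjacent to $v_3$, coming from $\alpha_4$, $\alpha_5$, $\alpha_6$). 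The vertex coordinates and edge length match the minimal Berkovich skeleton from~\autoref{fig:allSkeletaAndMetrics}, and~\autoref{lm:verticesTypeV} identifies $v_1$ and $v_3$ with the two genus-one vertices of $\Sigma(\mathcal{X})$.

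To certify faithfulness I will verify that all tropical multiplicities equal one via initial-ideal computations in the spirit of~\cite[Proposition 2.6.1, Corollary 2.4.10]{MSBook}. Eliminating $z_2$ and $z_4$ from $\init_{v_k}(J)$ for $k\in\{1,3\}$ recovers the two smooth elliptic curves $\init_{v_k}(g)$ from~\autoref{lm:verticesTypeV}, and at the generic point $p$ of the bounded edge the substitutions $z_2\mapsto x$ and $z_4\mapsto -\init(\alpha_4)$ reduce $\init_p(J)$ to the irreducible binomial $y^2-\init(\alpha_4)\init(\alpha_5)\,x^3$. For each leg the corresponding initial ideal, after analogous substitutions dictated by the strong triangle inequality, becomes an irreducible reduced polynomial of the form $y^2-c\,z$, $y^2-c\,x\,z$ or $y^2-c\,x^5$ in a suitable Laurent polynomial ring with $c\in\resK^{\times}$; irreducibility uses $\charF\resK\neq 2$.

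The main obstacle is to correctly distinguish the two legs at $v_1$ emanating from $\alpha_2$ and $\alpha_3$, which both project to the na\"ive direction $(0,-1)$ in $\RR^2$. The vertical modification by $z_2$ separates them in $\RR^4$: the hypothesis $\init(\alpha_2)\neq\init(\alpha_3)$ forces $-\val(z_2)=\ww_2$ along the $\alpha_3$ leg while $-\val(z_2)\to-\infty$ along the $\alpha_2$ leg, yielding the distinct primitive directions $(0,-1,0,0)$ and $(0,-1,-2,0)$. A symmetric argument at $v_3$ using $z_4$ and $\init(\alpha_4)\neq\init(\alpha_5)$ separates the $\alpha_4$ and $\alpha_5$ legs. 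Together with the combinatorial identification of vertices, edges and legs with their counterparts in $\Sigma(\mathcal{X})$, this proves that $\trop\colon\Sigma(\mathcal{X})\to\Trop\,V(J)$ is an isometry onto its image, completing the proof.
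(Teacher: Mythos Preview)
Your proposal is correct and follows essentially the same approach as the paper: both mirror the argument of \autoref{pr:ExtFaithVII}, obtain the identical combinatorial description of $\Trop\,V(J)$ (the two vertices $v_1,v_3$, the edge with direction $(2,3,2,0)$, and the six legs with the directions you list), invoke \autoref{lm:verticesTypeV} for the genus-one vertices, and check multiplicity one via initial degenerations. The only minor methodological difference is that the paper phrases the reconstruction of $\Trop\,V(J)$ through the planar $XY$-, $Z_2Y$- and $Z_4Y$-projections using the vertical-modification lemma of~\cite[Lemma~2.2, Proposition~2.3]{cue.mar:16}, whereas you argue directly from the parametrization and a case split on $-\val(x)$; both routes yield the same graph and the same initial-ideal verifications.
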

\begin{proof} We use the same techniques from~\autoref{pr:ExtFaithVII} and  apply two successive vertical modifications, starting from  $\trop(x-\alpha_4)$ followed by $\trop(x-\alpha_2)$. In particular,
  \[\init_{v_1}(J) = \langle \init_{v_2}(g(x,y)),  z_2-(x-\alpha_2), z_4+\alpha_4\rangle,\;\;
     \init_{v_3}(J) = \langle \init_{v_3}(g(x,y)),  z_2-x, z_4-(x-\alpha_4)\rangle.\]
  Both initial degenerations are smooth by~\autoref{lm:verticesTypeV}.
  
  The vertical modification techniques described in~\cite[Lemma 2.2, Proposition 2.3]{cue.mar:16} allow us to determine $\Trop\,V(J)$ by means of the planar  $XY$-, $Z_2Y$- and $Z_4Y$-projections.  The ambient tropical surface $\Trop\,V(\langle z_2-(x-\alpha_2),  z_4-(x-\alpha_4)\rangle)$ consists of five two-dimensional cells and it is depicted in~\autoref{fig:TypeVVerticalModif} together with $\Trop\, V(J)$.  As expected,  $\Trop\, V(J)$ consists of two four-valent vertices $v_1=(\ww_2,3\ww_2/2 +\ww_4,\ww_2,\ww_4)$, and $v_3=(\ww_4, 5\ww_4/2, \ww_4, \ww_4)$, joined by an edge with direction $(2,3,2,0)$, with six legs $\ell_1,\ldots, \ell_6$. They are the isometric image of the six marked legs of the extended skeleta and their directions are:
$\ell_1 \!=\! (-2,-1,0,0)$, $\ell_2 \!=\! (0,-1,-2,0)$, $\ell_3 \!=\!  \ell_5 \!= \! (0,-1,0,0)$, $\ell_4 \!=\! (0,-1,0,-2)$ and $\ell_6 \!=\! (2,5,2,2)$. Similar computations to the ones done in the proof of~\autoref{pr:ExtFaithVII} reveal that all tropical multiplicities  equal one.
\end{proof}
\begin{figure}
  \includegraphics[scale=0.35]{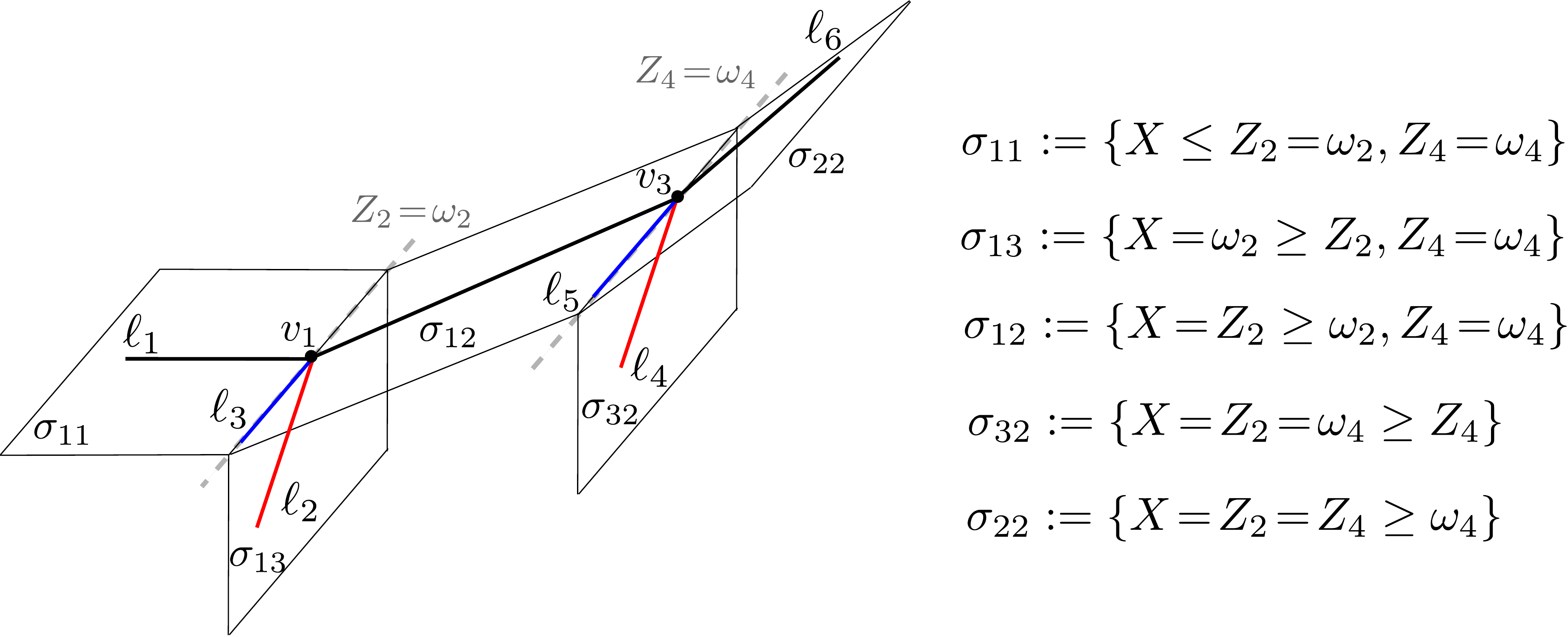}
  \caption{Extended faithfulness for Type (V) via two vertical modifications.\label{fig:TypeVVerticalModif}}
\end{figure}

  \section{Igusa invariants and their tropicalizations}\label{sec:Igusa}
  In 1960, Igusa introduced three invariants $j_1,j_2,j_3$ (called \emph{absolute Igusa invariants}) characterizing isomorphism classes of smooth genus two curves when  $\charF \resK\neq 2$~\cite{Igusa01}. These invariants can be expressed as rational functions (with integer coefficients) in the pairwise differences of the six branch points defining the hyperelliptic equation~\eqref{eq:hyperelliptic}.

  It is worth noticing that a curve $\mathcal{X}$ with Igusa invariants in $K$ need not be defined over  $K$ but rather over a field extension. A concrete algorithm for constructing the curve from these invariants was developed by Mestre~\cite{Mestre90}. For alternative methods involving  Hilbert and Siegel  moduli spaces that are better suited for computations over finite fields, as well as applications to Cryptography we refer to~\cite{LY11}.

  In order to give precise formulas for $j_1,j_2$ and $j_3$ we first construct four homogeneous polynomials $A, B, C$ and $D$ in $u$ and the six branch points $\alpha_1,\ldots, \alpha_6$. Up to an automorphism of $\PP^1$ we may assume none of the branch points lies at infinity. We write
  \begin{equation}\label{eq:diffBrPts}
    \Delta_{ij} := (\alpha_i-\alpha_j)^2 \quad \text{ for }i<j,
  \end{equation}
  and set the homogeneous degree eight polynomial $A$ in $\ZZ[u,\alpha_1, \ldots, \alpha_6]$ to be
  \begin{equation}\label{eq:A}
    A:=u^2     \sum_{\{\{i,j\},\{k,l\},\{m,n\}\}}    \Delta_{ij}\,\Delta_{kl}\,\Delta_{mn}    \;,
  \end{equation}
  where we sum over the 15 tripartitions  of $[6]=\{1,\ldots, 6\}$. Similarly, set
  \begin{equation}\label{eq:B}
B:= u^4  \sum_{\{\{i,j,k\},\{l,m,n\}\}} (\Delta_{ij}\,\Delta_{jk}\,\Delta_{ki})\,(\Delta_{lm}\,\Delta_{mn}\, \Delta_{nl})\;,
  \end{equation}
  where we sum over the ten partitions of $[6]$ into two sets of size three. We define $C$ as 
  \begin{equation}\label{eq:C}
    C:= u^6 
      \sum_{\substack{ \{\{i,j,k\},\{l,m,n\}\}
          \\\{\{i,l\},\{j,m\},\{k,n\}\}}} (\Delta_{ij}\,\Delta_{jk}\,\Delta_{ki})\,(\Delta_{lm}\,\Delta_{mn}\, \Delta_{nl})\,     (\Delta_{il}\,\Delta_{jm}\,\Delta_{kn}),
  \end{equation}
  where we sum over the 60 ways of defining a pair consisting of a partition $[6] =U_1\sqcup U_2$ into two sets  of size 3 and an ordered tripartition where each pair contains exactly one element from $U_1$. We interpret this indexing set as  a labeling of a Type (V) tree $T$ with six leaves, as in~\autoref{tab:CombAndLengthData}. Each set $U_i$ correspond to the leaves attached to each of the two vertices of the tree. The planar embedding of $T$ is relevant since the differences $\Delta_{il}, \Delta_{jm}$ and $\Delta_{kn}$ in each summand of $C$ corresponding to mirror leaves on each side of the tree. 
   This description will be used frequently to compute $-\val(C)$.

  Finally, we let $D$ be the square of the discriminant of the right hand side of~\eqref{eq:hyperelliptic}, i.e.
  \begin{equation}\label{eq:D}
    D:= u^{10}\prod_{1\leq i < j \leq 6} \Delta_{ij}.
  \end{equation}
  The polynomials $A$, $B$ and $C$ have $141$, $1\,531$ and $8\,531$ terms, respectively.
  \begin{definition} The three \emph{Igusa invariants} of the smooth hyperelliptic curve $\mathcal{X}$ equal
\begin{equation}\label{eq:IgusaInv}
j_1(\mathcal{X}):= \frac{A^5}{D}\;, \quad  j_2(\mathcal{X}):= \frac{A^3\,B}{D}\;, \; \text{ and }\quad j_3(\mathcal{X}):= \frac{A^2\,C}{D}.
\end{equation}
Notice that $j_1, j_2, j_3 \in \QQ(\alpha_1,\ldots, \alpha_6)$.
Furthermore, an easy calculation shows that applying an automorphism on the target $\PP^1$ of the hyperelliptic cover, and changing the  equation~\eqref{eq:hyperelliptic} defining $\mathcal{X}$ accordingly, yields the same three invariants.
  \end{definition}

  \smallskip

  Our objective in this section is to study the behavior of the three Igusa invariants under tropicalization and prove the first half of~\autoref{thm:IgusaTropical}. The second half is discussed in~\autoref{sec:new-Igusa}. We start by defining the tropical Igusa functions $j_i^{\trop}:M_2^{\trop}\rightarrow \mathbb{R}_{\geq 0}$. As it occurs with genus one curves and their tropical $j$-invariant~\cite{KMM08}, the construction involves a  genericity assumption. The precise hypersurfaces to avoid for each combinatorial type are discussed in the proof of~\autoref{thm:tropIgusaExpanded} and are listed in the Supplementary material.  
  \begin{definition}\label{def:tropIgusa}
    Given  a genus two abstract tropical curve $ \Gamma$ we define its three \emph{tropical Igusa invariants}  as $j_i^{\trop}(\Gamma ) := -\val(j_i(\mathcal{X}))$ for $i=1,2,3$ for a generic smooth genus two algebraic lift $\mathcal{X}$ of $\Gamma$.
  \end{definition}\noindent
Note that a generic algebraic lift $\mathcal{X}$ of $\Gamma$ is given by generic values $\alpha_i$ which are chosen using \autoref{tab:CombAndLengthData}.  
  By construction, in the non-generic case, the negative valuations will be lower than expected.
  Although it is not evident from the definition, our first result shows that these three tropical Igusa functions indeed depend solely on $\Gamma$, rather than on the isomorphism class of $\mathcal{X}$. Furthermore, they define piecewise linear functions on $M_2^{\trop}$ with domains of linearity given by the seven cones describing all combinatorial types. Here is the precise statement addressing the first half of~\autoref{thm:IgusaTropical}:
  \begin{theorem}\label{thm:tropIgusaExpanded} Let $\mathcal{X}$ be a genus two hyperelliptic curve defined over $K$ with $\charF \resK \neq 2, 3$.
    The tropical Igusa functions equal
    \begin{enumerate}[(i)]
    \item $j_1^{\trop}(\Gamma)\! =\! L_1 + 12L_0 + L_2$, $j_2^{\trop}(\Gamma) \!=\! j_3^{\trop}(\Gamma)\!=\! L_1 + 8 L_0 + L_2$ if $\Gamma$ is  a dumbbell curve, 
      \item $j_1^{\trop}(\Gamma)\! =\! j_2^{\trop}(\Gamma) \!=\! j_3^{\trop}(\Gamma)\! =\! L_1 \!+\! L_0 \!+\!L_2$ if $\Gamma$ is a theta curve,
    \end{enumerate}
    where $L_0,L_1, L_2$ denote the lengths on each curve, as in Figure~\ref{fig:allSkeletaAndMetrics}.    All three formulas remain valid under specialization and yield well-defined piecewise linear maps on the moduli space $M_2^{\trop}$ with domains of linearity corresponding to the seven combinatorial types.
    \end{theorem}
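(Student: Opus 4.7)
The plan is to compute $-\val(A)$, $-\val(B)$, $-\val(C)$, and $-\val(D)$ on each witness region $\Omega^{(i)}$ of~\autoref{tab:CombAndLengthData} and then combine them via the degree-zero identities
\[
j_1^{\trop} = 5(-\val A) - (-\val D),\quad j_2^{\trop} = 3(-\val A) + (-\val B) - (-\val D),\quad j_3^{\trop} = 2(-\val A) + (-\val C) - (-\val D).
\]
Since $j_1,j_2,j_3$ are homogeneous of weight zero in $u$, the leading coefficient $u$ cancels and we may set $u=1$. Each of $A,B,C,D$ is a sum of products of squared differences $\Delta_{ij}$, and $-\val$ of such a sum equals the maximum of $-\val$ of the summands unless the surviving initial forms cancel. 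The computation therefore splits into a combinatorial maximization followed by a cancellation check; the latter is where the hypothesis $\charF \widetilde{K}\neq 2, 3$ enters.

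For the dumbbell cone (Type~I), with the strict ordering $\ww_1<\cdots<\ww_6$, the identity $-\val(\alpha_i-\alpha_j)=\ww_{\max(i,j)}$ lets me list the maximizing summands explicitly. The relevant families are: for $A$, the $6$ tripartitions whose pair-tops form $\{4,5,6\}$; for $B$, the $4$ bipartitions whose triangle-tops split as $\{3,5\}\sqcup\{4,6\}$ (up to the swap $3\leftrightarrow 4$); for $C$, the $8$ tree configurations obtained by combining an optimal $B$-bipartition with a pairing whose edge-tops are again $\{4,5,6\}$. In each family the surviving initial coefficient is a nonzero integer multiple ($6$, $4$, and $8$ respectively) of a product of $\init(\alpha_j)^2$'s, so cancellation is avoided exactly when $\charF \widetilde{K}\neq 2, 3$. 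Plugging in $a = 2(\ww_4+\ww_5+\ww_6)$, $b = 2\ww_3+2\ww_4+4\ww_5+4\ww_6$, $c = 2\ww_3+4\ww_4+6\ww_5+6\ww_6$, and $d = 2(\ww_2+2\ww_3+3\ww_4+4\ww_5+5\ww_6)$, and translating via the length formulas of~\autoref{tab:CombAndLengthData}, yields the stated identities $j_1^{\trop}=L_1+12L_0+L_2$ and $j_2^{\trop}=j_3^{\trop}=L_1+8L_0+L_2$.

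For the theta cone (Type~II) I follow the same strategy with two refinements. First, the pair $(3,4)$ is now degenerate, contributing $d_{34}<\ww_3=\ww_4$, so any term involving $\Delta_{34}$ is strictly suboptimal; I must verify that the maximizers for $A, B, C$ can always be realized with $3$ and $4$ lying in distinct pairs, triangles, and tree-halves, and then discard the penalized configurations. Second, the identity $\init(\alpha_3)=\init(\alpha_4)$ creates additional maximizers beyond those visible in Type~I (for instance, $A$ now has $4+4=8$ top tripartitions with pair-tops either $\{4,5,6\}$ or $\{3,5,6\}$), all sharing the same leading monomial in the $\init(\alpha_j)^2$'s; a multiplicity count shows the aggregate scalar remains a nonzero integer provided $\charF \widetilde{K}\neq 2, 3$. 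Collecting terms produces $a=2(\ww_3+\ww_5+\ww_6)$, $b=4\ww_3+4\ww_5+4\ww_6$, $c=6\ww_3+6\ww_5+6\ww_6$, and $d=2(\ww_2+4\ww_3+d_{34}+4\ww_5+5\ww_6)$, so all three invariants collapse to $-2\ww_2+2\ww_3-2d_{34}+2\ww_5 = L_0+L_1+L_2$.

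Finally, the extension to Types~(III)--(VII) follows by specializing the Type~(I) or Type~(II) formulas along the relevant boundary faces, i.e.\ by setting one or more of $L_0, L_1, L_2$ to zero in accordance with the defining conditions in~\autoref{tab:CombAndLengthData}. Agreement along common boundaries of the seven cones (for instance, $\lim_{d_{34}\to \ww_3}$ of the Type~(II) formula matches the $L_0\to 0$ limit of Type~(I)) then yields a well-defined piecewise linear function on $M_2^{\trop}$ with the seven cones as its domains of linearity. The hardest step I anticipate is the cancellation check in Type~(II): to certify that $-\val(C)$ indeed attains its expected value one must control the initial forms of the eight tied maximizing terms arising from $\init(\alpha_3)=\init(\alpha_4)$. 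This can be done symbolically, and the exceptional hypersurfaces where cancellation does occur delineate the generic locus of~\autoref{def:tropIgusa}; characteristic three, where several of the relevant integer multiplicities vanish, requires a separate but parallel analysis of the top terms.
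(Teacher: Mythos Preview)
Your proposal is correct and follows essentially the same approach as the paper: both compute $-\val(A),-\val(B),-\val(C),-\val(D)$ on each cell by identifying the maximizing summands combinatorially, verify that the surviving integer coefficients ($6,4,8$ on Type~(I) and $8,4,8$ on Type~(II)) are nonzero when $\charF\resK\neq 2,3$, and then specialize to the lower-dimensional cells. The only packaging difference is that the paper handles Type~(II) by the explicit substitution $\alpha_4\mapsto\alpha_3+\alpha_{34}$ and works with the resulting polynomials $A',B',C',D'\in\ZZ[\alpha_1,\alpha_2,\alpha_3,\alpha_{34},\alpha_5,\alpha_6]$, whereas you argue directly that any summand containing $\Delta_{34}$ is strictly suboptimal; the two are equivalent, and your count of the $8$ surviving tripartitions for $A$ matches the paper's list exactly. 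For the boundary cells the paper is slightly more explicit than your sketch: it argues via continuity of initial forms along a sequence of interior weights that the Type~(I)/(II) leading monomials persist at the boundary, and that genericity of $\mathcal{X}$ is precisely the condition that the (now possibly larger) initial form does not vanish at $\init(\underline{\alpha})$---this is the content behind your phrase ``exceptional hypersurfaces where cancellation does occur.''
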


  \begin{remark}\label{rmk:noTropInvariants}  The previous result shows that the three tropical Igusa functions do not characterize tropical curves of genus two, not even within a fixed combinatorial type and should not be considered tropical analogs of Igusa invariants. Indeed, the second and third tropical Igusa functions agree on each cone in $M_2^{\trop}$, and all three agree on the cone of theta curves. In particular, we cannot recover the length data for each tropical curve from these three functions.
  \end{remark}

  We would like to comment on the relation of this result with \cite{hel:16}. In~\cite{Igusa01}, Igusa introduced ten projective invariants of smooth genus two curves, and the three specific quotients $j_1,j_2$ and $j_3$. Whenever these quotients do not vanish, they determine a unique point in $M_2$ (a smooth curve of genus two) over a field of characteristic different than two. In particular, these invariants become coordinates on $M_2$. Our work describes to which extent their tropicalizations fail to be coordinates on $M_2^{\trop}$. Building on earlier work of Liu~\cite{Liu93},~\cite{hel:16} shows that the set of ten invariants  suffices to characterize the type and lengths of the tropicalization $\Gamma$. One should take into account that in \cite{hel:16}, the ten invariants are expressed in terms of the coefficients of the hyperelliptic equation~\eqref{eq:hyperelliptic}, while our three quotients are written in terms of the branch points.

  \begin{proof}[Proof of~\autoref{thm:tropIgusaExpanded}] Consider a generic lift $\mathcal{X}$ of our tropical curve $\Gamma$. This means, a tuple of generic branch points $\alpha_1,\ldots, \alpha_6$ in $K^*$  whose valuations satisfy the conditions described in ~\autoref{tab:CombAndLengthData} and yield the metric graph $\Gamma$.    Furthermore, we assume $\val(\alpha_1)=0$, and $u\!=\!1$ since $u$ plays no role when defining each $j_i(\mathcal{X})$.   
By expression~\eqref{eq:IgusaInv}, the tropical Igusa functions of $\Gamma$  equal:
    \begin{equation}\label{eq:negValIgusa}
     \left\{ \begin{aligned}
        -\val(j_1(\mathcal{X})) & = -5\val(A)+\val(D)\, ,\\
        -\val(j_2(\mathcal{X})) & = -3\val(A)-\val(B) +\val(D)\,,\\
         -\val(j_3(\mathcal{X})) & = -2\val(A)-\val(C) +\val(D).
      \end{aligned}\right.
    \end{equation}

    We treat each invariant separately,     analyzing the contributions of each summand in the definition of the four polynomials $A, B, C, $ and $D$, and checking for potential cancellations of the expected initial terms.
    The proof is completed by discussing the behavior of each maximal cells of $M_2^{\trop}$  separately in two lemmas below.

    The genericity conditions on $\mathcal{X}$ are imposed so that the initial forms of each polynomial have the expected valuation after specializing them at the initial forms of each branch point. The two maximal cells in $M_2^{\trop}$ require no genericity assumptions, since the leading terms of all polynomials involved are monomials.
\smallskip

 \noindent
 \textbf{Type (I) cells:}
 For a Type (I) curve, the negative valuation of each $A, B, C, D$ is obtained by computing the initial term on each of these four polynomials with respect to the weight vector $\underline{\ww}:=(\ww_1,\ldots, \ww_6)\in \RR^6$ with $\ww_1<\ldots<\ww_6$.~\autoref{lm:computeInitialsTypeI} ensures that
 \begin{equation}\label{eq:valsTypeI}
   \begin{minipage}{0.42\textwidth}
     \[\begin{aligned}
   - \val(A) &= 2(\ww_4+\ww_5+\ww_6), \\
   - \val(B) & = 4(\ww_5\!+\!\ww_6)+2(\ww_3\!+\!\ww_4) ,
     \end{aligned}\]
   \end{minipage}
   \begin{minipage}{0.45\textwidth}
     \[\begin{aligned}
-\val(C) & = 6(\ww_5 +\ww_6) +4\ww_4 + 2\ww_3,\\
   -\val(D) & = 2\ww_2 + 4\ww_3 + 6\ww_4 + 8\ww_5 + 10\, \ww_6.
     \end{aligned}\]
   \end{minipage}
 \end{equation}
 Combining these values with~\eqref{eq:negValIgusa}  and the formulas for $L_0, L_1$ and $L_2$ from~\autoref{tab:CombAndLengthData} gives:
\begin{equation*}\label{eq:j1TrTypeI}
  \begin{aligned}
    j_1^{\trop}(\Gamma) & =  10(\ww_4+\ww_5+\ww_6) -( 2\ww_2 + 4\ww_3 + 6\ww_4 + 8\ww_5 + 10 \ww_6) 
    \\
&=4\ww_4+2\ww_5-
    2\ww_2-4\ww_3= 2(\ww_5-\ww_4)+6(\ww_4-\ww_3)+2(\ww_3-\ww_2)\\
    & = L_1 + 12 L_0 + L_2\,,\\
    j_2^{\trop}(\Gamma) & =
    6(\ww_4\!+\!\ww_5\!+\!\ww_6) + 4(\ww_5\!+\!\ww_6)+2(\ww_3\!+\!\ww_4)-( 2\ww_2 \!+\! 4\ww_3 \!+\! 6\ww_4 \!+\! 8\ww_5 \!+\! 10 \ww_6) 
    \\
&=2\ww_5+ 2\ww_4-2\ww_3-2\ww_2= 2(\ww_5-\ww_4)+4(\ww_4-\ww_3)+2(\ww_3-\ww_2)\\
    & = L_1 + 8 L_0 + L_2\,,\\
    j_3^{\trop}(\Gamma) & =
4(\ww_4\!+\!\ww_5\!+\!\ww_6)+6(\ww_6\!+\!\ww_5)\!+4\,\ww_4 +2\,\ww_3- (2\ww_2\!+\!4\ww_3\!+\!6\ww_4\!+\!8\ww_5\!+\!10\ww_6) \\&=   L_1 + 8 L_0 + L_2 = j_2^{\trop}(\Gamma) \,.
  \end{aligned}
\end{equation*}

 \noindent
 \textbf{Type (II) cells:}
Following~\autoref{tab:CombAndLengthData},  the weights for Type (II) curves satisfy
\begin{equation}\label{eq:wwTypeII}
  \ww_1<\ww_2<\ww_3=\ww_4<\ww_5<\ww_6\; \text{ and } -d_{34}:=\val(\alpha_3-\alpha_4)> -\ww_3.
\end{equation}
For this reason, in order to determine the valuations of $A, B, C$ and $D$ we consider the factor $\Delta_{34}$ as a new variable $\alpha_{34}^2$,  and replace each variable $\alpha_4$ by $\alpha_{34}+\alpha_3$ in all four polynomials. A similar strategy was used in~\autoref{ss:Theta}. We denote the new polynomials by
\begin{equation}\label{eq:newPolys}
  A', B', C', D'\in \ZZ[\alpha_1, \alpha_2, \alpha_3, \alpha_{34}, \alpha_5, \alpha_6].
\end{equation}

\noindent
A computation with~\texttt{Sage} (available in the Supplementary material) shows that $A',B'$ and $
C'$ have $177$, $1\,911$ and $11\,745$ terms, respectively.

The weight of the new variable $\alpha_{34}$ equals $d_{34}$.  We replace our weight vector in $\RR^6$ by $\underline{\omega} =(\ww_1,\ww_2, \ww_3, d_{34}, \ww_5, \ww_6)$.  By construction, the negative valuation of each $A, B, C, D$ agrees with that of $A', B', C'$ and $D'$. The later equals the $\underline{\ww}$-weight of the initial form of $A', B', C'$ and $D'$, respectively. \autoref{lm:computeInitialsTypeII} ensures that
 \begin{equation}\label{eq:valsTypeII}
   \begin{minipage}{0.42\textwidth}
     \[\begin{aligned}
   -\val(A)   &= 2(\ww_3+\ww_5+\ww_6)\,, \\
   -\val(B)  & = 4(\ww_3+\ww_5+\ww_6)\,,
     \end{aligned}\]
   \end{minipage}
   \begin{minipage}{0.48\textwidth}
 \[    \begin{aligned}
-\val(C) & = 6(\ww_3+\ww_5 +\ww_6)\,,\\
   -\val(D) & = 2\ww_2 + 8\ww_3 + 2d_{34} + 8\ww_5 + 10\, \ww_6.
     \end{aligned}\]
   \end{minipage}
 \end{equation}
 We conclude that $-5\val(A) = -3\val(A) -2\val(B) = -2\val(A)-3\val(C)$.
The formulas for the lengths $L_0, L_1$, and $L_2$ from~\autoref{tab:CombAndLengthData} yield
\begin{equation*}\label{eq:j1TrTypeII}
  \begin{aligned}
    j_1^{\trop}(\Gamma) & = j_2^{\trop}(\Gamma) = j_3^{\trop}(\Gamma)\\
    & = 10(\ww_3+\ww_5+\ww_6) -( 2\ww_2 + 8\ww_3  +2d_{34}+ 8\ww_5 + 10 \ww_6)   \\
&=2\ww_5+2\ww_3-
     2\ww_2-2d_{34}= 2(\ww_5-\ww_3)+2(\ww_3-\ww_2)+2(\ww_3-d_{34})\\
    & = L_1 + L_0 + L_2\,.
  \end{aligned}
\end{equation*}

\smallskip

 \noindent
 \textbf{Type (III) through (VII)  cells:}
 In order to prove the statement for lower dimensional cells, we first note that the substitution $A, B, C, D$ for $A', B', C', D'$  has no impact when computing their valuations on Type (I). Indeed, the weight $\underline{\ww} \!=\! (\ww_1,\ww_2, \ww_3, d_{34}, \ww_5, \ww_6)\in \RR^6$ satisfies $d_{34} = \ww_4$ and $\init(\alpha_{34}) = \init(\alpha_4)$.

  We fix a lower dimensional cell in $M_2^{\trop}$  and pick the weight vector $\underline{\ww}$ in $\RR^6$, where the fourth entry equals $\ww_4$, as described in~\autoref{tab:CombAndLengthData}. 
Consider a sequence of weight vectors $(\underline{\ww}^{(n)})_{n \in\NN}$ corresponding to a Type (I) curve specializing to $\underline{\ww}$. By continuity and the characterization of Gr\"obner fans of homogeneous polynomials~\cite{ComputingGroebnerFans}, we conclude that the $\underline{\ww}^{(n)}$-initial terms for $A, B, C$ and $D$ are present in the corresponding $\underline{\ww}$-initial terms. Therefore, as long as the initial forms of each polynomial do not vanish after evaluating them at $\init(\underline{\alpha})$,  the formulas for the Tropical Igusa functions on Type (I) remain valid for the lower dimensional types: the valuation of each polynomial is the expected one. The proof involving Type (II) sequences is similar since $\ww_4^{(n)} = d_{34}^{(n)} \to \ww_3$ if we approach a curve of Type (III), (VI) or (VII).
  \end{proof}

  \smallskip
  The following two lemmas are used in the proof of~\autoref{thm:tropIgusaExpanded} as well as in~\autoref{sec:new-Igusa}. They can be verified via~\MacT~computations by choosing appropriate weight vectors in $\RR^6$. The required scripts are available in the Supplementary material.  For completeness, we provide alternative non-computational proofs that help understand the behavior of these polynomials under tropicalization. They justify the need to exclude $\charF \resK=3$.
  \begin{lemma}\label{lm:computeInitialsTypeI} Assume $\charF \resK\neq 2,3$. Given a weight vector $\underline{\ww}=(\ww_1,\ldots, \ww_6)\in \RR^6$ in the relative interior of a Type (I) cell in $M_2^{\trop}$, we have
  \[
  \init_{\underline{\ww}} (A) \!=\! 6 \alpha_4^2\alpha_5^2\alpha_6^2, \,   \init_{\underline{\ww}} (B) \!=\! 4 \alpha_3^2\alpha_4^2\alpha_5^4\alpha_6^4,\,  \init_{\underline{\ww}}(C) \!= \!  8 \alpha_3^2\alpha_4^4\alpha_5^6\alpha_6^6  \text{ and } \init_{\underline{\ww}}(D) \!=\!  \alpha_2^2\alpha_3^4\alpha_4^6\alpha_5^8\alpha_6^{10}.
  \]
  \end{lemma}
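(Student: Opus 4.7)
The plan is to exploit the combinatorial structure of each polynomial together with the observation that for $i<j$, since $\ww_i<\ww_j$, the $\underline{\ww}$-initial form of $\Delta_{ij}=\alpha_i^2-2\alpha_i\alpha_j+\alpha_j^2$ is the monomial $\alpha_j^2$ with coefficient $+1$. Consequently, the initial form of any product $\Delta_{i_1j_1}\cdots\Delta_{i_rj_r}$ (with each $i_s<j_s$) equals $\alpha_{j_1}^2\cdots\alpha_{j_r}^2$, and any non-initial contribution from one of the factors strictly decreases the $\underline{\ww}$-weight. Therefore each summand of $A,B,C,D$ has its initial form concentrated in a single monomial with coefficient $+1$, and $\init_{\underline{\ww}}$ of the whole polynomial equals the sum of these initial monomials over the summands achieving the global maximum weight. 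The hypothesis $\charF\resK\neq 2,3$ enters only at the very end, to ensure that the resulting counting coefficients $6$, $4$, $8$ do not vanish in $\resK$.

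For $D=u^{10}\prod_{i<j}\Delta_{ij}$, the only summand gives $\init_{\underline{\ww}}(D)=\prod_{i<j}\alpha_j^2$, where the exponent of $\alpha_k$ equals $2(k-1)$. For $A$, the initial weight of the summand indexed by a tripartition of $[6]$ into pairs $P_1,P_2,P_3$ is $2\sum_r\ww_{\max P_r}$; this is maximal exactly when $\{\max P_1,\max P_2,\max P_3\}=\{4,5,6\}$, which forces a bijection between $\{4,5,6\}$ and $\{1,2,3\}$. The $3!=6$ such matchings each contribute $\alpha_4^2\alpha_5^2\alpha_6^2$.

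For $B$, each summand is indexed by an unordered partition $\{S,S'\}$ of $[6]$ into triples, with initial term $\alpha_{\mathrm{mid}(S)}^2\alpha_{\max(S)}^4\alpha_{\mathrm{mid}(S')}^2\alpha_{\max(S')}^4$; maximality forces $\{\max S,\max S'\}=\{5,6\}$ and $\{\mathrm{mid}(S),\mathrm{mid}(S')\}=\{3,4\}$, and a short enumeration gives exactly four such partitions, each yielding $\alpha_3^2\alpha_4^2\alpha_5^4\alpha_6^4$. For $C$, each summand carries the additional data of a bijection $\sigma\colon S\to S'$ and picks up the extra factor $\prod_{x\in S}\alpha_{\max(x,\sigma(x))}^2$. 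Maximality imposes the $B$-conditions together with $\{\max(x,\sigma(x))\}_{x\in S}=\{4,5,6\}$; a case split on the images $\sigma(\max S)$ and $\sigma(\mathrm{mid}(S))$, applied to each of the four $B$-winning partitions, shows that exactly $2$ of the $3!=6$ bijections qualify, for $8$ winning summands in total, all contributing $\alpha_3^2\alpha_4^4\alpha_5^6\alpha_6^6$.

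The main obstacle is the enumeration for $C$: one must verify both that every winning bijection produces the same monomial (which is automatic since the multiset of maxes is always $\{3,4,4,5,5,5,6,6,6\}$) and that no compromise summand, in which the $B$-part is suboptimal but the bijection part overcompensates, can match the global maximum. The latter follows from the bound $\sum_{x\in S}\ww_{\max(x,\sigma(x))}\le\ww_4+\ww_5+\ww_6$ combined with the strict ordering $\ww_1<\cdots<\ww_6$, so that any deviation in the $B$-part is not recoverable. The final step is to observe that the coefficients $6,4,8,1$ are nonzero in $\resK$ precisely because $\charF\resK\neq 2,3$, which is why this hypothesis appears in the statement.
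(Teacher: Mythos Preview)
Your proof is correct and follows essentially the same approach as the paper: for each of $A,B,C,D$ you identify which summands in the defining combinatorial sum achieve the maximal $\underline{\ww}$-weight, count them, and observe that each contributes the same monomial with coefficient $+1$. Your treatment of $C$ is slightly more structured than the paper's---you decompose the weight additively into the $B$-part and the bijection-part, bound each separately, and argue that no trade-off is possible---whereas the paper simply asserts the minimum valuation and exhibits the eight winning tuples directly; both arrive at the same eight configurations.
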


\begin{proof}
 By~\autoref{tab:CombAndLengthData},  the weight vector $\underline{\ww}=(\ww_1,\ldots, \ww_6)$ corresponding to Type (I) curves satisfies $\ww_1<\ww_2<\ldots<\ww_6$. Since $D$ is given as a product of all expressions $\Delta_{ij}$ from~\eqref{eq:diffBrPts}, and $\val(\alpha_i) >\val(\alpha_j)$ for $i<j$, we get $\val(\Delta_{ij})= -2\ww_j $ for $i<j$, thus
\begin{equation*}\label{eq:valDTypeI}
  -\!\val(D) = 2\ww_2 + 4\ww_3 + 6\ww_4 + 8\ww_5 + 10 \ww_6, \quad \text{ and } \quad\init_{\underline{\ww}}(D) = \alpha_2^2\alpha_3^4\alpha_4^6\alpha_5^8\alpha_6^{10}.
\end{equation*}

The computation for $\init_{\underline{\ww}}(A)$ is more involved, since  it requires determining the initial term of each summand in $A$ and checking for potential cancellations. Each summand $\Delta_{ij}\Delta_{kl}\Delta_{mn}$ of $A$ in expression~\eqref{eq:A} has valuation $-2(\ww_j+\ww_l+\ww_n)$ for $i<j, k<l, m<n$. The conditions on the parameters $\ww_i$ ensure that the minimal valuation is attained for the six tripartitions of the form $\{\{i,4\}, \{k,5\}, \{m,6\}\}$. The coefficient associated to $\alpha_4^{2}\alpha_5^{2}\alpha_6^2$ on each of these summands equals one, so no cancellations occur and this monomial appears in $A$ with coefficient six. Thus, $\init_{\underline{\ww}}(A) = 6 \,\alpha_4^{2}\alpha_5^{2}\alpha_6^2$ if $\val(6)=0$.

Next, we analyze the summands in $B$ given in~\eqref{eq:B} to determine the initial term of $B$ with respect to the weight vector $\underline{\ww}$. The conditions on $\underline{\ww}$ ensure that the summand indexed by the partition $\{\{i,j,k\}, \{l,m,n\}\}$ has valuation $-2(\ww_j + 2\ww_k + \ww_m+ 2\ww_n)$ if $i\!<\!j\!<\!k$ and $l\!<\!m\!<\!n$. The minimum valuation is achieved when $k,n \in\! \{5,6\}$ and $j,m\!\in\! \{3,4\}$. The corresponding summands are indexed by the four splits
\[
\{1,3,5\} \sqcup \{2,4,6\}\, , \{2,3,5\} \sqcup \{1,4,6\}\, , \{1,4,5\}\sqcup \{2,3,6\}\, \text{ and } \{2,4,5\} \sqcup \{1,3,6\}.
\]
On each summand, the monomial $\alpha_3^2\alpha_4^2\alpha_5^{4}\alpha_6^2$ has coefficient one, so  $\init_{\underline{\ww}}(B) = 4\,\alpha_3^2\alpha_4^2\alpha_5^{4}\alpha_6^2$.

In order to compute $\init_{\underline{\ww}}(C)$ and $-\val(C)$ we use expression~\eqref{eq:C} and analyze the valuation of all its 60 summands. The minimum valuation equals $-(6(\ww_6+\ww_5) +4\ww_4 + 2\ww_3)$. This value is obtained for those indices where each element in the pairs $\{5,6\}$ and $\{3,4\}$ belongs to a different set of the split $\{i,j,k\}\sqcup \{l,m,n\}$. Moreover, the elements $4,5$ and $6$ must lie in different pairs in the tripartition $\{\{i,l\}, \{j,m\}, \{k,n\}\}$. A combinatorial analysis allows us to  assume $1=i<j<k$ and conclude that the summands with minimum valuation correspond to the eight ordered tuples:
\begin{equation}\label{eq:8tuplesForC}
\begin{aligned}
(i,j,k,l,m,n)  = & \;  (1,3,6,4,5,2)\,,\, (1, 4, 5, 6, 2, 3)\,,\,   (1,4,6,5,2,3)\,,\, (1, 3, 5, 4, 6, 2)\,,  \\
  & \; (1,3,6,5,4,2)\,,\,(1, 4, 5, 6, 3, 2)\,,\, (1,4,6,5,3,2)\,,\, (1, 3, 5, 6, 4, 2)\,.
\end{aligned}
\end{equation}
On these summands,  the monomial $\alpha_3^2\alpha_4^4\alpha_5^6\alpha_6^6$ is monic, so  $\init_{\underline{\ww}}(C) = 8 \,\alpha_3^2\alpha_4^4\alpha_5^6\alpha_6^6$. 
  \end{proof}
\smallskip

    \begin{lemma}\label{lm:computeInitialsTypeII} Let $A', B', C', D'\in \ZZ[\alpha_1, \alpha_2, \alpha_3, \alpha_{34},\alpha_5, \alpha_6]$ be the polynomials in~\eqref{eq:newPolys}. Given a weight vector $\underline{\ww}=(\ww_1,\ww_2,\ww_3, d_{34}, \ww_5, \ww_6)\in \RR^6$ inducing a point in the relative interior of a  Type (II) cell in $M_2^{\trop}$, we have
  \begin{equation*} 
  \begin{minipage}{0.45\textwidth}
    \[\begin{aligned}
      \init_{\underline{\ww}} (A') &= 8\,\alpha_3^2\alpha_5^2\alpha_6^2 \,, \\
      \init_{\underline{\ww}} (B') & = 4\,\alpha_3^4 \alpha_5^4\alpha_6^4\,,
    \end{aligned}\]
  \end{minipage}
  \begin{minipage}{0.4\textwidth}
    \[\begin{aligned}
       \init_{\underline{\ww}}(C') &= 8\,\alpha_3^6 \alpha_{5}^6\alpha_6^6\,,\\
      \init_{\underline{\ww}}(D') & =  \alpha_2^2\alpha_3^8 \alpha_{34}^2\alpha_5^8\alpha_6^{10}.
    \end{aligned}\]
  \end{minipage}
  \end{equation*}
\end{lemma}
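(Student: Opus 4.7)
The plan is to imitate the strategy of~\autoref{lm:computeInitialsTypeI} polynomial by polynomial, now working with the refined variable set obtained from the substitution $\alpha_4 = \alpha_{34}+\alpha_3$. The Type~(II) inequalities $\omega_1<\omega_2<\omega_3<\omega_5<\omega_6$ together with $d_{34}<\omega_3$ give a uniform description of the dominant monomials in each factor $\Delta_{ij}$: $\Delta_{34}=\alpha_{34}^2$ is monic of weight $2d_{34}$; for $i\in\{1,2\}$ and $j\in\{3,4\}$ the factor $\Delta_{ij}$ is dominated by $\alpha_3^2$ with weight $2\omega_3$; and every $\Delta_{ij}$ involving index $5$ (respectively $6$) is dominated by $\alpha_5^2$ (respectively $\alpha_6^2$). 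Since all dominant coefficients equal $+1$, the question reduces in each case to a combinatorial one: which summands attain the largest total $\underline{\omega}$-weight, and do their leading monomials coincide.

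The pure product $D'$ admits no cancellation, so summing per-factor contributions over the fifteen pairs gives the expected $-\val(D')=2\omega_2+8\omega_3+2d_{34}+8\omega_5+10\omega_6$, with initial monomial the product of the fifteen leading terms. For $A'$, I would enumerate the fifteen tripartitions of $[6]$ and inspect the sum of weights of their three pairs; a short comparison using $\omega_2<\omega_3$ and $d_{34}<\omega_3<\omega_5$ shows that the maximum $2(\omega_3+\omega_5+\omega_6)$ is attained exactly when $\{3,4\}$ and $\{5,6\}$ each split across two pairs and the unique pair not involving $5$ or $6$ has one element in $\{1,2\}$ and one in $\{3,4\}$. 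Counting these yields the $8$ valid tripartitions, each contributing $\alpha_3^2\alpha_5^2\alpha_6^2$ with coefficient $+1$.

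The analogous argument for $B'$ singles out the four bipartitions $\{1,3,5\}\sqcup\{2,4,6\}$, $\{1,3,6\}\sqcup\{2,4,5\}$, $\{1,4,5\}\sqcup\{2,3,6\}$ and $\{1,4,6\}\sqcup\{2,3,5\}$, each contributing $\alpha_3^4\alpha_5^4\alpha_6^4$. The delicate case is $C'$, with $60$ summands encoded by a bipartition $U_1\sqcup U_2$ together with a bijective matching $U_1\to U_2$, producing nine pairs (two triangles plus the matching). For the maximum weight $6(\omega_3+\omega_5+\omega_6)$ to be achieved, none of the three twin pairs $\{1,2\}$, $\{3,4\}$ or $\{5,6\}$ may appear among these nine, since each would replace a $2\omega_3$- or $2\omega_5$-contribution by a strictly smaller one. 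A convenient encoding is a $2\times 3$ grid whose rows correspond to $U_1,U_2$ and whose columns correspond to the three matching positions: the three avoidance conditions are equivalent to requiring that each of $\{1,2\}$, $\{3,4\}$ and $\{5,6\}$ occupy two cells in distinct rows \emph{and} distinct columns. Direct enumeration then yields $8$ admissible grids, each producing $\alpha_3^6\alpha_5^6\alpha_6^6$ with coefficient $+1$.

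The main obstacle is precisely this final enumeration for $C'$: verifying that the three avoidance constraints are the only obstructions to the maximum, ruling out near-maximal competitors (for instance configurations placing $\{1,2\}$ in a common pair while $3,4,5,6$ are spread out), and then carrying out the degree count at each element to reach the number $8$. All comparisons rely exclusively on the strict inequalities $\omega_2, d_{34} < \omega_3 < \omega_5$ defining the Type~(II) cell in~\autoref{tab:CombAndLengthData}.
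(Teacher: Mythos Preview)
Your proposal is correct and follows essentially the same strategy as the paper: analyze $D'$ as a cancellation-free product, and for $A',B',C'$ identify which summands carry the maximal $\underline{\omega}$-weight and check that their leading monomials coincide with coefficient $+1$. The only organizational difference lies in the treatment of $C'$. The paper splits into the cases ``$3$ and $4$ in the same half of the bipartition'' versus ``in different halves'', shows the former never attains the maximum, and then explicitly lists the eight admissible tuples in the latter case (noting that four of them are new compared with Type~(I)). Your twin-avoidance criterion---no pair $\{1,2\}$, $\{3,4\}$, $\{5,6\}$ among the nine edges, equivalently each twin pair in distinct rows and distinct columns of the $2\times 3$ grid---is a cleaner uniform characterization of the same eight summands, and your derangement count ($4$ admissible bipartitions times $2$ fixed-point-free matchings) recovers the number $8$ without an explicit list. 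The near-maximal competitors you flag are ruled out by exactly the weight comparisons you indicate: each forbidden twin edge replaces a $2\omega_3$- or $2\omega_5$-contribution by a strictly smaller $2\omega_2$, $2d_{34}$, or redundant $2\omega_6$, using only the strict Type~(II) inequalities.
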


    \begin{proof}
       We start with $D'$. Since the weight vector $\underline{\ww}$ satisfies~\eqref{eq:wwTypeII}, formula~\eqref{eq:D} implies
  \begin{equation*}\label{eq:valDTypeII}
    -\!\val(\Delta_{ij})= \begin{cases}
      2 d_{34} & \text{ if } (i,j) = (3,4),\\
      2\,\ww_j  & \text{ if } j\neq 4,      \\
       2\, \ww_{3}  & \text{ if } j=4, i <3,
    \end{cases} \qquad \text{ for }i<j,
  \end{equation*}

    \noindent
   because $-\val(\alpha_{34}+\alpha_{3}-\alpha_j) = \ww_{\max\{j,3\}}$ for $j\neq 3, 4$.
    We conclude that $-\val(D') = 2\,\ww_2 + 8\,\ww_3+ 2\,d_{34}+ 8\,\ww_5 + 10\, \ww_6$. Furthermore, the term realizing this valuation is $\alpha_2^2\alpha_3^8\alpha_{34}^2\alpha_5^8\alpha_6^{10}$, hence it equals $\init_{\underline{\ww}}(D')$.

  To compute $\init_{\underline{\ww}}(A')$ we proceed analogously. For each tripartition not involving $\{3,4\}$, the valuation of the corresponding summand equals $-2(\ww_j+\ww_l+\ww_n)$, assuming $i<j$, $k<l$, and $m<n$. As in Type (I), the minimum  is achieved at $-2(\ww_3+\ww_5+\ww_6)$, when $j=3$ or $4$, $l=5$, and $n=6$, namely for the 8 tripartitions
  \[
  \{\{1,*\}, \{2,5\}, \{*,6\}\},\, \{\{1, *\}, \{2,6\}, \{*,5\}\},\, \{\{1, 5\}, \{2,*\}, \{*, 6\}\},\, \{\{1, 6\}, \{2, *\}, \{*, 5\}\}.
  \]
Notice that since $\init_{\underline{\ww}}(\alpha_{34}+\alpha_3) = \init_{\underline{\ww}}(\alpha_3)$, it is easy to verify that the coefficient of $\alpha_3^2\alpha_5^2\alpha_6$ on these eight summands   equals 1. 
  
  On the contrary, if $\{3,4\}$ is a pair in the tripartition (say the middle one), the  valuation of each such summand equals $-2(\ww_j + d_{34}+\ww_n)$, which is strictly larger than $-2(\ww_3+\ww_5+\ww_6)$. We conclude that $\init_{\underline{\ww}}(A') = 8 \,\alpha_3^2\alpha_5^2\alpha_6$.
  
We proceed similarly for the polynomial $B'$, distinguishing between splits where $3$ and $4$ belong to different subsets or not. In the first case, there are four summands realizing the minimum valuation $-2(2\ww_3+2\ww_5+2\ww_6)$, corresponding to the splits where $5$ and $6$ also are in different subsets. They all contribute one monomial $\alpha_3^4\alpha_5^4\alpha_6^4$ to $B'$, each with coefficient 1. 

On the contrary, if $3$ and $4$ lie in the same subset, the minimum valuation for these summands is $-2(\ww_2 + d_{34}+ 2\ww_5+ 2\ww_6)$ and it is obtained when $5$ and $6$ are in different subsets  (as in Type (I)). The conditions on $\underline{\ww}$ ensure that $\ww_2+d_{34} <2\ww_3$, so $\init_{\underline{\ww}}(B') = 4\alpha_3^4\alpha_5^4\alpha_6^4$.

Finally, we compute $\init_{\underline{\ww}}(C')$. For each summand of $C'$ corresponding to a split with $3$ and $4$ in different subsets, the valuation is the same as the one computed for Type (I). The expected valuation is $-6(\ww_6 +\ww_5 +\ww_3)$ and it is attained at the eight tuples below:
\begin{equation*}\label{eq:8tuplesForC'}
\begin{aligned}
(i,j,k,l,m,n)  = & \;  (1,3,6,4,5,2)\,,\, (1, 4, 5, 6, 2, 3)\,,\,   (1,4,6,5,2,3)\,,\, (1, 3, 5, 4, 6, 2)\,,  \\
  & \; (1,3,6,5,2,4)\,,\,(1, 4, 5, 3, 6, 2)\,,\, (1,4,6,3,5,2)\,,\, (1, 3, 5, 6, 2, 4)\,.
\end{aligned}
\end{equation*}
The first group corresponds to the four tuples on the top row of~\eqref{eq:8tuplesForC} since the variable $\alpha_{34}$ does not appear on those summands. The second group correspond to tuples where $4$  is opposed to $5$ or $6$ but this loss is compensated by $3$  winning over $1$ and $2$. Notice that these terms did not contribute for the Type (I) cell. 
Collectively, these eight tuples contribute the monomial $8\,\alpha_3^6\alpha_{5}^6\alpha_6^6$.

For the remaining 24 summands in $C'$, where 3 and 4 lie in the same set, the possible $\underline{\ww}$-initial forms are $\alpha_6^6\alpha_5^4\alpha_{34}^2\alpha_3^4\alpha_2^2$, $\alpha_6^6\alpha_5^6\alpha_{34}^2\alpha_3^2\alpha_2^2$, and $\alpha_6^6\alpha_5^4\alpha_{34}^2\alpha_3^6$. Their valuation is strictly bigger that $-6(\ww_6+\ww_5+\ww_3)$, therefore $\init_{\underline{\ww}}(C') = 8\alpha_3^6\alpha_{5}^6\alpha_6^6$.
\end{proof}

    In the rest of this section, we discuss the behavior of the tropical Igusa invariants when  $\charF\resK = 3$. Notice that in this case, we cannot predict the valuation of the polynomial $A$ on the relative interior of the Type (I) cell, since the initial form of $A$ in~\autoref{lm:computeInitialsTypeI} has a coefficient with non-zero valuation. 
    \begin{theorem}\label{thm:tropIgusaExpandedChar3} Let $\charF\resK\!=\!3$ and  $\Gamma$ be a curve of Type (I), (IV) or (V). Then:
      \begin{enumerate} \item If $1-\ww_4 \leq \ww_3$, then the formulas for all $j_i^{\trop}(\Gamma)$ from~\autoref{thm:tropIgusaExpanded} hold.
        \item If $1-\ww_4 > \ww_3$, then $j_1^{\trop}(\Gamma)  = j_2^{\trop}(\Gamma)=L_1 + 2L_0 + L_2$, whereas  $j_3^{\trop}(\Gamma) = L_1 + 4 L_0 + L_2$.
      \end{enumerate}
      If $\Gamma$ is a (specialization of a) Type (II) curve, the formulas from~\autoref{thm:tropIgusaExpanded} hold.
      \end{theorem}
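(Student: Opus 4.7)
The plan is to adapt the proof of Theorem~\ref{thm:tropIgusaExpanded} by isolating the effect of residue characteristic $3$ on $\val(A)$, while arguing that $\val(B)$, $\val(C)$, and $\val(D)$ (as well as the Type~(II) analogue $\val(A')$) remain unchanged. Indeed, by Lemmas~\ref{lm:computeInitialsTypeI} and~\ref{lm:computeInitialsTypeII}, the relevant leading coefficients are $4$, $8$, $1$, and $8$, all units modulo $3$. Only the leading coefficient $6$ of $A$ in Type~(I) acquires positive valuation, so the characteristic $0$ value of $\val(A)$ may fail and the deviation must be computed carefully.

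To pin down $\val(A)$ in Type~(I) in residue characteristic $3$, I would classify the monomials of $A$ by the profile of their exponent vector $(d_1,\dots,d_6)$ with $\sum d_i=6$ and $d_i\in\{0,1,2\}$. A short combinatorial argument, parallel to the initial form computation in Lemma~\ref{lm:computeInitialsTypeI}, shows that the coefficient in $A$ depends only on the profile: $6$ when three of the $d_i$ equal $2$, $-4$ when two equal $2$ and two equal $1$, $12$ when one equals $2$ and four equal $1$, and $-120$ for the all-ones profile. Modulo $3$ only the coefficient $-4$ is a unit, and the monomial of highest $\underline{\omega}$-weight in that class is $\alpha_3\alpha_4\alpha_5^2\alpha_6^2$, arising from the two tripartitions with pair $\{3,4\}$ and with $5,6$ separated between the remaining pairs. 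Comparing this new candidate against the $\val(3)$-shifted contribution $6\alpha_4^2\alpha_5^2\alpha_6^2$ produces the case split stated in the theorem, and verifying that the monomials with coefficients $12$ and $-120$ have strictly smaller weight than both candidates completes the determination of $\val(A)$. Substituting into $-5\val(A)+\val(D)$, $-3\val(A)-\val(B)+\val(D)$, and $-2\val(A)-\val(C)+\val(D)$ and using the $L_i$ formulas in~\autoref{tab:CombAndLengthData} yields the claimed expressions for $j_1^{\trop}$, $j_2^{\trop}$, and $j_3^{\trop}$ on the Type~(I) cell.

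For Types~(IV) and~(V), these cones are specializations of Type~(I) obtained by degenerating certain $\omega_i$. Following the concluding argument in the proof of Theorem~\ref{thm:tropIgusaExpanded}, continuity of initial forms along sequences of Type~(I) weights lets the analysis above transfer, producing the same case split and formulas on these lower-dimensional cells. For Type~(II), Lemma~\ref{lm:computeInitialsTypeII} records $\init_{\underline{\omega}}(A')=8\alpha_3^2\alpha_5^2\alpha_6^2$, whose coefficient $8$ is a unit modulo $3$; hence $\val(A')$ is unaffected by the residue characteristic and the formulas of Theorem~\ref{thm:tropIgusaExpanded} continue to hold verbatim.

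The main obstacle is the exhaustive bookkeeping on the $141$ monomials of $A$: verifying the four-profile classification of coefficients and confirming that the next-to-leading unit-coefficient monomial has the claimed form and weight, so that no overlooked contribution can displace the two candidates in the minimum valuation. This step is the natural place to invoke a \singular{} or \MacT{} verification, paralleling the calculations used for Lemma~\ref{lm:computeInitialsTypeI}; the scripts and numerical examples in the Supplementary material should cover exactly these boundary comparisons.
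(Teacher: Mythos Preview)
Your proposal is correct and follows essentially the same route as the paper. Your decomposition of $A$ by exponent profile into pieces with coefficients $6,-4,12,-120$ is exactly the paper's splitting $A=4A_4+6A_6+12A_{12}+120A_{120}$, and your case split comparing the weight of $-4\,\alpha_3\alpha_4\alpha_5^2\alpha_6^2$ against the $\val(3)$-shifted $6\,\alpha_4^2\alpha_5^2\alpha_6^2$ is precisely the paper's Case~1/Case~2 analysis; the only organizational difference is that the paper treats Types~(I), (IV) and~(V) simultaneously via~\eqref{eq:char3TypeI} rather than by specialization from Type~(I), but both arguments are valid.
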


    \begin{proof} If $\Gamma$ is a Type (II) curve, or a specialization thereof, the formulas for all initial forms in \autoref{lm:computeInitialsTypeII} remain valid in characteristic 3. Therefore, the same genericity assumptions imposed in~\autoref{thm:tropIgusaExpanded} yield the formulas for the tropical Igusa invariants for these curves.

In what remains, we treat the remaining three types: (I), (IV) and (V). As discussed above, the initial form of $A$ will not have a uniform value on each of these cones when $\charF \resK = 3$. We bypass this difficulty by writing $A$ as an integer combination of four polynomials with coefficients $\pm 1$ and disjoint supports, comparing the valuation of their initial forms, and considering possible ties and cancellations. A  calculation  available on the Supplementary material yields  $A = 4A_4 + 6A_6 + 12A_{12} + 120A_{120}$. Any $\underline{\omega}$ in the relative interior of the Type (I), (IV) or (V) cells gives
    \begin{equation}\label{eq:char3TypeI}
  \begin{minipage}{0.45\textwidth}
    \[\begin{aligned}
\init_{\underline{\omega}}(A_4) & = -\alpha_6^2\alpha_5^2\alpha_4\alpha_3\,,\\  \init_{\underline{\omega}}(A_6) &= \;\;\;\alpha_6^2\alpha_5^2\alpha_4^2\,,
    \end{aligned}\]
  \end{minipage}
  \begin{minipage}{0.45\textwidth}
    \[\begin{aligned}
    \init_{\underline{\omega}}(A_{12})\;\, & = \;\;\;\alpha_6^2\alpha_5\alpha_4\alpha_3\alpha_2\,, \\ \init_{\underline{\omega}}(A_{120}) & = -\alpha_6\alpha_5\alpha_4\alpha_3\alpha_2\alpha_1.
    \end{aligned}\]
  \end{minipage}
    \end{equation}
    Since $\ww_1<\ww_2<\ldots<\ww_6$ in Type (I), we conclude that $\init_{\underline{\omega}}(A_6)>\init_{\underline{\omega}}(A_{12}), \init_{\underline{\omega}}(A_{120})$ so we need only compare the weights of $A_4$ and $6A_6$. There are two cases to analyze: 
    \begin{description}
    \item[\textbf{Case 1}] If $1-\ww_4\leq -\ww_3$, our genericity assumptions ensure that $\val(6\alpha_4-4\alpha_3) = 1-\ww_4$. Thus $\val(A)$ is  the one predicted in~\autoref{lm:computeInitialsTypeI}. The formulas for the tropical Igusa invariants described in~\autoref{thm:tropIgusaExpanded} remain valid in this setting.

    \item[\textbf{Case 2}] If $1-\ww_4>-\ww_3$, we conclude that $\init_{\underline{\omega}}(A) = 4\init_{\underline{\omega}}(A_4)$, and so $-\val(A)=2\ww_6+2\ww_5+\ww_4+\ww_3$. The expressions for $\val(B)$, $\val(C)$, and $\val(D)$ obtained from~\autoref{lm:computeInitialsTypeI} and arithmetic manipulations as in the proof of~\autoref{thm:tropIgusaExpanded} yield the desired expressions for the Igusa invariants on the Type (I) cone.\qedhere
    \end{description}
    \end{proof}
    \begin{remark} If $1-\ww_4=-\ww_3$ and $\init(6\alpha_4)=\init(-4\alpha_3)$ the above methods do not allow us to compute $-\val(A)$. We bypass this difficulty by using three Laurent monomials in the 3 Igusa invariants where the polynomial $A$ is canceled out.~\autoref{lm:computeInitialsTypeI} yields:
      \begin{equation*}\label{eq:case3char3}
        \begin{aligned}
          5j_2^{\trop}(\Gamma) - 3 j_1^{\trop}(\Gamma)  & = 2\val(D)-5\val(B) = 2L_1 + 4L_0 + 2L_2\,,\\
          3j_3^{\trop}(\Gamma) - 2 j_1^{\trop}(\Gamma)  & = 3\val(D) - 5 \val(C) = 3L_1 + 16L_0 + 3L_2\,,\\
          3j_3^{\trop}(\Gamma) - 2 j_2^{\trop}(\Gamma)  & = \val(D) + 2\val(B) -3\val(C)  = L_1 + 8L_0 + L_2.          
        \end{aligned}
      \end{equation*}
    The matrix describing the three integer linear combination of $j_1^{\trop}$, $j_2^{\trop}$ and $j_3^{\trop}$ has rank two, so we can only express the last two invariants in terms of  $j_1^{\trop}$.     
    \end{remark}

\section{A new Igusa invariant}\label{sec:new-Igusa}

As was shown in~\autoref{rmk:noTropInvariants}, the fact that the tropical Igusa functions do not yield coordinates on $M_2^{\trop}$ raises a natural question: can we replace $j_1, j_2, j_3$ by an alternative set of three algebraic invariants better suited for tropicalization? Given the expressions  in~\autoref{thm:tropIgusaExpanded} we propose to replace $j_3$ with a linear expression  in $j_1,j_2, j_3$ whose initial form corresponding to a weight vector of Type (I) or (II) appears as a result of a cancellation in the initial forms of the $j_i$'s. In other words, we aim to compute a {Khovanskii basis} of the ring of invariants of $M_2$.

The computation of $\val(A), \val(B)$ and $\val(C)$ on Types (I) and (II) in expressions~\eqref{eq:valsTypeI} and~\eqref{eq:valsTypeII} gives the  linear relation
\begin{equation*}\label{eq:findLambda}
  -\val(A)-\val(B) = -\val(C).
\end{equation*}
Therefore, a cancellation might be produced among leading terms via the expressions
\begin{equation}\label{eq:Qmu}
  Q_{\lambda} := A\,B-\lambda C \quad \text{(for Type (I)) }\quad \text{ and }\quad   Q_{\lambda}' := A'\,B'-\lambda C' \quad \text{(for Type (II)) },  
\end{equation}
for suitable  $\lambda \in K^*$.
For generic choices of $\lambda$, a \sage\, calculation shows that $Q_\lambda\in\QQ[\alpha_1, \alpha_2, \alpha_3, \alpha_4, \alpha_5, \alpha_6]$ has $12\,567$ terms, whereas $Q_{\lambda}'\in \QQ[\alpha_1, \alpha_2, \alpha_3, \alpha_{34}, \alpha_5, \alpha_6]$ has $11\,891
$.

By Lemmas~\ref{lm:computeInitialsTypeI} and~\ref{lm:computeInitialsTypeII} we know that for $\charF \resK \neq 2,3$:
\begin{equation}\label{eq:lambdas}
\begin{aligned}
  \text{ On Type (I): }\,\,&\quad \init_{\underline{\ww}}(A)\;\init_{\underline{\ww}}(B)\;
   = 24\, \alpha_3^2\alpha_4^4\alpha_5^6\alpha_6^6 = 3\,  \init_{\underline{\ww}}(C) \qquad\; \text{ so } \quad \lambda = 3\,. \\
  \text{On Type (II): }  &\quad \init_{\underline{\ww}'}(A')\, \init_{\underline{\ww}'}(B')  = 32\,\alpha_3^6\alpha_5^6\alpha_6^6 \; = 4 \,  \init_{\underline{\ww}'}(C') \qquad \text{ so } \quad \lambda = 4.
\end{aligned}
\end{equation} 
These relations shows which values of $\lambda$ will produce cancellations between the $\underline{\omega}$-leading terms of $AB$ and $C$ in $Q_{\lambda}$ and $Q_{\lambda}'$. This choice yields a new Igusa invariant in Type (I):
\[
j_3':=\frac{Q_3A^2}{D} = \frac{A^3B}{D} - 3\frac{A^2C}{D}=j_2 - 3 j_3.
\]
The tropicalization of $j_3'$ equals $-\val(j_3)$ and it is determined by the $\underline{\omega}$-initial form of $Q_3$. A \MacT~computation finds the initial terms:
\[
\init_{\underline{\ww}}(Q_3)= 8\,\alpha_6^6\alpha_5^6\alpha_4^3\alpha_3^3,\quad \text{so } -\val(Q_3) = 6(\ww_6 +\ww_5) + 3(\ww_4 +\ww_3).
\]
Combining this expression with~\eqref{eq:valsTypeI} and the length formula from~\autoref{tab:CombAndLengthData} yields
\begin{equation*}\label{eq:j3prime}
    \begin{aligned}
    j_3'^{\;\trop} & =  6(\ww_6 +\ww_5) + 3(\ww_4 +\ww_3) + 4(\ww_4+\ww_5+\ww_6) - (2\ww_2 + 4\ww_3 + 6\ww_4 + 8\ww_5 + 10\, \ww_6)\\
    & = 2\ww_5 + \ww_4 - \ww_3 -2\ww_2 = 2(\ww_5-\ww_4) + 3(\ww_4-\ww_3) + 2(\ww_3-\ww_2) = L_1 + 6 L_0 + L_2\,.
  \end{aligned}
\end{equation*}

\noindent
The new function     $j_3'^{\;\trop}$  fails to provide new length data for  Type (I) curves. For this reason, we turn to the Type (II) cell and work with $Q_4$, as predicted by~\eqref{eq:Qmu}. We set:
\begin{equation}\label{eq:j4}
j_4:=\frac{Q_4A^2}{D} = \frac{A^3B}{D} - 4\frac{A^2C}{D}=j_2 - 4 j_3.
  \end{equation}
 By construction $j_4^{\trop} = j_3^{\trop}$ on Type (I) curves if $\charF \resK \neq 2,3$.  

 Since we are interested in the behavior of $j_4^{\trop}$ on the Type (II) cell, we work with $Q_4'$ instead of $Q_4$.   A \sage~calculation reveals that $Q_4'\in \QQ[\alpha_1, \alpha_2, \alpha_3, \alpha_{34}, \alpha_5, \alpha_6]$ has $11\,379$ terms. The possible weight vectors $\underline{\ww}=(\ww_1,\ww_2,\ww_3,d_{34},\ww_5,\ww_6)\in \RR^6$ giving  Type (II) curves form  a six-dimensional open cone in $\RR^6$, whose closure we denote by $\Theta$.

  The possible valuations of $Q_4$ are determined by the Gr\"obner fan of $Q_4'$.  A \sage~computation shows that its $f$-vector equals $(1, 32, 174, 396, 420, 168)$. We are interested in the intersection of the Gr\"obner fan of $Q_4'$ with the relative interior of the cone $\Theta$. The following lemma shows that $\Theta$ gets subdivided into  three maximal pieces
  \begin{equation}\label{eq:ThetaSubdivision}
       \begin{aligned}
    \Theta_0  \!& := \Theta\cap\{d_{34}\geq \ww_2,\, \ww_5\!+\!d_{34}\geq 2 \ww_3\} \,,\;
    \Theta_1 \!:= \Theta\cap\{ 2 \ww_3 \geq \ww_5\!+\!\ww_2, \, 2 \ww_3 \geq \ww_5+d_{34}\},\\
          \Theta_2 \!& := \Theta\,\cap\,\{\ww_2\geq d_{34},\, \ww_5\!+\!\ww_2\geq 2 \ww_3\} \,.
    \end{aligned}
     \end{equation}
  \begin{lemma}\label{lm:SubdivisionOfThetaByQ4}
   The pieces $\Theta_0, \Theta_1$ and $\Theta_2$ in~\eqref{eq:ThetaSubdivision} determine the $\underline{\ww}$-initial form of $Q_4'$:
    \[
    \init_{\underline{\ww}}(Q_4')=\begin{cases}
     -8\alpha_6^6\alpha_5^6\alpha_{34}^2\alpha_3^4 & \quad \text{ if }\quad \underline{\ww} \in \relint(\Theta_0)\;,\\
     -8\alpha_6^6\alpha_5^4\alpha_3^8 & \quad \text{ if }\quad \underline{\ww} \in \relint(\Theta_1)\;,\\
     -8\alpha_6^6\alpha_5^6\alpha_3^4\alpha_2^2 & \quad \text{ if }\quad \underline{\ww} \in \relint(\Theta_2)\;.
    \end{cases}\]
On the intersection of $\Theta_i$ and $\Theta_j$, the initial form is obtained by adding the forms for each piece. On the triple intersection, the initial form equals the sum of the three forms.
  \end{lemma}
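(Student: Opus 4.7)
The plan is to exploit the leading-term cancellation in $Q_4' = A'B' - 4C'$ that is implicit in Lemma~\ref{lm:computeInitialsTypeII}, and then identify the three distinct secondary monomials that compete for dominance across the interior of $\Theta$.

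First, combining Lemma~\ref{lm:computeInitialsTypeII} with~\eqref{eq:lambdas} we see that $\init_{\underline{\ww}}(A')\init_{\underline{\ww}}(B') = 32 \alpha_3^6\alpha_5^6\alpha_6^6 = 4\init_{\underline{\ww}}(C')$ for every $\underline{\ww}$ in the relative interior of $\Theta$. Hence the $\underline{\ww}$-leading terms cancel in $Q_4'$ and the initial form must be read off from subleading contributions of $A'$, $B'$, and $C'$. To find them I would revisit the combinatorial analysis performed for Lemmas~\ref{lm:computeInitialsTypeI} and~\ref{lm:computeInitialsTypeII}, identifying which tripartitions (for $A'$), balanced $3$--$3$ splits of $[6]$ (for $B'$), and ordered tuples (for $C'$) produce the next-best monomials. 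The relevant directions are those that raise the exponent of one of $\alpha_2$, $\alpha_3$, $\alpha_{34}$ at the cost of lowering the exponent of $\alpha_5$ or $\alpha_{34}$, yielding three candidate secondary monomials
\[
M_0 = \alpha_6^6\alpha_5^6\alpha_{34}^2\alpha_3^4, \qquad M_1 = \alpha_6^6\alpha_5^4\alpha_3^8, \qquad M_2 = \alpha_6^6\alpha_5^6\alpha_3^4\alpha_2^2,
\]
whose $\underline{\ww}$-weights differ from the cancelled leading weight $6\ww_6+6\ww_5+6\ww_3$ by exactly $-2(\ww_3-d_{34})$, $-2(\ww_5-\ww_3)$, and $-2(\ww_3-\ww_2)$ respectively. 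All three defects are strictly negative on $\relint(\Theta)$ by~\eqref{eq:wwTypeII}.

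Next, a pairwise comparison of these three defects recovers the partition of $\Theta$ into $\Theta_0$, $\Theta_1$, $\Theta_2$ from~\eqref{eq:ThetaSubdivision} as precisely the regions where $M_0$, $M_1$, $M_2$ respectively carry the largest weight among the three candidates. It remains to verify that the integer coefficient in front of each $M_i$ inside $Q_4' = A'B' - 4C'$ equals the promised $-8$. I would isolate this coefficient via a convolution sum over those tripartitions and splits of $[6]$ that contribute to $M_i$ through $A'B'$, then subtract $4$ times the coefficient of $M_i$ in $C'$; the key new contribution on the $C'$ side comes from those summands of~\eqref{eq:C} whose expansion after $\alpha_4 \mapsto \alpha_{34}+\alpha_3$ produces a factor $\alpha_{34}^2$, $\alpha_3^2$, or $\alpha_2^2$. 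The assumption $\charF \resK \neq 2,3$ ensures that the coefficient $-8$ survives in $\resK$.

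The main obstacle will be the combinatorial bookkeeping of these secondary terms, especially inside $C'$, whose $60$ summands each split into several monomials after the substitution $\alpha_4 = \alpha_{34}+\alpha_3$. In practice I would perform this coefficient extraction with \MacT\ and \sage, paralleling the approach already used in the Supplementary material for other initial-form computations, and cross-check against the Gr\"obner fan of $Q_4'$, which is claimed to have exactly three maximal cones intersecting $\Theta$. The final assertion about boundary behavior (that on $\Theta_i \cap \Theta_j$ the initial form is the sum of the two contributions, and similarly on the triple intersection) then follows immediately from the definition of initial form: along a wall where the $\underline{\ww}$-weights of two or three of the $M_i$ coincide, every contributing term is preserved.
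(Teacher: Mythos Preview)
Your proposal is correct and takes a more conceptual route than the paper. The paper's proof is purely computational: it replaces $Q_4'$ by the sum of its extremal monomials (because the direct Gr\"obner fan computation fails to terminate), computes the Gr\"obner fan of that truncation, intersects it with $\Theta$ to find the three maximal pieces, and then reads off the initial form on each $\Theta_i$ by evaluating at a sample interior point with \MacT. No combinatorial explanation for the three monomials or the coefficient $-8$ is given.

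Your approach instead predicts the three competitors $M_0,M_1,M_2$ in advance, and your observation that their $\underline{\ww}$-weights drop from the cancelled leading weight by exactly $L_0,L_1,L_2$ is exactly what the paper exploits only later, in the proof of Theorem~\ref{thm:newIgusa}. This makes the subdivision~\eqref{eq:ThetaSubdivision} transparent rather than a by-product of a fan computation. One caution: the step where you ``revisit the combinatorial analysis'' of Lemmas~\ref{lm:computeInitialsTypeI} and~\ref{lm:computeInitialsTypeII} identifies the subleading terms of $A'$, $B'$, $C'$ separately, but what you need are the subleading terms of the \emph{difference} $A'B'-4C'$, where further cancellations at the secondary level are a priori possible. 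You acknowledge this (the ``main obstacle'') and sensibly fall back on a computer-algebra coefficient extraction and a cross-check against the Gr\"obner fan; that is in fact indispensable here, since with $11\,379$ terms there is no clean way to rule out competing secondary monomials by hand. So in the end both arguments rest on the same computation, but yours explains why the answer has the shape it does.
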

  \begin{proof} The proof is  computational, and all the required \sage~scripts  are included in the Supplementary material.
    Since the computation of the Gr\"obner fan of $Q_4$ using \sage~halts,  we replace $Q_4'$ by the sum of its extremal monomials and calculate its Gr\"obner fan. We then compute the intersection of this fan with $\Theta$ and check that only three of its maximal cones intersect $\Theta$ in dimension six. We consider a sample interior point in the relative interior of each piece (e.g.\ the sum of its extremal rays) and determine the initial forms of $Q_4'$ on each $\Theta_i$ using \MacT. The equalities defining $\Theta_i$ are determined by \sage. The last claim in the statement follows from the defining properties of  Gr\"obner fans.
  \end{proof}

  The pieces $\Theta_0$, $\Theta_1$ and $\Theta_2$ have a natural interpretation   in terms of length data:
  \begin{lemma}\label{lm:SubdivMinimalLength}
    Given $i=0,1,2$, the inequalities defining $\Theta_i$ single out the minimal edge length $L_i$ of the corresponding theta graph. 
  \end{lemma}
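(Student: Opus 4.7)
The plan is to translate the two linear inequalities defining each $\Theta_i$ into inequalities among the lengths $L_0, L_1, L_2$ using the explicit formulas from the Type (II) row of~\autoref{tab:CombAndLengthData}, namely $L_0 = 2(\ww_3-d_{34})$, $L_1 = 2(\ww_5-\ww_3)$, and $L_2 = 2(\ww_3-\ww_2)$. Since the statement is purely linear in $\ww_2, \ww_3, d_{34}, \ww_5$, no tropical geometry is needed beyond this substitution; the proof is a direct verification.

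First I would consider $\Theta_0$. Observe that $L_0 \leq L_2$ is equivalent to $\ww_3 - d_{34} \leq \ww_3 - \ww_2$, i.e.\ $d_{34} \geq \ww_2$, while $L_0 \leq L_1$ is equivalent to $\ww_3 - d_{34} \leq \ww_5 - \ww_3$, i.e.\ $\ww_5 + d_{34} \geq 2\ww_3$. These are precisely the two inequalities cutting out $\Theta_0$ inside $\Theta$, so $\underline{\omega} \in \Theta_0$ if and only if $L_0 = \min\{L_0, L_1, L_2\}$.

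The arguments for $\Theta_1$ and $\Theta_2$ are analogous. For $\Theta_1$, the inequality $L_1 \leq L_0$ rewrites as $\ww_5 + d_{34} \leq 2\ww_3$ and $L_1 \leq L_2$ rewrites as $\ww_5 + \ww_2 \leq 2\ww_3$, matching the two conditions defining $\Theta_1$. For $\Theta_2$, the inequalities $L_2 \leq L_0$ and $L_2 \leq L_1$ translate respectively into $\ww_2 \geq d_{34}$ and $\ww_5 + \ww_2 \geq 2\ww_3$, again matching the defining conditions. In each case the minimality of $L_i$ on $\Theta_i$ follows directly.

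There is no serious obstacle: the only point worth checking is that the three pieces cover $\Theta$ and that their pairwise intersections correspond exactly to ties between two of the edge lengths, which is automatic from the above equivalences. This compatibility dovetails with the last sentence of~\autoref{lm:SubdivisionOfThetaByQ4}, where the initial form of $Q_4'$ on an intersection $\Theta_i\cap \Theta_j$ is obtained by summing the individual leading terms, reflecting a tie between $L_i$ and $L_j$.
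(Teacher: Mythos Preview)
Your proposal is correct and is exactly the approach the paper takes: the paper simply states that the result ``follows from the length formulas in~\autoref{tab:CombAndLengthData}'' without writing out the verification, and your computation spells out precisely that substitution. There is nothing to add.
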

\noindent  In particular, the subdivision~\eqref{eq:ThetaSubdivision} of $\Theta$ is compatible with the automorphisms of this cone induced by permutations of the underlying theta graph. The proof of this result follows from the length formulas in~\autoref{tab:CombAndLengthData}.   Below is the main result in this section:
  
  \begin{theorem}\label{thm:newIgusa}
    Let $\mathcal{X}$ be a curve in $M_2$,  defined over $K$ with $\charF \resK \neq 2$, and generic with respect to its (abstract) tropicalization $\Gamma$.  
    The tropical Igusa function $j_4^{\trop}$ equals
    \begin{enumerate}[(i)]
    \item $j_4^{\trop}(\Gamma)\! = j_3^{\trop}(\Gamma)$ if $\Gamma$ is  a dumbbell curve, and
      \item $j_4^{\trop}(\Gamma)\! =\! L_0 + L_1 + L_2 -\min\{L_0 , L_1 , L_2\}$ if $\Gamma$ is a theta curve,
    \end{enumerate}
    where $L_0,L_1, L_2$ denote the lengths on each curve as in Figure~\ref{fig:allSkeletaAndMetrics}.    The  formulas remain valid under specialization and yield well-defined piecewise linear maps on $M_2^{\trop}$.
    \end{theorem}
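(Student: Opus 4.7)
The plan is to write $j_4 = A^2 Q_4/D$ with $Q_4 := AB - 4C$, so that $j_4^{\trop} = -2\val(A) - \val(Q_4) + \val(D)$, and then to compute $-\val(Q_4)$ on each maximal cone of $M_2^{\trop}$ using the leading-term data from Lemmas~\ref{lm:computeInitialsTypeI}, \ref{lm:computeInitialsTypeII}, and~\ref{lm:SubdivisionOfThetaByQ4}. The valuations of $A$ and $D$ are already recorded in the proof of~\autoref{thm:tropIgusaExpanded}, so the calculation reduces to combining these ingredients with the length identities of~\autoref{tab:CombAndLengthData}.

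For the dumbbell case, I would multiply the monomial leading terms from~\autoref{lm:computeInitialsTypeI} to obtain $\init_{\underline{\ww}}(A)\init_{\underline{\ww}}(B) = 24\,\alpha_3^2\alpha_4^4\alpha_5^6\alpha_6^6$ and $4\init_{\underline{\ww}}(C) = 32\,\alpha_3^2\alpha_4^4\alpha_5^6\alpha_6^6$. Since both leading monomials coincide, their difference is $-8\,\alpha_3^2\alpha_4^4\alpha_5^6\alpha_6^6$, which is nonzero whenever $\charF \resK \neq 2$. Hence $-\val(Q_4) = -\val(C)$ for a generic lift, and substituting into the formula for $j_4^{\trop}$ produces exactly the expression for $j_3^{\trop}$, yielding (i).

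For the theta case, I would invoke~\autoref{lm:SubdivisionOfThetaByQ4}, which already supplies $\init_{\underline{\ww}}(Q_4')$ on each of the three pieces $\Theta_0, \Theta_1, \Theta_2$. Reading off their $\underline{\ww}$-weights and combining with $-\val(A) = 2(\ww_3+\ww_5+\ww_6)$ and $-\val(D) = 2\ww_2+8\ww_3+2d_{34}+8\ww_5+10\ww_6$ from~\eqref{eq:valsTypeII}, a short arithmetic manipulation using the length formulas of~\autoref{tab:CombAndLengthData} converts the expression on $\Theta_i$ into $L_0+L_1+L_2-L_i$. The identification of $\Theta_i$ as the region where $L_i$ realizes the minimum of $\{L_0,L_1,L_2\}$, supplied by~\autoref{lm:SubdivMinimalLength}, then produces (ii). Continuity across the interfaces $\Theta_i\cap\Theta_j$ is automatic, since there $L_i=L_j$ and the additivity of initial forms asserted in~\autoref{lm:SubdivisionOfThetaByQ4} matches the two linear expressions. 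Specialization to Types (III)--(VII) follows by the same degeneration argument used at the end of the proof of~\autoref{thm:tropIgusaExpanded}: approach each lower-dimensional cone through sequences of Type (I) or Type (II) parameters and invoke semicontinuity of the relevant Gr\"obner fans.

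I expect the main obstacle to be the case $\charF \resK = 3$. There the leading terms of $A$ and $B$ in~\autoref{lm:computeInitialsTypeI} carry integer coefficients divisible by $3$, so $\init_{\underline{\ww}}(AB)$ is no longer the monomial used above and the cancellation $24-32=-8$ must be re-derived after expanding $A = 4A_4+6A_6+12A_{12}+120A_{120}$ as in the proof of~\autoref{thm:tropIgusaExpandedChar3}. I would split the analysis along the same hyperplane that distinguishes the two subcases of~\autoref{thm:tropIgusaExpandedChar3}, check in each regime which monomial of $AB$ survives, and verify that its combined coefficient with $4C$ remains a unit in the residue field; since $-8\equiv 1\pmod 3$ I expect the cancellation to persist once the correct surviving monomials are identified. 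An analogous refinement of $\Theta_0\cup\Theta_1\cup\Theta_2$ will be needed to adapt~\autoref{lm:SubdivisionOfThetaByQ4} to characteristic three.
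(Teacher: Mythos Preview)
Your proposal is correct and follows essentially the same route as the paper: write $j_4 = A^2Q_4/D$, compute $-\val(Q_4)$ on each maximal cone via Lemmas~\ref{lm:computeInitialsTypeI}, \ref{lm:computeInitialsTypeII}, \ref{lm:SubdivisionOfThetaByQ4}, and assemble using the valuations of $A$ and $D$ already recorded in~\eqref{eq:valsTypeI}, \eqref{eq:valsTypeII}. For Type~(II) the paper phrases the same computation as $j_4^{\trop} = j_3^{\trop} + \val(C') - \val(Q_4')$, which is your formula after cancelling $-2\val(A)+\val(D)$.

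The one place where the paper is more economical is the characteristic~$3$ discussion. For Type~(I) the paper does not recompute $\val(Q_4)$ at all: it simply uses $j_4 = j_2 - 4j_3$ together with the values of $j_2^{\trop}$ and $j_3^{\trop}$ already supplied by Theorems~\ref{thm:tropIgusaExpanded} and~\ref{thm:tropIgusaExpandedChar3}. In the regime where $j_3^{\trop} > j_2^{\trop}$ there is nothing to check, and when $j_2^{\trop}=j_3^{\trop}$ the genericity hypothesis rules out cancellation. This sidesteps the decomposition $A = 4A_4 + 6A_6 + \cdots$ that you anticipate. For Type~(II) your worry about refining $\Theta_0\cup\Theta_1\cup\Theta_2$ in characteristic~$3$ is unnecessary: every leading coefficient appearing in Lemmas~\ref{lm:computeInitialsTypeII} and~\ref{lm:SubdivisionOfThetaByQ4} is a power of $2$ (namely $8$, $4$, $8$, $1$, $-8$), hence a unit whenever $\charF\resK\neq 2$, so those lemmas hold verbatim and no further case analysis is required.
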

  \begin{proof}
The formula for Type (I) will depend on the characteristic of $\resK$ and will be obtained from Theorems~\ref{thm:tropIgusaExpanded} and~\ref{thm:tropIgusaExpandedChar3}. A simple inspection shows that in all cases $j_4^{\trop}\leq j_3^{\trop}$. The genericity of $\mathcal{X}$ ensures that no cancellations occur and thus, ${j_4}_{\dumb}^{\trop} = {j_3}_{\dumb}^{\trop}$.

To prove the statement on the Type (II) cell, we notice that $j_4$ differs from $j_3$ by replacing $C'$ with $Q_4'$, so $j_4^{\trop}=j_3^{\trop}+\val(C')-\val(Q_4')$. ~\autoref{lm:SubdivisionOfThetaByQ4} and \eqref{eq:valsTypeII} gives
\[
\val(C')-\val(Q_4') =\begin{cases}
-6(\ww_6 + \ww_5 +\ww_3) + 6(\ww_6+\ww_5) + 2 d_{34} + 4\ww_3 = - L_0& \quad \text{ if } \underline{\ww} \in \Theta_0,\\
-6(\ww_6 + \ww_5 +\ww_3) + 6 \ww_6 + 4 \ww_5 + 8 \ww_3 = - L_1 & \quad \text{ if } \underline{\ww} \in \Theta_1,\\
-6(\ww_6 + \ww_5 +\ww_3) + 6(\ww_6+\ww_5)  + 4\ww_3 + 2 \ww_2 =  - L_2 & \quad \text{ if } \underline{\ww} \in \Theta_2.
\end{cases}
\]
By~\autoref{lm:SubdivMinimalLength} and~\autoref{thm:tropIgusaExpanded} we conclude that on Type (II) curves
\[
{j_4}_{\figTheta}^{\trop} = {j_3}_{\figTheta}^{\trop} - \min\{L_0, L_1, L_3\} = L_0 + L_1 + L_2 - \min\{L_0, L_1, L_3\}.
    \]
   Analogous arguments as the ones provided in the proof of~\autoref{thm:tropIgusaExpanded} and the genericity of $\mathcal{X}$ ensure that the given formulas are valid under specialization.
  \end{proof}
  The Igusa functions $j_1, j_2, j_4$ characterize isomorphism types in $M_2$. The tropical Igusa functions $j_1^{\trop}, j_2^{\trop}$ and $j_4^{\trop}$ allow us to recover partial length data for each point in $M_2^{\trop}$, once we determine the combinatorial type of the curve using~\autoref{thm:admisscovers} and~\autoref{tab:CombAndLengthData}. 
  The methods presented in this section will not produce a complete set of tropical invariants on  $M_2^{\trop}$. Indeed, we have exploited the unique relation among the valuations of $A, B$, and $C$ to build $j_4$ and no further combination of $A, B, C$ would produce a cancellation of initial terms. It remains an interesting challenge to develop an alternative approach to generate a new algebraic invariant on $M_2$ inducing the missing tropical invariant on each cell of $M_2^{\trop}$.

\section*{Acknowledgements}
We wish to thank Claus Fieker, Michael Joswig, Lars Kastner, Ralph Morrison, Sam Payne, Padmavathi
Srinivasan, Bernd Sturmfels and Annette Werner for very fruitful
conversations. We thank the two anonymous referees for useful suggestions on an earlier version of this paper. The first author was supported by an Alexander von
Humboldt Postdoctoral Research Fellowship (Germany), an NSF
postdoctoral fellowship DMS-1103857 and an NSF Standard Grant DMS-1700194 (USA). The second author was
supported by the DFG transregional collaborative research center grant SFB-TRR 195 ``Symbolic Tools in Mathematics and their Application'', INST 248/235-1 (Germany).

Part of this project was carried out during the 2013 program on \emph{Tropical Geometry and Topology} at the
Max-Planck Institut f\"ur Mathematik in Bonn (Germany), where the second author was in
residence, and during the 2016 major program on \emph{Combinatorial Algebraic Geometry} at the Fields Institute in Toronto (Canada), where both authors were in residence.  The authors would like to thank both institutes for their hospitality, and for providing excellent working conditions.

\normalsize

\bigskip
\noindent
\textbf{\small{Authors' addresses:}}
\smallskip
\

\noindent
\small{M.A.\ Cueto,  Mathematics Department, The Ohio State University, 231 W 18th Ave, Columbus, OH 43210, USA.
\\
\noindent \emph{Email address:} \texttt{cueto.5@osu.edu}}
\vspace{2ex}

\noindent
\small{H.\ Markwig, Eberhard Karls Universit\"at T\"ubingen, Fachbereich Mathematik, Auf der Morgenstelle 10, 72108 T\"ubingen, Germany.
  \\
  \noindent \emph{Email address:} \texttt{hannah@math.uni-tuebingen.de}}

\end{document}